\documentclass[reqno]{amsart}
\usepackage[marginratio=1:1]{geometry}

\usepackage[hidelinks]{hyperref}
\usepackage{calc}
\newsavebox\CBox
\newcommand\hcancel[2][0.5pt]{%
\ifmmode\sbox\CBox{$#2$}\else\sbox\CBox{#2}\fi%
  \makebox[0pt][l]{\usebox\CBox}%
  \rule[0.5\ht\CBox-#1/2]{\wd\CBox}{#1}}
\usepackage{blindtext}
\usepackage{color}
\usepackage{url}
\usepackage{comment}
\usepackage{float}
\usepackage{enumerate}
\usepackage{enumitem}   
\usepackage{cancel}
\usepackage{mathrsfs}
\usepackage{colonequals}
\usepackage{pdfpages}

\usepackage{amsmath,amsfonts,amsthm,bm}
\usepackage{mathrsfs}  
\usepackage{xcolor}
\usepackage{amsfonts}
\usepackage{amssymb}
\usepackage{amsmath}
\usepackage{mathtools}
\usepackage{mathrsfs}  
\numberwithin{equation}{section}
\usepackage[toc,page]{appendix}
\usepackage{tikz-cd}
\theoremstyle{definition}

\newtheorem{definicao}{Definition}[section]

\theoremstyle{plain}

\newtheorem{teorema}[definicao]{Theorem}

\newtheorem{lema}[definicao]{Lemma}
\newtheorem{proposicao}[definicao]{Proposition}
\newtheorem{corolario}[definicao]{Corollary}

\usepackage{scalerel}

\definecolor{roxo}{rgb}{0.44, 0.16, 0.39}
\definecolor{ao(english)}{rgb}{0.0, 0.5, 0.0}
\definecolor{dmagenta}{RGB}{139, 0, 139}
\definecolor{dgreen}{RGB}{0,90,0}
\definecolor{navy}{RGB}{0,0,128}

\usepackage{stackengine}

\definecolor{iblue}{RGB}{0, 35, 194}

\address{$^{1}$Department of Mathematics, Imperial College London, London SW7 2AZ, UK}
\author[Giuseppe Tenaglia]{Giuseppe Tenaglia$^{1}$}
\email{\href{mailto:giuseppe.tenaglia20@imperial.ac.uk}{giuseppe.tenaglia20@imperial.ac.uk}}

\allowdisplaybreaks
\title{Abundance of typical horseshoes for the random standard map}
\date{September 2026}

\begin{document}

\begin{abstract}
{We introduce a notion of typical horseshoe, consisting of a pair of rectangles admitting Markov returns at times of positive lower density, with controlled hyperbolic geometry and orbits that shadow typical trajectories. We prove the abundance of typical horseshoes for the random standard map and establish several statistical properties of the system, including exponential mixing and large deviation estimates for the associated projective and two-point processes.}
\end{abstract}

\subjclass[2020]{37D25, 37H15, 37B10, 37A25, 60J05}

\keywords{Random standard map, Typical horseshoes,
Nonuniform hyperbolicity, Exponential mixing, Large deviations}

\maketitle

\section{Introduction}

Random dynamical systems describe time-evolving motions whose future state depends on the present one and on the underlying noise. As deterministic ones, they may exhibit chaos through sensitive dependence on initial conditions: trajectories starting arbitrarily close can eventually separate. A statistical indicator of sensitive dependence are the Lyapunov exponents, which measure the asymptotic exponential growth rate of infinitesimal perturbations along typical trajectories. Positivity of the largest one  signals local instability. A natural question is whether this purely statistical instability gives rise to more geometric features that describe and organize chaotic dynamics.

In the absence of noise, a positive answer has been given by Katok. His seminal work \cite{PMIHES_1980__51__137_0} establishes that the dynamics of $C^{1+\alpha}$ non-uniformly hyperbolic diffeomorphisms can be approximated by uniformly hyperbolic invariant sets called \emph{horseshoes}. On these sets, a suitable iterate of the dynamics is topologically equivalent to a full shift over a finite number of symbols.

It is natural to ask whether analogous geometric structures can be constructed in random systems. The underlying difficulty is that deterministic horseshoe's constructions exploit the existence of periodic orbits with uniform geometric properties, whereas, in the random setting, periodic orbits do not
exists. In fact, return times fluctuate, and one needs to control how often
typical orbits return to a given set, while simultaneously exhibiting hyperbolic
behaviour and uniform geometric properties.

In \cite{LTTCircle,LTTGeneral}, we solved the above issues and established density of random horseshoes for non-uniformly expanding systems with diffusive noise. This class exhibits a more tractable version of the general difficulty: diffusive noise provides control on the statistics of the return times, and the absence of contracting directions in non-uniformly expanding systems simplifies the geometric control needed to construct random horseshoes.

In this work, we take a first step towards generalising our previous result to the non-uniformly hyperbolic setting, by establishing the existence of random horseshoes for the random standard map. This system takes the form
\begin{align*}
x_{n+1} &= L\cos\bigl(2\pi(x_n+\omega_n)\bigr)-y_n \pmod{1},\\
y_{n+1} &= x_n+\omega_n \pmod{1},
\end{align*}
where $L>0$ and $(\omega_n)_{n\ge0}$ is a sequence of independent and identically distributed random variables uniformly distributed in 
$(-L^{-\delta},L^{-\delta})$, for some $\delta>0$.
Its associated dynamics is non-uniformly hyperbolic: outside a small critical neighbourhood of size $\sim L^{-1+\varepsilon}$, the expanding and contracting cone fields are strongly separated.

For this system, we introduce the notion of a \emph{typical horseshoe}. Informally, this consists of two nearby rectangles $I_1,I_2$ at a fixed scale, approximately aligned with the coordinate axes, such that typical points $x_i\in I_i$ return simultaneously to their respective rectangles at a sequence of times with positive lower density. Between consecutive selected returns, a vertically crossing strip around each returning point is mapped to a horizontal band crossing both rectangles. For any fixed scale, Theorem \ref{thm:typical-horseshoes} establishes that, if $L$ is sufficiently large, every pair of rectangles outside the critical region satisfying the required geometric conditions forms a typical horseshoe.

The proof also yields several statistical estimates of independent interest. In Theorem~\ref{mixingpm}, we establish annealed exponential mixing for the projective process, with a contraction rate proportional to  $L^{-\delta}$. In Theorem~\ref{mixingtwopointmotion}, we prove annealed exponential mixing for the two-point motion, with contraction rate uniform in $L$ and $\delta$. We obtain exponential large-deviation estimates for bounded observables (Theorem~\ref{thm:large_deviations}), together with large deviation estimates for the Lyapunov exponent with rate polynomial on  $L^{-1}$ (Theorem~\ref{lyapunov_large_deviations}). We also control the distribution of the stable direction and the frequency of its significant deviations from the vertical one (Proposition~\ref{vertconc} and Corollary~\ref{Stabledef}). Finally, Proposition~\ref{prop:young-times} provides exponential lower-tail estimates for the number of simultaneous expanding returns of two given curves to prescribed regions.

Existence of horseshoes supported by fixed-scale rectangles outside the critical region is not by itself a novelty for the random standard map: such structures can be obtained by restricting to orbits that permanently avoid the critical region, following classical constructions \cite{Duarte1994}. The novelty of our main result, Theorem \ref{thm:typical-horseshoes}, is to show that these rectangles support horseshoes whose branches shadow typical pairs of trajectories—orbits that repeatedly enter the critical region where hyperbolicity is temporarily lost. Other horseshoe-like results for random standard maps leverage positive entropy to construct symbolic models \cite{HuangLu2017,HuangZhang2023}, but do not establish geometric hyperbolicity.

For models closely related to ours, such as the Pierrehumbert model and a version of the random standard map with two independently randomised shears, annealed mixing of the projective process and the two-point motions have already been established in \cite{BCZG2023,Zhang2026}. Their arguments follow a general framework based on controllability and Lyapunov methods. In our setting, we undertake a detailed study of these processes for our model of choice in order to prove the quantitative contraction rates stated in Theorems \ref{mixingpm} and \ref{mixingtwopointmotion}. Once established, these results serve as key ingredients for constructing the random horseshoes.

The technical machinery used here differs substantially from the one developed in our previous work on non-uniformly expanding systems. In that setting, our arguments crucially exploit the fact that the random times at which typical trajectories experience uniform expansion depend only on the past. In the hyperbolic setting, however, the information required to control the stable direction—and thereby construct random horseshoes—depends on the future.

A concrete illustration of this obstacle arises when trying to adapt the framework of \cite{DD24} to construct random horseshoes. Indeed, \cite{DD24} builds a finite-time Pesin theory for area-preserving diffeomorphisms to show that, at a specific time, two curves expand simultaneously while exhibiting points with long stable manifolds. Aside from controlling the exact alignment of these directions, this is precisely what our construction needs. To form a random horseshoe, however, such behavior must recur with positive density. Establishing this recurrence while accounting for future dependence would necessitate a detailed study of how stable directions depend on one another across trajectories, a task that lies beyond the scope of this work.

To resolve these issues, we separately construct expanding returns and times at which the stable direction has the required geometric properties, then use quantitative density estimates to show that their intersection has positive lower density. More precisely, our strategy proceeds  as follows:
\begin{enumerate}[nosep]
\item
Consider two admissible reference intervals $I_1,I_2$, a typical pair of marked points $x_i\in I_i$, and curves $\gamma_i$ passing through $x_i$. Combining our statistical estimates with the finite-time Pesin theory of \cite{DD24}, we establish a sequence of times $h$ with positive lower density at which the Pesin charts along the marked trajectories have polynomial size in $L^{-1}$ and shrink exponentially backwards in time. At these times, suitable pieces of the image curves reach a scale polynomial in $L^{-1}$ and are almost horizontal. During the next $K_0$ iterates, both trajectories avoid the critical region and return to their respective reference intervals, while the image curves expand to cross both $I_1$ and $I_2$ horizontally. Critical avoidance continues for another $K_1$ iterates.
\item
We identify, for the marked points, stable times with density arbitrarily close to one for large $L$. At each such time, their stable direction is almost vertical, and we can construct a family of Pesin charts whose initial size is polynomial in $L^{-1}$ and whose sizes decrease exponentially along their future orbit.
\item
We retain those return times $h$ for which $h+K_1$ is a stable time for both marked trajectories. Our quantitative estimates show that, for large $L$, such times have positive lower density.

Between two sufficiently separated retained returns $h<h'$, we combine the Pesin charts starting at the stable time $h+K_1$ with those ending at the expanding time $h'-K_0$. This gives a compatible family of charts with polynomial size on $L^{-1}$ at both endpoints, independently of the length of the intervening orbit segment. Within these charts, horizontal curves grow forwards and vertical fibres grow backwards to polynomial scale in $L^{-1}$. Expansion along the two outer blocks, which avoid the critical region, then brings these curves to the prescribed rectangle scale. The resulting vertical strips are mapped to horizontal bands crossing both rectangles, with the expansion and contraction required for the horseshoe construction.
\end{enumerate}
The horseshoe construction is carried out in Section~\ref{abundanceset}, following Sections~\ref{sec2}--\ref{revtsec}, where we establish the statistical estimates, study the distribution of stable directions, and adapt the finite-time Pesin framework of \cite{DD24} to our setting.

The paper is organised as follows. Section~\ref{sec2} states the main results and defines typical horseshoes. Sections~\ref{mixingderivatsec} and~\ref{mixing2pmsec} establish mixing for the projective process and the two-point motion, respectively. Section~\ref{ldevsec} proves the large-deviation estimates, and Section~\ref{stabledirsec} studies stable and unstable directions. Section~\ref{revtsec} develops the growth-to-large-scale estimates. Finally, Section~\ref{abundanceset} constructs simultaneous expanding returns and stable times and combines them to prove the abundance of typical horseshoes.

\section{Statement of the results}\label{sec2}

We study the random process $(x_n,y_n)_{n\ge0}$ on the
$2$-torus $\mathbb{T}^2$, defined by
\begin{equation}\label{randomstandardmap}
\begin{aligned}
x_{n+1} &= L\cos\bigl(2\pi(x_n+\omega_n)\bigr)-y_n \pmod{1},\\
y_{n+1} &= x_n+\omega_n \pmod{1},
\end{aligned}
\end{equation}
where $L>0$ and
$(\omega_n)_{n\ge0}$ is a sequence of independent and identically
distributed random variables with common distribution
$\operatorname{Unif}(-L^{-\delta},L^{-\delta}),$ with $\delta>0.$

For fixed $L$ and $\delta$, we write
\begin{equation}\label{notation}
\sigma:=L^{-\delta},
\qquad
f(x):=L\cos(2\pi x).
\end{equation}
Let $\Omega:=[-\sigma,\sigma]^{\mathbb{N}_0}$ be the space of
noise sequences, equipped with the product probability measure
$\mathbb{P}$ whose coordinate marginals are uniform on
$[-\sigma,\sigma]$. We denote expectation with respect to
$\mathbb{P}$ by $\mathbb{E}$.
For each noise realization $\omega=(\omega_n)_{n\ge0}\in\Omega$, we define
\begin{equation}\label{maps}
f_{\omega}(x,y):=f_{\omega_0}(x,y)
=
\bigl(f(x+\omega_0)-y,\;x+\omega_0\bigr),
\end{equation}
where both coordinates are taken modulo $1$.

For $n\ge1$, we write
\[
f_\omega^n
:=
f_{\omega_{n-1}}\circ\cdots\circ f_{\omega_0},
\qquad
f_\omega^0:=\operatorname{id}.
\]
Equivalently, if $\vartheta:\Omega\to\Omega$ denotes the left shift, then
\[
f_\omega^n
=
f_{\vartheta^{n-1}\omega}\circ\cdots\circ
f_{\vartheta\omega}\circ f_\omega,
\qquad n\ge1.
\]
\subsection{Abundance of statistical properties}

Let $m$ denote the normalized Lebesgue measure on $\mathbb{T}^2$.
It is well known that $m$ is the unique ergodic stationary measure
for the random standard map. Consequently, by ergodicity, the
top Lyapunov exponent
\begin{align*}
\lambda_1(\omega,x)
:=
\lim_{n\to\infty}
\frac{1}{n}
\log \left\|Df_\omega^n(x)\right\|
\end{align*}
is constant $m\otimes\mathbb{P}$-almost surely. Moreover, in the
noise regime considered here, $\lambda_1>0$ for $L$ sufficiently
large \cite{BXY17}.

More importantly for our purposes, a sufficient condition for
the positivity of the Lyapunov exponent, usually referred to as
\emph{expansion on average}, was established for the random
standard map in \cite[Theorem 1]{BXY18}. We record below the version that will be used later.

\begin{teorema}\label{blubluaverage}
For every sufficiently small $\delta>0$, there exists $L_0(\delta)>0$ for which,
whenever $L>L_0(\delta)$ and $l\ge 11$ 
\begin{equation}\label{exponavg}
\inf_{\substack{(x,y)\in\mathbb{T}^2\\ v\in\mathbb{S}^1}}
\mathbb{E}
\left[
\log\left|Df_\omega^l(x,y)\cdot v\right|
\right]
\ge
\frac34\log L.
\end{equation}
\end{teorema}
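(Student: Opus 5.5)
The plan is to reduce the claim to the single-step-type estimate of \cite[Theorem 1]{BXY18}, in the form ``expansion on average after a bounded number of steps'', and then propagate it to every $l\ge 11$ by a cocycle/conditioning argument.

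\textbf{Step 1: projective formulation.} For $(x,y)\in\mathbb{T}^2$ and a unit vector $v=(\cos\theta,\sin\theta)$, a direct computation from \eqref{maps} gives
\[
Df_\omega(x,y)=\begin{pmatrix} f'(x+\omega_0) & -1\\ 1 & 0\end{pmatrix},
\qquad f'(s)=-2\pi L\sin(2\pi s).
\]
Hence
\[
\bigl|Df_\omega(x,y)v\bigr|\ \ge\ \bigl|f'(x+\omega_0)\cos\theta-\sin\theta\bigr|.
\]
Since $\omega_0$ is uniform on an interval of length $2\sigma=2L^{-\delta}$, the term $f'(x+\omega_0)$ sweeps a range of size of order $L^{1-\delta}$ away from the zeros of $\sin$. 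The expectation of $\log|a\cos\theta-\sin\theta|$ over such a smoothed parameter is then bounded below by $\log L-C\delta\log L-C$, unless $\theta$ is within $L^{-1+\varepsilon}$ of vertical or $x$ lies in the critical set. In the bad configurations one only gets a uniform lower bound of order $-C\log L$; this follows from the logarithmic integrability of $|\sin|$ against a uniform density, which yields $\mathbb E\log|\cdot|\ge -C\delta\log L$. For the composition I write
\[
\log|Df^l_\omega v|=\sum_{j=0}^{l-1}\log\bigl|Df_{\vartheta^j\omega}(z_j)v_j\bigr|,
\]
where $z_j$ and $v_j$ are the point and the normalized direction at step $j$. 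Each summand is then estimated conditionally on $\omega_0,\dots,\omega_{j-1}$, using the independence of $\omega_j$.

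\textbf{Step 2: bad steps are rare and isolated.} A step is bad when $z_j$ is in the critical strip $\{|\sin 2\pi(x+\omega)|\lesssim L^{-\varepsilon}\}$ or $v_j$ is nearly vertical. After a good step, $v_{j+1}$ is within $O(L^{-1+\varepsilon})$ of horizontal, so the next step can be bad only if $z_{j+1}$ is critical. That event has conditional probability $O(L^{-\varepsilon+\delta})$, because the noise spreads the $x$-coordinate over a window much larger than the critical strip; this requires $\delta<\varepsilon$. After a bad step, two steps suffice to restore a near-horizontal direction with high conditional probability. Summing over $j$, the expected total is at least
\[
(l-2)(1-C\delta-o(1))\log L-2C\log L .
\]
The first two steps may be bad because the initial $v$ is arbitrary (possibly vertical) and $(x,y)$ is arbitrary. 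For $l\ge 11$ this exceeds $\tfrac34 l\log L$ only if $\delta$ is small, but the statement only demands $\tfrac34\log L$, which is far weaker. So the real content is just that the deficit from the first couple of steps is bounded by $O(\log L)$ and is compensated. This is presumably why $l\ge 11$ appears: it is the threshold in \cite{BXY18}.

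\textbf{Main obstacle.} The delicate point is the uniform-in-$(x,y,v)$ lower bound for the bad initial steps. If $v$ is vertical, then $Df v=(-1,0)$, which has norm $1$, so there is no loss. The real danger is a direction $v$ tuned so that $a\cos\theta\approx\sin\theta$ for the typical value $a=f'(x+\omega_0)$. Here I would use that the conditional law of $a$ has density bounded by $C L^{-1+\delta}/|\cos|$, and integrate $\log|a\cos\theta-\sin\theta|$ against it to get a loss of at most $O(\delta\log L+1)$. Rather than redo this, I would simply cite \cite[Theorem 1]{BXY18}, which provides exactly \eqref{exponavg}. The argument above is the sketch of why it holds, with the constants $\tfrac34$ and $11$ inherited from that reference.
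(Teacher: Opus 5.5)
Your proposal ends where the paper does: the paper gives no independent proof of this statement, but records it as a version of \cite[Theorem 1]{BXY18}, so citing that result is the same approach. The heuristic sketch you give before the citation is not needed and is loose in places. For example, the one-step bound $\log L-C\delta\log L-C$ fails for directions at an intermediate angle from vertical, such as $|\cos\theta|\sim L^{-1/2}$, where one step gives only about $\tfrac12\log L$; this does not affect the citation-based argument.
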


Throughout the paper, given a measurable space $E$, we denote by
$L^\infty(E)$ the Banach space of bounded measurable functions on $E$,
endowed with the supremum norm $\|\cdot\|_\infty$.

We define the Koopman operator associated to the random standard map by
\begin{equation}\label{koopmanonemotion}
\begin{aligned}
\mathcal P_1
&\colon L^\infty(\mathbb{T}^2)
\longrightarrow L^\infty(\mathbb{T}^2),\\
\mathcal P_1\phi(x,y)
&:=
\mathbb E\left[\phi\bigl(f_\omega(x,y)\bigr)\right].
\end{aligned}
\end{equation}
The following mixing estimate for $\mathcal{P}_1$ is essentially \cite[Theorem 2]{BXY18}.
\begin{teorema}\label{blublumixing}
There exist $K\in\mathbb N$ and $L_0>0$ such that, for every
$\delta>0$ sufficiently small and every $L>L_0$,
\begin{equation}\label{basemixing}
\sup_{(x,y),(x',y')\in\mathbb T^2}
\left|
\mathcal P_1^K\phi(x,y)-\mathcal P_1^K\phi(x',y')
\right|
\le
L^{-99/400}\|\phi\|_\infty,
\qquad
\forall\,\phi\in L^\infty(\mathbb T^2).
\end{equation}
In particular, there exists $C>0$ such that, for every $n\ge0$,
\begin{equation}\label{baseexpmixing}
\left\|
\mathcal P_1^n\phi-\int_{\mathbb T^2}\phi\,dm
\right\|_\infty
\le
CL^{-\frac{99}{400K}n}\|\phi\|_\infty,
\qquad
\forall\,\phi\in L^\infty(\mathbb T^2).
\end{equation}
\end{teorema}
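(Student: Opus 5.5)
Theorem~\ref{blublumixing} is essentially \cite[Theorem 2]{BXY18}, so the plan is to recover the one-step oscillation bound \eqref{basemixing} from the structure of the noise, and then derive \eqref{baseexpmixing} from it by a standard Doeblin-type iteration.

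\emph{Step 1: the contraction bound \eqref{basemixing}.} Since $y_{n+1}=x_n+\omega_n$ and $x_{n+1}=f(x_n+\omega_n)-y_n$, the noise enters additively in the $y$-coordinate and through $f$ in the $x$-coordinate. After two steps, the pair $(\omega_0,\omega_1)$ determines the point $(x_2,y_2)$ via
\[
(\omega_0,\omega_1)\mapsto\bigl(f(f(x_0+\omega_0)-y_0+\omega_1)-x_0-\omega_0,\;f(x_0+\omega_0)-y_0+\omega_1\bigr).
\]
This map has Jacobian determinant of order $|f'|\sim L$ away from critical points of $f$. I would show that, off an exceptional set of noise of probability $O(L^{-c})$, the law of $(x_K,y_K)$ has a density comparable to Lebesgue on most of $\mathbb T^2$, for a fixed $K$ (a few steps). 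There are two ingredients.
\begin{itemize}
\item A single noise interval of width $2\sigma=2L^{-\delta}$ is stretched by $f$ to length $\sim L^{1-\delta}\gg1$, so it wraps around the circle many times. Each wrapped branch has nearly constant density, with error controlled by the distortion $|f''|/|f'|^2\sim L^{-1}$ away from the critical set.
\item Trajectories that land within $L^{-1+\varepsilon}$ of the critical points $\{0,1/2\}$ of $f$ are discarded; they occur with probability $O(L^{-\varepsilon'}\sigma^{-1})$.
\end{itemize}
This gives $\mathcal P_1^K(z,\cdot)\ge(1-\eta)\,\nu_z$, where $\nu_z$ is close to $m$ in total variation uniformly in $z$, with $\eta\lesssim L^{-99/400}$ for small $\delta$. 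Then for any $z,z'$,
\[
\bigl|\mathcal P_1^K\phi(z)-\mathcal P_1^K\phi(z')\bigr|\le \bigl\|\mathcal P_1^K(z,\cdot)-\mathcal P_1^K(z',\cdot)\bigr\|_{TV}\,\|\phi\|_\infty .
\]
The total-variation term is bounded by the discarded mass plus the density errors. The main obstacle is the bookkeeping of exponents: balancing the critical-region probability, the distortion errors and the noise width to reach the specific exponent $99/400$. I would follow the estimates of \cite{BXY18} directly rather than re-derive the constant.

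\emph{Step 2: iteration to \eqref{baseexpmixing}.} Let $\operatorname{osc}(\psi)=\sup\psi-\inf\psi$. Applying \eqref{basemixing} to $\psi-c$ for a constant $c$ chosen so that $\|\psi-c\|_\infty=\tfrac12\operatorname{osc}(\psi)$ gives
\[
\operatorname{osc}(\mathcal P_1^K\psi)\le\tfrac12L^{-99/400}\operatorname{osc}(\psi)\le L^{-99/400}\operatorname{osc}(\psi).
\]
Write $n=jK+r$ with $0\le r<K$. Since $\mathcal P_1$ is a Markov operator, $\operatorname{osc}(\mathcal P_1^r\phi)\le2\|\phi\|_\infty$. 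Iterating the $K$-step bound then yields
\[
\operatorname{osc}(\mathcal P_1^n\phi)\le 2L^{-\frac{99}{400}j}\|\phi\|_\infty .
\]
The measure $m$ is stationary, so $\int\mathcal P_1^n\phi\,dm=\int\phi\,dm$. The value $\int\phi\,dm$ therefore lies between $\inf\mathcal P_1^n\phi$ and $\sup\mathcal P_1^n\phi$, and hence
\[
\Bigl\|\mathcal P_1^n\phi-\int\phi\,dm\Bigr\|_\infty\le\operatorname{osc}(\mathcal P_1^n\phi).
\]
Finally, $j\ge n/K-1$, so $L^{-\frac{99}{400}j}\le L^{99/400}L^{-\frac{99}{400K}n}$. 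The constant $C=2L^{99/400}$ depends on $L$; to get $C$ uniform one instead uses $j\ge (n-K+1)/K$ and absorbs the factor into $C$ by taking $C=2\sup_{L>L_0}L^{99/400}$. Since this supremum is infinite, I would restate with $C$ allowed to depend on $L$, or, more cleanly, note that the bound is only needed for $n\ge K$ up to a harmless constant. This yields \eqref{baseexpmixing}.
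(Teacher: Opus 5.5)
Your route is the paper's. The paper gives no argument for \eqref{basemixing} beyond citing \cite[Theorem 2]{BXY18}. Your Step 2 is the standard, correct way to get \eqref{baseexpmixing}: the oscillation contracts by $L^{-99/400}$ every $K$ steps, and $\int\phi\,dm$ lies between $\inf\mathcal P_1^n\phi$ and $\sup\mathcal P_1^n\phi$ by stationarity of $m$. Two side remarks in the proposal are wrong, though neither is load-bearing.

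\textbf{The mechanism in Step 1.} Put $a=f'(x_0+\omega_0)$ and $b=f'(y_2)$. The Jacobian matrix of $(\omega_0,\omega_1)\mapsto(x_2,y_2)$ is $\left(\begin{smallmatrix}ab-1&b\\ a&1\end{smallmatrix}\right)$, whose determinant is $-1$, not of order $L$; this is the unit-Jacobian hypothesis of Proposition~\ref{fromrmtogoodset}. Consequently the two-step law is carried by a set of area at most $4\sigma^2$. The three-step law is likewise carried by a set of area $O(\sigma)$, since reaching $(x,y)$ forces $\omega_2-\omega_0\equiv y+x_0-f(f(y)-x)$.

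Because the last two noises act through a unit-Jacobian map with at most one solution, the density of the $n$-step law at $(x,y)$ equals $(2\sigma)^{-2}$ times the probability of the set of earlier noise histories from which $(x,y)$ is reachable. Near-uniformity therefore means controlling that probability uniformly in $(x,y)$. That is the fibre counting of Proposition~\ref{four-step-trans}, and it needs at least four steps. Since you defer the quantitative estimate to \cite{BXY18}, this does not break the proof, but the sketch as written would not yield \eqref{basemixing}.

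\textbf{The constant $C$.} The dependence of $C$ on $L$ is forced, and the statement, where $C$ is introduced after $L$ is fixed, already permits it. So $C=2L^{99/400}$ is fine as is. To see it is forced, take $\phi=\mathbf 1_{\{d_{\mathbb T}(y,x_*)\le\sigma\}}$. Then $\mathcal P_1\phi(x_*,y_*)=1$ and $\int\phi\,dm=2\sigma$, so already $n=1$ requires $C\ge(1-2\sigma)L^{99/(400K)}$.

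Your alternative of restricting to $n\ge K$ does not remove the dependence: writing $n=jK+r$, the remainder still costs a factor $L^{99r/(400K)}$. An $L$-uniform form would be $2L^{-\frac{99}{400}\lfloor n/K\rfloor}$, as in Theorem~\ref{mixingpm}. In any case, the paper only uses \eqref{baseexpmixing} at fixed $L$, namely for ergodicity of the skew product in Proposition~\ref{stableprop}.
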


In this work, we establish an analogous mixing property for the projective process associated with \eqref{randomstandardmap}.
Let $\mathbb{PT}^2$ denote the projectivized tangent bundle of
$\mathbb T^2$, which we identify with
\[
\mathbb{PT}^2\simeq\mathbb T^2\times\mathbb P^1.
\]
Equivalently, we may represent an element of $\mathbb P^1$ by a
unit vector $v\in\mathbb S^1$, with antipodal vectors identified.
The projective process is the Markov chain defined by
\begin{equation}\label{projectiveprocess}
(x_{n+1},y_{n+1},v_{n+1})
=
\left(
f_{\omega_n}(x_n,y_n),
\frac{Df_{\omega_n}(x_n,y_n)\cdot v_n}
{\left|Df_{\omega_n}(x_n,y_n)\cdot v_n\right|}
\right).
\end{equation}
Its Koopman operator is given by
\begin{equation}\label{Koopmanproh}
\begin{aligned}
\mathcal P_p
&\colon L^\infty(\mathbb{PT}^2)
\longrightarrow L^\infty(\mathbb{PT}^2),\\
\mathcal P_p\phi(x,y,v)
&:=
\mathbb E\left[
\phi\left(
f_\omega(x,y),
\frac{Df_\omega(x,y)\cdot v}{|Df_\omega(x,y)\cdot v|}
\right)
\right].
\end{aligned}
\end{equation}

We prove the following exponential mixing property.

\begin{teorema}\label{mixingpm}
For every $\delta>0$ sufficiently small, there exists $L_0>0$
such that, for every $L>L_0$, the projective dynamics
\eqref{projectiveprocess} admits a unique stationary probability
measure $\nu$, which is ergodic and mixing.
More precisely, there exists $M\in\mathbb N$, independent of
$L$ and $\delta$, such that, for every $n\ge0$,
\begin{equation}\label{projmixing}
\left\|
\mathcal P_p^n\varphi-\nu(\varphi)\right\|_\infty
\le
2L^{-\frac{\delta}{2}\lfloor n/M\rfloor}
\|\varphi\|_\infty,
\qquad n\ge0.
\qquad
\forall\,\phi\in L^\infty(\mathbb{PT}^2).
\end{equation}
\end{teorema}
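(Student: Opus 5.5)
We will prove \eqref{projmixing} with a Doeblin-type argument. The goal is a uniform minorization: there exist $M\in\mathbb N$, independent of $L$ and $\delta$, and a probability measure $\eta$ on $\mathbb{PT}^2$ such that for every $(x,y,v)$
\begin{equation*}
\mathcal P_p^M\bigl((x,y,v),\cdot\bigr)\ \ge\ \bigl(1-L^{-\delta/2}\bigr)\,\eta(\cdot).
\end{equation*}
Equivalently, the Dobrushin coefficient satisfies
$\sup_{p,p'}|\mathcal P_p^M\varphi(p)-\mathcal P_p^M\varphi(p')|\le 2L^{-\delta/2}\|\varphi\|_\infty$ after centring.
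Once this holds, the usual iteration gives $\operatorname{osc}(\mathcal P_p^{n}\varphi)\le L^{-\frac{\delta}{2}\lfloor n/M\rfloor}\operatorname{osc}(\varphi)$. Applying Banach's fixed point theorem on the space of probability measures with total variation distance (or a Cauchy argument for $\mathcal P_p^n\varphi$) then gives existence and uniqueness of $\nu$ and the bound \eqref{projmixing} with constant $2$. Ergodicity and mixing follow immediately.

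To get the minorization we first track the base. Write $Df_{\omega}(x,y)=\begin{pmatrix} f'(x+\omega_0) & -1\\ 1 & 0\end{pmatrix}$ with $|f'|\sim L$ off the critical set. After two steps, the $(x,y)$-coordinate is a smooth function of $(\omega_0,\omega_1)$ whose Jacobian has determinant of order $|f'|\sim L$ off a critical set. So the pushforward of the uniform noise on $[-\sigma,\sigma]^2$ has a density on a region of diameter about $L\sigma$ in one direction and $\sigma$ in the other. Iterating a bounded number of times, the $L$-stretching makes the base law spread over the whole torus. This is the mechanism behind Theorem~\ref{blublumixing}, which we use as given for the base. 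For the fibre, note that a unit vector $v$ is mapped to the direction of $(f'(x+\omega_0)v_1-v_2,\,v_1)$. Its slope is essentially $f'(x+\omega_0)-v_2/v_1$, and varying $\omega_0$ over an interval of length $\sigma$ moves this slope by about $L\sigma\cdot 2\pi\sin$. Hence, unless $v_1$ is tiny or the point is critical, the direction after one step is almost horizontal, within angle $O(L^{-1})$, yet still spread with density in the slope variable. The plan is therefore to condition on the noise coordinates one at a time. We use the last one or two noises $\omega_{M-1},\omega_{M-2}$ to randomize the direction, and the earlier ones to randomize the base point via the density argument. We then compute the joint density of $(x_M,y_M,\text{slope}_M)$ on a fixed box and bound it from below uniformly in the starting point. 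The number of steps $M$ is fixed so that the base has spread; it depends only on the structure of $f$, not on $L$ or $\delta$.

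The main obstacle is the set of bad noise realizations, namely those where the orbit passes through the critical strip $|\sin 2\pi(x+\omega)|\lesssim L^{-1+\varepsilon}$ or where $v$ is nearly vertical so that $v_1\approx 0$. There the Jacobian degenerates, and no lower bound on the density is available. We will bound the probability of these realizations by $L^{-\delta/2}$. One step of the critical strip costs probability $\lesssim L^{-1+\varepsilon}/\sigma=L^{-1+\varepsilon+\delta}$, which is $\le L^{-\delta/2}/M$ for small $\delta$. A near-vertical $v$ becomes near-horizontal after one non-critical step, so it can fail at most once. Getting the precise exponent $\delta/2$ rather than something weaker is the delicate part. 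We expect the minorization constant to lose a factor related to the noise width $\sigma=L^{-\delta}$, for example in comparing directional densities on a window of angular size $\sim L^{-1}$ against one of size $L\sigma\cdot L^{-2}$. We will therefore try to normalize $\eta$ to live on the almost-horizontal cone, where all good realizations land, so that only the bad-set probability enters the coefficient $1-L^{-\delta/2}$.
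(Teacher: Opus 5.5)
There is a genuine gap, and it lies in the mechanism you propose for the fibre. Once you condition on the base endpoint, the last two noises cannot randomize the direction. For fixed earlier noise there is at most one admissible pair $(\omega_{M-2},\omega_{M-1})$, with unit Jacobian (Propositions~\ref{fromrmtogoodset} and~\ref{four-step-trans}). By \eqref{recursion}, the last two projective updates are then deterministic: $s_M=\varphi\bigl(f'(y_M),\varphi(g_2(x_M,y_M),s_{M-2})\bigr)$. Indeed $s_{n+1}=1/(f'(y_{n+1})-s_n)$ with $y_{n+1}=x_n+\omega_n$, so the slope spread produced by one noise coordinate is exactly the spread of $y_{n+1}$. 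You are counting the same randomness twice.

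The fibre randomness in the joint law of $(x_M,y_M,s_M)$ must instead come from $\omega_{M-3}$, under the law conditioned on the endpoint. Use the four-step indexing, with $(x_0,y_0)$ the base point four steps before the endpoint $(x,y)$. The conditioned law is uniform on the comb-like set $\Omega^{(x,y)}_{(x_0,y_0)}$. By \eqref{inversemapF}, $f'(x_1+\omega_1)=f'(H_1(x,y)+\omega_3)$, and admissibility forces $f(H_1(x,y)+\omega_3)$ to lie within $2\sigma$ of $x_0+f(y)-x$ modulo one.

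Two consequences follow at a fixed endpoint off the set $C$ of \eqref{badset}. First, the conditional slope law lies, up to small probability, in the window $B(y)$ of \eqref{biuay}, of radius $L^{-4/3}$ rather than angular size $L^{-1}$. Second, its fine structure is positioned according to $x_0$. Base points four steps earlier whose $x_0$'s are generic and far apart compared with $\sigma$ typically give conditional slope laws with essentially disjoint supports.

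So the direct joint-density computation you describe cannot produce a pointwise lower bound against one fixed $\eta$, uniform in the initial point, with mass $1-L^{-\delta/2}$. At best it would require mixing the base first, averaging over the earlier base point, and controlling density ratios at the edges of the branch intervals; none of this is in your sketch. Note also that Theorem~\ref{blublumixing} gives only a total-variation bound for the base, not a pointwise bound $\ge(1-L^{-\delta/2})m$. And your near-one Doeblin condition implies the Dobrushin bound that actually suffices, but is not equivalent to it.

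The bad-set accounting has the same blind spot. The critical-strip cost $L^{-1+\varepsilon+\delta}$ is an unconditional estimate. The losses that actually fix the rate appear only under the endpoint-conditioned law: the mass $Q_0(B')\lesssim L^{-\delta}$ of $\omega_0$ with short fibre $B(\omega_0)$ (Proposition~\ref{marginals}), and bad $\omega_1$'s whose relative weight is inflated by $|B(\omega_0)|^{-1}\lesssim L^{1/100+3\delta}$.

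The paper avoids both problems by proving only the pairwise total-variation bound of Theorem~\ref{mixing_proj_motion}, with $M=K+4$. Lemma~\ref{reductionLemma} uses \eqref{basemixing} to replace both base marginals at time $K$ by $m$, so slopes are only ever compared at a common base point $(x,y)$. Off $C$, the slope at time $K$ lies in $B(y)$ up to probability $\lesssim L^{-\delta}$. Two slopes in $B(y)$ at the same base point then give four-step kernels within $O(L^{-\delta})$ in total variation, by coupling the laws of $w_1=f'(x_1+\omega_1)-s_1$ branch by branch (Proposition~\ref{Llarge}).

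The exponent $\delta/2$ is then not delicate: $CL^{-\delta}\le L^{-\delta/2}$ for large $L$, and iterating the oscillation bound gives \eqref{projmixing}. To repair your argument, replace the minorization target by this Dobrushin target and couple at a common base point after the base has mixed.
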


We do not claim that the contraction rate in \eqref{projmixing}
is optimal. In fact, we expect the contraction rate to be of
order $L^{-\gamma}$, for some $\gamma>0$ independent of $\delta$.
Although our argument does not yield such a rate, we believe
that it could be obtained by a refinement of the techniques
developed here, at the expense of substantially longer and more
involved estimates.

We finally turn to the two-point motion. Let
\[
\Delta:=
\left\{
(z,\bar z)\in\mathbb T^2\times\mathbb T^2:z=\bar z
\right\}
\]
denote the diagonal. The two-point motion associated with
\eqref{randomstandardmap} is the Markov chain on $(\mathbb T^2\times\mathbb T^2)\setminus\Delta$
defined by
\begin{equation}\label{twopointmotion}
(z_{n+1},\bar z_{n+1})
=
\left(f_{\omega_n}(z_n),f_{\omega_n}(\bar z_n)\right).
\end{equation}
Thus, the two-point motion describes the simultaneous evolution
of two distinct initial conditions under the same realization
of the noise.

Given a measurable function
\[
V:(\mathbb T^2\times\mathbb T^2)\setminus\Delta
\longrightarrow[0,\infty),
\]
we define the weighted supremum norm
\begin{equation}\label{betanorm}
\|\varphi\|_V
:=
\sup_{(z,\bar z)\in
(\mathbb T^2\times\mathbb T^2)\setminus\Delta}
\frac{|\varphi(z,\bar z)|}{1+V(z,\bar z)}.
\end{equation}
We denote by
\begin{equation}\label{linfityc}
L^\infty_V\bigl((\mathbb T^2\times\mathbb T^2)\setminus\Delta\bigr)
:=
\left\{
\phi:(\mathbb T^2\times\mathbb T^2)\setminus\Delta\to\mathbb R:
\phi\text{ is measurable and }\|\phi\|_V<\infty
\right\}
\end{equation}
the corresponding weighted space.

Let $\mathcal P_2$ denote the Koopman operator associated with
the two-point motion, namely
\begin{equation}\label{koopmantwopont}
\mathcal P_2\phi(z,\bar z)
:=
\mathbb E\left[
\phi\bigl(f_\omega(z),f_\omega(\bar z)\bigr)
\right].
\end{equation}
Our main mixing result for the two-point motion is the following.

\begin{teorema}\label{mixingtwopointmotion}
There exist $C>0$, $\gamma\in(0,1)$, $\varepsilon_0>0$,
$t\in\mathbb N$, and $L_0>0$ such that, for every $\delta>0$
sufficiently small and every $L>L_0$, setting
\begin{equation}\label{VLdefinition}
V_L(z,\bar z)
:=
d(z,\bar z)^{-\frac{\varepsilon_0}{\log L}}
\mathbf1_{\{d(z,\bar z)\le L^{-t}\}}
+
\mathbf1_{\{d(z,\bar z)>L^{-t}\}},
\end{equation}
we have, for every $n\ge1$,
\begin{equation}\label{2pmmixing}
\left\|
\mathcal P_2^n\phi
-\int_{\mathbb T^2\times\mathbb T^2}\phi\,d(m\otimes m)
\right\|_{V_L}
\le
C\gamma^n\|\phi\|_{V_L},
\qquad
\forall\,\phi\in
L^\infty_{V_L}\bigl((\mathbb T^2\times\mathbb T^2)\setminus\Delta\bigr).
\end{equation}
Equivalently, for every $(z,\bar z)\notin\Delta$,
\begin{align*}
\left|
\mathcal P_2^n\phi(z,\bar z)-\int\phi\,d(m\otimes m)
\right|
\le
C\gamma^n\bigl(1+V_L(z,\bar z)\bigr)\|\phi\|_{V_L}.
\end{align*}
\end{teorema}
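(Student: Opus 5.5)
Our plan is to apply Harris' theorem in its weighted form (Hairer--Mattingly), applied to a fixed iterate $\mathcal P_2^{t'}$. We need two ingredients. The first is a Lyapunov--Foster drift condition $\mathcal P_2^{t'}V_L\le \alpha V_L+b$ with $\alpha<1$ uniform in $L$. The second is a minorization (small-set) condition: on the sublevel set $\{V_L\le R\}$, the transition kernels of $\mathcal P_2^{t''}$ started at any two points overlap by a uniform amount $\eta>0$. Harris' theorem then gives a spectral gap in $\|\cdot\|_{V_L}$ with $\gamma=\gamma(\alpha,b,R,\eta)$. Since $m\otimes m$ is stationary for the two-point motion (each $f_\omega$ preserves $m$), it is the limit. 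The constants $\alpha,b,R,\eta$ must be independent of $L,\delta$; this is what makes $C,\gamma$ uniform.

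\textbf{Drift.} For $d(z,\bar z)\le L^{-t}$ we linearize. We write $\bar z=z+\exp$ of a small vector $u$, so that $d(f^l_\omega z,f^l_\omega\bar z)\approx|Df^l_\omega(z)u|$, with a relative error controlled by $L^{O(l)}d(z,\bar z)$. This error is harmless once $t$ is large compared to $l$. Set $\beta=\varepsilon_0/\log L$. Then
\[
\mathbb E\bigl[d_l^{-\beta}\bigr]\approx d^{-\beta}\,\mathbb E\bigl[|Df^l_\omega(z)v|^{-\beta}\bigr],
\]
where $d_l$ denotes the distance after $l$ steps. Expanding the exponential, we get $\mathbb E[e^{-\beta X}]\le 1-\beta\mathbb E X+\beta^2\mathbb E[X^2e^{\beta|X|}]$ with $X=\log|Df^l v|$. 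Theorem~\ref{blubluaverage} gives $\mathbb E X\ge\frac34\log L$, and $|X|\le l\log(CL)$ deterministically. With $\beta=\varepsilon_0/\log L$ this yields $1-\frac34\varepsilon_0+O(l^2\varepsilon_0^2)$, which is $<1-\varepsilon_0/2$ for small $\varepsilon_0$, uniformly in $L$. This is exactly why the exponent is scaled by $1/\log L$.

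For pairs at distance in $(L^{-t},\cdot)$, or pairs escaping past $L^{-t}$, $V_L$ is bounded: $d^{-\beta}\le e^{t\varepsilon_0}$. The maps are Lipschitz with constant $\le CL$, so a pair at distance $>L^{-t}$ lands at distance $\ge L^{-t-l}$ after $l$ steps, giving $V_L\le e^{(t+l)\varepsilon_0}$, a constant. Together these give the drift inequality with $\alpha=1-\varepsilon_0/2$ and $b$ independent of $L$.

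\textbf{Minorization.} On $\{V_L\le R\}$ the two points are at distance $\ge L^{-t'}$ for some fixed $t'$. We need that after a bounded number of steps $N$ the law of $(z_N,\bar z_N)$ has a density bounded below by $c\,(m\otimes m)$ on a fixed set, uniformly in $L$. We would obtain this in two stages. First, separation: using the same expansion-on-average argument (or Theorem~\ref{blubluaverage} iterated with Chebyshev), in $O(t')$ blocks of length $l$ the pair reaches distance $\ge L^{-1}$-order, away from the critical configurations, with probability bounded below. Second, controllability: once separated, the noises acting on $x+\omega$ in the two copies produce independent displacements. Indeed, $\bar x+\omega$ and $x+\omega$ share $\omega$, but after two steps the $f'$ factors $-2\pi L\sin(2\pi(x+\omega))$ differ. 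Consequently the Jacobian of $(\omega_0,\ldots,\omega_3)\mapsto(z_4,\bar z_4)$ is non-degenerate, and a pushforward of the uniform noise spreads mass over a region of size $\gtrsim \sigma L^{k}\ge 1$, which wraps the torus. Combined with Theorem~\ref{blublumixing} applied to each marginal, this gives a uniform overlap $\eta$.

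\textbf{Main obstacle.} We expect the hardest step to be this minorization with constants independent of $L$ and $\delta$. The Jacobian determinant grows like powers of $L$, while the noise has width $L^{-\delta}$. We must check that the image densities do not degenerate, and that the fraction of noise realizations hitting critical strips, of size $L^{-1+\varepsilon}$, is negligible. We would first handle this via a change-of-variables computation over $4$ steps, excluding near-critical noise sets of probability $O(L^{-\varepsilon})$.
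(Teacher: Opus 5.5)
The architecture is the paper's: Hairer--Mattingly's Harris theorem with the $\log L$-scaled singular weight, and drift near $\Delta$ from linearization plus Theorem~\ref{blubluaverage}. Your drift argument is correct. Your Lipschitz bound away from $\Delta$ is even simpler than the paper's use of Proposition~\ref{smalldev}. It gives a drift constant $b\approx e^{(t+l)\varepsilon_0}$ instead of $2$, which only enlarges the Harris threshold; the sublevel set is still $\{d\ge L^{-t'}\}$ with $t'$ independent of $L$.

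The genuine gap is the minorization, which is where all the uniformity in $L,\delta$ lives. First, invoking Theorem~\ref{blublumixing} ``applied to each marginal'' says nothing about the pair chain. The laws of $z_n$ and $\bar z_n$ can both be uniform while their joint law is singular with respect to $m\otimes m$. Second, non-degeneracy of the Jacobian of $\omega\mapsto(z_k,\bar z_k)$ only gives absolute continuity. The density at a target $w$ is $(2\sigma)^{-k}\sum_{h(\omega)=w}|\det Dh(\omega)|^{-1}$, and $|\det Dh|$ is a power of $L$. A bound uniform in $L,\delta$ therefore needs a \emph{pointwise} lower bound on the number of preimages, at every $w$ in a fixed set, of order $\sigma^{k}\sup|\det Dh|$. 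Mass ``wrapping the torus'' gives this only on average.

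Your four-step version is in fact problematic here. Two inverse steps act on the pair as a common translation: $f_{\omega_2}^{-1}\circ f_{\omega_3}^{-1}(w)=c(w)+(-\omega_2,\omega_3)$, and likewise for $\bar w$. So the admissible $(z_2,\bar z_2)$ form a $\sigma$-sized plane piece on which $z_2-\bar z_2$ is frozen. Forward two-step branches cross it only $O(L^{-1})$-transversally, and counting hits becomes a delicate equidistribution problem.

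The missing idea is the paper's branch count over six steps with $(\omega_2,\omega_3)$ fixed:
\begin{itemize}
\item Start from a macroscopically separated, non-critical pair in $S$ (see \eqref{defSc}). Lemma~\ref{forward_small} then gives $\asymp L^{3-2\delta}$ disks of $(\omega_0,\omega_1)$ whose images are graphs over $(x_2,\bar x_2)\in\mathbb T^2$ with slope $O(L^{-1})$. This uses $|f'(x_0+\omega_0)-f'(\bar x_0+\omega_0)|\gtrsim L$, so separation of order $L^{-1}$ is not enough.
\item Lemma~\ref{inverse_small} gives $\asymp L^{3-2\delta}$ inverse disks of $(\omega_5,\omega_4)$ whose images are graphs over the complementary plane $(y_2,\bar y_2)$.
\item Each pair of branches meets exactly once because $\xi^-\circ\xi$ is a contraction. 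This yields $\gtrsim L^{6-4\delta}$ preimages against $|\det Dh_\beta|\le CL^6$.
\end{itemize}

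Two smaller pieces are also missing. Harris needs one fixed time for all starting points, but your separation time depends on the start; the paper handles this with returns to $S$ at times $2$ and $3$. Also, linearized expansion on average is only valid while $d\le L^{-O(l)}$. The paper reaches distance $\eta$ using the nonlinear one-step estimate \eqref{inequality1}, as in Proposition~\ref{uniform_bound_on_T}.
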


To prove Theorems \ref{mixingpm} and \ref{mixingtwopointmotion},
we study the transition kernels associated with the projective
process and the two-point motion. These are the families of
probability measures
\[
\left\{
\bar{\mathcal P}^{\,n}_{p,(x,y,v)}
\right\}_{n\ge0,\,(x,y,v)\in\mathbb{PT}^2},
\qquad
\left\{
\bar{\mathcal P}^{\,n}_{2,(z,\bar z)}
\right\}_{n\ge0,\,(z,\bar z)\in
(\mathbb T^2\times\mathbb T^2)\setminus\Delta},
\]
characterized by
\begin{align*}
\mathcal P_p^n\phi(x,y,v)
&=
\int_{\mathbb{PT}^2}
\phi(z')\,\bar{\mathcal P}^{\,n}_{p,(x,y,v)}(dz'),\\
\mathcal P_2^n\phi(z,\bar z)
&=
\int_{(\mathbb T^2\times\mathbb T^2)\setminus\Delta}
\phi(z',\bar z')\,
\bar{\mathcal P}^{\,n}_{2,(z,\bar z)}
(dz',d\bar z'),
\end{align*}
for every bounded measurable function $\phi$ on the corresponding
state space, where $\mathcal P_p$ and $\mathcal P_2$ are defined
in \eqref{Koopmanproh} and \eqref{koopmantwopont}, respectively.
Whenever these measures admit densities with respect to the
corresponding Lebesgue measures, we denote their densities by
the same symbols.
Similarly, \eqref{koopmanonemotion} defines the transition kernels
\[
\left\{
\bar{\mathcal P}^{\,n}_{1,(x,y)}
\right\}_{n\ge0,\,(x,y)\in\mathbb T^2}
\]
of the one-point motion.

Using the mixing estimates established in Theorems
\ref{mixingpm} and \ref{mixingtwopointmotion}, we obtain
exponential large-deviation bounds for bounded observables
of the projective process and the two-point motion,
with constants uniform in $L$ and $\delta$. In the following, we write
$\mathbb P_u$ for probability when the corresponding process is
started at $u$.

\begin{teorema}\label{thm:large_deviations}
There exist constants $C,c>0$ such that, for every $\delta>0$
sufficiently small, there exists $L_0(\delta)>0$ for which
the following holds whenever $L\ge L_0(\delta)$.

Let $\nu=\nu_{L,\delta}$ denote the stationary probability
measure of the projective process. Then, for every bounded measurable
observable $H_p:\mathbb{PT}^2\to\mathbb R$, every initial state
$(x_0,y_0,v_0)\in\mathbb{PT}^2$, every $n\ge1$,
and every $\varepsilon>0$,
\begin{equation}\label{eq:projective_large_deviations}
\mathbb P_{(x_0,y_0,v_0)}\left(
\left|
\frac1n\sum_{j=1}^n H_p(x_j,y_j,v_j)-\int H_p\,d\nu
\right|\ge\varepsilon
\right)
\le
C\exp\left(-cn\frac{\varepsilon^2}{\|H_p\|^2_\infty}\right).
\end{equation}

Similarly, for every bounded measurable observable
$H_2:(\mathbb T^2\times\mathbb T^2)\setminus\Delta\to\mathbb R$,
every initial pair $(z_0,\bar z_0)\notin\Delta$, every $n\ge1$,
and every $\varepsilon>0$,
\begin{align}
&\mathbb P_{(z_0,\bar z_0)}\left(
\left|
\frac1n\sum_{j=1}^n H_2(z_j,\bar z_j)
-\int H_2\,d(m\otimes m)
\right|\ge\varepsilon
\right)
\nonumber\\
&\qquad\le
C\bigl(1+V_L(z_0,\bar z_0)\bigr)
\exp\left(-cn\frac{\varepsilon^2}{\|H_2\|^2_{\infty}}\right).
\label{eq:two_point_large_deviations}
\end{align}
\end{teorema}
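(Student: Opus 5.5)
We will derive both large-deviation estimates from the mixing results via the Poisson equation and a martingale decomposition, followed by the Azuma--Hoeffding inequality.

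\emph{Projective process.} Let $\bar H:=H_p-\nu(H_p)$, so that $\|\bar H\|_\infty\le 2\|H_p\|_\infty$. We solve the Poisson equation $g-\mathcal P_p g=\bar H$ with $g:=\sum_{k\ge0}\mathcal P_p^k\bar H$. By Theorem~\ref{mixingpm} we have $\|\mathcal P_p^k\bar H\|_\infty\le 2L^{-\frac\delta2\lfloor k/M\rfloor}\|\bar H\|_\infty$. Summing in blocks of length $M$ gives
\[
\|g\|_\infty\le 2M\bigl(1-L^{-\delta/2}\bigr)^{-1}\|\bar H\|_\infty\le 8M\|H_p\|_\infty,
\]
where the last inequality holds once $L^{-\delta/2}\le1/2$, i.e.\ for $L\ge L_0(\delta)$. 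Since $M$ is independent of $L$ and $\delta$, the resulting constants are uniform.

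Writing $u_j=(x_j,y_j,v_j)$, we decompose
\[
\sum_{j=1}^n\bar H(u_j)=\sum_{j=1}^n\bigl(g(u_j)-\mathcal P_pg(u_{j-1})\bigr)+\mathcal P_pg(u_0)-\mathcal P_pg(u_n).
\]
The first sum is a martingale with respect to the natural filtration generated by $\omega_0,\dots,\omega_{j-1}$, and its increments are bounded by $2\|g\|_\infty\le16M\|H_p\|_\infty$. The boundary term is at most $2\|g\|_\infty$.

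For $n\varepsilon\ge 4\|g\|_\infty$, Azuma--Hoeffding gives
\[
\mathbb P\bigl(|\text{martingale}|\ge n\varepsilon/2\bigr)\le 2\exp\bigl(-n\varepsilon^2/(8(16M)^2\|H_p\|^2_\infty)\bigr).
\]
For $n\varepsilon<4\|g\|_\infty$, we have $n\varepsilon^2/\|H_p\|_\infty^2\lesssim M/\varepsilon\cdot\varepsilon^2\cdot\ldots$. A cleaner route is to note that we may assume $\varepsilon\le2\|H_p\|_\infty$ (otherwise the probability is zero). In that case $n\varepsilon^2/\|H_p\|_\infty^2\le 2n\varepsilon/\|H_p\|_\infty<64M$, so the bound holds trivially by choosing $C\ge e^{64Mc}$. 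This proves \eqref{eq:projective_large_deviations}.

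\emph{Two-point motion.} We follow the same scheme with $\bar H=H_2-(m\otimes m)(H_2)$. Since $\|\bar H\|_{V_L}\le\|\bar H\|_\infty$, Theorem~\ref{mixingtwopointmotion} gives
\[
|g(z,\bar z)|\le C(1-\gamma)^{-1}\bigl(1+V_L(z,\bar z)\bigr)\|\bar H\|_\infty .
\]
Here $g$ is unbounded near the diagonal, and this is the main obstacle: the martingale increments are not uniformly bounded.

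To handle it, we split $g=\mathcal P_2^{N}$-tail plus a finite part. More concretely, set $g_N:=\sum_{k<N}\mathcal P_2^k\bar H$, which satisfies $\|g_N\|_\infty\le N\|\bar H\|_\infty$, and decompose
\[
\sum_j\bar H(w_j)=\sum_j\bigl(g_N(w_j)-\mathcal P_2 g_N(w_{j-1})\bigr)+\text{boundary}+\sum_j\mathcal P_2^N\bar H(w_{j-1}).
\]
The last sum is bounded by $C\gamma^N\|\bar H\|_\infty\sum_j\bigl(1+V_L(w_j)\bigr)$.

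Alternatively, and more simply, we use the Markov property to restart the process. Let $T$ be the first time with $d(z_T,\bar z_T)>L^{-t}$, at which point $V_L=1$. A drift estimate for $V_L$ is implicit in Theorem~\ref{mixingtwopointmotion}: applying \eqref{2pmmixing} to $\phi=V_L$ gives $\mathcal P_2^nV_L\le C'(1+V_L)$. This should yield an exponential tail of $T$ with prefactor $1+V_L(z_0,\bar z_0)$. From time $T$ on, the process lives in the region where $V_L$ is bounded, and we rerun the truncated Poisson/martingale argument: the increments $g(w_j)-\mathcal P_2g(w_{j-1})$ are bounded except on excursions near the diagonal, and those excursions are controlled by the same tail estimate.

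We then choose $N$ proportional to $\log(1/\varepsilon)$ where needed and absorb the remaining terms, leading to \eqref{eq:two_point_large_deviations} with the prefactor $C(1+V_L(z_0,\bar z_0))$. We expect this excursion/tail control near $\Delta$ to be the delicate step. The fallback is a Chernoff bound on the exponential moment of $\sum_j V_L(w_j)$, obtained by iterating the drift inequality.
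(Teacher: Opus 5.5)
Your argument for the projective process is correct and complete. Summing \eqref{projmixing} in blocks of length $M$ gives $\|g\|_\infty\le 8M\|H_p\|_\infty$ once $L^{-\delta/2}\le 1/2$. Azuma--Hoeffding then applies with increments bounded by $16M\|H_p\|_\infty$, and the case $n\varepsilon<4\|g\|_\infty$ is absorbed into $C$ because you may assume $\varepsilon\le 2\|H_p\|_\infty$.

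This is a different route from the paper, which uses no Poisson equation. The paper shows that the tilted operators $\mathcal P_{i,\xi}h=\mathcal P_i(e^{\xi H_i}h)$ are analytic near $\xi=0$, uniformly in $L,\delta$, on $L^\infty(\mathbb{PT}^2)$ and on $L^\infty_{V_L}$. For this it uses $\|\mathcal P_i(H_i^k\,\cdot)\|\le B_i\|H_i\|_\infty^k$, with $B_2$ coming from \eqref{two_point_operator_bound}, and then invokes the spectral-perturbation argument of \cite{LTTGeneral}. Your route gives explicit constants for $H_p$. The paper's route treats both processes identically, since only operator norms on the weighted space enter.

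The genuine gap is the two-point motion. There you list strategies but give no proof, and none of them works as stated.

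\begin{itemize}
\item \emph{Truncated Poisson solution.} You only have $\|g_N\|_\infty\le N\|\bar H\|_\infty$, and this is sharp in general: for pairs extremely close to $\Delta$ the orbits stay close for many steps, so $\mathcal P_2^k\bar H$ does not decay in sup norm. Since the remainder forces $N\gtrsim\log(\|H_2\|_\infty/\varepsilon)$, Azuma yields only $\exp\bigl(-cn\varepsilon^2/(\|H_2\|_\infty^2\log^2(\|H_2\|_\infty/\varepsilon))\bigr)$.
\item \emph{The remainder bound.} Bounding the remainder by $C\gamma^N\|\bar H\|_\infty\sum_j(1+V_L(w_j))$ cannot be closed with exponentially small error. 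For large $R$, $(m\otimes m)\{V_L\ge R\}\asymp R^{-2\log L/\varepsilon_0}$, which is only polynomial. Applying \eqref{2pmmixing} to $\mathbf 1_{\{V_L\ge nR\}}$ shows the law of $w_n$ inherits this tail for large $n$. Hence $\frac1n\sum_jV_L(w_j)$ has polynomial, not exponential, tails.
\item \emph{Chernoff fallback.} The same fact rules it out. Iterating the linear drift \eqref{two_point_drift} controls $\mathbb E\sum_jV_L(w_j)$, not exponential moments, and in fact $\int e^{sV_L}\,d(m\otimes m)=\infty$ for every $s>0$.
\item \emph{Restart at $T$.} This starts from the incorrect claim that after $T$ the chain lives where $V_L$ is bounded. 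The later assertion that subsequent excursions are ``controlled by the same tail estimate'' is exactly the step that needs proof.
\end{itemize}

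The missing idea is that, because $H_2$ is bounded, you must control the \emph{duration} of near-diagonal excursions, not the \emph{size} of $V_L$ along them. There are two ways to do this.
\begin{itemize}
\item Run the tilted-operator argument on $L^\infty_{V_L}$ as the paper does. Then $\mathbb E_{(z_0,\bar z_0)}e^{\xi\sum_{j\le n}H_2(z_j,\bar z_j)}=\mathcal P_{2,\xi}^n1(z_0,\bar z_0)\le(1+V_L(z_0,\bar z_0))\|\mathcal P_{2,\xi}^n1\|_{V_L}$, which is exactly where the prefactor comes from.
\item Use the drift \eqref{two_point_drift} and the minorization \eqref{two_point_minorization} to build regeneration times with exponential tails, the first one carrying the factor $1+V_L(z_0,\bar z_0)$. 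The block sums of $\bar H$ are then i.i.d.\ and bounded by $2\|H_2\|_\infty$ times the block length, and a Bernstein inequality for sub-exponential variables gives \eqref{eq:two_point_large_deviations}.
\end{itemize}
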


Our final statistical result for the random standard map concerns large-deviation estimates for Lyapunov exponent.

\begin{teorema}\label{lyapunov_large_deviations}
Fix $0<\delta_*<1/100$. For every $0<\alpha<24/25$,
there exist $r_1,r_2>0$ and $L_0>1$ such that, for every
$0<\delta\le\delta_*$ and every $L\ge L_0$,
\begin{equation}\label{lyapunov_ld_third_iterate}
\mathbb P\left\{
\log\left|Df_\omega^{3m}(x,y)\cdot v\right|
\le\alpha m\log L-C
\right\}
\le L^{-r_1m}e^{-r_2C}
\end{equation}
for every $(x,y)\in\mathbb T^2$, $v\in\mathbb S^1$, $m\ge1$, and $C\ge0$.
\end{teorema}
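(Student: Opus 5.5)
We will prove \eqref{lyapunov_ld_third_iterate} by a Chernoff bound for the cocycle $S_m:=\log|Df^{3m}_\omega(x,y)\cdot v|$, viewed as an additive functional of the projective process \eqref{projectiveprocess} sampled every three steps. Write $S_m=\sum_{j=0}^{m-1}X_j$, where
\[
X_j:=\log\bigl|Df^{3}_{\vartheta^{3j}\omega}(x_{3j},y_{3j})\cdot v_{3j}\bigr|.
\]
Each $X_j$ is a function of the current projective state and of the three fresh noises $\omega_{3j},\omega_{3j+1},\omega_{3j+2}$, which are independent of $\mathcal F_{3j}$. By Markov's inequality, for $\theta>0$,
\[
\mathbb P\{S_m\le \alpha m\log L-C\}\le e^{-\theta C}L^{\theta\alpha m}\,\mathbb E\bigl[e^{-\theta S_m}\bigr].
\]
Conditioning successively on $\mathcal F_{3j}$, it then suffices to prove the one-block estimate
\[
\sup_{(x,y)\in\mathbb T^2,\ v\in\mathbb S^1}\mathbb E\bigl[|Df^3_\omega(x,y)\cdot v|^{-\theta}\bigr]\le L^{-\theta\beta}
\]
for some $\beta>\alpha$ and small $\theta>0$. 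This gives the claim with $r_2=\theta$ and $r_1=\theta(\beta-\alpha)$. We take $\beta$ close to $24/25$, which explains the constraint on $\alpha$.

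To prove the one-block estimate, compute $Df_\omega(x,y)=\begin{pmatrix} f'(x+\omega_0)&-1\\ 1&0\end{pmatrix}$, where $f'(s)=-2\pi L\sin(2\pi s)$. For a unit vector $v=(a,b)$, the image has first coordinate $f'(x+\omega_0)a-b$. If $|a|$ is not tiny and $x+\omega_0$ stays at distance $\gtrsim L^{-\epsilon}$ from the critical set $\{0,1/2\}$, one step already gives growth of order $L^{1-\epsilon}$ and an almost horizontal image direction. The three-step composition is then treated by tracking the three possible bad events: (i) $v$ is nearly vertical, so the first step gives no growth but rotates $v$ to nearly horizontal; (ii) the position lands near the critical set at some step; (iii) the direction is misaligned with the expanding cone. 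Since $\omega_n$ is uniform on an interval of length $2\sigma=2L^{-\delta}$, the probability of landing within $L^{-1+\epsilon'}$-type bad windows is at most $L^{-1+\epsilon'+\delta}$, and this is small because $\delta\le\delta_*<1/100$. On bad events we still need a lower bound: $|Df^3_\omega v|\ge \|Df\|^{-3}\gtrsim L^{-3}$ holds deterministically, since $\|Df^{-1}\|\lesssim L$. Hence the contribution of bad events to $\mathbb E[|Df^3 v|^{-\theta}]$ is at most $L^{3\theta}\cdot L^{-(1-\epsilon'-\delta)}$, which is harmless for small $\theta$.

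On good events we need $|Df^3 v|\ge L^{\beta}$ with $\beta\approx 24/25$. This follows from at least two expanding steps among the three, each of size about $L^{1-\epsilon}$, minus losses, even in the worst case where the first step only rotates a vertical vector. Applying the sharper noise-averaging argument of \cite{BXY18}, which underlies Theorem~\ref{blubluaverage}, would give even more. Combining the two parts yields $\mathbb E[|Df^3v|^{-\theta}]\le L^{-\theta\beta}+CL^{3\theta-1+2\delta_*}$, and for $\theta$ small this is at most $L^{-\theta\beta'}$ with $\alpha<\beta'<\beta$.

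The main obstacle is the uniform case analysis for the three-step block. For an arbitrary $v$ and an arbitrary position, we must show that the noise moves the relevant critical-proximity events into sets of probability $\lesssim L^{-1+O(\epsilon+\delta)}$, and that outside them the product of three matrices expands by $L^{\beta}$ with $\beta>24/25$. This is delicate when $v$ is nearly vertical and the subsequent point is nearly critical. We plan to handle it by conditioning on $\omega_0$ and then using the uniform density of $\omega_1,\omega_2$. We will choose $\epsilon$ so that the exponent loss is below $1/25-\delta_*$, and if needed pass to $n=3m$ with $\theta\to0$ to absorb constants into $L_0$.
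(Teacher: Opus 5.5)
Your proposal is essentially the paper's argument: three-step blocks, a conditional one-block estimate uniform in the projective state (the paper's \eqref{elementary}, $X_i\ge\frac{24}{25}\log L$ except with probability $O(L^{\delta-1/100})$), the crude bound $X_i\ge-4\log L$ on bad blocks, and an exponential-moment argument. The paper truncates $X_i$ to a two-valued variable and bounds the number of bad blocks, which is equivalent to your direct bound on $\mathbb E[|Df^3_\omega v|^{-\theta}\mid\mathcal F]$.

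There is one bookkeeping slip, and it does not hurt the argument. Excluding windows of size $L^{-1+\epsilon'}$ around $\{0,\frac12\}$ only gives $|f'|\gtrsim L^{\epsilon'}$ per step, so per-step growth $L^{1-\epsilon}$ requires excluding windows of size $\sim L^{-\epsilon}$, of probability $\sim L^{\delta-\epsilon}$. This matches the paper's $L^{\delta-1/100}$ and the role of $\delta_*<1/100$, and your Chernoff step only needs a failure probability $L^{-c}$ with $c>0$ uniform in $\delta\le\delta_*$.
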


\subsection{Typical horseshoes}

We first fix the geometric terminology. Let $0<\kappa<1/100$
and set
\[
\mathcal C^h_\kappa
:=\{(v_x,v_y):|v_y|\le\kappa|v_x|\},
\qquad
\mathcal C^v_\kappa
:=\{(v_x,v_y):|v_x|\le\kappa|v_y|\}.
\]
A regular $C^1$ curve is called $\kappa$-horizontal,
respectively $\kappa$-vertical, if its tangent lines lie in
$\mathcal C^h_\kappa$, respectively $\mathcal C^v_\kappa$.
Once $\kappa$ is fixed, we also use the terms almost horizontal
and almost vertical.

A curvilinear rectangle is a closed topological disk bounded
by two almost horizontal sides and two almost vertical sides,
meeting only at their endpoints. We refer to these as its
upper and lower sides, and its left and right sides.

A horizontal crossing of a rectangle is a
$\kappa$-horizontal arc joining its left and right sides,
with the interior of the arc contained in the interior of
the rectangle. A curve crosses a rectangle horizontally
if it contains such an arc.

A horizontal strip is a closed band inside a rectangle,
extending from its left side to its right side and bounded
above and below by two disjoint horizontal crossings.

A band bounded above and below by two disjoint
$\kappa$-horizontal arcs is said to cross $I_a$ horizontally
with margin if both boundary arcs cross $I_a$ horizontally
with margin. It crosses $I_1,I_2$ simultaneously with margin if it crosses both rectangles in this sense.

Vertical crossings and vertical strips are defined by
exchanging the coordinates

\begin{definicao}[$\chi$-horseshoe position]
\label{def:horseshoe-position}
Let $\chi>0$ be sufficiently small. Two disjoint curvilinear
rectangles $I_1, I_2$ are in $\chi$-horseshoe position if they
are contained in a common rectangular neighbourhood
$U$ of diameter at most $50\chi$ and satisfy the following conditions:
\begin{enumerate}
\item Each horizontal side has arclength between $2\chi$ and
$4\chi$, and each vertical side has arclength between $10\chi$
and $20\chi$.

\item There exists $y_*$ such that the lower horizontal sides
of both rectangles lie below $y_*-4\chi$, and their upper
horizontal sides lie above $y_*+4\chi$.

\item The rectangles stay away from the critical set:
\[
\inf_{x\in\pi_x(I_1\cup I_2)}
d_{\mathbb T}\bigl(x,\{0,\tfrac12\}\bigr)\ge\chi,
\]
where $\pi_x$ denotes the projection into the $x$-coordinate.
\end{enumerate}
\end{definicao}

For a pair in $\chi$-horseshoe position, fix curvilinear
rectangles $I_{c,1}, I_{c,2}$ of positive area such that
\[
I_{1,c}\Subset\operatorname{int}I_1,
\qquad
 I_{2,c}\Subset\operatorname{int} I_2,
\qquad 
I_{1,c}\cup I_{2,c}\subset\{|y-y_*|<\chi\}.
\]

Furthermore, choose slightly larger curvilinear rectangles
$I^+_1, I^+_2$, contained in $U$ and disjoint, with sides almost parallel to the
corresponding sides of $I_1$ and $I_2$, respectively, such that
\[
I_1\Subset\operatorname{int}I^+_1,
\qquad
I_2\Subset\operatorname{int} I^+_2.
\]
We call $I^+_1,I^+_2$ the outer covers of $I_1,I_2$ 

A curve crosses $I_a$ horizontally \emph{with margin} if
it contains a horizontal crossing of $I_a$ that extends,
along the same curve, to a horizontal crossing of $I_a^+$.

A \emph{simultaneous horizontal crossing with margin}
of $I_1,I_2$ is a connected $\kappa$-horizontal arc  that crosses both rectangles with margin.

\begin{definicao}[Typical horseshoe]
\label{def:typical-horseshoe}
Let $(I_1, I_2)$ be in $\chi$-horseshoe position, for some $\chi>0$. We call $(I_1, I_2)$ a
\emph{typical horseshoe} if there exist constants $d_*,\alpha_*>0$ and a set
$\widetilde\Omega\subset\Omega$, with
$\mathbb P(\widetilde\Omega)=1$, such that the following holds.

For every $\omega\in\widetilde\Omega$ and for
$(m\otimes m)$-almost every
$(z_1,z_2)\in I_{1,c}\times I_{2,c}$ which do not have the same $x$-coordinate, let $\gamma_a$ be the horizontal
line segment through $z_a$ crossing $I_a$, for $a=1,2$.
There exists a sequence of integers $\{h_k\}_{k \ge 0}$ satisfying
\begin{equation}\label{posdensstm}
\liminf_{N\to \infty}\frac{\#\{k\ge0:h_k(\omega)<N\}}{N}\ge d_*,
\end{equation}
with the following properties. Set
\[
n_k:=h_{k+1}-h_k,
\qquad
P_k:=f_{\vartheta^{h_k}\omega}^{n_k},
\qquad
z_{a,k}:=f_\omega^{h_k}(z_a).
\]
Then, we require
\begin{enumerate}
\item
For every $k\ge0$ and $a\in\{1,2\}$, $z_{a,k}\in I_{a,c}$, and there exists a curve $\gamma_{h_k,a}$ with $z_a\in\gamma_{h_k,a}\subset\gamma_a$
such that $f_\omega^{h_k}(\gamma_{h_k,a}):=\Gamma_{k,a}$ is a simultaneous horizontal crossing with margin
of $I_1,I_2$.

\item
Let $\Gamma'_{k,a}\subset\Gamma_{k,a}$ such that $P_k(\Gamma'_{k,a})=\Gamma_{k+1,a}$.
For every $k\ge0$ and $a\in\{1,2\}$, there is a vertical strip $H_k^a$ which admits a $C^1$ foliation
\[
H_k^a=\bigsqcup_{p\in\Gamma'_{k,a}}W_k^a(p)
\]
by $\kappa$-vertical crossings of $I_a$, satisfying
\[
W_k^a(p)\cap\Gamma'_{k,a}=\{p\}.
\]
Furthermore, for every $q\in H_k^a$ and every nonzero
$v\in\mathcal C^h_\kappa$,
\begin{equation}\label{formula1}
DP_k(q)v\in\operatorname{int}\mathcal C^h_\kappa,
\qquad
\|DP_k(q)v\|\ge L^{\alpha_* n_k}\|v\|,
\end{equation}
and for every $q\in P_k(H_k^a)$ and every nonzero
$w\in\mathcal C^v_\kappa$,
\begin{equation}
DP_k^{-1}(q)w\in\operatorname{int}\mathcal C^v_\kappa,
\qquad
\|DP_k^{-1}(q)w\|\ge L^{\alpha_* n_k}\|w\|.
\end{equation}

\item
At time $h_{k+1}$, the image $P_k(H_k^a)$ forms
a horizontally stretched band crossing $I_1,I_2$
simultaneously with margin, with vertical thickness at most
$C\chi L^{-\alpha_* n_k}$.
\end{enumerate}
\end{definicao}

These conditions give the Markov property. For every
$a,b\in\{1,2\}$, one can select a vertical substrip
$R_k^{a,b}\subset H_k^a$ such that
\[
P_k(R_k^{a,b})=S_k^{a,b}\subset I_b,
\]
where $S_k^{a,b}$ is a horizontal strip crossing $I_b$.
Indeed, the horizontal boundaries of the image cross $I_b$,
and the inverse cone condition makes the preimages of the
vertical sides of $I_b$ almost vertical. For each $a$, the two
strips $R_k^{a,1},R_k^{a,2}$ are disjoint. For each $b$, the
strips $S_k^{1,b},S_k^{2,b}$ are disjoint because $P_k$ is a
diffeomorphism and $H_k^1,H_k^2$ are disjoint.

We are ready to state the main result of this work.

\begin{teorema}\label{thm:typical-horseshoes}
For $\delta>0$ sufficiently small, and $\chi>0,$ there  exists $L_0$ such that, for $L\ge L_0,$ every pair of rectangles $I_1,I_2$ in $\chi$-horseshoe position is a typical horseshoe.
\end{teorema}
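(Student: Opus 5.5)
The proof follows the three-step strategy sketched in the introduction. The plan is to produce, for a fixed typical pair $(z_1,z_2)$ and noise $\omega$, two sets of times, each with controlled density, and to retain their intersection.

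The first set consists of \emph{expanding returns}. Apply the large deviation bounds of Theorem~\ref{thm:large_deviations} to the two-point motion started at $(z_1,z_2)$. The starting point is off the diagonal, so $V_L(z_1,z_2)<\infty$, and by Borel--Cantelli only almost-sure lower densities are needed. Together with the Lyapunov large deviations of Theorem~\ref{lyapunov_large_deviations}, applied to the tangent vectors of $\gamma_1,\gamma_2$, this shows that with positive lower density $d_1$ (uniform in $L$) there are times $h$ with the following properties:
\begin{itemize}
\item both marked points lie in a small region from which, after $K_0$ critical-free iterates, they land in $I_{a,c}$;
\item the Pesin charts along the two marked backward orbits have size at least $L^{-C}$ and shrink exponentially backwards.
\end{itemize}
The observable used here is the indicator of the positive-measure set of pairs $(z,\bar z)$ whose $K_0+K_1$ future iterates avoid the $L^{-1+\varepsilon}$-critical strips with probability bounded below. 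This uses the finite-time Pesin theory adapted from \cite{DD24} in Section~\ref{revtsec}. Proposition~\ref{prop:young-times} then gives exponential lower tails for the number of simultaneous returns at which suitable subcurves of $f^h_\omega(\gamma_a)$ have grown to scale $L^{-C}$ and are $\kappa$-horizontal. During the next $K_0$ steps outside the critical region, the map expands horizontal cones by a factor of order $L$ per step. Choosing $K_0\sim C'/\log L$-independent constants, depending on $\chi$, makes these subcurves cross $I_1\cup I_2^+$ horizontally with margin, which yields item (1).

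The second set consists of \emph{stable times}. By Proposition~\ref{vertconc} and Corollary~\ref{Stabledef}, the stable direction at $f^n_\omega(z_a)$ fails to be $\kappa/2$-vertical with frequency at most $\eta(L)\to0$. Combining this with the Lyapunov large deviations along the future orbit, the set of times $s$ at which both marked points have almost vertical stable direction and forward Pesin charts of size at least $L^{-C}$ has lower density at least $1-\eta'(L)$. Shifting by $K_1$ and intersecting, the times $h$ with $h$ an expanding return and $h+K_1$ a stable time have lower density at least $d_1-\eta'(L)>d_1/2=:d_*$ once $L$ is large. This density bookkeeping is elementary, but it is the reason $d_1$ must be uniform in $L$.

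It remains to verify items (2) and (3) for two retained times $h_k<h_{k+1}$, sparsified so that $n_k\ge N_0$. Glue the forward charts starting at $h_k+K_1$ to the backward charts ending at $h_{k+1}-K_0$. Inside these charts, graph transform arguments make horizontal curves grow forward and vertical fibres grow backward to scale $L^{-C}$, with rates $L^{\alpha n}$ because the Pesin charts carry the exponents $\approx\log L$. The outer critical-free blocks of lengths $K_1$ and $K_0$ then enlarge the vertical fibres through $\Gamma'_{k,a}$ to $\kappa$-vertical crossings of $I_a$, giving the foliation $H_k^a$, and enlarge the image band to a simultaneous horizontal crossing. The cone invariance \eqref{formula1} and its inverse follow from chart-level hyperbolicity and uniform cone separation away from the critical set. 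The thickness bound $C\chi L^{-\alpha_*n_k}$ follows from backward contraction of vertical fibres, with $\alpha_*$ chosen so that the polynomial losses $L^{\pm C}$ at the chart endpoints are absorbed once $n_k\ge N_0$.

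The main obstacle is the gluing step. The chart sizes at $h_k+K_1$ and $h_{k+1}-K_0$ must be compatible along the whole intervening orbit segment, even though the orbit passes through the critical region and the stable direction depends on the entire future. I would first try to choose the charts of \cite{DD24} with tempered sizes $L^{-C}e^{-\epsilon|j-j_0|}$ measured from both endpoints, and take their minimum at each intermediate time.
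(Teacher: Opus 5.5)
Your overall architecture matches the paper's. Expanding returns are the Young times of Proposition~\ref{prop:young-times}, with density uniform in $L$. Stable times come from Corollary~\ref{Stabledef} together with tempered norms, as in Proposition~\ref{stableprop}, with density tending to one as $L\to\infty$. You then intersect, sparsify, run a graph transform in charts across the middle block, and close up with the two critical-free blocks.

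There is a genuine gap, precisely at the step you flag as the main obstacle, and the fix you suggest fails. Write $j_0=h_k+K_1$ for the stable time, $j_1=h_{k+1}-K_0$ for the expanding return, and $D=j_1-j_0$.

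\emph{Why the minimum of chart sizes fails.} Taking at each intermediate time the minimum of $L^{-C}e^{-\epsilon(j-j_0)}$ and $L^{-C}e^{-\epsilon(j_1-j)}$ gives charts of size $L^{-C}e^{-\epsilon D}$ at \emph{both} endpoints. A graph transform only certifies growth up to the size of the terminal chart. So the horizontal curve reaching $j_1$ and the vertical fibre reaching $j_0$ would be exponentially small in $D$. The outer blocks have fixed length and enlarge them by at most $L^{O(K_0+K_1)}$. Hence they cannot reach scale $\chi$ once $D$ is large, and the construction must work for arbitrarily large $D$.

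\emph{The deeper problem.} The two families of charts diagonalize different invariant splittings. At the stable time you have the forward tempered splitting $(b_s,b_u)$, with $b_s=e^s$. At the expanding return you have the reverse-tempered splitting $(a_s,a_u)$. Adjusting chart sizes does not make them compatible. A graph transform across the whole segment needs a single invariant splitting whose constants degrade only with the distance to the \emph{nearer} endpoint.

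\emph{The missing idea.} The paper resolves this in Lemmas~\ref{lem:long-splitting-comparison} and~\ref{lem:long-common-splitting} by taking $e_s:=a_s$ and $e_u:=b_u$. Area preservation gives
$\sin\angle(a_u(i),b_u(i))\le H^2L^{-2\alpha_*i+\varepsilon_*D}$ and
$\sin\angle(a_s(i),b_s(i))\le H^2L^{-2\alpha_*(D-i)+\varepsilon_*D}$, with $H$ polynomial in $L$.

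Hence, once $D\ge D_0$, the pair $(e_s,e_u)$ is close to $(b_s,b_u)$ on the first half of the block and to $(a_s,a_u)$ on the second half. Its angle is at least $(2H)^{-1}L^{-\varepsilon_*\min\{i,D-i\}}$, and its hyperbolicity defects are $L^{2\varepsilon_*\min\{k,D-k\}}$.

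The Lyapunov charts $L_i$ then satisfy $\|L_i\|\lesssim H^3L^{3\varepsilon_*\min\{i,D-i\}}$. This allows radii $L^{-k'}L^{-\lambda\min\{i,D-i\}/4}$, which is the \emph{maximum}, not the minimum, of the two one-sided sizes. These radii are polynomial in $L^{-1}$ at both endpoints, independently of $D$. This is also the real reason for the sparsification threshold $D_0$, beyond absorbing endpoint losses into $\alpha_*$.
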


\section{Mixing of the projective process}\label{mixingderivatsec}

In this section we use the slope coordinate
$s=\tan\theta\in\mathbb R\cup\{\infty\}\simeq\mathbb P^1$.
Thus a finite slope $s$ represents the direction of $(1,s)$,
and $s=\infty$ represents the vertical direction.
The transition kernels retain the notation introduced above.
For probability measures, we use
\[
\|\mu-\mu'\|_{\mathrm{TV}}
:=
\sup_A|\mu(A)-\mu'(A)|,
\]
where the supremum is over measurable sets.

The aim of this section is to prove the following result.

\begin{teorema}\label{mixing_proj_motion}
Let $\bar{\mathcal P}^n_{p,(x_0,y_0,s_0)}$ denote the $n$-step
transition kernel of the projective process.
There exist $C>0$ and $M\in\mathbb N$, independent of $\delta,L$,
such that, for all $\delta>0$ sufficiently small and all
sufficiently large $L$,
\begin{align*}
\left\|
\bar{\mathcal P}^M_{p,(x_0,y_0,s_0)}
-
\bar{\mathcal P}^M_{p,(x_0',y_0',s_0')}
\right\|_{\mathrm{TV}}
\le
CL^{-\delta},
\qquad
\forall\,(x_0,y_0,s_0),(x_0',y_0',s_0')\in\mathbb{PT}^2.
\end{align*}
\end{teorema}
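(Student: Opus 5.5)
The plan is to prove the bound by a coupling argument. It suffices to construct, for any two initial states $u=(x_0,y_0,s_0)$ and $u'=(x_0',y_0',s_0')$, a coupling of the two $M$-step chains under which the terminal states coincide with probability at least $1-CL^{-\delta}$.

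\textbf{Explicit dynamics.} Writing $w=x+\omega$, one step of the projective process reads
\[
(x,y,s)\longmapsto\Bigl(f(w)-y,\;w,\;\frac{1}{f'(w)-s}\Bigr),
\]
since $Df_\omega(x,y)=\begin{pmatrix}f'(w)&-1\\1&0\end{pmatrix}$ sends $(1,s)$ to $(f'(w)-s,1)$. Two features drive the argument. First, outside the critical set $\{|f'(w)|\le L^{1-\varepsilon}\}$, the slope map $s\mapsto 1/(f'(w)-s)$ is a strong contraction: its derivative is of order $L^{-2+2\varepsilon}$ whenever $|s|\lesssim 1$, and any $s$ is sent into $|s'|\lesssim L^{-1+\varepsilon}$ after a single noncritical step. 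Second, the critical window has width $\sim L^{-\varepsilon'}$. Since $w$ spreads over an interval of length $2\sigma$, the probability of visiting it at a given step, once the base point has been randomised, is $O(L^{-\varepsilon'})$. For $\delta$ small this is at most $L^{-\delta}$.

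\textbf{Step 1: coupling the base points.} Let $K$ be as in Theorem~\ref{blublumixing}. By \eqref{basemixing}, the laws of $(x_K,y_K)$ started from $(x_0,y_0)$ and from $(x_0',y_0')$ are within $L^{-99/400}\le L^{-\delta}$ in total variation. A maximal coupling therefore makes the base points equal at time $K$, except on an event of probability $L^{-\delta}$. From then on I use identical noise for a few steps. The base points stay equal, and, discarding the event that one of these steps is critical (probability $O(L^{-\delta})$ by the previous paragraph), the slopes $s_n,s_n'$ become $O(L^{-1+\varepsilon})$ and their difference contracts by a factor $L^{-2+2\varepsilon}$ per step. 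After a bounded number $j_0$ of steps we thus have equal base points and
\[
|s_n-s_n'|\le L^{-10}.
\]

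\textbf{Step 2: absorbing the slope discrepancy.} This is the main obstacle. Identical noise never makes the slopes exactly equal, and the invariant law $\nu$ is concentrated near a graph $s\approx S(x,y)$, so there is no Lebesgue density to exploit directly. Over the next three steps I instead use slightly different noise in the second copy, $\omega'_j=\omega_j+\eta_j$ for $j=1,2,3$. The goal is to choose $\eta=(\eta_1,\eta_2,\eta_3)$ so that the two final states $(x,y,s)$ agree exactly. This is three equations in three unknowns, equal to zero at $\eta=0$ when the slopes already agree, and I would solve them by the implicit function theorem. The key computation is that the Jacobian of
\[
(\omega_1,\omega_2,\omega_3)\longmapsto(x_3,y_3,s_3)
\]
is nondegenerate, with a quantitative lower bound, outside critical visits. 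Indeed, $y_3$ depends on $\omega_3$ with derivative $1$, $x_3$ depends on $\omega_2$ through $f'\sim L$, and $s_3$ depends on $\omega_1$ through $f''$ terms of size $L\cdot L^{-2}$. This gives a Jacobian determinant of order at least $L^{-c}$ for a fixed $c$. The required $\eta$ then has size $O(L^{c}\cdot L^{-10}\cdot L^{-2})$, far smaller than $\sigma L^{-\delta}$. Since the noise is uniform on $[-\sigma,\sigma]$, the map $\omega\mapsto\omega+\eta(\omega)$ is a near-identity change of variables, and the induced laws differ in total variation by $O(|\eta|/\sigma+|\partial\eta|)\le L^{-\delta}$.

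\textbf{Conclusion.} Combining Steps 1 and 2 gives $M=K+j_0+3$, which is independent of $L$ and $\delta$. The failure events are: the base coupling fails, a critical visit occurs, or the noise-shift costs total variation. Each has probability $O(L^{-\delta})$, so
\[
\bigl\|\bar{\mathcal P}^M_{p,u}-\bar{\mathcal P}^M_{p,u'}\bigr\|_{\mathrm{TV}}\le CL^{-\delta}.
\]
The delicate points are the uniform lower bound on the three-step Jacobian and the control of the boundary effects of the uniform noise density under the shift. I expect Step 2 to require the most care, in particular tracking $f''$ near the critical set.
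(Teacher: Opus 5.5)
Your argument is correct in substance, and it takes a genuinely different route from the paper.

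\textbf{Comparison with the paper.} The paper never builds a pathwise coupling. After using \eqref{basemixing} at time $K$ (Lemma~\ref{reductionLemma}), it conditions on the base endpoint of four further steps and works with the conditional law of the slope. Given the endpoint $(x,y)$, the slope lies in $B(y)=\{|s-1/f'(y)|<L^{-4/3}\}$ with high probability (the term $I_2$). For two slopes in $B(y)$, the conditional laws are TV-close because $\omega_1\mapsto f'(x_1+\omega_1)-s_1$ has a branchwise density, with $|f''|\ge L^{2/3}$, that moves Lipschitz-continuously with $s_1$ (the term $I_3$, Proposition~\ref{Llarge}). So, contrary to your remark, a Lebesgue density is exploited, but only after conditioning. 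This requires the fibre analysis of the four-step base map (Propositions~\ref{four-step-trans}, \ref{uniform_goodness} and \ref{marginals}). The $L^{-\delta}$ in the final bound comes from Proposition~\ref{marginals}, i.e.\ from the $\omega_0$'s whose admissible $\omega_1$-fibre is cut by the edge of the noise interval.

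Your synchronous-then-shift coupling uses the same mechanism: one noise coordinate acts on the slope through $f''$. But you transport the noise instead of conditioning on endpoints, so these boundary layers never appear. The only losses are $L^{-99/400}$, the probabilities of $w$-windows of width $L^{-c}$ (of order $L^{\delta-c}$), and a negligible shift cost. Once repaired, your route is shorter and gives
\[
\bigl\|\bar{\mathcal P}^{K+4}_{p,u}-\bar{\mathcal P}^{K+4}_{p,u'}\bigr\|_{\mathrm{TV}}\lesssim L^{-99/400}+L^{\delta-c}
\]
for a fixed $c>0$. That is a rate $L^{-\gamma}$ with $\gamma$ independent of $\delta$, which the remark after Theorem~\ref{mixingpm} says is expected but not reached by the paper's method. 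In exchange, the paper's route gives explicit information on the conditional slope law given the base point.

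\textbf{Two statements must be corrected.}

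First, a step with $|f'(w)|\ge L^{1-\varepsilon}$ does not send every slope into $|s|\lesssim L^{-1+\varepsilon}$: if $s\approx f'(w)$, the new slope is huge. In the first synchronous step you must also discard $\{|f'(w)-s_K|<L^{1-\varepsilon}\}\cup\{|f'(w)-s'_K|<L^{1-\varepsilon}\}$. This has probability $O(L^{\delta-\varepsilon/2})$.

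Second, the three-step Jacobian equals, up to sign, $f''(w_1)s_1^2s_2^2s_3^2$ with $w_1=x+\omega_1$. It therefore degenerates at the zeros of $f''$, which is where $|f'|$ is maximal. So ``outside critical visits'' is the wrong condition; you must instead discard $\{|f''(w_1)|<L^{1-\varepsilon}\}$, at cost $O(L^{\delta-\varepsilon})$.

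Moreover, a lower bound on the determinant alone controls neither $\eta$ nor $\partial\eta$ through the implicit function theorem. No implicit function theorem is needed, though. Take $w_1'$ with $f'(w_1')=f'(w_1)+s'-s$, and set $\eta_1=\eta_3=w_1'-w_1$ and $\eta_2=-(f(w_1')-f(w_1))$. Then $s_1'=s_1$, $w_2'=w_2$ and $w_3'=w_3$, so the final states coincide exactly. The map $\omega\mapsto\omega+\eta$ is triangular with determinant $f''(w_1)/f''(w_1')$. This gives injectivity and the bound $O(\|\eta\|_\infty/\sigma+\|f''(w_1)/f''(w_1')-1\|_\infty)$ on the total-variation cost at once.

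Finally, since $|\eta_2|\lesssim L^{\varepsilon}|s-s'|$, you only need $|s-s'|\ll L^{-\varepsilon-\delta}$. One synchronous step already suffices (e.g.\ with $\varepsilon=1/3$), and the target $L^{-10}$ is unnecessary.
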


We define
\begin{equation}\label{allthegis}
g_1(x,y):=f(y)-x,
\qquad
g_2(x,y):=f'(g_1(x,y)),
\end{equation}
and the slope
\begin{equation}\label{slope}
s^*(y):=\frac{1}{f'(y)},
\end{equation}
with the convention $1/0=\infty$.
We introduce the bad set
\begin{equation}\label{badset}
C
:=
\left\{
(x,y)\in\mathbb T^2:
|f'(y)|<L^{2/3}
\ \text{or}\
|g_2(x,y)|<L^{2/3}
\right\},
\end{equation}
and the ball
\begin{equation}\label{biuay}
B(y):=\{s\in\mathbb R:|s-s^*(y)|<L^{-4/3}\},
\qquad f'(y)\ne0.
\end{equation}

The base marginal of
$\bar{\mathcal P}^n_{p,(x_0,y_0,s_0)}$
is $\bar{\mathcal P}^n_{1,(x_0,y_0)}$.
Let
\[
q^n_{(x_0,y_0,s_0)}(ds\mid x,y)
\]
denote the conditional law of the slope after $n$ steps, given
the base point $(x,y)$. For $n\ge4$, Corollary
\ref{Density_is_pos} allows this conditional law to be specified
for $m$-almost every $(x,y)$.

To prove Theorem \ref{mixing_proj_motion}, we first prove the
following reduction lemma.

\begin{lema}\label{reductionLemma}
Let $K\ge4$ be as in \eqref{basemixing}, increasing $K$ if
necessary. For
$(x_0,y_0,s_0),(x_0',y_0',s_0')\in\mathbb{PT}^2$, define
\begin{equation}\label{i1}
I_1
:=
\sup_A
\left|
\int_C\int_{\mathbb P^1}
\Delta_{p,A}(x,y,s)
\bigl(q^K_{(x_0,y_0,s_0)}-q^K_{(x_0',y_0',s_0')}\bigr)
(ds\mid x,y)\,dm(x,y)
\right|,
\end{equation}
\begin{equation}\label{i2}
I_2
:=
\sup_A
\left|
\int_{C^c}\int_{\mathbb P^1\setminus B(y)}
\Delta_{p,A}(x,y,s)
\bigl(q^K_{(x_0,y_0,s_0)}-q^K_{(x_0',y_0',s_0')}\bigr)
(ds\mid x,y)\,dm(x,y)
\right|,
\end{equation}
\begin{equation}\label{i3}
I_3
:=
\sup_A
\left|
\int_{C^c}\int_{B(y)}
\Delta_{p,A}(x,y,s)
\bigl(q^K_{(x_0,y_0,s_0)}-q^K_{(x_0',y_0',s_0')}\bigr)
(ds\mid x,y)\,dm(x,y)
\right|,
\end{equation}
where
\[
\Delta_{p,A}(x,y,s)
:=
\bar{\mathcal P}^4_{p,(x,y,s)}(A)
-
\bar{\mathcal P}^4_{p,(x,y,s^*(y))}(A),
\]
with $s^*(y)$ as in \eqref{slope}. Then
\[
\left\|
\bar{\mathcal P}^{4+K}_{p,(x_0,y_0,s_0)}
-
\bar{\mathcal P}^{4+K}_{p,(x_0',y_0',s_0')}
\right\|_{\mathrm{TV}}
\le
O(L^{-99/400})+I_1+I_2+I_3.
\]
\end{lema}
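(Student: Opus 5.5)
We will use the Markov property at time $K$: write
\[
\bar{\mathcal P}^{4+K}_{p,u}(A)=\int_{\mathbb T^2}\int_{\mathbb P^1}\bar{\mathcal P}^4_{p,(x,y,s)}(A)\,q^K_u(ds\mid x,y)\,\bar{\mathcal P}^K_{1,(x_0,y_0)}(dx,dy),
\]
for $u=(x_0,y_0,s_0)$, and similarly for $u'=(x_0',y_0',s_0')$. The idea is to first replace the base marginals $\bar{\mathcal P}^K_{1,(x_0,y_0)}$ and $\bar{\mathcal P}^K_{1,(x_0',y_0')}$ by Lebesgue measure $m$. Then we subtract from the slope-dependent integrand its value at the reference slope $s^*(y)$. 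That value no longer depends on $s$, so its contribution cancels between the two initial conditions.

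\textbf{Step 1: replacing the base marginals.} By \eqref{basemixing} and \eqref{baseexpmixing}, increasing $K$ if necessary, we have
\[
\|\bar{\mathcal P}^K_{1,(x_0,y_0)}-m\|_{\mathrm{TV}}\le L^{-99/400}.
\]
Corollary \ref{Density_is_pos} (valid for $K\ge4$) gives a density, so the conditional law $q^K_u(\cdot\mid x,y)$ is defined $m$-a.e. Set $F_u(x,y):=\int\bar{\mathcal P}^4_{p,(x,y,s)}(A)\,q^K_u(ds\mid x,y)$, which takes values in $[0,1]$. The joint law factors as the base marginal times $q^K_u$, so
\[
\Bigl|\bar{\mathcal P}^{4+K}_{p,u}(A)-\int F_u\,dm\Bigr|\le 2\|\bar{\mathcal P}^K_{1,(x_0,y_0)}-m\|_{\mathrm{TV}}=O(L^{-99/400}),
\]
uniformly in $A$. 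The same bound holds for $u'$.

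One technical point needs care. The disintegration is against the actual base law, not against $m$. I would handle this by writing the base law as $\rho_u\,dm$ and bounding $\int|\rho_u-1|\,dm$ by twice the TV distance. This is the only step needing care, and it is where I expect the main (mild) obstacle: making sure $q^K_u$ is well-defined $m$-a.e. and that the replacement costs only the TV bound.

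\textbf{Step 2: centering at $s^*(y)$.} Write
\[
\bar{\mathcal P}^4_{p,(x,y,s)}(A)=\Delta_{p,A}(x,y,s)+\bar{\mathcal P}^4_{p,(x,y,s^*(y))}(A).
\]
The second term is independent of $s$, and $q^K_u(\cdot\mid x,y)$ is a probability measure. Integrating it against $q^K_u$ therefore gives $\bar{\mathcal P}^4_{p,(x,y,s^*(y))}(A)$, the same for $u$ and $u'$. This term cancels in $\int F_u\,dm-\int F_{u'}\,dm$ wherever $s^*(y)$ is defined.

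Where $f'(y)=0$ we have $s^*(y)=\infty$, which is still a valid point of $\mathbb P^1$, so no issue arises. In any case this set lies in $C$ and is $m$-null.

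\textbf{Step 3: splitting the remainder.} What remains is
\[
\int_{\mathbb T^2}\int_{\mathbb P^1}\Delta_{p,A}\,(q^K_u-q^K_{u'})(ds\mid x,y)\,dm.
\]
Split the base integral into $C$ and $C^c$. On $C^c$, split the slope integral into $B(y)$ and $\mathbb P^1\setminus B(y)$; this is legitimate since $f'(y)\neq0$ there. Taking absolute values and the supremum over $A$ of each of the three pieces gives $I_1+I_2+I_3$.

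Combining this with Step 1 and taking the supremum over $A$ yields
\[
\left\|\bar{\mathcal P}^{4+K}_{p,(x_0,y_0,s_0)}-\bar{\mathcal P}^{4+K}_{p,(x_0',y_0',s_0')}\right\|_{\mathrm{TV}}\le O(L^{-99/400})+I_1+I_2+I_3.
\]
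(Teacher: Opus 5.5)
Your proposal is correct and follows the paper's proof essentially step for step. It uses the disintegration at time $K$, replaces the base law $\bar{\mathcal P}^K_{1,(x_0,y_0)}$ by $m$ at cost $O(L^{-99/400})$ via \eqref{basemixing} together with stationarity of $m$, cancels the $s^*(y)$-term because the conditional slope laws are probability measures, and splits into the three regions $C$, $C^c$ with $s\notin B(y)$, and $C^c$ with $s\in B(y)$. Your remark that positivity of the density (Corollary~\ref{Density_is_pos}) makes $q^K_u(\cdot\mid x,y)$ well defined $m$-a.e.\ is a point the paper leaves implicit, and the extra factor $2$ in your replacement bound is harmless.
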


\begin{proof}
Let $K$ be as in \eqref{basemixing}. Then, for any measurable set
$A\subset\mathbb{PT}^2$, we have
\begin{align*}
\bar{\mathcal P}^{4+K}_{p,(x_0,y_0,s_0)}(A)
&=
\int
\bar{\mathcal P}^4_{p,(x,y,s)}(A)
q^K_{(x_0,y_0,s_0)}(ds\mid x,y)
\bar{\mathcal P}^K_{1,(x_0,y_0)}(dx,dy)\\
&=
\int_{\mathbb T^2}\int_{\mathbb P^1}
\bar{\mathcal P}^4_{p,(x,y,s)}(A)
q^K_{(x_0,y_0,s_0)}(ds\mid x,y)\,dm(x,y)\\
&\quad+
\int_{\mathbb T^2}\int_{\mathbb P^1}
\bar{\mathcal P}^4_{p,(x,y,s)}(A)
q^K_{(x_0,y_0,s_0)}(ds\mid x,y)
\bigl[
\bar{\mathcal P}^K_{1,(x_0,y_0)}(dx,dy)-m(dx,dy)
\bigr].
\end{align*}
Here $\bar{\mathcal P}^K_{1,(x_0,y_0)}$ is the base transition
measure. The above, along with \eqref{basemixing}, implies that,
if $(x_0',y_0',s_0')$ is another initial condition,
\begin{align*}
&\left|
\bar{\mathcal P}^{4+K}_{p,(x_0,y_0,s_0)}(A)
-
\bar{\mathcal P}^{4+K}_{p,(x_0',y_0',s_0')}(A)
\right|\\
&\le
O(L^{-99/400})\\
&\quad+
\left|
\int_{\mathbb T^2}\int_{\mathbb P^1}
\bar{\mathcal P}^4_{p,(x,y,s)}(A)
\bigl(q^K_{(x_0,y_0,s_0)}-q^K_{(x_0',y_0',s_0')}\bigr)
(ds\mid x,y)\,dm(x,y)
\right|.
\end{align*}
Using
\[
\int
\bigl(q^K_{(x_0,y_0,s_0)}-q^K_{(x_0',y_0',s_0')}\bigr)
(ds\mid x,y)=0,
\]
we get
\begin{align*}
&\int_{\mathbb P^1}
\bar{\mathcal P}^4_{p,(x,y,s)}(A)
\bigl(q^K_{(x_0,y_0,s_0)}-q^K_{(x_0',y_0',s_0')}\bigr)
(ds\mid x,y)\\
&=
\int_{\mathbb P^1}
\bigl(
\bar{\mathcal P}^4_{p,(x,y,s)}(A)
-
\bar{\mathcal P}^4_{p,(x,y,s^*(y))}(A)
\bigr)
\bigl(q^K_{(x_0,y_0,s_0)}-q^K_{(x_0',y_0',s_0')}\bigr)
(ds\mid x,y).
\end{align*}
Consequently,
\begin{align*}
&\left|
\int_{\mathbb T^2}\int_{\mathbb P^1}
\bigl(
\bar{\mathcal P}^4_{p,(x,y,s)}(A)
-
\bar{\mathcal P}^4_{p,(x,y,s^*(y))}(A)
\bigr)
\bigl(q^K_{(x_0,y_0,s_0)}-q^K_{(x_0',y_0',s_0')}\bigr)
(ds\mid x,y)\,dm(x,y)
\right|\\
&\le I_1+I_2+I_3,
\end{align*}
using \eqref{badset} and \eqref{biuay}.
Taking the supremum over $A$ concludes the proof.
\end{proof}

Because $m(C)\le L^{-1/3}$, we have
\begin{equation}\label{i1lessimlminus}
I_1\lesssim L^{-1/3}.
\end{equation}
In the remainder of this section we prove the following lemma.

\begin{lema}\label{finalmixinglema}
We have
\begin{equation}\label{i2final}
I_2
\lesssim
L^{-\delta}+L^{1/100+3\delta-1/16}
\lesssim L^{-\delta},
\end{equation}
and
\begin{equation}\label{i3final}
I_3\lesssim L^{-\delta}.
\end{equation}
\end{lema}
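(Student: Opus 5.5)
The plan is to handle $I_2$ and $I_3$ separately, using only the explicit one-step slope dynamics and the smoothing provided by the noise. Since $f_\omega(x,y)=(f(x+\omega_0)-y,\,x+\omega_0)$, a slope $s$ at $(x,y)$ is sent to $s'=1/(f'(x+\omega_0)-s)$, and the new second coordinate is $y'=x+\omega_0$. Hence the slope at a base point $(x,y)$ equals $1/(f'(y)-\tilde s)$, where $\tilde s$ is the slope one step earlier. This is why $s^*(y)=1/f'(y)$, which corresponds to $\tilde s=0$, is the natural reference slope in $\Delta_{p,A}$.

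\emph{Bound \eqref{i3final}.} We use $|\Delta_{p,A}|\le1$ to get
\[
I_3\le \sup_{(x_0,y_0,s_0)}\int_{C^c} q^K_{(x_0,y_0,s_0)}\bigl(B(y)\mid x,y\bigr)\,dm(x,y)\,\cdot 2 .
\]
So it suffices to prove anticoncentration of the slope at time $K$ near $s^*(y)$. For $(x,y)\in C^c$ we have $|f'(y)|\ge L^{2/3}$, so $s\in B(y)$ forces the previous slope $\tilde s$ into a window of length $\lesssim L^{-4/3}|f'(y)|^2\lesssim L^{2/3}$ around $0$. This window is not small, so we go back one more step: $\tilde s=1/(f'(y_{K-1})-\hat s)$ with $|g_2|\ge L^{2/3}$ on $C^c$. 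Then we exploit the noise $\omega_{K-2}$, which enters $f'(\cdot+\omega_{K-2})$ with derivative $f''\sim L$ away from the critical set. Disintegrating with respect to the final base point, via the change of variables $(\omega_{K-2},\omega_{K-1})\mapsto(x_K,y_K)$ (whose Jacobian is controlled as in Corollary \ref{Density_is_pos}), we expect the conditional density of the relevant slope variable to be spread over a scale $\gtrsim \sigma L$ relative to the target window. This should give $q^K(B(y)\mid x,y)\lesssim L^{-\delta}$ uniformly, except on a set of base points of $m$-measure $O(L^{-1/3})$, which is absorbed as in \eqref{i1lessimlminus}.

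\emph{Bound \eqref{i2final}.} Here we couple the two $4$-step projective kernels from $(x,y,s)$ and $(x,y,s^*(y))$ using the same noise. The base trajectories then coincide, and only the slopes differ. Since $s\notin B(y)$ and $(x,y)\notin C$, after one step both slopes are of size $O(L^{-2/3})$. On the subsequent steps the map $\tilde s\mapsto1/(f'(\cdot)-\tilde s)$ contracts slope differences by a factor $\sim|f'|^{-2}$, except when the trajectory visits the critical region. After $4$ steps the two slopes therefore differ by a tiny amount $\eta$, polynomially small in $L$. Meanwhile, the $4$-step kernel has a density in the slope variable, obtained by exploiting one noise coordinate, whose Lipschitz constant is at most polynomial in $L$ with a power controlled by $\sigma^{-1}=L^{\delta}$. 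The total variation distance of the two kernels is then bounded by $\eta$ times this Lipschitz constant, which gives the main term $L^{-\delta}$. The critical visits that spoil the contraction, namely those with $|f'(x_j+\omega_j)|<L^{1-\varepsilon}$ for some $j\le4$, occur with probability bounded by the measure of a neighbourhood of $\{0,\tfrac12\}$ of size roughly $L^{-1/16}$. That neighbourhood is inflated by the noise factor $L^{3\delta}$ from the density bounds and by a slack $L^{1/100}$ from the expansion-on-average constants of Theorem \ref{blubluaverage}. This accounts for the second term $L^{1/100+3\delta-1/16}$.

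The main obstacle is the conditional anticoncentration for $I_3$. The slope must be controlled conditionally on the base point, so the same noise variables drive both the base point and the slope, and one has to check that after conditioning enough randomness survives in the slope direction. I would first attempt the two-variable change of variables $(\omega_{K-2},\omega_{K-1})\mapsto(x_K,y_K)$ with one further noise coordinate left free. If that fails, I would fall back on bounding the joint density of (base point, slope) directly and dividing by the lower bound on the base density supplied by Corollary \ref{Density_is_pos}.
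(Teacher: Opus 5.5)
\textbf{The $I_3$ argument rests on a false claim.} For $(x,y)\in C^c$ the slope at time $K$ does not anticoncentrate near $s^*(y)$; it concentrates there. Since $s_K=1/(f'(y)-s_{K-1})$, we have $|s_K-s^*(y)|=|s_{K-1}|/\bigl(|f'(y)|\,|f'(y)-s_{K-1}|\bigr)$. Because $|f'(y)|\ge L^{2/3}$, this is $<L^{-4/3}$ as soon as $|s_{K-1}|\le 1/2$. Moreover $s_{K-1}=1/(g_2(x,y)-s_{K-2})$ with $|g_2(x,y)|\ge L^{2/3}$. So the bound fails only if $s_{K-2}$ lands within distance $2$ of a number of size at least $L^{2/3}$, which requires a near-critical step.

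Your own computation already shows this. The window you obtain for the previous slope has length at least of order one, and it contains all typical values of that slope, which are $O(L^{-2/3})$. Going back a further step only enlarges the admissible set. Hence $q^K(B(y)\mid x,y)\ge 1-O(L^{-\delta})$; this is exactly what the paper proves for $I_2$, starting from \eqref{firsti2}. With $|\Delta_{p,A}|\le 1$ you therefore get nothing better than $I_3\lesssim 1$.

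In your plan the roles of the two terms are reversed. On $B(y)$ the mass is essentially full, and what must be shown is that $\Delta_{p,A}$ itself is small there. The paper does this by bounding $I_3$ by $H_3$ in \eqref{i3lesseq}: the total variation distance between $4$-step kernels started from one base point outside $C$ with two slopes within $2L^{-4/3}$ of each other. Since the base marginals coincide, this reduces to the endpoint-conditioned laws of $s_2$. The bad $\omega_0$ (those in $B'$, or with $|s_1-s_1'|>L^{-4/3}$) are discarded via Proposition \ref{marginals}. What remains is a branch-by-branch comparison of the densities of $w_1=f'(x_1+\omega_1)-s_1$ on pieces where $|f''|\ge L^{2/3}$ (Proposition \ref{Llarge}).

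\textbf{The $I_2$ argument.} The paper uses precisely the concentration above. It takes $|\Delta_{p,A}|\le 1$ and reduces the endpoint-conditioned probability of $s_K\notin B(y)$ to the event $|s_{K-2}|>L^{-1/16}$, i.e.\ to $|f'(x_1+\omega_1)-s_1|<L^{1/16}$ for the last free noise coordinate. The term $L^{1/100+3\delta-1/16}$ is the measure of that set divided by the lower bound $c_0L^{-1/100-3\delta}$ on $|B(\omega_0)|$ off $B'$. The exponent $1/100$ comes from the threshold $L^{99/100}$ in Proposition \ref{four-step-trans}, not from Theorem \ref{blubluaverage}.

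Your coupling plan for $I_2$ aims to show $\Delta_{p,A}(x,y,s)$ is small for \emph{every} $s$. That is a different and stronger claim; if established, it would also dispose of $I_3$ and make the splitting unnecessary. But the justification offered does not hold, for two reasons.
\begin{enumerate}
\item The first forward slope from $(x,y,s)$ is $1/(f'(x+\omega_0)-s)$. Neither $(x,y)\notin C$ nor $s\notin B(y)$ controls it, since those conditions concern $f'(y)$ and $g_2$, i.e.\ the past. So for $|s|$ of order $L$, a critical set of $\omega_0$ must be removed under the conditional law $Q_0$.
\item There is no global Lipschitz bound on the conditional slope density. The branch densities $1/|f''|$ blow up where $f''$ vanishes, which is why the paper excises $\{|f''|<L^{2/3}\}$, the $\omega_1$ tied to $\mathcal C_3^{\mathrm{aug}}$, and boundary pieces before comparing branches.
\end{enumerate}
Carrying out this comparison conditionally on the time-$4$ base point is the actual content of the proof, and your proposal leaves it as an expectation.
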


The above, together with \eqref{i1lessimlminus}, proves
Theorem \ref{mixing_proj_motion}.

\subsection{Probabilistic preliminary estimates}

In this subsection, we recall several standard probabilistic
results that will be used repeatedly in the sequel.
We omit their proofs, as they are classical.
We write
$\mathcal F_0^j:=\sigma(\omega_0,\ldots,\omega_j)$ for $j\ge0$.

\begin{proposicao}\label{fromrmtogoodset}
Let $(\omega_k)_{k\ge0}$ be i.i.d.\ random variables uniformly
distributed on $[-\sigma,\sigma]$, and consider
\[
f_\omega^n(x_0,y_0)=F(u,v),
\]
where
\[
u=(\omega_0,\ldots,\omega_{n-3}),
\qquad
v=(\omega_{n-2},\omega_{n-1}).
\]
Assume:
\begin{enumerate}
\item for every fixed $u$, the equation
\[
F(u,v)=(x,y)
\]
admits at most one solution $v\in[-\sigma,\sigma]^2$;

\item the Jacobian satisfies
\[
|\det D_vF(u,v)|=1
\qquad\text{whenever }F(u,v)=(x,y).
\]
\end{enumerate}
Define
\[
\Gamma_{x,y}
:=
\left\{
u:\exists v\in[-\sigma,\sigma]^2
\text{ such that }F(u,v)=(x,y)
\right\}.
\]
Then the regular conditional law of $u$ given $F(u,v)=(x,y)$
is precisely the law of $u$ conditioned on the event
$\{u\in\Gamma_{x,y}\}$, for almost every endpoint with positive
density. In particular, for every $A\in\mathcal F_0^{n-3}$,
\[
\mathbb P\bigl(A\mid F(u,v)=(x,y)\bigr)
=
\mathbb P\bigl(A\mid u\in\Gamma_{x,y}\bigr),
\]
whenever $\mathbb P\{u\in\Gamma_{x,y}\}>0$, with the conditional
law on the left chosen according to this formula.
\end{proposicao}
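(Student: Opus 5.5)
We prove Proposition~\ref{fromrmtogoodset} by computing the joint law of $(u,F(u,v))$ explicitly and reading off the conditional law. The variables $u\in[-\sigma,\sigma]^{n-2}$ and $v\in[-\sigma,\sigma]^2$ are independent. Their densities are the constants $(2\sigma)^{-(n-2)}$ and $(2\sigma)^{-2}$. For a bounded measurable $\psi(u)$ and a bounded measurable $\phi$ on $\mathbb T^2$, we write
\[
\mathbb E\bigl[\psi(u)\phi(F(u,v))\bigr]
=\int\psi(u)\Bigl(\int_{[-\sigma,\sigma]^2}\phi(F(u,v))\,(2\sigma)^{-2}\,dv\Bigr)(2\sigma)^{-(n-2)}\,du .
\]
We then change variables in the inner integral, for fixed $u$.

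\textbf{Inner change of variables.} Fix $u$ and set $F_u:=F(u,\cdot)$. By hypothesis (1), $F_u$ is injective on $[-\sigma,\sigma]^2$. By hypothesis (2), $F_u$ has unit Jacobian. The area formula therefore gives
\[
\int_{[-\sigma,\sigma]^2}\phi(F_u(v))\,dv=\int_{\mathbb T^2}\phi(x,y)\,\mathbf 1_{\{u\in\Gamma_{x,y}\}}\,dm(x,y).
\]
Here $\mathbf 1_{\{u\in\Gamma_{x,y}\}}$ is exactly the indicator that $(x,y)$ lies in the image $F_u([-\sigma,\sigma]^2)$. Since $F$ is a composition of smooth maps, $F_u$ is $C^1$ and the area formula applies; the boundary of the square has measure zero. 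The torus is flat, so $dm$ is the local Lebesgue measure.

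\textbf{Joint density.} Inserting this and applying Fubini, the pair $(u,(x,y))$ has joint density
\[
\rho(u,x,y)=(2\sigma)^{-n}\,\mathbf 1_{\{u\in\Gamma_{x,y}\}}
\]
with respect to $du\otimes dm$. The endpoint therefore has density
\[
p(x,y)=(2\sigma)^{-2}\,\mathbb P\{u\in\Gamma_{x,y}\}.
\]
Joint measurability of $\{(u,x,y):u\in\Gamma_{x,y}\}$ follows because this set is the image of a Borel set under the continuous injective map $(u,v)\mapsto(u,F(u,v))$. By Lusin--Souslin, such an image is Borel.

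\textbf{Conditional law.} For $(x,y)$ with $p(x,y)>0$, a version of the regular conditional law of $u$ is $\rho(u,x,y)/p(x,y)$. This equals the uniform density on $[-\sigma,\sigma]^{n-2}$ restricted to $\Gamma_{x,y}$, normalized by $\mathbb P\{u\in\Gamma_{x,y}\}$. That is precisely the law of $u$ conditioned on $\{u\in\Gamma_{x,y}\}$. Events $A\in\mathcal F_0^{n-3}$ are $u$-measurable, so the displayed identity for $\mathbb P(A\mid F(u,v)=(x,y))$ follows directly.

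\textbf{Main difficulty.} The only delicate point is measure-theoretic: one must justify the area formula and the joint measurability of the set $\{u\in\Gamma_{x,y}\}$. Both are handled as above. The condition ``for almost every endpoint with positive density'' matches the usual uniqueness of regular conditional laws up to null sets.
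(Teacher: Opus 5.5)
Your proof is correct. The paper omits the proof as classical, and your disintegration (area formula for the injective, unit-Jacobian map $F(u,\cdot)$, joint density $(2\sigma)^{-n}\mathbf 1_{\{u\in\Gamma_{x,y}\}}$ for $(u,(x,y))$, then division by the endpoint density $(2\sigma)^{-2}\mathbb P\{u\in\Gamma_{x,y}\}$) is the standard argument it refers to. One minor simplification: Lusin--Souslin is not needed, because $\{(u,x,y):u\in\Gamma_{x,y}\}$ is the continuous image of the compact cube $[-\sigma,\sigma]^n$ and is therefore compact.
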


We also need the following result.

\begin{proposicao}\label{from_normalized_to_others}
Let $Q=[-\sigma,\sigma]^n$ be endowed with the probability
measure $\mu:=\eta_\sigma^{\otimes n}$, and let $\Gamma\subset Q$
be a measurable set with $\mu(\Gamma)>0$.
Define the conditioned measure
\[
\mu_\Gamma(A):=\frac{\mu(A\cap\Gamma)}{\mu(\Gamma)}.
\]
Write $z=(u,v)$ with
\[
u\in[-\sigma,\sigma]^k,
\qquad
v\in[-\sigma,\sigma]^{n-k},
\]
and for each $u$ define the fiber
\[
\Gamma_u
:=
\left\{
v\in[-\sigma,\sigma]^{n-k}:(u,v)\in\Gamma
\right\}.
\]
Then:
\begin{enumerate}
\item The marginal of $\mu_\Gamma$ on the $u$-coordinates is
absolutely continuous with respect to Lebesgue measure, with
density
\[
u\longmapsto
\frac{|\Gamma_u|}{\int|\Gamma_{u'}|\,du'}.
\]

\item For Lebesgue-almost every $u$ such that $|\Gamma_u|>0$,
the conditional law of $v$ given $u$ is the normalized Lebesgue
measure on $\Gamma_u$, namely
\[
\mu_\Gamma(dv\mid u)
=
\frac{\mathbf1_{\Gamma_u}(v)}{|\Gamma_u|}\,dv.
\]

\item The measure $\mu_\Gamma$ admits the disintegration
\[
\mu_\Gamma(du,dv)
=
\frac{|\Gamma_u|}{\int|\Gamma_{u'}|\,du'}\,du
\cdot
\frac{\mathbf1_{\Gamma_u}(v)}{|\Gamma_u|}\,dv.
\]
\end{enumerate}
\end{proposicao}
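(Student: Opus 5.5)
Proposition~\ref{from_normalized_to_others} is a disintegration statement for the restriction of a product measure to a set, and I will prove it with Fubini's theorem. Let $\eta_\sigma$ denote the uniform law on $[-\sigma,\sigma]$, with density $(2\sigma)^{-1}$. Then $\mu$ has constant density $(2\sigma)^{-n}$ on $Q$, and
\[
\mu(\Gamma)=(2\sigma)^{-n}\int_{[-\sigma,\sigma]^k}|\Gamma_u|\,du ,
\]
where $|\cdot|$ is Lebesgue measure on $[-\sigma,\sigma]^{n-k}$. Measurability of $u\mapsto|\Gamma_u|$ is part of Tonelli's theorem. Since $\mu(\Gamma)>0$, we get $\int|\Gamma_{u'}|\,du'>0$.

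For item (1), take a Borel set $E\subset[-\sigma,\sigma]^k$ and apply Fubini to $\mathbf 1_{E\times[-\sigma,\sigma]^{n-k}}\mathbf 1_\Gamma$:
\[
\mu_\Gamma(E\times[-\sigma,\sigma]^{n-k})
=\frac{(2\sigma)^{-n}\int_E|\Gamma_u|\,du}{(2\sigma)^{-n}\int|\Gamma_{u'}|\,du'}.
\]
This identifies the stated density of the $u$-marginal.

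For item (3), take a rectangle $E\times F$. Fubini gives
\[
\mu_\Gamma(E\times F)=\frac{\int_E|\Gamma_u\cap F|\,du}{\int|\Gamma_{u'}|\,du'}.
\]
On $\{|\Gamma_u|>0\}$ we write $|\Gamma_u\cap F|=|\Gamma_u|\cdot|\Gamma_u|^{-1}\int_F\mathbf 1_{\Gamma_u}(v)\,dv$. The set $\{|\Gamma_u|=0\}$ contributes nothing to either side. The two finite measures therefore agree on the $\pi$-system of rectangles, and by Dynkin's $\pi$--$\lambda$ theorem they agree on all Borel sets.

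Item (2) follows from (3) by the essential uniqueness of disintegrations. The kernel $u\mapsto\mathbf 1_{\Gamma_u}(v)|\Gamma_u|^{-1}dv$ is measurable in $u$, again by Tonelli, and reproduces $\mu_\Gamma$ against its $u$-marginal. Hence it equals the regular conditional law for marginal-a.e.\ $u$, which is Lebesgue-a.e.\ $u$ with $|\Gamma_u|>0$.

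The only real care point is this null-set bookkeeping. On $\{|\Gamma_u|=0\}$ the marginal density vanishes, so the conditional law is unconstrained there, and that is why (2) is stated only for $u$ with $|\Gamma_u|>0$. There is no genuine obstacle beyond this.
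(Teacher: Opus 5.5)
Your proof is correct. The paper gives no proof of this proposition; it calls it classical. Your Fubini--Tonelli computation is the standard argument being invoked there: you treat the null set $\{u:|\Gamma_u|=0\}$ separately and use essential uniqueness of regular conditional laws for item (2).

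One minor simplification: item (3) needs no $\pi$--$\lambda$ step. Since $\Gamma\cap(\{u:|\Gamma_u|=0\}\times[-\sigma,\sigma]^{n-k})$ is Lebesgue-null, the right-hand side collapses to $\mathbf 1_\Gamma(u,v)\,du\,dv/\int|\Gamma_{u'}|\,du'$. This is exactly $\mu_\Gamma$, because $\mu$ is a constant multiple of Lebesgue measure on $Q$.
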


\subsection{Preliminary dynamical estimates}
Throughout this section, we write $A\sim L$ to mean that there exist constants $c,c'>0$, independent of $L$, such that \[ cL\le A\le c'L. \]
We first establish the following dynamical estimates.
\begin{proposicao}\label{four-step-trans}
For all $(x_0,y_0),(x,y)\in\mathbb T^2$, there exists
$\omega=(\omega_0,\omega_1,\omega_2,\omega_3)
\in[-\sigma,\sigma]^4$ such that
\[
f_\omega^4(x_0,y_0)=(x,y).
\]
Moreover, for each $(\omega_0,\omega_1)$, there is at most one
pair $(\omega_2,\omega_3)$ realizing the above.

Define
\begin{equation}\label{omegax0y0xy}
\Omega_{(x_0,y_0)}^{(x,y)}
:=
\left\{
(\omega_0,\omega_1):
\exists(\omega_2,\omega_3)
\text{ such that }f_\omega^4(x_0,y_0)=(x,y)
\right\},
\end{equation}
and consider its one-dimensional projection
\begin{equation}\label{ax0y0xy}
A_{(x_0,y_0)}^{(x,y)}
:=
\left\{
\omega_0:
\exists\omega_1,\;
(\omega_0,\omega_1)\in\Omega_{(x_0,y_0)}^{(x,y)}
\right\}.
\end{equation}
Then there exists a subset
\[
\tilde A_{(x_0,y_0)}^{(x,y)}
\subset A_{(x_0,y_0)}^{(x,y)}
\]
which is the union of at least $\sim L^{99/100-\delta}$ connected
components, each of size at least $L^{-1-\delta}$, and, if
$\omega_0\in\tilde A_{(x_0,y_0)}^{(x,y)}$, the set
\begin{equation}\label{bixoyoxy}
B_{(x_0,y_0)}^{(x,y)}(\omega_0)
:=
\left\{
\omega_1:
(\omega_0,\omega_1)\in\Omega_{(x_0,y_0)}^{(x,y)}
\right\}
\end{equation}
is the union of at least $\sim L^{99/100-2\delta}$ connected components, each of size at least $L^{-1-\delta}$.
Furthermore,
\begin{equation}\label{measurethesitmates}
\begin{aligned}
\operatorname{Leb}\bigl(\tilde A_{(x_0,y_0)}^{(x,y)}\bigr)
&\asymp L^{-2\delta},\\
\operatorname{Leb}\bigl(
A_{(x_0,y_0)}^{(x,y)}
\setminus\tilde A_{(x_0,y_0)}^{(x,y)}
\bigr)
&\lesssim L^{-3\delta}.
\end{aligned}
\end{equation}
\end{proposicao}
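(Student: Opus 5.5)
The plan is to write $f_\omega^4$ out explicitly and reduce the statement to counting preimages of short intervals under the one-dimensional map $f(x)=L\cos(2\pi x)$. Write $(x_k,y_k)=f_\omega^k(x_0,y_0)$. Then
\[
y_1=x_0+\omega_0,\quad x_1=f(y_1)-y_0,\quad y_2=x_1+\omega_1,\quad x_2=f(y_2)-y_1,
\]
and similarly for steps three and four. The target equations $y_4=y$ and $x_4=x$ force $y_3=f(y)-x=:d$. The two remaining equations are then solved explicitly:
\[
\omega_2=d-x_2,\qquad \omega_3=y-f(d)+y_2=:c-\bigl(f(d)-y_2\bigr)+\dots
\]
(all mod $1$). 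More precisely, $\omega_3=c'+y_2$ with $c':=y-f(d)$ depending only on $(x,y)$. Since $(x_2,y_2)$ is a function of $(\omega_0,\omega_1)$ alone, uniqueness of $(\omega_2,\omega_3)$ is immediate. Moreover $(\omega_0,\omega_1)\in\Omega^{(x,y)}_{(x_0,y_0)}$ if and only if two conditions hold:
\[
\text{(i)}\ \ y_2\in -c'+[-\sigma,\sigma],\qquad \text{(ii)}\ \ f(y_2)-x_0-\omega_0\in d+[-\sigma,\sigma],
\]
where $y_2=f(x_0+\omega_0)-y_0+\omega_1$ and everything is taken mod $1$.

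First I would analyse condition (ii) as a condition on $y_2$ ranging over the window $J:=-c'+[-\sigma,\sigma]$, with $\omega_0$ fixed. The target set $d+x_0+\omega_0+[-\sigma,\sigma]$ has length $2\sigma$ mod $1$. On $J$, remove the part where $|f'(y_2)|<L^{99/100}$; this removes only a set of length $\lesssim L^{-1/100}$ near $\{0,\tfrac12\}$. The image $f(J)$ winds around the circle $\gtrsim L^{99/100}\sigma$ times on the remaining part. Each full winding contributes one preimage interval, of length between $2\sigma/(2\pi L)$ and $2\sigma/L^{99/100}$, hence of length at least $L^{-1-\delta}$. This yields a set $G(\omega_0)\subset J$ consisting of $\gtrsim L^{99/100-\delta}$ intervals of length $\ge L^{-1-\delta}$, with total measure $\asymp\sigma^2$ by equidistribution of the preimages. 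Condition (i) says $\omega_1\in G(\omega_0)-f(x_0+\omega_0)+y_0$, intersected with $[-\sigma,\sigma]$. So $B(\omega_0)$ is a translate of $G(\omega_0)$ cut by $[-\sigma,\sigma]$, and nonempty only if $f(x_0+\omega_0)\in y_0-c'+[-2\sigma,2\sigma]$.

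Next I would define $\tilde A$ as the set of $\omega_0$ satisfying two requirements. The first is $|f'(x_0+\omega_0)|\ge L^{99/100}$. The second is that $f(x_0+\omega_0)-y_0+c'$ lies in $[-\sigma,\sigma]$ mod $1$, so that the shift places at least a fixed proportion of $J$ inside $[-\sigma,\sigma]$. For such $\omega_0$, the fibre $B(\omega_0)$ contains a fraction of $G(\omega_0)$, i.e. $\gtrsim L^{99/100-2\delta}$ components of size $\ge L^{-1-\delta}$; the extra $\sigma$ factor comes from the portion of $J$ retained. The same winding argument, applied to $\omega_0\mapsto f(x_0+\omega_0)$ on $[-\sigma,\sigma]$ and a target of length $2\sigma$, shows that $\tilde A$ has $\gtrsim L^{99/100-\delta}$ components, each of length $\ge L^{-1-\delta}$, and $\operatorname{Leb}(\tilde A)\asymp\sigma\cdot\sigma=L^{-2\delta}$. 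Existence of a solution follows because $\tilde A\neq\emptyset$ and $B(\omega_0)\neq\emptyset$ on it.

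The main obstacle is the bound $\operatorname{Leb}(A\setminus\tilde A)\lesssim L^{-3\delta}$, together with uniform-in-$(x_0,y_0,x,y)$ control when $x_0$, $-c'$ or $d$ sit near the critical points $\{0,\tfrac12\}$. The set $A\setminus\tilde A$ has two parts. The near-critical part has measure $\lesssim L^{-1/100}\le L^{-3\delta}$ once $\delta<1/300$. The remaining part consists of $\omega_0$ for which the shifted window is only marginally inside, with $f(x_0+\omega_0)$ in the $\sigma$-collar around the admissible interval. To make this part small, I would shrink the definition of $\tilde A$ so that only a collar of width $\sim\sigma^2$ in the $f$-value is discarded, and check that the fibre still has the claimed count there. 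That collar has $\omega_0$-measure $\lesssim\sigma\cdot\sigma^{2}\cdot\sigma^{-1}\cdot\sigma=\sigma^3$ after accounting for the winding. If this bookkeeping fails, the fallback is to carry the near-critical exceptional sets explicitly, using the quadratic behaviour of $f$ near its critical points.
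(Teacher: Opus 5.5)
Your approach is essentially the paper's. It uses the same explicit formulas for $(\omega_2,\omega_3)$ and the inclusion of $A$ in $\{x_1(\omega_0)\in H_1+[-2\sigma,2\sigma]\}$, where your $-c'$ is the paper's $H_1$. Once you adopt your own fix of discarding only a collar of width $\sim\sigma^2$, your $\tilde A$ is exactly the paper's, defined through the shrunken window $J_1$. You are right that the half-window version must be dropped, since it leaves mass $\asymp\sigma^2$ in $A\setminus\tilde A$.

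One correction to your counting. In the corrected version, the fibre count $L^{99/100-2\delta}$ arises because the overlap $(x_1+[-\sigma,\sigma])\cap(H_1+[-\sigma,\sigma])$ has length only $\gtrsim\sigma^2$. It does not come from retaining a fixed fraction of $J$, which would give $L^{99/100-\delta}$. The paper obtains this count by locating solutions with $\omega_2=0$ and fattening them via $\omega_2$, which is equivalent to your description of $B(\omega_0)$ as translated preimage intervals of a $2\sigma$-window under $f$.
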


\begin{proof}
First observe that, if $\omega_0,\omega_1$ are fixed, then
$x_2,y_2$ are given. Hence, if $f_\omega^4(x_0,y_0)=(x,y)$,
then necessarily
\[
\omega_3=y-f(f(y)-x)+y_2,
\qquad
\omega_2=f(y)-x-x_2.
\]

Write
\[
G_{a,b,c}:=f_a^{-1}\circ f_b^{-1}\circ f_c^{-1}.
\]
The proof is based on the observation that
\begin{align*}
f_\omega^4(x_0,y_0)=(x,y)
\quad\Longleftrightarrow\quad
(x_1(\omega_0),y_1(\omega_0))
=
G_{\omega_1,\omega_2,\omega_3}(x,y),
\end{align*}
where
\begin{equation}\label{inversemapF}
\begin{aligned}
G_{\omega_1,\omega_2,\omega_3}(x,y)
=
\Bigl(
&H_1(x,y)+\omega_3-\omega_1,\\
&f(H_1(x,y)+\omega_3)-f(y)+x+\omega_2
\Bigr),
\end{aligned}
\end{equation}
and
\begin{equation}\label{hacca1xy}
H_1(x,y):=f(f(y)-x)-y.
\end{equation}

Let $A_{(x_0,y_0)}^{(x,y)}$ be as in \eqref{ax0y0xy}.
If $\omega_0\in A_{(x_0,y_0)}^{(x,y)}$, then necessarily,
by \eqref{inversemapF} and \eqref{hacca1xy},
\[
x_1(\omega_0)=H_1(x,y)+\omega_3-\omega_1.
\]
In particular,
\begin{equation}\label{actualkeyinclusion}
A_{(x_0,y_0)}^{(x,y)}
\subset
\left\{
\omega_0:
x_1(\omega_0)\in
[H_1(x,y)-2L^{-\delta},H_1(x,y)+2L^{-\delta}]
\right\}.
\end{equation}
We introduce the strip
\[
J_1
:=
\left[
H_1(x,y)-2L^{-\delta}+\tfrac12L^{-2\delta},
H_1(x,y)+2L^{-\delta}-\tfrac12L^{-2\delta}
\right],
\]
the boundary region
\[
\partial J
:=
[H_1(x,y)-2L^{-\delta},H_1(x,y)+2L^{-\delta}]
\setminus J_1,
\]
and the critical region
\[
\mathcal C_{99/100}
:=
\left\{
\omega_0:|f'(x_0+\omega_0)|\le L^{99/100}
\right\}.
\]
Consider the set
\begin{equation}\label{key_set}
\tilde A_{(x_0,y_0)}^{(x,y)}
:=
\left\{
\omega_0\in[-\sigma,\sigma]\setminus\mathcal C_{99/100}:
x_1(\omega_0)\in J_1
\right\}.
\end{equation}
By \cite{BXY18}, the set $[-\sigma,\sigma]\setminus\mathcal C_{99/100}$ can be
partitioned into connected components which map exactly onto
the full circle, with bounded distortion with constant
\[
C\sim1+L^{-98/100}.
\]
As a result, in each component $I_j$,
\[
\operatorname{Leb}
\{\omega_0\in I_j:x_1(\omega_0)\in J_1\}
\asymp |J_1|\,|I_j|.
\]
Since the total length of these intervals is comparable to
$L^{-\delta}$, we have
\[
\operatorname{Leb}\bigl(\tilde A_{(x_0,y_0)}^{(x,y)}\bigr)
\asymp L^{-2\delta},
\]
and analogously, by \eqref{actualkeyinclusion},
\[
\operatorname{Leb}\bigl(
A_{(x_0,y_0)}^{(x,y)}
\setminus\tilde A_{(x_0,y_0)}^{(x,y)}
\bigr)
\lesssim L^{-3\delta},
\]
which proves \eqref{measurethesitmates}.

Furthermore, because $|f'(x_0+\omega_0)|$ lies in
$[L^{99/100},2\pi L]$ outside $\mathcal C_{99/100}$, there are at
least $\sim L^{99/100-\delta}$ such components, each of length
at least $L^{-1-\delta}$.

The remaining claims follow from the lemma below.

\begin{lema}\label{lemma_minor}
If $\omega_0\in\tilde A_{(x_0,y_0)}^{(x,y)}$, then the set
\[
B(\omega_0)
:=
\left\{
\omega_1:(\omega_0,\omega_1)\in\Omega_{(x_0,y_0)}^{(x,y)}
\right\}
\]
is the union of at least $\sim L^{99/100-2\delta}$ connected
components, each of size at least $L^{-1-\delta}$.
\end{lema}

\begin{proof}
\noindent\textbf{Step 1.}
Let
\[
H_L
:=
\left\{
\omega_3:
|f'(H_1(x,y)+\omega_3)|\le L^{99/100}
\right\}.
\]
We claim that, for every
$\omega_0\in\tilde A_{(x_0,y_0)}^{(x,y)}$,
\begin{equation}\label{cardinality}
\#\left\{
(\omega_1,\omega_3):
(x_1(\omega_0),y_1(\omega_0))
=
G_{\omega_1,0,\omega_3}(x,y)
\right\}
>
L^{99/100-2\delta}.
\end{equation}

Fix $\omega_0\in\tilde A_{(x_0,y_0)}^{(x,y)}$.
Varying $\omega_1$ produces a horizontal translation of the
inverse curve:
\begin{equation}\label{displacement}
\omega_3\longmapsto G_{\omega_1,0,\omega_3}(x,y)
=
(-\omega_1,0)+G_{0,0,\omega_3}(x,y).
\end{equation}
Therefore, solving the equation in \eqref{cardinality} is
equivalent to asking that the curve
\[
\omega_3\in H_L^c\longmapsto G_{0,0,\omega_3}(x,y)
\]
intersects the horizontal line at height $y_1(\omega_0)$ at
a point whose $x$-coordinate is within distance
$\sim L^{-\delta}$ of $x_1(\omega_0)$.
The role of $\omega_1$ is then to shift that point exactly
onto $x_1(\omega_0)$.

Since $x_1(\omega_0)\in J_1$, the intersection
\[
B_{L^{-\delta}}(x_1(\omega_0))
\cap
[H_1(x,y)-L^{-\delta},H_1(x,y)+L^{-\delta}]
\]
is an interval with length at least $\sim L^{-2\delta}$.

The curve
\[
\omega_3\in H_L^c\longmapsto G_{0,0,\omega_3}(x,y)
\]
is essentially the graph of a function defined on $H_L^c$,
with slope in $[L^{99/100},2\pi L]$ in absolute value.
Consequently, it intersects the horizontal line at height
$y_1(\omega_0)$ with $x$-coordinate in
\[
B_{L^{-\delta}}(x_1(\omega_0))
\cap
[H_1(x,y)-L^{-\delta},H_1(x,y)+L^{-\delta}]
\]
at least $L^{99/100-2\delta}$ times.
This proves \eqref{cardinality}.

\medskip
\noindent\textbf{Step 2.}
Let $\{(\omega_1^k,\omega_3^k)\}_{k\in I}$ be the values obtained
in the previous step, corresponding to solutions of
\[
G_{\omega_1^k,0,\omega_3^k}(x,y)
=
(x_1(\omega_0),y_1(\omega_0)).
\]
We claim that
\begin{equation}\label{proofdisplacement}
i\ne j
\quad\Longrightarrow\quad
|\omega_1^i-\omega_1^j|\gtrsim L^{-1}.
\end{equation}
Write
\[
G_{0,0,\omega_3^k}(x,y)=(x_3^k,y_3^k).
\]
Then, for $i\ne j$,
\[
y_3^i=y_3^j=y_1(\omega_0).
\]
The corresponding preimages $\omega_3^i,\omega_3^j\in H_L^c$
lie on distinct monotone branches and, by \eqref{inversemapF},
\[
|\omega_3^i-\omega_3^j|
=
|x_3^i-x_3^j|
\gtrsim L^{-1}.
\]
By \eqref{displacement},
\[
|\omega_1^i-\omega_1^j|
=
|x_3^i-x_3^j|
\gtrsim L^{-1},
\]
which proves \eqref{proofdisplacement}.

\medskip
\noindent\textbf{Step 3.}
We claim that, if $\omega_0\in\tilde A_{(x_0,y_0)}^{(x,y)}$
and $\{\omega_1^i\}_{i\in I}$ is the set established above,
then there exists $c>0$ such that
\begin{equation}\label{lastclaim}
B_{cL^{-1-\delta}}(\omega_1^i)
\subset
B_{(x_0,y_0)}^{(x,y)}(\omega_0),
\qquad
\forall\,i\in I.
\end{equation}
By Step 2, these sets are disjoint.

Consider local $XY$ coordinates centred at
$(x_1(\omega_0),y_1(\omega_0))$.
After discarding $O(1)$ values of $\omega_1^i$, we can restrict
to those $i\in I$ such that
\[
(x_1(\omega_0),y_1(\omega_0))
=
G_{\omega_1^i,0,\omega_3^i}(x,y)
\]
and there is a small interval $\bar J_i$ around $\omega_3^i$,
disjoint from $H_L$, whose endpoints are mapped by
$\omega_3\mapsto G_{\omega_1^i,0,\omega_3}$ to points with
vertical coordinates $-L^{-\delta}$ and $L^{-\delta}$ in
the new coordinates.

Writing, for $\omega_3\in\bar J_i$,
\[
G_{\omega_1^i,0,\omega_3}(x,y)
=
(X(\omega_3),Y(\omega_3)),
\]
we have
\[
L^{99/100}\lesssim|Y'(\omega_3)|\lesssim L,
\qquad
|X'(\omega_3)|=1.
\]
As a result,
\[
L^{-1-\delta}\lesssim|\bar J_i|
\lesssim L^{-99/100-\delta}.
\]
Choose $c>0$ such that
\[
B_{cL^{-1-\delta}}(\omega_3^i)\subset\bar J_i.
\]
Then, if $\omega_3\in B_{cL^{-1-\delta}}(\omega_3^i)$,
\[
|X(\omega_3)|\lesssim L^{-1-\delta},
\qquad
|Y(\omega_3)|\le L^{-\delta}.
\]
Formula \eqref{lastclaim} follows since, by
\eqref{inversemapF}, $\omega_2$ acts as a vertical translation
and $\omega_1$ as a horizontal translation.
\end{proof}
\end{proof}

We also need a strengthening of Lemma \ref{lemma_minor}:
for fixed $\omega_0$, if the associated $\omega_3$ avoids
the enlarged critical set and $\omega_1$ stays away from
the boundary of the noise interval, then
$\omega_1\in B(\omega_0)$ belongs to a connected component
of length at least a constant multiple of $L^{-1-\delta}$.

\begin{proposicao}\label{uniform_goodness}
Fix $\omega_0$ and assume that
\[
(x_1(\omega_0),y_1(\omega_0))
=
G_{\omega_1,\omega_2,\omega_3}(x,y)
\]
for some $(\omega_1,\omega_2,\omega_3)$.

Define the enlarged critical set $\mathcal C_3^{\mathrm{aug}}$
by
\begin{equation}\label{aug}
\mathcal C_3^{\mathrm{aug}}
:=
\left\{
\omega_3:
|f'(H_1(x,y)+\omega_3)|\le2L^{1/2}
\right\} \cup [L^{-\delta}-4L^{-1-\delta},L^{-\delta}]\cup[-L^{-\delta},-L^{-\delta}+4L^{-1-\delta}] ,
\end{equation}
with $H_1$ as in \eqref{hacca1xy}, augmented by the first
incomplete and the first complete monotone branches adjacent
to it.

Then the following hold.
\begin{enumerate}
\item[(i)] The set of $\omega_1$ for which there exist $\omega_2$
and $\omega_3\in\mathcal C_3^{\mathrm{aug}}$ satisfying
\[
G_{\omega_1,\omega_2,\omega_3}(x,y)
=
(x_1(\omega_0),y_1(\omega_0))
\]
has Lebesgue measure bounded by
\[
\operatorname{Leb}
\{\omega_1\text{ associated to bad }\omega_3\}
\lesssim L^{-1/2}.
\]

\item[(ii)] Assume $\omega_3\notin\mathcal C_3^{\mathrm{aug}}$
and that $\omega_1$ is at distance at least $cL^{-1-\delta}$
from the boundary of the noise interval.
Then there exists $c'>0$ such that either
\[
(\omega_1,\omega_1+c'L^{-1-\delta})
\subset B_{(x_0,y_0)}^{(x,y)}(\omega_0)
\]
or
\[
(\omega_1-c'L^{-1-\delta},\omega_1)
\subset B_{(x_0,y_0)}^{(x,y)}(\omega_0).
\]
In particular, every such $\omega_1$ belongs to a connected
component of admissible $\omega_1$'s of size at least
$c'L^{-1-\delta}$.
\end{enumerate}
\end{proposicao}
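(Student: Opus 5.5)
Both parts rest on the explicit form \eqref{inversemapF} of the inverse map. There, $\omega_1$ enters only as a horizontal translation and $\omega_2$ only as a vertical translation of the curve
\[
\omega_3\longmapsto G_{0,0,\omega_3}(x,y)=\bigl(H_1(x,y)+\omega_3,\ f(H_1(x,y)+\omega_3)-f(y)+x\bigr).
\]
This curve is the graph of $X\mapsto f(X)-f(y)+x$ over the interval $X\in H_1(x,y)+[-\sigma,\sigma]$. Fix $\omega_0$ and write $(x_1,y_1)=(x_1(\omega_0),y_1(\omega_0))$. A triple $(\omega_1,\omega_2,\omega_3)$ is a solution exactly when
\[
\omega_1=H_1(x,y)+\omega_3-x_1
\qquad\text{and}\qquad
\omega_2=y_1-f(H_1(x,y)+\omega_3)+f(y)-x .
\]
In particular $\omega_3\mapsto\omega_1$ is a translation with derivative $1$. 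The admissibility constraints are therefore $\omega_1,\omega_2,\omega_3\in[-\sigma,\sigma]$.

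\textbf{Part (i).} The map $\omega_3\mapsto\omega_1$ is an isometry, so the set of $\omega_1$ coming from bad $\omega_3$ has Lebesgue measure at most $\operatorname{Leb}(\mathcal C_3^{\mathrm{aug}})$. I estimate its three pieces separately.
\begin{itemize}
\item The set $\{|f'(X)|\le 2L^{1/2}\}$ is the set where $|\sin 2\pi X|\lesssim L^{-1/2}$. It is a union of two intervals of length $\sim L^{-1/2}$ around $\{0,\tfrac12\}$.
\item The two boundary intervals each have length $4L^{-1-\delta}$.
\item The monotone branches of $f$ adjacent to the critical set are those on which $|f|$ sweeps an interval of length $\sim 1$ modulo $1$. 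Near the critical point, $|f'|\sim L^{1/2}$ at the edge of the critical set and grows, so a branch has length $\lesssim L^{-1/2}$. The incomplete branches are no longer than the complete ones.
\end{itemize}
Summing gives $\lesssim L^{-1/2}$, which proves (i).

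\textbf{Part (ii).} Let $\omega_3\notin\mathcal C_3^{\mathrm{aug}}$, and let $\omega_1$ be at distance at least $cL^{-1-\delta}$ from $\pm\sigma$. I perturb $\omega_3$ to $\omega_3+t$ with $|t|\le c'L^{-1-\delta}$, which moves $\omega_1$ by exactly $t$. The required $\omega_2$ then changes by $f(X+t)-f(X)$, where $X=H_1(x,y)+\omega_3$. Its magnitude is at most $2\pi L\,|t|\le 2\pi c'L^{-\delta}$, and its sign is fixed because $f$ is monotone near $X$.

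The first excluded branches give a buffer: on a neighbourhood of $\omega_3$ of size $\gtrsim L^{-1}\gg c'L^{-1-\delta}$, the point $X+t$ stays in the region $|f'|\ge L^{1/2}$ and on the same monotone branch. The boundary strips of width $4L^{-1-\delta}$ in \eqref{aug} ensure that $\omega_3+t$ stays inside $[-\sigma,\sigma]$. The assumption on $\omega_1$ gives the same for $\omega_1+t$.

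The main obstacle is keeping $\omega_2\in[-\sigma,\sigma]$. If $\omega_2$ is interior, moving $t$ in either direction by $c'L^{-1-\delta}$ keeps $\omega_2$ admissible, provided $c'$ is chosen small compared with the distance to the boundary. In general, though, $\omega_2$ may sit near $\pm\sigma$. I handle this by choosing the sign of $t$ so that $f(X+t)-f(X)$ pushes $\omega_2$ towards the interior, that is, away from the nearer endpoint of $[-\sigma,\sigma]$. Monotonicity of $f$ on the branch makes this possible on one side. Once $c'$ is small, $|\Delta\omega_2|\le 2\pi c' L^{-\delta}<\sigma$, so $\omega_2$ cannot overshoot the opposite endpoint.

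This yields one of the two one-sided intervals $(\omega_1,\omega_1\pm c'L^{-1-\delta})\subset B_{(x_0,y_0)}^{(x,y)}(\omega_0)$, as stated, and the connected-component claim follows immediately.
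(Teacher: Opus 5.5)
Your proposal is correct and follows the paper's argument. For (i), the paper uses exactly your observation that $\omega_1=H_1(x,y)+\omega_3-x_1(\omega_0)$ places the bad $\omega_1$'s in a translate of $\mathcal C_3^{\mathrm{aug}}$. For (ii), it refers to Step~3 of Lemma~\ref{lemma_minor}, which is the same mechanism: slide $\omega_3$ along the curve while $\omega_1$ and $\omega_2$ compensate as horizontal and vertical translations. Your explicit bound on $\operatorname{Leb}(\mathcal C_3^{\mathrm{aug}})$ and your choice of the sign of $t$, which pushes $\omega_2$ away from the nearer endpoint, fill in details the paper leaves implicit; the sign choice is precisely what accounts for the one-sided form of the conclusion.
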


\begin{proof}
(i) If $\omega_1$ corresponds to a bad value
$\omega_3\in\mathcal C_3^{\mathrm{aug}}$, then the first
coordinate equation gives
\[
x_1(\omega_0)+\omega_1=H_1(x,y)+\omega_3.
\]
Hence
\[
\omega_1
\in
H_1(x,y)-x_1(\omega_0)+\mathcal C_3^{\mathrm{aug}}.
\]
Thus the set of bad $\omega_1$'s is contained in a translate of
$\mathcal C_3^{\mathrm{aug}}$, and therefore has measure bounded
by $\operatorname{Leb}(\mathcal C_3^{\mathrm{aug}})$.

(ii) The proof is the same as Step 3 of the proof of
Lemma \ref{lemma_minor}.
\end{proof}

Consider the probability measure
\begin{equation}\label{quuuuuu}
Q
=
\mathbb P\left(
\,\cdot\,\middle|\Omega_{(x_0,y_0)}^{(x,y)}
\right)
\end{equation}
on events depending only on $\omega_0,\omega_1$.
We abbreviate
$B(\omega_0):=B_{(x_0,y_0)}^{(x,y)}(\omega_0)$.
By Proposition \ref{from_normalized_to_others}, its projection
onto $\omega_0$ has density
\begin{equation}\label{pomega0}
p(\omega_0)
:=
\frac{
|B(\omega_0)|\mathbf1_{A_{(x_0,y_0)}^{(x,y)}}(\omega_0)
}{
|\Omega_{(x_0,y_0)}^{(x,y)}|
},
\end{equation}
with associated probability measure
$dQ_0(\omega_0)=p(\omega_0)\,d\omega_0$.
For $Q_0$-almost every $\omega_0$, we also write
\[
Q_{\omega_0}(d\omega_1)
:=
\frac{\mathbf1_{B(\omega_0)}(\omega_1)}
{|B(\omega_0)|}\,d\omega_1
\]
for the conditional law of $\omega_1$.

\begin{proposicao}\label{marginals}
Fix a sufficiently small constant $c_0>0$ and define
\begin{equation}\label{biiiiii}
B'
:=
\left\{
\omega_0\in A_{(x_0,y_0)}^{(x,y)}:
|B(\omega_0)|\le c_0L^{-1/100-3\delta}
\right\}.
\end{equation}
Then
\[
Q_0(B')\lesssim L^{-\delta}.
\]
\end{proposicao}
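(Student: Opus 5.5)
We bound $Q_0(B')$ by splitting $B'$ according to whether $\omega_0$ lies in the good set $\tilde A:=\tilde A_{(x_0,y_0)}^{(x,y)}$ of Proposition~\ref{four-step-trans}. Writing $Q_0(B')=\int_{B'}|B(\omega_0)|\,d\omega_0/|\Omega_{(x_0,y_0)}^{(x,y)}|$, we need a lower bound on the denominator and an upper bound on the numerator.

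\emph{Lower bound for the denominator.} For $\omega_0\in\tilde A$, Lemma~\ref{lemma_minor} shows that $B(\omega_0)$ contains at least $\sim L^{99/100-2\delta}$ disjoint components, each of length at least $cL^{-1-\delta}$. Hence
\[
|B(\omega_0)|\gtrsim L^{-1/100-3\delta}.
\]
Integrating over $\tilde A$ and using $\operatorname{Leb}(\tilde A)\asymp L^{-2\delta}$ from \eqref{measurethesitmates} gives
\[
|\Omega_{(x_0,y_0)}^{(x,y)}|\ge\int_{\tilde A}|B(\omega_0)|\,d\omega_0\gtrsim L^{-1/100-5\delta}.
\]
With the implicit constants tracked, this lower bound is $\ge c_1L^{-1/100-3\delta}$ on $\tilde A$, with $c_1$ depending only on the constants in Lemma~\ref{lemma_minor}.

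\emph{Upper bound for the numerator.} Choose $c_0<c_1$. Then $B'\cap\tilde A=\emptyset$, so $B'\subset A\setminus\tilde A$. On $B'$ we have $|B(\omega_0)|\le c_0L^{-1/100-3\delta}$ by definition, and $\operatorname{Leb}(A\setminus\tilde A)\lesssim L^{-3\delta}$ by \eqref{measurethesitmates}. Therefore
\[
\int_{B'}|B(\omega_0)|\,d\omega_0\lesssim L^{-3\delta}\cdot L^{-1/100-3\delta}=L^{-1/100-6\delta}.
\]
Dividing by the denominator bound gives $Q_0(B')\lesssim L^{-\delta}$.

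\emph{Main obstacle.} The delicate step is making sure $c_0$ can be chosen below the uniform lower bound $c_1$ for $|B(\omega_0)|$ on $\tilde A$. This requires the count of components in Lemma~\ref{lemma_minor} and their lengths (Step~3, with constant $c$) to be uniform in $(x_0,y_0),(x,y)$ and in $\omega_0\in\tilde A$. In particular, discarding $O(1)$ branches and restricting to those whose intervals $\bar J_i$ fit must cost at most a constant factor. Since both the number of components and the constant $c$ come from bounded-distortion estimates independent of the base points, this should hold.

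If the uniformity were doubtful, the fallback is to avoid needing $B'\cap\tilde A=\emptyset$. One would instead bound $\int_{B'\cap\tilde A}|B|$ by $c_0L^{-1/100-3\delta}\operatorname{Leb}(\tilde A)$, but that only gives $Q_0\lesssim c_0$. So the uniform constant genuinely must be used.
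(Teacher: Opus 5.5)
Your proposal is correct and follows essentially the same route as the paper. The paper also observes $B'\subset A_{(x_0,y_0)}^{(x,y)}\setminus\tilde A_{(x_0,y_0)}^{(x,y)}$, which implicitly takes $c_0$ below the uniform lower bound $|B(\omega_0)|\gtrsim L^{-1/100-3\delta}$ on $\tilde A_{(x_0,y_0)}^{(x,y)}$ from Lemma~\ref{lemma_minor}. It then bounds the numerator by $L^{-1/100-3\delta}\cdot L^{-3\delta}$, bounds the denominator from below by the integral over $\tilde A_{(x_0,y_0)}^{(x,y)}$, namely $L^{-1/100-3\delta}\cdot L^{-2\delta}$, and takes the ratio. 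Your explicit remark that this constant must be uniform is precisely what the paper's phrase ``sufficiently small $c_0$'' relies on.
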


\begin{proof}
By \eqref{pomega0},
\[
Q_0(B')
=
\frac{
\int_{B'}|B(\omega_0)|\,d\omega_0
}{
\int_{A_{(x_0,y_0)}^{(x,y)}}|B(\omega_0)|\,d\omega_0
}.
\]
We have
\[
B'
\subset
A_{(x_0,y_0)}^{(x,y)}
\setminus\tilde A_{(x_0,y_0)}^{(x,y)},
\]
and
\[
\operatorname{Leb}\bigl(
A_{(x_0,y_0)}^{(x,y)}
\setminus\tilde A_{(x_0,y_0)}^{(x,y)}
\bigr)
\lesssim L^{-3\delta}.
\]
Furthermore,
\begin{align*}
\int_{A_{(x_0,y_0)}^{(x,y)}}|B(\omega_0)|\,d\omega_0
&\ge
\int_{\tilde A_{(x_0,y_0)}^{(x,y)}}|B(\omega_0)|\,d\omega_0\\
&\gtrsim
L^{-1/100-3\delta}L^{-2\delta}.
\end{align*}
Therefore,
\[
Q_0(B')
\lesssim
\frac{
L^{-1/100-3\delta}L^{-3\delta}
}{
L^{-1/100-3\delta}L^{-2\delta}
}
=
L^{-\delta}.
\]
\end{proof}

To conclude, we record a simple corollary of Proposition
\ref{four-step-trans}, which will be used later.
We omit the proof as it is standard.

\begin{corolario}\label{Density_is_pos}
For every $k\ge4$ and every $(x_0,y_0)\in\mathbb T^2$, the base
transition measure admits a density satisfying
\[
\bar{\mathcal P}^k_{1,(x_0,y_0)}(x,y)>0,
\qquad
\forall\,(x,y)\in\mathbb T^2.
\]
Here we use the same notation for the kernel and its density.
\end{corolario}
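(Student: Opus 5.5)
The plan is to show that the $k$-step transition law has a density by a change of variables in the last two noise coordinates, and that this density is everywhere positive by the reachability statement of Proposition~\ref{four-step-trans}. First consider $k=4$. Fix $(x_0,y_0)$ and write $f_\omega^4(x_0,y_0)=F(u,v)$ with $u=(\omega_0,\omega_1)$ and $v=(\omega_2,\omega_3)$, as in Proposition~\ref{fromrmtogoodset}. For fixed $u$, the point $(x_2,y_2)$ is determined. The map $v\mapsto F(u,v)$ is then the composition $f_{\omega_3}\circ f_{\omega_2}$ applied to $(x_2,y_2)$.

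From the explicit inversion in the proof of Proposition~\ref{four-step-trans}, we have $\omega_2=f(y)-x-x_2$ and $\omega_3=y-f(f(y)-x)+y_2$. So $v\mapsto F(u,v)$ is injective, with inverse
\[
(x,y)\mapsto\bigl(f(y)-x-x_2,\;y-f(f(y)-x)+y_2\bigr).
\]
This inverse is smooth and its Jacobian determinant is $\pm1$. Indeed, its differential is
\[
\begin{pmatrix}-1 & f'(y)\\ f'(f(y)-x) & 1-f'(f(y)-x)f'(y)\end{pmatrix},
\]
with determinant $-1$. This is consistent with each $f_{\omega}$ preserving area.

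Hence, for each fixed $u$, the push-forward of the uniform law of $v$ on $[-\sigma,\sigma]^2$ has density $(2\sigma)^{-2}\mathbf 1_{\Gamma(u)}(x,y)$ with respect to $m$, where $\Gamma(u)$ is the image of the square. Integrating over $u$ by Fubini gives the density
\[
\bar{\mathcal P}^4_{1,(x_0,y_0)}(x,y)=(2\sigma)^{-2}\,\mathbb P\bigl(u\in\Omega_{(x_0,y_0)}^{(x,y)}\bigr)=(2\sigma)^{-4}\operatorname{Leb}\bigl(\Omega_{(x_0,y_0)}^{(x,y)}\bigr),
\]
with $\Omega_{(x_0,y_0)}^{(x,y)}$ as in \eqref{omegax0y0xy}.

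Positivity now comes from the structure described in Proposition~\ref{four-step-trans}. The set $\tilde A_{(x_0,y_0)}^{(x,y)}$ has measure $\asymp L^{-2\delta}>0$. For each $\omega_0$ in it, $B_{(x_0,y_0)}^{(x,y)}(\omega_0)$ contains intervals of length at least $L^{-1-\delta}$. By Fubini, $\operatorname{Leb}(\Omega_{(x_0,y_0)}^{(x,y)})\gtrsim L^{-2\delta}L^{-1-\delta}>0$ for every $(x,y)$. The point needing care is that these bounds hold for \emph{every} target $(x,y)$, not just almost every one, which is exactly how Proposition~\ref{four-step-trans} is stated. One must also check that the boundary of the noise square does not destroy the open intervals. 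The slack built into $J_1$ and Step~3 of Lemma~\ref{lemma_minor} handles this. Measurability of $\Omega_{(x_0,y_0)}^{(x,y)}$ follows because it is the projection of the graph of a continuous condition with unique fibres.

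For $k>4$, use the Chapman--Kolmogorov relation
\[
\bar{\mathcal P}^k_{1,(x_0,y_0)}(x,y)=\int\bar{\mathcal P}^4_{1,(x',y')}(x,y)\,\bar{\mathcal P}^{k-4}_{1,(x_0,y_0)}(dx',dy').
\]
This defines a density, because it is an average of densities. It is positive everywhere, because the integrand is positive for every $(x',y')$ and the mixing measure is a probability. The main obstacle is only the pointwise (rather than almost-everywhere) positivity at $k=4$, which relies on the uniform-in-target lower bounds of Proposition~\ref{four-step-trans}.
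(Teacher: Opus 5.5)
Your proof is correct and follows the argument the paper intends when it calls this a simple corollary of Proposition~\ref{four-step-trans} and omits the proof: the injective, unit-Jacobian dependence on $(\omega_2,\omega_3)$ (exactly the hypotheses of Proposition~\ref{fromrmtogoodset}) gives the pointwise version $(2\sigma)^{-4}\operatorname{Leb}(\Omega_{(x_0,y_0)}^{(x,y)})$ of the four-step density, which is positive for every target by the measure bounds of Proposition~\ref{four-step-trans}, and Chapman--Kolmogorov with the four steps placed last extends this to $k>4$. The one detail you leave implicit is the joint measurability of $((x',y'),(x,y))\mapsto\operatorname{Leb}(\Omega_{(x',y')}^{(x,y)})$, which Tonelli needs; it holds because the underlying set is a continuous image of a compact set.
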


\subsection{Estimate on \texorpdfstring{$I_2$}{I2}}

In this subsection, we estimate the quantity $I_2$ in \eqref{i2}.

First, note that
\begin{equation}\label{firsti2}
I_2
\lesssim
\sup_{(x_0,y_0,s_0)}
\operatorname*{ess\,sup}_{(x,y)\in C^c}
\mathbb P\left(
|s_K-s^*(y)|>L^{-4/3}
\,\middle|\,
f_\omega^K(x_0,y_0)=(x,y)
\right).
\end{equation}

In the slope coordinate
$s_n\in\mathbb R\cup\{\infty\}$ introduced above, the recurrence
gives, for $n\ge3$,
\begin{equation}\label{recursion}
\begin{aligned}
s_n
&=\varphi(f'(y_n),s_{n-1}),\\
s_{n-1}
&=\varphi(g_2(x_n,y_n),s_{n-2}),\\
s_{n-2}
&=\varphi\bigl(f'(x_{n-3}+\omega_{n-3}),s_{n-3}\bigr),
\end{aligned}
\end{equation}
where $\varphi(a,s):=1/(a-s)$ is a M\"obius transformation,
extended to $\mathbb R\cup\{\infty\}$ by
$\varphi(a,a)=\infty$ and $\varphi(a,\infty)=0$.
Since $(x,y)\in C^c$, with $C$ as in \eqref{badset}, we have
\begin{align*}
&\mathbb P\left(
|s_K-s^*(y)|>L^{-4/3}
\,\middle|\,
f_\omega^K(x_0,y_0)=(x,y)
\right)\\
&\le
\mathbb P\left(
|s_{K-2}|>L^{-1/16}
\,\middle|\,
f_\omega^K(x_0,y_0)=(x,y)
\right).
\end{align*}
Conditioning on $\mathcal F_0^{K-5}$ and reindexing the remaining
four noise variables, Proposition \ref{fromrmtogoodset} gives
\begin{align*}
&\mathbb P\left(
|s_{K-2}|>L^{-1/16}
\,\middle|\,
f_\omega^K(x_0,y_0)=(x,y)
\right)\\
&\le
\sup_{(x_0',y_0',s_0')\in\mathbb{PT}^2}
\mathbb P\left(
|s_2(x_0',y_0',s_0')|>L^{-1/16}
\,\middle|\,
\Omega_{(x_0',y_0')}^{(x,y)}
\right),
\end{align*}
where $\Omega_{(x_0',y_0')}^{(x,y)}$ is defined as in
\eqref{omegax0y0xy}.
Relabelling the initial condition as $(x_0,y_0,s_0)$,
let $B'$ be as in \eqref{biiiiii} and $Q_0$ as in
\eqref{pomega0}. Then
\begin{align*}
&\mathbb P\left(
|s_2(\omega_0,\omega_1)|>L^{-1/16}
\,\middle|\,
\Omega_{(x_0,y_0)}^{(x,y)}
\right)\\
&=
\int_{B'}
\mathbb P\left(
|s_2(\omega_0,\omega_1)|>L^{-1/16}
\,\middle|\,
\Omega_{(x_0,y_0)}^{(x,y)},\omega_0
\right)\,dQ_0(\omega_0)\\
&\quad+
\int_{(B')^c}
\mathbb P\left(
|s_2(\omega_0,\omega_1)|>L^{-1/16}
\,\middle|\,
\Omega_{(x_0,y_0)}^{(x,y)},\omega_0
\right)\,dQ_0(\omega_0)\\
&\lesssim
L^{-\delta}
+
\sup_{\omega_0\in A_{(x_0,y_0)}^{(x,y)}\setminus B'}
\mathbb P\left(
|s_2(\omega_0,\omega_1)|>L^{-1/16}
\,\middle|\,
\Omega_{(x_0,y_0)}^{(x,y)},\omega_0
\right),
\end{align*}
where in the last line we used Proposition \ref{marginals}.

For every
$\omega_0\in A_{(x_0,y_0)}^{(x,y)}\setminus B'$,
the one-step recurrence in \eqref{recursion} gives
\begin{align*}
&\mathbb P\left(
|s_2(\omega_0,\omega_1)|>L^{-1/16}
\,\middle|\,
\Omega_{(x_0,y_0)}^{(x,y)},\omega_0
\right)\\
&\le
\frac{
\operatorname{Leb}
\left\{
\omega_1\in B(\omega_0):
|f'(x_1(\omega_0)+\omega_1)-s_1(\omega_0)|<L^{1/16}
\right\}
}{
|B(\omega_0)|
}\\
&\lesssim
L^{1/100+3\delta}
\operatorname{Leb}
\left\{
\omega_1\in B(\omega_0):
|f'(x_1(\omega_0)+\omega_1)-s_1(\omega_0)|<L^{1/16}
\right\}\\
&\lesssim
L^{1/100+3\delta-1/16},
\end{align*}
which concludes the proof of \eqref{i2final}.

\subsection{Estimate on \texorpdfstring{$I_3$}{I3}}

In this subsection we prove \eqref{i3final}.
We first estimate
\begin{equation}\label{i3lesseq}
H_3
:=
\sup_{(x_0,y_0)\notin C}
\sup_{s_0,s_0'\in B(y_0)}
\left\|
\bar{\mathcal P}^{4}_{p,(x_0,y_0,s_0)}
-
\bar{\mathcal P}^{4}_{p,(x_0,y_0,s_0')}
\right\|_{\mathrm{TV}}.
\end{equation}
By definition of $I_3$, we have $I_3\lesssim H_3$.

Denote by $\bar{\mathcal P}^4_{1,(x_0,y_0)}(x,y)$ the density
of the base process after four steps, and by
$q^4_{(x_0,y_0,s_0)}(\cdot\mid x,y)$ the conditional law of
$s_4$ given $(x_4,y_4)=(x,y)$. Then, by \eqref{i3lesseq},
\[
H_3
\le
\sup_{(x_0,y_0)\notin C}
\operatorname*{ess\,sup}_{(x,y)\in\mathbb T^2}
\sup_{s_0,s_0'\in B(y_0)}
\left\|
q^4_{(x_0,y_0,s_0)}(\cdot\mid x,y)
-
q^4_{(x_0,y_0,s_0')}(\cdot\mid x,y)
\right\|_{\mathrm{TV}}.
\]

Conditioned on $(x_4,y_4)=(x,y)$, the last two projective
updates are a deterministic function of $s_2$.
Since measurable push-forwards do not increase total variation
distance, we have
\begin{align*}
&\left\|
\mathbb P\left(
s_4(x_0,y_0,s_0)\in\cdot
\,\middle|\,
f_\omega^4(x_0,y_0)=(x,y)
\right)
-
\mathbb P\left(
s_4(x_0,y_0,s_0')\in\cdot
\,\middle|\,
f_\omega^4(x_0,y_0)=(x,y)
\right)
\right\|_{\mathrm{TV}}\\
&\le
\left\|
\mathbb P\left(
s_2(x_0,y_0,s_0)\in\cdot
\,\middle|\,
\Omega_{(x_0,y_0)}^{(x,y)}
\right)
-
\mathbb P\left(
s_2(x_0,y_0,s_0')\in\cdot
\,\middle|\,
\Omega_{(x_0,y_0)}^{(x,y)}
\right)
\right\|_{\mathrm{TV}},
\end{align*}
where $\Omega_{(x_0,y_0)}^{(x,y)}$ is as in
\eqref{omegax0y0xy}.

Let $B'$ be as in \eqref{biiiiii} and define
\begin{align*}
\tilde B
&:=
B'\cup
\left\{
\omega_0\in A_{(x_0,y_0)}^{(x,y)}:
|s_1-s_1'|>L^{-4/3}
\right\},\\
\tilde G
&:=
A_{(x_0,y_0)}^{(x,y)}\setminus\tilde B.
\end{align*}
Conditioning on $\omega_0$, we have
\begin{align*}
&\left\|
\mathbb P\left(
s_2(x_0,y_0,s_0)\in\cdot
\,\middle|\,
\Omega_{(x_0,y_0)}^{(x,y)}
\right)
-
\mathbb P\left(
s_2(x_0,y_0,s_0')\in\cdot
\,\middle|\,
\Omega_{(x_0,y_0)}^{(x,y)}
\right)
\right\|_{\mathrm{TV}}\\
&\le
Q_0(B')+Q_0\bigl(|s_1-s_1'|>L^{-4/3}\bigr)\\
&\quad+
\sup_{\omega_0\in\tilde G}
\left\|
\mathbb P\left(
s_2(s_0)\in\cdot
\,\middle|\,
\Omega_{(x_0,y_0)}^{(x,y)},\omega_0
\right)
-
\mathbb P\left(
s_2(s_0')\in\cdot
\,\middle|\,
\Omega_{(x_0,y_0)}^{(x,y)},\omega_0
\right)
\right\|_{\mathrm{TV}},
\end{align*}
where $Q_0$ is as in \eqref{pomega0}.

The first term is $O(L^{-\delta})$ by Proposition
\ref{marginals}. Furthermore, by Proposition
\ref{from_normalized_to_others},
\[
Q_0\bigl(|s_1-s_1'|>L^{-4/3}\bigr)
=
\frac{
\displaystyle
\int_{\{|s_1-s_1'|>L^{-4/3}\}}
|B(\omega_0)|\,d\omega_0
}{
|\Omega_{(x_0,y_0)}^{(x,y)}|
}.
\]
By \eqref{recursion} and the fact that
$|s_0-s_0'|\le2L^{-4/3}$, we have
\[
\operatorname{Leb}
\{\omega_0:|s_1-s_1'|>L^{-4/3}\}
\lesssim L^{-1/2}.
\]
The above inequality, along with $|B(\omega_0)|\le2L^{-\delta}$
and the estimate on the measure of
$\Omega_{(x_0,y_0)}^{(x,y)}$ in Proposition
\ref{four-step-trans}, gives
\[
Q_0\bigl(|s_1-s_1'|>L^{-4/3}\bigr)
\lesssim
L^{-1/2+1/100+4\delta}.
\]
Formula \eqref{i3final} now follows from the following
proposition.

\begin{proposicao}\label{Llarge}
For $L$ sufficiently large,
\begin{align*}
\sup_{\omega_0\in\tilde G}
\left\|
\mathbb P\left(
s_2(s_0)\in\cdot
\,\middle|\,
\Omega_{(x_0,y_0)}^{(x,y)},\omega_0
\right)
-
\mathbb P\left(
s_2(s_0')\in\cdot
\,\middle|\,
\Omega_{(x_0,y_0)}^{(x,y)},\omega_0
\right)
\right\|_{\mathrm{TV}}
\lesssim L^{-\delta}.
\end{align*}
\end{proposicao}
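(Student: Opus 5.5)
The plan is to fix $\omega_0\in\tilde G$ and reduce the claim to a one-dimensional total-variation estimate for a small translation.

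\textbf{Reduction.} Once $\omega_0$ is fixed, the slopes $s_1=s_1(\omega_0)$ and $s_1'=s_1'(\omega_0)$ are deterministic, and $\Delta:=s_1-s_1'$ satisfies $|\Delta|\le L^{-4/3}$ by definition of $\tilde G$. Conditioned on $\Omega_{(x_0,y_0)}^{(x,y)}$ and $\omega_0$, the variable $\omega_1$ has law $Q_{\omega_0}$, the normalized Lebesgue measure on $B(\omega_0)$, by Proposition~\ref{from_normalized_to_others}. Since $\omega_0\notin B'$, we also have $|B(\omega_0)|\ge c_0L^{-1/100-3\delta}$. By \eqref{recursion},
\[
s_2=\varphi\bigl(a(\omega_1),s_1\bigr),\qquad s_2'=\varphi\bigl(a(\omega_1),s_1'\bigr),\qquad a(\omega_1):=f'\bigl(x_1(\omega_0)+\omega_1\bigr).
\]
Since $a\mapsto\varphi(a,s)=1/(a-s)$ is a bijection of $\mathbb R\cup\{\infty\}$, total variation is preserved if we apply the inverse of $\varphi(\cdot,s_1')$ to both laws. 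We have $\varphi(a,s_1)=\varphi(a-\Delta,s_1')$. Hence the quantity to bound equals $\|\mu_a-\mu_a\circ\tau_\Delta^{-1}\|_{\mathrm{TV}}$, where $\mu_a$ is the law of $a(\omega_1)$ under $Q_{\omega_0}$ and $\tau_\Delta$ is translation by $\Delta$.

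\textbf{Splitting $B(\omega_0)$.} I would write $B(\omega_0)=G_1\sqcup G_2$, where $G_2$ collects three kinds of bad $\omega_1$:
\begin{itemize}
\item those associated to a bad $\omega_3\in\mathcal C_3^{\mathrm{aug}}$, which have measure $\lesssim L^{-1/2}$ by Proposition~\ref{uniform_goodness}(i);
\item those within $cL^{-1-\delta}$ of the boundary of the noise interval;
\item those for which $|f''(x_1+\omega_1)|<L^{1/2}$, i.e. points near the extrema of $f'$, a set of measure $\lesssim L^{-1/2}$.
\end{itemize}
Dividing by $|B(\omega_0)|$, we get $Q_{\omega_0}(G_2)\lesssim L^{-1/2+1/100+3\delta}$.

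\textbf{Estimate on the good part.} By Proposition~\ref{uniform_goodness}(ii), every $\omega_1\in G_1$ lies in a component of $B(\omega_0)$ of length at least $c'L^{-1-\delta}$. Hence the number of such components is at most $|B(\omega_0)|/(c'L^{-1-\delta})$. Subdividing further at the critical points of $f''$ costs only $O(1)$ extra pieces per period, i.e. $O(L^{-\delta}\cdot 1)$ extra endpoints, which is negligible. On each piece $J$, $a$ is a monotone diffeomorphism with $|a'|=|f''|\ge L^{1/2}$. The translate of $a|_J$ by $\Delta$ equals $a\circ\psi$ on $J$ up to two end-intervals, where $\psi$ is a shift in $\omega_1$ of size $|\Delta|/|f''|\le L^{-11/6}$ with $|\psi'-1|\lesssim|\Delta|\,|f'''|/|f''|^2\lesssim L^{-4/3}L^{2}/L=L^{-1/3}$.

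Taking $|f'''|\lesssim L$, the distortion bound is in fact $\lesssim L^{-4/3}\cdot L/L=L^{-4/3}$. The TV contribution from each piece is then bounded by its end-interval mass plus its distortion term:
\[
\frac{L^{-11/6}}{|B(\omega_0)|}+\frac{|J|}{|B(\omega_0)|}\,L^{-4/3}.
\]
Summing over at most $|B(\omega_0)|L^{1+\delta}$ pieces gives a total of order
\[
L^{-11/6+1+\delta}+L^{-4/3}=O\bigl(L^{-5/6+\delta}\bigr).
\]
Adding the bound on $Q_{\omega_0}(G_2)$, the total is $\lesssim L^{-1/2+1/100+3\delta}\ll L^{-\delta}$ for small $\delta$, uniformly in $\omega_0\in\tilde G$.

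\textbf{Main obstacle.} The delicate step is justifying the piece count. One needs that $B(\omega_0)\cap G_1$ decomposes into intervals of length $\gtrsim L^{-1-\delta}$, so that the boundary errors of size $L^{-11/6}$ do not accumulate. This is exactly what Proposition~\ref{uniform_goodness}(ii) provides. Two points remain to be checked: that the leftover bad set, including the near-extremal region of $f'$ where the density of $\mu_a$ blows up, has $Q_{\omega_0}$-mass controlled through the lower bound $|B(\omega_0)|\gtrsim L^{-1/100-3\delta}$ from $\omega_0\notin B'$; and that $\tau_\Delta$ does not move mass across these excluded regions by more than $O(L^{-11/6})$ per boundary point.
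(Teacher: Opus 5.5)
Your proposal is correct and follows essentially the same route as the paper: you reduce to comparing the law of $f'(x_1+\omega_1)$ with its translate by $s_1-s_1'$, discard the same three bad sets (small $|f''|$, bad $\omega_3\in\mathcal C_3^{\mathrm{aug}}$, and the near-boundary set) using $|B(\omega_0)|\gtrsim L^{-1/100-3\delta}$, and then use Proposition~\ref{uniform_goodness}(ii) to bound the number of branches by $|B(\omega_0)|L^{1+\delta}$, estimating each branch by an endpoint loss plus a distortion term. The differences are cosmetic. You use the threshold $L^{1/2}$ rather than $L^{2/3}$, count pieces as components plus $O(1)$ cuts rather than discarding short residuals, and phrase the per-branch density comparison through the shift map $\psi$. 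The intermediate estimate $|\psi'-1|\lesssim L^{-1/3}$ is a slip, but you correct it immediately to the right bound $L^{-4/3}$, which uses $|f'''|\lesssim L$.
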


\begin{proof}
\noindent
\textbf{Step 1: Decomposition into subprobability measures.}

Fix $\omega_0\in\tilde G$. Let
\[
w_1=f'(x_1+\omega_1)-s_1,
\qquad
w_1'=f'(x_1+\omega_1)-s_1'.
\]
By \eqref{recursion},
\[
s_2=\frac1{w_1},
\qquad
s_2'=\frac1{w_1'}.
\]
To prove the proposition, it is sufficient to prove
\begin{equation}\label{newgoal}
\|p_1-p_1'\|_{\mathrm{TV}}
:=
\left\|
\mathbb P\left(
w_1\in\cdot
\,\middle|\,
\Omega_{(x_0,y_0)}^{(x,y)},\omega_0
\right)
-
\mathbb P\left(
w_1'\in\cdot
\,\middle|\,
\Omega_{(x_0,y_0)}^{(x,y)},\omega_0
\right)
\right\|_{\mathrm{TV}}
\lesssim L^{-\delta}.
\end{equation}

Consider the sets
\begin{align*}
A
&:=
\left\{
\omega_1\in B(\omega_0):
|f''(x_1+\omega_1)|<L^{2/3}
\right\},\\
\tilde A
&:=
\left\{
\omega_1\in B(\omega_0):
\omega_3\in\mathcal C_3^{\mathrm{aug}}
\right\},
\end{align*}
where $\mathcal C_3^{\mathrm{aug}}$ is as in \eqref{aug}.
Let
\[
C_\partial
:=
B(\omega_0)\cap
\left\{
\omega_1:
\operatorname{dist}(\omega_1,\{-\sigma,\sigma\})
<c'L^{-1-\delta}
\right\}.
\]
We define the bad set
\[
C(\omega_0):=A\cup\tilde A\cup C_\partial.
\]
We define $p_1^b,p_1^{\prime,b}$ as the push-forwards of
$Q_{\omega_0}|_{C(\omega_0)}$ under $w_1,w_1'$, respectively,
and $p_1^g,p_1^{\prime,g}$ as the corresponding push-forwards
of $Q_{\omega_0}|_{B(\omega_0)\setminus C(\omega_0)}$, so that
\begin{equation}\label{pgpb}
p_1=p_1^g+p_1^b,
\qquad
p_1'=p_1^{\prime,g}+p_1^{\prime,b}.
\end{equation}
Observe that
\begin{align*}
p_1^b(\mathbb R)+p_1^{\prime,b}(\mathbb R)
&\lesssim Q_{\omega_0}(C(\omega_0))
\le\frac{|C(\omega_0)|}{|B(\omega_0)|}\\
&\lesssim
L^{1/100+3\delta}
\left(
L^{-1/3}+L^{-1-\delta}+L^{-1/2}
\right)
\lesssim L^{-\delta},
\end{align*}
where the term $L^{-1/2}$ comes from Proposition
\ref{uniform_goodness}. Therefore, by \eqref{pgpb},
\begin{equation}\label{pigpigiprime}
\|p_1-p_1'\|_{\mathrm{TV}}
\lesssim
L^{-\delta}
+
\|p_1^g-p_1^{\prime,g}\|_{\mathrm{TV}}.
\end{equation}

\medskip
\noindent
\textbf{Step 2: Construction of the coupling partition.}

Proposition \ref{uniform_goodness} gives a lower bound of order
$L^{-1-\delta}$ on the lengths of the admissible components
before removing $A$. After removing $A$ and discarding any
short residual components, write
\[
B(\omega_0)\setminus C(\omega_0)
=
\bigcup_{j\in J}I_j,
\]
where the intervals $I_j$ have length at least a constant
multiple of $L^{-1-\delta}$.
Any discarded residual components are included in $C(\omega_0)$.
We now trim these intervals.

Define
\[
U_j:=f'(x_1+I_j)-s_1,
\qquad
U_j':=f'(x_1+I_j)-s_1'.
\]
Set
\begin{align*}
\widetilde U_j&:=U_j\cap U_j',\\
\widetilde I_j
&:=
\left\{
\omega_1\in I_j:w_1(\omega_1)\in\widetilde U_j
\right\},\\
\widetilde I_j'
&:=
\left\{
\omega_1\in I_j:w_1'(\omega_1)\in\widetilde U_j
\right\}.
\end{align*}
Thus the two restricted branches have the same image
$\widetilde U_j$.
Since $\omega_0\in\tilde G$, we have
$|s_1-s_1'|\le L^{-4/3}$.
Moreover, $|f''(x_1+\omega_1)|\ge L^{2/3}$ on $I_j$, so each
branch loses length at most $O(L^{-2})$.
Consequently, the total discarded probability mass is bounded by
\[ 
\frac{
\sum_j
\bigl(
|I_j\setminus\widetilde I_j|
+
|I_j\setminus\widetilde I_j'|
\bigr)
}{
|B(\omega_0)|
}
\lesssim
L^{-1+\delta}
\frac{\sum_j|I_j|}{|B(\omega_0)|}
\le
L^{-1+\delta}.
\]
By \eqref{pigpigiprime}, it remains to estimate
\begin{equation}\label{p1minusp1prime}
\|p_1-p_1'\|_{\mathrm{TV}}
\lesssim
L^{-\delta}
+
\sum_j\int_{\widetilde U_j}
|p_{1,j}^g(u)-p_{1,j}^{\prime,g}(u)|\,du,
\end{equation}
where the branch densities are defined below.

\medskip
\noindent
\textbf{Step 3: Conclusion of the proof.}

On each $\widetilde U_j$, the corresponding branch densities
are given by
\begin{equation}\label{densities}
p_{1,j}^g(u)
=
\frac1{|B(\omega_0)|}
\frac1{|f''(x_1+\omega_j(u))|},
\qquad
p_{1,j}^{\prime,g}(u)
=
\frac1{|B(\omega_0)|}
\frac1{|f''(x_1+\omega_j'(u))|},
\end{equation}
where $\omega_j(u)$ and $\omega_j'(u)$ are the unique preimages
of $u$ in $\widetilde I_j$ and $\widetilde I_j'$, respectively.

Using $|f''(x_1+\omega_1)|\ge L^{2/3}$ for
$\omega_1\in\widetilde I_j\cup\widetilde I_j'$, and the fact that
\[
f'(x_1+\omega_j'(u))-f'(x_1+\omega_j(u))
=
s_1'-s_1,
\]
we obtain
\[
|\omega_j'(u)-\omega_j(u)|
\lesssim
L^{-2/3}|s_1-s_1'|.
\]
The above, along with $|f'''|\lesssim L$, gives
\[
|f''(x_1+\omega_j'(u))-f''(x_1+\omega_j(u))|
\lesssim
L|\omega_j'(u)-\omega_j(u)|
\lesssim
L^{1/3}|s_1-s_1'|.
\]

Using again $|f''(x_1+\omega_1)|\ge L^{2/3}$ for
$\omega_1\in\widetilde I_j\cup\widetilde I_j'$, we deduce
\[
\left|
\frac1{|f''(x_1+\omega_j(u))|}
-
\frac1{|f''(x_1+\omega_j'(u))|}
\right|
\lesssim
\frac{L^{1/3}|s_1-s_1'|}{L^{4/3}}
=
L^{-1}|s_1-s_1'|.
\]

Thus, by \eqref{densities}, on $\widetilde U_j$ we have
\[
|p_{1,j}^g(u)-p_{1,j}^{\prime,g}(u)|
\lesssim
\frac{L^{-1}|s_1-s_1'|}{|B(\omega_0)|}.
\]
Summing over all branches,
\begin{align*}
\sum_j\int_{\widetilde U_j}
|p_{1,j}^g(u)-p_{1,j}^{\prime,g}(u)|\,du
&\lesssim
\sum_j
\frac{L^{-1}|s_1-s_1'|}{|B(\omega_0)|}
|\widetilde U_j|\\
&\lesssim
\sum_j
\frac{|s_1-s_1'|}{|B(\omega_0)|}
|\widetilde I_j|\\
&\le
|s_1-s_1'|
\le
L^{-4/3},
\end{align*}
where we used
$|\widetilde U_j|\lesssim L|\widetilde I_j|$.

This, together with \eqref{p1minusp1prime},
\eqref{pigpigiprime}, and \eqref{newgoal}, completes the proof.
\end{proof}
\section{Mixing of the two-point motion}\label{sec:two_point_mixing}\label{mixing2pmsec}

In this section we prove Theorem \ref{mixingtwopointmotion} using the
following quantitative version of Harris' theorem, proved in \cite[Theorems~1.2 and~1.3]{HM11}.
We first state the abstract criterion and identify the estimates needed
for the two-point motion. The subsequent subsections establish the
Lyapunov and small-set estimates separately.

\begin{teorema}\label{HM_theorem}
Let $(X,\mathcal B)$ be a measurable space and let $P$ be a Markov
operator on $X$. Assume that there exist an integer $N\ge1$ and constants
$\gamma\in(0,1)$, $K>0$, $\alpha\in(0,1)$, $R>0$, and $B\ge1$ such that
\begin{equation}\label{harris_threshold}
R>\frac{2K}{1-\gamma},
\end{equation}
and the following conditions hold:
\begin{itemize}
\item There exists a measurable function $V:X\to[0,\infty)$ satisfying
\begin{equation}\label{harris_drift}
P^NV(x)\le\gamma V(x)+K,
\qquad x\in X.
\end{equation}

\item There exists a probability measure $\xi$ such that
\begin{equation}\label{harris_minorization}
P^N(x,\cdot)\ge\alpha\xi(\cdot),
\qquad x\in\{V\le R\}.
\end{equation}

\item The operator $P$ is bounded on $L^\infty_V(X)$:
\begin{equation}\label{normoperator}
\sup_{\|\phi\|_V=1}\|P\phi\|_V\le B,
\end{equation}
where the weighted space and norm are defined as in
\eqref{linfityc} and \eqref{betanorm}.
\end{itemize}

Then $P$ admits a unique invariant probability measure $\pi$, with
\begin{equation}\label{integrability}
\int_X V\,d\pi\le\frac{K}{1-\gamma}.
\end{equation}
Moreover, there exist $C>0$ and $\theta\in(0,1)$, depending only on
$N,\gamma,K,\alpha,R,B$, such that
\begin{equation}\label{harris_mixing}
\left\|P^n\phi-\int_X\phi\,d\pi\right\|_V
\le C\theta^n\|\phi\|_V,
\qquad n\ge0,
\end{equation}
for every $\phi\in L^\infty_V(X)$.
\end{teorema}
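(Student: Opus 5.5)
The plan follows the Hairer--Mattingly argument of \cite{HM11}. I would first reduce to the operator $Q:=P^N$ and then pass back to $P$ using \eqref{normoperator}. For a parameter $\beta>0$, to be chosen, I would introduce the weighted seminorm
\[
\|\phi\|_\beta:=\sup_x\frac{|\phi(x)|}{1+\beta V(x)},
\]
together with the quotient seminorm
\[
|||\phi|||_\beta:=\inf_{c\in\mathbb R}\|\phi+c\|_\beta .
\]
By duality, the latter is the Lipschitz seminorm with respect to the metric
\[
d_\beta(x,y)=\bigl(2+\beta V(x)+\beta V(y)\bigr)\mathbf 1_{x\ne y}.
\]
The whole argument rests on one estimate: there exist $\beta>0$ and $\bar\alpha\in(0,1)$ such that
\[
|||Q\phi|||_\beta\le\bar\alpha\,|||\phi|||_\beta
\]
for all $\phi$. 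Equivalently, for every $\phi$ with $|||\phi|||_\beta\le1$ and every pair $x,y$,
\[
|Q\phi(x)-Q\phi(y)|\le\bar\alpha\, d_\beta(x,y).
\]

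I would prove this contraction by splitting into two cases, normalizing $\phi$ so that $|\phi|\le 1+\beta V$.

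\textbf{Case 1: $V(x)+V(y)\ge R$.} The drift condition \eqref{harris_drift} gives
\[
|Q\phi(x)-Q\phi(y)|\le 2+\beta\bigl(\gamma V(x)+\gamma V(y)+2K\bigr).
\]
Since $R>2K/(1-\gamma)$ by \eqref{harris_threshold}, one can pick $\gamma_0\in(\gamma+2K/R,1)$. Then $2K\le(\gamma_0-\gamma)(V(x)+V(y))$, and the right-hand side is at most
\[
2+\beta\gamma_0\bigl(V(x)+V(y)\bigr)\le \bar\alpha_1\, d_\beta(x,y),
\]
with
\[
\bar\alpha_1=\frac{2+\beta\gamma_0R}{2+\beta R}<1 .
\]

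\textbf{Case 2: $V(x)+V(y)<R$.} Here both points lie in $\{V\le R\}$, so the minorization \eqref{harris_minorization} lets me write
\[
Q(x,\cdot)=\alpha\xi+(1-\alpha)\tilde Q(x,\cdot).
\]
The $\xi$-parts cancel in the difference, and the drift bound applied to the residual kernels gives
\[
|Q\phi(x)-Q\phi(y)|\le 2(1-\alpha)+\beta\bigl(\gamma V(x)+\gamma V(y)+2K\bigr).
\]
Choosing $\beta$ small enough that $2\beta K\le\alpha/2$, this is at most
\[
2-\alpha+\beta\gamma\bigl(V(x)+V(y)\bigr)\le\bar\alpha_2\,d_\beta(x,y),
\]
with
\[
\bar\alpha_2=\max\Bigl(1-\tfrac{\alpha}{2},\,\gamma\Bigr)+O(\beta)<1 .
\]
Taking $\bar\alpha:=\max(\bar\alpha_1,\bar\alpha_2)$ completes the contraction estimate.

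From the contraction I would derive the conclusions as follows. Since $d_\beta$-Lipschitz duality makes $Q^*$ a strict contraction on probability measures with $\int V<\infty$, and this space is complete for the dual (weighted total variation) metric, $Q$ has a unique invariant measure $\pi$. One then checks that $\pi$ is also $P$-invariant: $P^*\pi$ is $Q$-invariant, so it equals $\pi$ by uniqueness. Iterating the drift condition and integrating against $\pi$ gives \eqref{integrability}. Applying the contraction to $\phi-\pi(\phi)$ yields
\[
\|Q^k\phi-\pi(\phi)\|_V\lesssim\bar\alpha^k\|\phi\|_V ,
\]
using that $\|\cdot\|_\beta$ and $\|\cdot\|_V$ are equivalent with constants depending on $\beta$. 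Finally, for $n=kN+r$ with $0\le r<N$, I would write
\[
P^n\phi-\pi(\phi)=Q^k\bigl(P^r\phi-\pi(\phi)\bigr)
\]
and bound $\|P^r\phi\|_V\le B^r\|\phi\|_V$ via \eqref{normoperator}. This gives \eqref{harris_mixing} with $\theta=\bar\alpha^{1/N}$ and $C$ depending only on $N,\gamma,K,\alpha,R,B$.

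The main obstacle is the choice of the constants $\gamma_0$ and $\beta$ making both cases contract simultaneously. This is exactly where the threshold \eqref{harris_threshold} enters, and I would fix $\gamma_0$ first, then take $\beta$ small. The remaining steps (duality, completeness, passage from $Q$ to $P$) are routine.
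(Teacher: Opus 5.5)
The paper gives no proof of its own here; it quotes the statement from \cite[Theorems~1.2 and~1.3]{HM11}, and your proposal correctly reconstructs that Hairer--Mattingly argument (the metric $d_\beta$ with weighted total-variation duality, the two cases $V(x)+V(y)\ge R$ and $V(x)+V(y)<R$, and the passage from $P^N$ to $P$ via \eqref{normoperator}), so it follows the same route. One detail to make explicit: the Banach fixed-point step only gives uniqueness among measures that integrate $V$, but testing invariance against the truncations $V\wedge M$, iterating \eqref{harris_drift}, and using dominated and then monotone convergence shows that every $P^N$-invariant probability measure satisfies $\int V\,d\mu\le K/(1-\gamma)$, so uniqueness holds among all invariant probability measures.
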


For the two-point motion, write
$X=(\mathbb T^2\times\mathbb T^2)\setminus\Delta$.
The following theorem supplies the drift and minorization conditions
\eqref{harris_drift}--\eqref{harris_minorization} for the two-point motion.

\begin{teorema}\label{what_harier_wants}
There exists $L_0>0$ such that, for all $\delta>0$ sufficiently small and
$L>L_0$, the following holds. There exist $\varepsilon_0>0$, integers
$l,t\ge1$, $K>0$, and $\alpha\in(0,1)$, independent of $L,\delta$, such that
\begin{equation}\label{two_point_threshold}
e^{2t\varepsilon_0}>\frac{2K}{1-\alpha}.
\end{equation}
For $V_L$ as in \eqref{VLdefinition}, we have
\begin{equation}\label{two_point_drift}
\mathcal P_2^{jl}V_L(z,\bar z)
\le\alpha^j V_L(z,\bar z)+K,
\qquad j\ge0.
\end{equation}

Furthermore, there exists $r\in\mathbb N$, independent of $L,\delta$,
such that the sublevel set
\begin{equation}\label{two_point_sublevel}
\mathcal C
:=\{V_L\le e^{2t\varepsilon_0}\}
=\{(z,\bar z):d(z,\bar z)\ge L^{-2t}\}
\end{equation}
is small for the $rl$-step chain. More precisely, there exists $h>0$,
independent of $L,\delta$, such that
\begin{equation}\label{two_point_minorization}
\bar{\mathcal P}^{\,rl}_{2,(z,\bar z)}(A)
\ge h\,m_{C_{\mathrm{good}}}(A),
\qquad (z,\bar z)\in\mathcal C,
\end{equation}
for every measurable set $A$. Here $C_{\mathrm{good}}$ is the set defined
in \eqref{defCc}, and
\begin{equation}\label{two_point_minorizing_measure}
m_{C_{\mathrm{good}}}(A)
:=\frac{(m\otimes m)(A\cap C_{\mathrm{good}})}
{(m\otimes m)(C_{\mathrm{good}})}.
\end{equation}
\end{teorema}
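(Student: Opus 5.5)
The plan is to prove the drift bound \eqref{two_point_drift} by linearising the two-point motion near the diagonal and using the lower-tail estimate of Theorem~\ref{lyapunov_large_deviations}. The minorization \eqref{two_point_minorization} is then obtained in two stages: first push pairs to macroscopic separation, then use a two-point analogue of the controllability argument of Proposition~\ref{four-step-trans}. The constants $t,l,\varepsilon_0,K$ are chosen in that order, and \eqref{two_point_threshold} is checked at the end.

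\textbf{Drift.} Write $\beta:=\varepsilon_0/\log L$ and take $l=3m_0$, with $m_0$ fixed.

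\emph{Pairs with $d(z,\bar z)\le L^{-t}$.} Since $\|D^2f_\omega\|\lesssim L^2$ and $\|Df_\omega\|\lesssim L$, there is a unit vector $v$ (the direction of $\bar z-z$) such that
\[
d(z_l,\bar z_l)=\bigl|Df^l_\omega(z)v\bigr|\,d(z,\bar z)\,\bigl(1+O(L^{2l}d(z,\bar z))\bigr),
\]
provided $t>2l+1$. Hence
\[
\mathcal P_2^lV_L\le(1+o(1))\,V_L(z,\bar z)\,\mathbb E\bigl|Df^{l}_\omega(z)v\bigr|^{-\beta}+\text{(bounded contribution from the part where }d(z_l,\bar z_l)>L^{-t}).
\]
Fix $\alpha_0<24/25$ and split according to $C\ge0$ in Theorem~\ref{lyapunov_large_deviations}:
\[
\mathbb E\,e^{-\beta\log|Df^{3m_0}v|}
\le e^{-\beta\alpha_0m_0\log L}\Bigl(1+\sum_{C\ge0}e^{\beta(C+1)}L^{-r_1m_0}e^{-r_2C}\Bigr)
\le e^{-\alpha_0\varepsilon_0m_0}\bigl(1+O(L^{-r_1m_0})\bigr).
\]
The sum converges because $\beta\to0$ as $L\to\infty$, so this quantity is at most some $\alpha<1$ uniformly in $L$ and $\delta$. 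Note that only the ratio $\beta\log L=\varepsilon_0$ enters.

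\emph{Pairs with $d(z,\bar z)>L^{-t}$.} Here $V_L=1$. Since $\|Df_\omega^{-1}\|\lesssim L$, we have $d(z_l,\bar z_l)\ge L^{-t-Cl}$, and therefore
\[
V_L(z_l,\bar z_l)\le e^{\varepsilon_0(t+Cl)}=:K.
\]
Iterating the resulting one-step inequality $\mathcal P_2^lV_L\le\alpha V_L+K$ gives \eqref{two_point_drift} after enlarging $K$ by a factor $(1-\alpha)^{-1}$. To get \eqref{two_point_threshold}, $\varepsilon_0$ must be chosen after $t$ and $l$. One then compares $e^{2t\varepsilon_0}$ against $K\sim e^{\varepsilon_0(t+Cl)}/(1-\alpha)$. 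This may force enlarging $t$ relative to $l$ (so that $t>Cl$), which is compatible with the linearisation requirement $t>2l+1$.

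\textbf{Minorization on $\mathcal C=\{d\ge L^{-2t}\}$.} This proceeds in two stages.

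\emph{Stage 1: reaching macroscopic separation.} I would show that, with probability bounded below uniformly in $L$, the pair reaches within $O(t)$ blocks of length $l$ the set $C_{\mathrm{good}}$ of pairs that are macroscopically separated with distinct $x$-coordinates. While the pair is close, the linearised growth rate from Theorem~\ref{lyapunov_large_deviations} gives expansion by $L^{\alpha_0 m}$ per $3m$ steps with high probability. Starting from distance $L^{-2t}$, a bounded number of blocks, depending only on $t$, therefore reaches distance $L^{-1}$. A few further steps with expansion $\sim L$ then separate the pair to order one, and the noise can be used to avoid equal $x$-coordinates.

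\emph{Stage 2: joint four-step control.} For $(z,\bar z)\in C_{\mathrm{good}}$, I would solve the joint equation $f^n_\omega(z)=w$, $f^n_\omega(\bar z)=\bar w$ by backward inversion, as in \eqref{inversemapF}. Over enough steps, say eight, the noise variables act as translations. The key point is that the two trajectories feel the same $\omega_j$ at points whose $f'$-values differ by order $L$, so the Jacobian of the map from noise to endpoints is non-degenerate. This gives, via Proposition~\ref{from_normalized_to_others}, a density bounded below by $h>0$ on $C_{\mathrm{good}}$, uniformly in $L$ and $\delta$. That density is essentially $\asymp 1$ because of the measure counting in \eqref{measurethesitmates}. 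Combining the two stages by the Markov property, with the number of steps padded to a common value $rl$, gives \eqref{two_point_minorization}.

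\textbf{Main obstacle.} The main obstacle is Stage 2: obtaining a lower bound on the joint density that is uniform in $L$ and $\delta$. The noise is only of size $L^{-\delta}$, while the two points share every noise variable. I would first try to reduce to Proposition~\ref{four-step-trans} applied to $z$, and then control the conditional law of $\bar z$ given $z$'s endpoint. Because the derivatives $f'$ along the two orbits are nearly independent, the last noise variables move $\bar z$ transversally with branches of size $\gtrsim L^{-1-\delta}$, and I would count these branches exactly as in Lemma~\ref{lemma_minor}.
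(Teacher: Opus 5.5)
Your drift argument is sound and differs from the paper's only in its inputs. You bound $\mathbb E|Df^{l}_\omega(z)v|^{-\beta}$ directly from the lower-tail estimate of Theorem~\ref{lyapunov_large_deviations}, whereas the paper Taylor-expands $J_l^{-p}$ and uses expansion on average (Theorem~\ref{blubluaverage}). Off $\Delta_t$ you use the crude bound $e^{\varepsilon_0(t+Cl)}$, whereas the paper gets $1+O(L^{-\frac\lambda2(l-3)})$ from Proposition~\ref{smalldev}, so its $K$ does not depend on $t$. Your $\alpha$ does not depend on $t$, so the threshold reduces to $e^{\varepsilon_0(t-Cl)}>2/(1-\alpha)^2$, which you can achieve.

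The genuine gap is Stage~2, which carries the whole minorization and is asserted rather than derived. A density lower bound uniform in $L,\delta$ has the form $(2\sigma)^{-n}\sum_{\text{preimages}}|\det|^{-1}$. So you need a count of noise preimages of each target pair that exactly balances an upper bound on the Jacobian and the factor $(2\sigma)^{-n}\asymp L^{n\delta}$. Non-degeneracy of the Jacobian only gives local invertibility, not a lower bound; in the paper it serves only to make branch images flat graphs. The tools you propose lose powers of $L$. Lemma~\ref{lemma_minor} only counts branches with $|f'|\ge L^{99/100}$ and components of length $L^{-1-\delta}$. Combined with \eqref{measurethesitmates}, this gives only $|\Omega^{(x,y)}_{(x_0,y_0)}|\gtrsim L^{-1/100-5\delta}$, i.e.\ a one-point density bound of order $L^{-1/100-\delta}$, not $\asymp 1$. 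Moreover, once you condition on the four-step endpoint of $z$, the last two noise variables are determined by $(\omega_0,\omega_1)$ (Proposition~\ref{four-step-trans}). They are therefore not available to move $\bar z$.

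The missing idea is the paper's branch-matching count. Start from $S$, where $|\sin2\pi x_0|,|\sin2\pi\bar x_0|\ge c$ and $|f'(x_0)-f'(\bar x_0)|\gtrsim L$, and fix $(\omega_2,\omega_3)$. Two forward steps give $\asymp L^{3-2\delta}$ disjoint disks in $(\omega_0,\omega_1)$, each of area $\asymp L^{-3}$, so together they carry a fixed fraction of the noise mass. Each is mapped bijectively onto the $(x_2,\bar x_2)$-torus, with $(y_2,\bar y_2)$ an $O(L^{-1})$-Lipschitz graph (Lemma~\ref{forward_small}). Four inverse steps from a target in $C_{\mathrm{good}}$ give $\asymp L^{3-2\delta}$ disks in $(\omega_5,\omega_4)$ whose images are $O(L^{-1})$-Lipschitz graphs over $(y,\bar y)$ (Lemma~\ref{inverse_small}). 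Since $\xi^-\circ\xi$ is an $O(L^{-2})$-contraction, each pair of branches gives exactly one solution, so there are $\gtrsim L^{6-4\delta}$ solutions. The reduction $\det Dh_\beta=\det[p,q,e,r]$ gives $|\det Dh_\beta|\lesssim L^6$, so the density is $\gtrsim L^{4\delta}L^{6-4\delta}L^{-6}\asymp 1$.

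Two further points need repair. First, your Stage~1 target, ``macroscopically separated with distinct $x$-coordinates'', does not give the transversality Stage~2 needs: if $x+\bar x\equiv\frac12$ then $f'(x)=f'(\bar x)$. You must land in a quantitative set like $S$, as in Lemma~\ref{thm:twostep_smallset}. The inverse side needs the analogous condition on backward coordinates, which is why $C_{\mathrm{good}}$ is defined through $A,\bar A$. Second, your hitting times depend on the initial pair, so padding to a common $rl$ requires returns to the starting set at two coprime times. The paper supplies these with its $2$- and $3$-step returns (Lemmas~\ref{thm:twostep_smallset} and~\ref{lem:S_to_eta_one_step}).
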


Subsection \ref{subsec:two_point_lyapunov} proves the estimates that yield
\eqref{two_point_drift}. Subsection \ref{subsec:two_point_smallset}
introduces the small-set construction, and Subsection
\ref{subsec:two_point_local_minorization} proves a six-step minorization
on a preliminary set $S$. Subsection \ref{subsec:two_point_extension}
extends this estimate to $\mathcal C$, giving
\eqref{two_point_minorization}, and assembles the proof of Theorem
\ref{what_harier_wants}. The forward-branch construction used in the
local minorization is proved in Subsection \ref{subsec:two_point_forward}.

\begin{proof}[Proof of Theorem \ref{mixingtwopointmotion}]
We apply Theorem \ref{HM_theorem} with
\begin{equation}\label{two_point_harris_identification}
\begin{gathered}
X=(\mathbb T^2\times\mathbb T^2)\setminus\Delta,
\qquad P=\mathcal P_2,\qquad V=V_L,\\
N=rl,\qquad \gamma=\alpha^r,\qquad
R=e^{2t\varepsilon_0},\qquad \xi=m_{C_{\mathrm{good}}},
\end{gathered}
\end{equation}
and with minorization constant $h$ in place of the parameter $\alpha$
in that theorem. Estimate \eqref{two_point_drift}, with $j=r$, gives
\eqref{harris_drift}; \eqref{two_point_minorization} gives
\eqref{harris_minorization}. Moreover, \eqref{two_point_threshold} implies
\eqref{harris_threshold}, since $\alpha^r\le\alpha$.

It remains to verify \eqref{normoperator} uniformly in $L,\delta$.
Since $f_\omega^{-1}$ is $(2\pi L+1)$-Lipschitz,
\[
d(f_\omega z,f_\omega\bar z)
\ge\frac{d(z,\bar z)}{2\pi L+1}.
\]
Writing $p=\varepsilon_0/\log L$, the definition of $V_L$ gives
\[
V_L(f_\omega z,f_\omega\bar z)
\le e^{t\varepsilon_0}(2\pi L+1)^p V_L(z,\bar z)
\le e^{(t+2)\varepsilon_0}V_L(z,\bar z)
\]
for all sufficiently large $L$. The factor $e^{t\varepsilon_0}$ accounts
for crossing the cutoff $d=L^{-t}$, where the definition of $V_L$ changes.
Consequently,
\[
|\mathcal P_2\phi|
\le\|\phi\|_{V_L}\mathcal P_2(1+V_L)
\le e^{(t+2)\varepsilon_0}\|\phi\|_{V_L}(1+V_L),
\]
and hence
\begin{equation}\label{two_point_operator_bound}
\|\mathcal P_2\|_{V_L\to V_L}
\le e^{(t+2)\varepsilon_0}.
\end{equation}
Thus \eqref{normoperator} holds with $B=e^{(t+2)\varepsilon_0}$.

Each map $f_\omega\times f_\omega$ preserves $m\otimes m$, so this is the invariant probability measure identified by Harris' theorem.
The desired estimate \eqref{2pmmixing} now follows from
\eqref{harris_mixing}. All the parameters entering that estimate are
independent of $L,\delta$.
\end{proof}

\subsection{Finding the Lyapunov function}
\label{subsec:two_point_lyapunov}
We prove the drift estimate \eqref{two_point_drift} by combining
contraction near the diagonal with a uniform bound away from it.
Our choice of a Lyapunov function singular at the diagonal is inspired
by the framework of \cite[Section~4]{BCZG2023}; in our setting, however, the mechanism producing the drift is the expansion on average in Theorem \ref{exponavg}, which
pushes nearby points away from the diagonal.

Set
\[
\Delta_t:=\{(z,\bar z)\in X:d(z,\bar z)\le L^{-t}\},
\]
and define
\begin{equation}\label{gittipi}
G_{t,p}(z,\bar z)
:=d(z,\bar z)^{-p}\mathbf1_{\{d(z,\bar z)\le L^{-t}\}}
+\mathbf1_{\{d(z,\bar z)>L^{-t}\}}.
\end{equation}
In particular, the weight in \eqref{VLdefinition} satisfies
\begin{equation}\label{two_point_weight_identification}
V_L=G_{t,\varepsilon_0/\log L}.
\end{equation}

\begin{proposicao}\label{Lyapunov function}
Let $G_{t,p}$ be as in \eqref{gittipi}. For every fixed $l\ge l_0:=11$,
there exists $\varepsilon_*>0$, independent of $L,\delta$, such that
the following holds. Fix $0<\varepsilon_0\le\varepsilon_*$ and
$t\ge2l+1$, and set $p=\varepsilon_0/\log L$. For $L$ sufficiently
large and $\delta>0$ sufficiently small,
\begin{equation}\label{first_lyapunov}
\mathcal P_2^lG_{t,p}(z,\bar z)
\le\alpha G_{t,p}(z,\bar z),
\qquad (z,\bar z)\in\Delta_t,
\end{equation}
where $\alpha:=1-\varepsilon_0/2\in(0,1)$. Moreover,
\begin{equation}\label{concoutsidediagonal}
\sup_{(z,\bar z)\notin\Delta_t}
\mathcal P_2^lG_{t,p}(z,\bar z)
\le1+O\left(L^{-\frac\lambda2(l-3)}\right),
\end{equation}
where $\lambda>0$ is supplied by Proposition \ref{smalldev}.
\end{proposicao}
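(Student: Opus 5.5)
The plan is to treat the two estimates separately. Near the diagonal, a linearization reduces \eqref{first_lyapunov} to an exponential-moment bound for $\log|Df^l_\omega v|$, which expansion on average (Theorem \ref{blubluaverage}) supplies. Away from the diagonal, \eqref{concoutsidediagonal} reduces to a probability estimate for returning close to the diagonal, which I expect Proposition \ref{smalldev} to provide.

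\emph{A pointwise bound.} Since distances on $\mathbb T^2$ are bounded by $1$, for every $(w,\bar w)\in X$ we have
\[
G_{t,p}(w,\bar w)\le d(w,\bar w)^{-p}.
\]
Indeed, $d^{-p}\ge1$ always, and $G_{t,p}=d^{-p}$ on $\Delta_t$. So it suffices to bound $\mathbb E\bigl[d(f^l_\omega z,f^l_\omega\bar z)^{-p}\bigr]$.

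\emph{Near the diagonal.} Fix $(z,\bar z)\in\Delta_t$ with $d=d(z,\bar z)$, and write $\bar z=z+dv$ in a local chart with $v\in\mathbb S^1$.
\begin{itemize}
\item Each $f_{\omega_j}$ has first derivative $O(L)$ and second derivative $O(L)$, uniformly in the noise. The chain rule then gives $\|D^2f^l_\omega\|\le (CL)^{2l}$, uniformly in $\omega$.
\item Taylor expansion gives
\[
f^l_\omega(\bar z)-f^l_\omega(z)=d\,Df^l_\omega(z)v+O\bigl((CL)^{2l}d^2\bigr).
\]
Here $|Df^l_\omega(z)v|\ge (CL)^{-l}$, because $Df_\omega^{-1}$ is also $O(L)$.
\item Since $d\le L^{-t}$ and $t\ge2l+1$, the relative error is $O(C^{3l}L^{3l-t})$. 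This looks a little tight when $t$ is exactly $2l+1$. If necessary I will simply enlarge $t$, since $t$ is a free parameter independent of $L$. With that choice the relative error is $o(1)$.
\item Hence
\[
d(f^l_\omega z,f^l_\omega\bar z)^{-p}\le (1+o(1))^{p}\,|Df^l_\omega(z)v|^{-p}\,d^{-p}.
\]
Because $p=\varepsilon_0/\log L\to0$, the prefactor $(1+o(1))^p$ is $1+o(1)$.
\end{itemize}

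\emph{The moment estimate.} Set $Y:=\log|Df^l_\omega(z)v|/\log L$. Then $|Y|\le 2l$ for $L$ large, and $|Df^l_\omega(z)v|^{-p}=e^{-\varepsilon_0Y}$.
\begin{itemize}
\item Use the elementary inequality $e^{-x}\le 1-x+x^2e^{|x|}$.
\item Theorem \ref{blubluaverage} gives $\mathbb E Y\ge 3/4$ for $l\ge11$, uniformly in $(z,v)$.
\item Therefore
\[
\mathbb E\,e^{-\varepsilon_0Y}\le 1-\tfrac34\varepsilon_0+4l^2\varepsilon_0^2e^{2l\varepsilon_0}.
\]
\item Choose $\varepsilon_*=\varepsilon_*(l)$ small enough that the right-hand side, multiplied by the $1+o(1)$ prefactor, is at most $1-\varepsilon_0/2=\alpha$ for all $\varepsilon_0\le\varepsilon_*$.
\end{itemize}
Since $d^{-p}=G_{t,p}(z,\bar z)$ on $\Delta_t$, this gives \eqref{first_lyapunov}.

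\emph{Away from the diagonal.} Take $(z,\bar z)\notin\Delta_t$, so $d(z,\bar z)>L^{-t}$, and write $d'$ for $d(f^l_\omega z,f^l_\omega\bar z)$. Split $\mathcal P^l_2G_{t,p}$ according to whether $d'>L^{-t}$ or not:
\begin{itemize}
\item On $\{d'>L^{-t}\}$ the integrand equals $1$.
\item On $\{d'\le L^{-t}\}$, the Lipschitz bound for $(f^l_\omega)^{-1}$ gives $d'\ge L^{-t}(CL)^{-l}$. Hence the integrand is at most
\[
L^{p(t+l)}C^{pl}\le e^{(t+2l)\varepsilon_0}=O(1).
\]
\end{itemize}
Thus
\[
\mathcal P^l_2G_{t,p}(z,\bar z)\le 1+O(1)\,\mathbb P\bigl(d'\le L^{-t}\bigr),
\]
and it remains to show $\mathbb P(d'\le L^{-t})\lesssim L^{-\frac\lambda2(l-3)}$ uniformly over pairs at distance $>L^{-t}$.

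\emph{Main obstacle.} This last probability bound is where I expect the real difficulty. I plan to obtain it from Proposition \ref{smalldev}, presumably a small-deviation estimate stating that each extra step lowers the probability of landing in a tiny neighbourhood by a factor $L^{-\lambda}$.

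For pairs with $d\ge L^{-2l}$, I would argue as follows. Condition on the first three noises, where there is no control. After those three steps, the remaining $l-3$ steps each contribute a factor $L^{-\lambda/2}$, because the noise acts on each coordinate with Jacobian one (as in Proposition \ref{four-step-trans}).

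For the intermediate scales $L^{-t}<d<L^{-2l}$, I would reuse the linearization above. The event $\{d'\le L^{-t}\}$ then forces $\log|Df^l_\omega v|$ to lie well below its mean. Theorem \ref{lyapunov_large_deviations} makes this event polynomially unlikely in $L$, which again gives a bound of the form $L^{-c(l-3)}$.
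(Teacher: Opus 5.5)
Your near-diagonal argument is the paper's (Taylor linearization plus Theorem~\ref{blubluaverage} with $p=\varepsilon_0/\log L$). Your bound $e^{(t+2l)\varepsilon_0}$ on the integrand over $\{d'\le L^{-t}\}$ is correct, and it even makes the paper's dyadic sum over $n$ unnecessary. The gap is the estimate $\mathbb P(d'\le L^{-t})\lesssim L^{-\frac\lambda2(l-3)}$, which you leave as a plan.

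For $d\ge L^{-2l}$, the mechanism you invoke is not available. The unit Jacobian of Propositions~\ref{fromrmtogoodset} and~\ref{four-step-trans} concerns the one-point chain. The common noise acts on the difference only through $v_{s+1}=-2L\sin(\pi(u_s+2\omega_s))\sin(\pi v_s)-(y_s-\bar y_s)$, whose amplitude is $\asymp L|v_s|$. Also, the exponent $l-3$ is not a count of uncontrolled steps. For $L^{-t}<d<L^{-2l}$, your relative linearization error is no longer small. With an additive error, Theorem~\ref{lyapunov_large_deviations} only yields some $L^{-cl}$ with $c$ fixed by $r_1,r_2$, not the stated exponent in terms of the $\lambda$ of Proposition~\ref{smalldev}.

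The missing idea is a localization you already have at hand. Since $(f^l_\omega)^{-1}$ is $(2\pi L+1)^l$-Lipschitz, $\{d'\le L^{-t}\}$ is empty unless $L^{-t}<d\le(2\pi L+1)^lL^{-t}$; in particular your first regime is empty once $t\ge 3l+1$. In that window $|v_k|\le d(z_k,\bar z_k)\lesssim L^{2l-1-t}<\eta_0$ for $0\le k\le l-1$. This is exactly where $t\ge 2l+1$ is used, and it means the event $E_{0,l-1}$ of Proposition~\ref{smalldev} holds surely. Moreover, $d>L^{-t}$ forces $|v_0|\gtrsim L^{-t-2}$ or $|v_1|\gtrsim L^{-t}$: if $v_0$ is tiny, then $y_0-\bar y_0$ dominates $v_1$. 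By \eqref{substitution}, $\{|v_l|\le L^{-t}\}$ is then contained, up to constants, in $\{S^0_{l-1}\le 2\}$ or $\{S^1_{l-2}\le 0\}$. Proposition~\ref{smalldev} with $(k,r)=(l-1,2)$ or $(l-2,0)$ bounds its probability by $O(L^{-\lambda(l-3)})$. Combined with your $O(1)$ bound on the integrand, this gives \eqref{concoutsidediagonal}.

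Separately, you may not enlarge $t$, since the statement is for every $t\ge 2l+1$. Your concern about small $\|Df^l_\omega(z)v\|$ is legitimate (the paper's bound $\|\rho_l\|\lesssim L^{2l-t}$ tacitly assumes it is not small), but it is harmless. Apply your inequality $e^{-x}\le 1-x+x^2e^{|x|}$ to $Y':=\log J_l/\log L$ itself, where $J_l=d'/d$ and $|Y'|\le 2l$ by the two-sided Lipschitz bounds. Off the event $\{\|Df^l_\omega(z)v\|<L^{-1/2}\}$, the relative Taylor error is at most $C^{2l}L^{2l+\frac12-t}\le C^{2l}L^{-1/2}$, so $Y'=Y+o(1)$ there. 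That event has probability $o(1)$ by Theorem~\ref{lyapunov_large_deviations}, and on it $|Y'-Y|\le 4l$. Hence $\mathbb E Y'\ge\frac34-o(1)$, and your choice of $\varepsilon_*(l)$ goes through for every $t\ge 2l+1$.
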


To prove this result, we need the following large-deviation estimate.
Let $(x_n,y_n),(\bar x_n,\bar y_n)$ satisfy \eqref{twopointmotion}.
For the following local estimates, choose compatible real lifts of the
initial points and iterate the lifted recurrence. Define
\begin{align*}
u_n&=x_n+\bar x_n,\\
v_n&=x_n-\bar x_n.
\end{align*}
Then
\begin{equation}\label{unnevienne}
\begin{aligned}
u_n
&=2L\cos\bigl(\pi(u_{n-1}+2\omega_{n-1})\bigr)
\cos(\pi v_{n-1})-(y_{n-1}+\bar y_{n-1}),\\
v_n
&=-2L\sin\bigl(\pi(u_{n-1}+2\omega_{n-1})\bigr)
\sin(\pi v_{n-1})-(y_{n-1}-\bar y_{n-1}).
\end{aligned}
\end{equation}

Fix $\chi\in(0,1/8)$ and take $0<\delta<\chi/4$. Given the two orbits up to
time $s$, define $\xi_s$ using the next noise coordinate $\omega_s$:
\begin{equation}\label{defviesse}
\xi_s:=
\begin{cases}
1-\chi,& |v_{s+1}|\ge L^{1-\chi}|v_s|,\\
0,& |v_s|\le|v_{s+1}|<L^{1-\chi}|v_s|,\\
-n,& L^{-n}|v_s|\le|v_{s+1}|<L^{-n+1}|v_s|,
\quad n\ge1.
\end{cases}
\end{equation}
When $v_s=0$, set $\xi_s=1-\chi$; the remaining zero-probability
boundary cases do not affect the estimates below.
By definition, we have
\begin{equation}\label{substitution}
|v_{s+1}|\ge L^{\xi_s}|v_s|.
\end{equation}

For $i,k\ge0$, define
\begin{equation}\label{sumxi1}
S_k^i=\xi_i+\dots+\xi_{i+k}.
\end{equation}

\begin{proposicao}\label{smalldev}
There exist $\eta_0>0$ and $\lambda>0$ such that, for every fixed $r\ge0$
and all $i,k\ge0$ with $k\ge2r$, setting
\[
E_{i,k}:=\{|v_j|<\eta_0:\ i\le j\le i+k\},
\]
we have, for $L$ sufficiently large,
\begin{equation}\label{distancedev}
\mathbb P\bigl(E_{i,k}\cap\{S_k^i\le r-n\}\bigr)
\lesssim L^{-\lambda(k-r+n)},
\qquad n\ge0.
\end{equation}
In particular, the same bound holds without $E_{i,k}$ whenever the
two lifted orbits remain in this range throughout the block.
\end{proposicao}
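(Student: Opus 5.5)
The plan is to prove Proposition~\ref{smalldev} through an exponential moment (Chernoff) bound for $S^i_k$ on the event $E_{i,k}$. The bound will come from a one-step conditional estimate, iterated with the tower property along the filtration $\mathcal F_0^{j}$.

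\textbf{One-step estimate.} The key input is a conditional tail bound for $\xi_s$. Conditionally on $\mathcal F_0^{s-1}$ and on $|v_s|<\eta_0$, we aim to show
\[
\mathbb P\bigl(\xi_s\le -n\mid\mathcal F_0^{s-1}\bigr)\lesssim L^{-c(n+1)},\qquad n\ge0,
\]
for some $c>0$ independent of $L$. This should follow from \eqref{unnevienne}. For $|v_s|<\eta_0$ small we have $\sin(\pi v_s)\approx\pi v_s$, so
\[
v_{s+1}\approx -2\pi L v_s\sin\bigl(\pi(u_s+2\omega_s)\bigr)-(y_s-\bar y_s),
\qquad y_s-\bar y_s=v_{s-1}.
\]
Only $\omega_s$ is random here, and it varies over an interval of length $2\sigma=2L^{-\delta}$. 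The bound $|v_{s+1}|<L^{1-\chi}|v_s|$ (that is, $\xi_s\le 0$) forces $\sin(\pi(u_s+2\omega_s))$ into a window of width about $L^{-\chi}$ around the value $-v_{s-1}/(2\pi Lv_s)$. The Lebesgue measure of the admissible $\omega_s$ is therefore $\lesssim L^{-\chi/2}$, uniformly: the square root accounts for the critical points of $\sin$, the effective interval has length $\sigma$, and we need $\delta<\chi/4$ for the window to be small relative to $\sigma$. The event $\xi_s\le -n$ is much stronger, since it forces $|v_{s+1}|\le L^{-n+1}|v_s|$. Its window has width $\lesssim L^{-n-\chi}$, giving conditional probability $\lesssim L^{-(n+\chi)/2}L^{\delta}$. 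Adjusting constants yields $\mathbb P(\xi_s\le-n\mid\cdot)\lesssim L^{-\lambda'(n+1)}$ for $n\ge0$.

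\textbf{Moment generating function.} Next we bound $\mathbb E[L^{-\theta\xi_s}\mid\mathcal F_0^{s-1}]$ on $\{|v_s|<\eta_0\}$ for a fixed small $\theta\in(0,\lambda')$. Summing the tails gives
\[
\mathbb E[L^{-\theta\xi_s}\mid\cdot]\le L^{-\theta(1-\chi)}+\sum_{n\ge0}L^{\theta n}L^{-\lambda'(n+1)}\lesssim L^{-\theta(1-\chi)}+L^{-\lambda'}\le L^{-\mu}
\]
for some $\mu>0$, once $L$ is large. We then insert the indicators of $\{|v_j|<\eta_0\}$ and iterate by the tower property from $j=i+k$ down to $j=i$. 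Each conditional factor is controlled because $\mathbf 1_{\{|v_j|<\eta_0\}}$ is $\mathcal F_0^{j-1}$-measurable. This gives
\[
\mathbb E\bigl[\mathbf 1_{E_{i,k}}L^{-\theta S^i_k}\bigr]\le C^{k+1}L^{-\mu(k+1)}.
\]
Markov's inequality then yields
\[
\mathbb P\bigl(E_{i,k}\cap\{S^i_k\le r-n\}\bigr)\le L^{\theta(r-n)}C^{k+1}L^{-\mu(k+1)}.
\]
We take $\lambda\le\min(\mu,\theta)/4$ and absorb $C^{k+1}$ into $L^{-\mu(k+1)/2}$ for large $L$. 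Using $k\ge2r$, the factor $L^{\theta r}$ is absorbed by $L^{-\mu k/2}$, so the bound becomes $L^{-\lambda(k-r+n)}$. The final claim, for orbits remaining in this range, is the same computation with the indicators identically one.

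\textbf{Main obstacle.} The delicate step is the uniform one-step estimate. It has to hold near the critical points of $\sin$, where the dependence on $\omega_s$ degenerates, and for all sizes of the ratio $v_{s-1}/(Lv_s)$. When this ratio is huge, $\xi_s$ tends to be large, which is harmless, so the difficulty sits in the regime where the window lies near $\pm1$. There we will use the square-root sublevel bound for $\sin$ on an interval of length $\sigma$. Because $\sigma=L^{-\delta}$ shrinks, this bound also requires $\delta$ small compared to $\chi$.
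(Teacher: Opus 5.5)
Your route is the same as the paper's. You derive one-step bounds \eqref{inequality1}--\eqref{inequalitytwo} from the square-root sublevel estimate for $\sin$ over an $\omega_s$-interval of length $2\sigma$. You then take a conditional exponential moment on $\{|v_s|<\eta_0\}$, condition successively using that $\mathbf 1_{\{|v_j|<\eta_0\}}$ is $\mathcal F_0^{j-1}$-measurable, and finish with Markov's inequality. However, your final absorption step fails as written, and it is exactly where $k\ge 2r$ enters. Your moment bound is dominated by $L^{-\theta(1-\chi)}$, since $\xi_s=1-\chi$ with probability close to one, so necessarily $\mu\le\theta(1-\chi)<\theta$. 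After you trade $C^{k+1}$ for half of $L^{-\mu(k+1)}$, you are left with $L^{\theta(r-n)-\mu(k+1)/2}$. For $r=k/2$ and $n=0$ the exponent is $\tfrac{(\theta-\mu)k}{2}-\tfrac{\mu}{2}$, which is positive and grows with $k$. So $L^{\theta r}$ is not absorbed by $L^{-\mu k/2}$, and no choice $\lambda\le\min(\mu,\theta)/4$ repairs this.

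The fix is to keep almost all of the exponent. For $\theta<\lambda'$ and $L$ large, $L^{-\theta(1-\chi)}+CL^{-\lambda'}\le L^{-\theta(1-2\chi)}$, and this already swallows the constant. Hence $\mathbb E[\mathbf 1_{E_{i,k}}L^{-\theta S^i_k}]\le L^{-\theta(1-2\chi)(k+1)}$, and Markov gives the bound $L^{-\theta((1-2\chi)(k+1)-r+n)}$. Since $r\le k/2$ and $\chi<1/8$, we have $(1-2\chi)(k+1)-r+n\ge\tfrac12(k-r+n)$, which gives $\lambda=\theta/2$. This is precisely the paper's conclusion, with $\beta=\theta$. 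What makes it work is that the good value $1-\chi$ of $\xi_s$ exceeds $1/2$ with room to spare; knowing only $\mu>0$ is not enough.

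A smaller slip: for $n\ge1$, $\{\xi_s\le -n\}=\{|v_{s+1}|<L^{1-n}|v_s|\}$ forces a window of width $\asymp L^{-n}$, not $L^{-n-\chi}$. The conditional bound is therefore $L^{\delta-n/2}$, as in \eqref{inequalitytwo}. This is harmless once you adjust constants.
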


\begin{proof}
The claim follows from a standard exponential-moment argument once we
establish the following conditional estimates, with
$\mathcal F_0^{s-1}=\sigma(\omega_0,\ldots,\omega_{s-1})$:
\begin{equation}\label{inequality1}
\mathbb P\left(
\xi_s=1-\chi
\,\middle|\,
\mathcal F_0^{s-1}
\right)
\ge1-O\left(L^{\delta-\chi/2}\right),
\end{equation}
and, for every integer $m\le-1$,
\begin{equation}\label{inequalitytwo}
\mathbb P\left(
\xi_s=m
\,\middle|\,
\mathcal F_0^{s-1}
\right)
\lesssim L^{\delta+m/2},
\end{equation}
whenever $|v_s|<\eta_0$, for $\eta_0$ small enough.

When $v_s=0$, both estimates are immediate from the definition of
$\xi_s$. Otherwise, write
\[
v_{s+1}
=B_s\sin\bigl(\pi(u_s+2\omega_s)\bigr)+a_s,
\]
with
\[
B_s=-2L\sin(\pi v_s),
\qquad
a_s=-(y_s-\bar y_s).
\]
Then, for $\gamma\in\mathbb R$, since
$|\sin(\pi v_s)|\asymp|v_s|$ for $\eta_0$ small enough,
\begin{align*}
&\mathbb P\left\{
|v_{s+1}|\le L^\gamma|v_s|
\,\middle|\,
\mathcal F_0^{s-1}
\right\}\\
&\qquad=
\mathbb P\left\{
\left|
\sin\bigl(\pi(u_s+2\omega_s)\bigr)+\frac{a_s}{B_s}
\right|
\le L^\gamma\frac{|v_s|}{|B_s|}
\,\middle|\,
\mathcal F_0^{s-1}
\right\}\\
&\qquad\le
\mathbb P\left\{
\left|
\sin\bigl(\pi(u_s+2\omega_s)\bigr)+\frac{a_s}{B_s}
\right|
\lesssim L^{\gamma-1}
\,\middle|\,
\mathcal F_0^{s-1}
\right\}\\
&\qquad\lesssim L^\delta L^{\frac{\gamma-1}{2}}.
\end{align*}
Taking $\gamma=1-\chi$ gives \eqref{inequality1}, while, for $m\le-1$,
taking $\gamma=m+1$ gives \eqref{inequalitytwo}.

For some $\beta>0$, these estimates imply, on $\{|v_s|<\eta_0\}$,
\[
\mathbb E\bigl[L^{-\beta\xi_s}\mid\mathcal F_0^{s-1}\bigr]
\le L^{-\beta(1-2\chi)}
\]
for $L$ sufficiently large. Successive conditioning with the indicators
$\mathbf1_{\{|v_s|<\eta_0\}}$ and Markov's inequality give
\[
\mathbb P\bigl(E_{i,k}\cap\{S_k^i\le r-n\}\bigr)
\le L^{-\beta((1-2\chi)(k+1)-r+n)}.
\]
Since $k\ge2r$ and $\chi<1/8$, this implies \eqref{distancedev}
with $\lambda=\beta/2$.
\end{proof}

\begin{proof}[Proof of Proposition \ref{Lyapunov function}]
The proof is divided into two steps.

\medskip
\noindent\textbf{Step 1: Proof of \eqref{first_lyapunov}.}

If $(z,\bar z)\in\Delta_t$, then
\begin{equation}\label{pielgpt}
\begin{aligned}
\mathcal P_2^lG_{t,p}(z,\bar z)
&\le
\mathbb E\left[
d(f_\omega^l(z),f_\omega^l(\bar z))^{-p}
\right]\\
&=
G_{t,p}(z,\bar z)
\mathbb E\left[J_l(\omega,z,\bar z)^{-p}\right],
\end{aligned}
\end{equation}
where
\begin{equation}\label{jel}
J_l(\omega,z,\bar z)
:=
\frac{d(f_\omega^l(z),f_\omega^l(\bar z))}
{d(z,\bar z)}.
\end{equation}
Set
\[
v:=\frac{z-\bar z}{\|z-\bar z\|}.
\]
Using Taylor expansion and the bound
$\|f_\omega\|_{C^2}\lesssim L$, we obtain
\[
J_l(\omega,z,\bar z)
=
\|Df_\omega^l(z)v\|\,\|e_l+\rho_l\|,
\]
where
\[
e_l=
\frac{Df_\omega^l(z)v}{\|Df_\omega^l(z)v\|},
\qquad
\|\rho_l\|\lesssim L^{2l-t}.
\]
Consequently,
\begin{equation}\label{jeielminusp}
J_l(\omega,z,\bar z)^{-p}
=
\|Df_\omega^l(z)v\|^{-p}\,
\|e_l+\rho_l\|^{-p}.
\end{equation}

We use the Taylor expansions
\begin{align*}
e^{-px}&=1-px+O(p^2x^2),\\
(1+x)^{-p}&=1+O(px),
\end{align*}
for the first and second factors in \eqref{jeielminusp}, respectively.
This gives
\begin{align*}
J_l(\omega,z,\bar z)^{-p}
={}&1-p\log\|Df_\omega^l(z)v\|\\
&+O\left(p^2\log^2\|Df_\omega^l(z)v\|\right)
+O(pL^{2l-t}).
\end{align*}
Choosing
\[
p=\frac{\varepsilon_0}{\log L},
\]
with $\varepsilon_0>0$ sufficiently small, we obtain
\[
J_l(\omega,z,\bar z)^{-p}
=
1-p\log\|Df_\omega^l(z)v\|
+O(\varepsilon_0^2)+O(L^{2l-t}).
\]

By the expansion-on-average estimate in Theorem \ref{blubluaverage},
we may take $l_0=11$ so that, for every $l\ge l_0$ and $L$ sufficiently large,
\[
\inf_{z,v}
\mathbb E\log\|Df_\omega^l(z)v\|
\ge\frac34\log L.
\]
Taking expectations therefore gives
\begin{align*}
\mathbb E\left[J_l(\omega,z,\bar z)^{-p}\right]
&\le
1-\frac34\varepsilon_0
+O(\varepsilon_0^2)+O(L^{2l-t})\\
&\le1-\frac12\varepsilon_0
=:\alpha<1,
\end{align*}
provided $\varepsilon_0$ is sufficiently small and $L$ sufficiently large.
The claim follows from \eqref{pielgpt}.

\medskip
\noindent\textbf{Step 2: Proof of \eqref{concoutsidediagonal}.}

We estimate $\mathcal P_2^lG_{t,p}(z,\bar z)$ when
$d(z_0,\bar z_0)>L^{-t}$. First, either
$|v_0|\gtrsim L^{-t-2}$ or $|v_0|<L^{-t-2}$.
In the latter case,
$|y_0-\bar y_0|\gtrsim L^{-t}$ and
\[
v_1
=
L\cos(2\pi(x_0+\omega_0))
-L\cos(2\pi(\bar x_0+\omega_0))
-(y_0-\bar y_0).
\]
The first difference is $O(L^{-t-1})$, so
$|v_1|\gtrsim L^{-t}$. Thus either
\[
|v_0|\gtrsim L^{-t-2}
\qquad\text{or}\qquad
|v_1|\gtrsim L^{-t}.
\]

Furthermore, if
\[
d(z_0,\bar z_0)>(2\pi L+1)^lL^{-t},
\]
then the inverse Lipschitz bound gives
$d(z_k,\bar z_k)>L^{-t}$ for $0\le k\le l$.
In particular, $G_{t,p}(z_l,\bar z_l)=1$.
We may therefore assume that
\[
L^{-t}<d(z_0,\bar z_0)\le(2\pi L+1)^lL^{-t}.
\]
This implies
\[
|v_k|\le d(z_k,\bar z_k)\lesssim L^{-t+2l},
\qquad 0\le k\le l-1.
\]
Since $t\ge2l+1$, for $L$ sufficiently large we have
$|v_k|<\eta_0$ throughout this block. Proposition \ref{smalldev}
therefore applies with $E_{0,l-1}$ of probability one.

By the definition of $G_{t,p}$,
\begin{align*}
\mathcal P_2^lG_{t,p}(z,\bar z)
&\le
1+\int_{\{d(z_l,\bar z_l)\le L^{-t}\}}
d(z_l,\bar z_l)^{-p}\,d\mathbb P(\omega)\\
&\le
1+\int_{\{|v_l|\le L^{-t}\}}
|v_l|^{-p}\,d\mathbb P(\omega).
\end{align*}
Set
\[
I:=
\int_{\{|v_l|\le L^{-t}\}}|v_l|^{-p}\,d\mathbb P(\omega).
\]
Then
\begin{align*}
I
&=
\sum_{n=0}^\infty
\int_{\{L^{-t-n}>|v_l|>L^{-t-n-1}\}}
|v_l|^{-p}\,d\mathbb P(\omega)\\
&\le
\sum_{n=0}^\infty
L^{(n+t+1)p}\,
\mathbb P\{|v_l|\le L^{-t-n}\}.
\end{align*}

In the first case,
\begin{equation}\label{step 1}
I\lesssim
\sum_{n=0}^\infty
L^{(n+t+1)p}
\mathbb P\{|v_l|\le L^{-n+2}|v_0|\},
\end{equation}
whereas in the second case,
\begin{equation}\label{step 2}
I\lesssim
\sum_{n=0}^\infty
L^{(n+t+1)p}
\mathbb P\{|v_l|\le L^{-n}|v_1|\}.
\end{equation}

By \eqref{substitution},
\begin{align*}
|v_l|&\ge L^{S_{l-1}^0}|v_0|,
&
S_{l-1}^0&=\xi_0+\dots+\xi_{l-1},\\
|v_l|&\ge L^{S_{l-2}^1}|v_1|,
&
S_{l-2}^1&=\xi_1+\dots+\xi_{l-1}.
\end{align*}
Therefore,
\begin{align*}
\{|v_l|\le L^{-n+2}|v_0|\}
&\subset\{S_{l-1}^0\le-n+2\},\\
\{|v_l|\le L^{-n}|v_1|\}
&\subset\{S_{l-2}^1\le-n\}.
\end{align*}

For the first case, Proposition \ref{smalldev}, with $k=l-1$ and
$r=2$ (so that $k\ge2r$), gives
\[
\mathbb P\{S_{l-1}^0\le-n+2\}
\lesssim L^{-\lambda(l-3+n)}.
\]
Fixed multiplicative constants in the preceding distance comparisons
are absorbed into the implicit constant. Hence
\begin{align*}
I
&\lesssim
\sum_{n=0}^\infty
L^{(n+t+1)p}L^{-\lambda(l-3+n)}\\
&=
L^{(t+1)p-\lambda(l-3)}
\sum_{n=0}^\infty L^{n(p-\lambda)}\\
&\lesssim L^{-\frac\lambda2(l-3)},
\end{align*}
provided $L$ is sufficiently large that
\[
p<\lambda,
\qquad
(t+1)p<\frac\lambda2(l-3).
\]

In the second case, use $k=l-2$ and $r=0$ to obtain
\begin{align*}
I
&\lesssim
\sum_{n=0}^\infty
L^{(n+t+1)p}L^{-\lambda(l-2+n)}\\
&=
L^{(t+1)p-\lambda(l-2)}
\sum_{n=0}^\infty L^{n(p-\lambda)}\\
&\lesssim L^{-\frac\lambda2(l-3)}.
\end{align*}
These estimates, together with \eqref{step 1} and \eqref{step 2},
prove \eqref{concoutsidediagonal}.
\end{proof}

We record explicitly how the two estimates yield the drift condition
needed for Harris' theorem. With the same exponent $p=\varepsilon_0/\log L$,
\eqref{first_lyapunov} and \eqref{concoutsidediagonal} imply, for $L$
sufficiently large,
\begin{equation}\label{two_point_one_block_drift}
\mathcal P_2^lV_L\le\alpha V_L+2
\qquad\text{on }X.
\end{equation}
Consequently, setting
\begin{equation}\label{two_point_drift_constant}
K:=\frac{2}{1-\alpha},
\end{equation}
we obtain
\begin{equation}\label{two_point_iterated_drift}
\mathcal P_2^{jl}V_L
\le\alpha^jV_L+2\sum_{q=0}^{j-1}\alpha^q
\le\alpha^jV_L+K,
\qquad j\ge0.
\end{equation}
This is \eqref{two_point_drift}. After fixing $l$ and $\varepsilon_0$,
we may choose $t\ge2l+1$ sufficiently large that
\begin{equation}\label{two_point_parameter_choice}
e^{2t\varepsilon_0}>
\frac{4}{(1-\alpha)^2}
=\frac{2K}{1-\alpha},
\end{equation}
which gives \eqref{two_point_threshold}.

\subsection{The small-set construction}
\label{subsec:two_point_smallset}

Recall the sublevel set in \eqref{two_point_sublevel}:
\begin{equation}\label{candidatesmallset}
\mathcal C
:=\{(z,\bar z):d(z,\bar z)\ge L^{-2t}\}
=\{G_{t,\varepsilon_0/\log L}\le e^{2t\varepsilon_0}\}.
\end{equation}
The aim of the next two subsections is to prove the following
minorization, which is precisely \eqref{two_point_minorization}.

\begin{proposicao}\label{small_set_on_C}
Fix $l$ and $t$ as in Proposition \ref{Lyapunov function}. There exist
an even integer $r\ge1$ and $h_t>0$, independent of $L,\delta$, such that
\begin{equation}\label{two_point_smallset_estimate}
\bar{\mathcal P}^{\,rl}_{2,(z,\bar z)}(\cdot)
\ge h_t\,m_{C_{\mathrm{good}}}(\cdot),
\qquad (z,\bar z)\in\mathcal C.
\end{equation}
\end{proposicao}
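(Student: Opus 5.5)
The minorization on $\mathcal C$ will be proved in two stages, following the roadmap announced after Theorem \ref{what_harier_wants}: a local six-step minorization on a preliminary set, and then a transport argument from all of $\mathcal C$ into that set.

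\textbf{Stage 1: local minorization.} Define a preliminary set $S\subset X$ of pairs $(z,\bar z)$ at distance bounded below by a fixed constant. On $S$, both points should also avoid the critical strips $\{|f'|\lesssim L^{99/100}\}$, and so should their $x$-coordinates after adding noise. The goal is a six-step bound
\[
\bar{\mathcal P}^{6}_{2,(z,\bar z)}\ge h_0\,m_{C_{\mathrm{good}}},\qquad (z,\bar z)\in S,
\]
with $h_0$ independent of $L,\delta$. Here $C_{\mathrm{good}}$ is a fixed positive-measure subset of $\mathbb T^2\times\mathbb T^2$ consisting of pairs away from the diagonal and from the critical set.

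The mechanism is a two-point analogue of Proposition \ref{four-step-trans}. The two orbits share the same noise, so each coordinate $\omega_j$ acts as the same translation on both points. The independent control comes from the fact that $x\mapsto f(x+\omega)$ acts differently at two points far apart.

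\begin{enumerate}
\item Use the first noise variables to solve, branch by branch, for the forward images of $z$ and $\bar z$. This is the ``forward-branch construction.''
\item Show that the map from $(\omega_0,\dots,\omega_5)$ to $(z_6,\bar z_6)\in\mathbb T^4$ has many monotone branches.
\item On each branch, show that the Jacobian determinant is comparable to a fixed power of $L$ and that distortion is bounded.
\item Show that the branches are numerous enough, of order $L^{c}$ per coordinate direction, to cancel the $\sigma^{-6}=L^{6\delta}$ normalization of the noise density.
\end{enumerate}

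A change of variables, as in Propositions \ref{fromrmtogoodset} and \ref{from_normalized_to_others}, then yields a density bounded below uniformly on $C_{\mathrm{good}}$.

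\textbf{Stage 2: extension to $\mathcal C$.} Starting from $(z,\bar z)\in\mathcal C$, so that $d(z,\bar z)\ge L^{-2t}$, show that after $rl-6$ steps the pair lies in $S$ with probability at least some $\beta>0$, uniformly in $L,\delta$. The inputs are as follows.

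\begin{itemize}
\item Proposition \ref{smalldev} gives $|v_{s+1}|\ge L^{1-\chi}|v_s|$ with high probability at each step while $|v_s|<\eta_0$.
\item Over a block of length comparable to $2t/(1-\chi)$, the separation therefore grows from $L^{-2t}$ to $\eta_0$, except on an event of probability $O(L^{-\lambda})$.
\item The base mixing Theorem \ref{blublumixing} and the noise then bring the pair into the non-critical region $S$ within $O(1)$ further steps with probability bounded below.
\end{itemize}

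The block length depends only on $t$ and $\chi$, not on $L$. So $r$ can be chosen even and independent of $L,\delta$, and the Markov property gives $h_t=\beta h_0$.

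\textbf{Main obstacle.} The hardest step is Stage 1, specifically making the two-point Jacobian nondegenerate with constants uniform in $L$ and $\delta$. The shared noise only translates both orbits together, so the relative displacement $v_n$ is steered only through the factor $\sin(\pi(u+2\omega))\sin(\pi v)$ in \eqref{unnevienne}.

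I would first solve for $(\omega_4,\omega_5)$ exactly as in Proposition \ref{four-step-trans}, to hit the target $z_6$. The remaining freedom in $(\omega_0,\dots,\omega_3)$ must then move $\bar z_6$ over a region of size $\gtrsim 1$. For this I would count branches where $\sin(\pi(u+2\omega))$ is bounded away from zero, and discard bad sets of measure $O(L^{-1/2})$, as in Proposition \ref{uniform_goodness}. This step needs the conditional-measure bookkeeping of Propositions \ref{marginals} and \ref{Llarge} to be redone in four dimensions.
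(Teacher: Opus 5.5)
Your two-stage architecture is the paper's: a six-step local minorization on a set $S$, then landing in $S$ at the fixed time $rl-6$ with uniformly positive probability, and composing via the Markov property. However, each stage is missing the ingredient that makes it work.

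\textbf{Stage 1.} Your $S$ (separation bounded below, both points non-critical) omits the condition the construction runs on. That condition is $|\cos(\pi(x_0+\bar x_0))\sin(\pi(\bar x_0-x_0))|\ge c$ in \eqref{defSc}, i.e.\ $|f'(x_0+\omega_0)-f'(\bar x_0+\omega_0)|\gtrsim L$.

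\begin{itemize}
\item Separation alone allows $x_0=\bar x_0$. There $\omega_0$ has no effect on the relative position: $z_1-\bar z_1=(\bar y_0-y_0,0)$ for every $\omega_0$.
\item It also allows $x_0+\bar x_0=\tfrac{1}{2}$, where that difference is only $O(L^{1-\delta})$.
\item In your own notation, $\partial_\omega v_{n+1}=-4\pi L\cos(\pi(u_n+2\omega))\sin(\pi v_n)$. So it is the cosine, not $\sin(\pi(u+2\omega))$, that must stay away from zero.
\end{itemize}

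The paper uses transversality to get $\det\partial(x_2,\bar x_2)/\partial(\omega_0,\omega_1)=b\bar b(a-\bar a)+(b-\bar b)\asymp L^3$, where $a-\bar a=f'(x_0+\omega_0)-f'(\bar x_0+\omega_0)$ and $|b|,|\bar b|\asymp L$. This yields $\asymp L^{3-2\delta}$ branches, each mapped bijectively onto $\mathbb T^2$, with image a graph of slope $O(L^{-1})$ (Lemma \ref{forward_small}). The mirror statement backwards from the target (Lemma \ref{inverse_small}) explains $C_{\mathrm{good}}$. Its definition \eqref{defCc} is exactly the backward analogue of \eqref{defSc}, imposed on $(A,\bar A)$. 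So it is fixed by the statement, not a set of ``pairs away from the diagonal and the critical set'' that you may choose. With $(\omega_2,\omega_3)$ frozen, each forward/backward pair of branches meets exactly once, because $\xi^-\circ\xi$ is an $O(L^{-2})$-contraction. This gives $\gtrsim L^{6-4\delta}$ solutions. Only the crude bound $|\det Dh_\beta|\le CL^6$ is then needed, and the density is $\gtrsim L^{4\delta}\cdot L^{6-4\delta}\cdot L^{-6}$.

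Your route is to solve $(\omega_4,\omega_5)$ for $z_6$, then move $\bar z_6$ with $(\omega_0,\dots,\omega_3)$. It still needs a lower bound on the conditional density of $\bar z_6$ given $z_6$, for a map from four constrained noise coordinates to $\mathbb T^2$. That is the whole difficulty, and deferring it to bookkeeping ``redone in four dimensions'' leaves it unproved. Your accounting is also off in three ways.
\begin{itemize}
\item The map $(\omega_0,\dots,\omega_5)\mapsto(z_6,\bar z_6)$ goes from six to four dimensions, so it has no Jacobian determinant until two coordinates are conditioned on.
\item The noise prefactor (then $\sigma^{-4}$, not $\sigma^{-6}$) helps rather than needing to be cancelled. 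The branch count must compensate the $L^6$ size of the Jacobian, and $\sigma^{-4}$ absorbs the $L^{-4\delta}$ loss in that count.
\item Neither a lower Jacobian bound nor distortion control is required.
\end{itemize}

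\textbf{Stage 2.} A minorization needs one time $rl$ for every $(z,\bar z)\in\mathcal C$. Your argument only gives entry into $S$ at a random, start-dependent time ``within $O(1)$ further steps''. Your growth estimate holds only while $|v_s|<\eta_0$, so it too yields a random exit time. Also, $d(z,\bar z)\ge L^{-2t}$ does not bound $|v_0|$ below, which is why the paper starts at time $0$ or $1$.

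The missing idea is aperiodicity, assembled in the paper as follows.
\begin{itemize}
\item Proposition \ref{uniform_bound_on_T} gives a deterministic $n\le T$ with $\mathbb P(d(z_n,\bar z_n)>\eta)\ge b$.
\item From horizontal separation $\ge\eta/4$, the pair enters $S$ after exactly two steps with probability $\ge d_*$ (Lemma \ref{thm:twostep_smallset}).
\item From $S$, it re-enters $S$ after exactly three steps with probability $\ge p_1d_*$ (Lemma \ref{lem:S_to_eta_one_step}).
\item Every integer $\ge2$ has the form $2j+3k$, so every starting point can be padded to lie in $S$ at exactly time $rl-6$.
\end{itemize}

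Finally, Theorem \ref{blublumixing} cannot place the pair in $S$. It is a one-point statement and says nothing about the joint position $(x_n,\bar x_n)$, in particular nothing about transversality. The paper instead uses bounded distortion of $\omega_0\mapsto v_1$ and, conditionally on $\omega_0$, of $\omega_1\mapsto x_2,\bar x_2,u_2,v_2$.
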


For $c>0$ sufficiently small, define
\begin{equation}\label{defSc}
S:=
\left\{
(x_0,\bar x_0)\in\mathbb T^2:
\begin{aligned}
&\min\{|\sin(2\pi x_0)|,|\sin(2\pi\bar x_0)|\}\ge c,\\
&|\cos(\pi(x_0+\bar x_0))
\sin(\pi(\bar x_0-x_0))|\ge c
\end{aligned}
\right\}.
\end{equation}
We also write $S$ for its lift to $X$ under the projection
$(z,\bar z)\mapsto(x,\bar x)$.
Define the target set
\begin{equation}\label{defCc}
C_{\mathrm{good}}:=
\left\{
(z_0,\bar z_0)\in X:
\begin{aligned}
&\min\{|\sin(2\pi A)|,|\sin(2\pi\bar A)|\}\ge10c,\\
&|\cos(\pi(A+\bar A))
\sin(\pi(\bar A-A))|\ge10c
\end{aligned}
\right\},
\end{equation}
where
\begin{equation}\label{two_point_inverse_initial_coordinates}
\begin{aligned}
y_1^0&:=f(y_0)-x_0,
&
\bar y_1^0&:=f(\bar y_0)-\bar x_0,\\
A&:=f(y_1^0)-y_0,
&
\bar A&:=f(\bar y_1^0)-\bar y_0.
\end{aligned}
\end{equation}
The superscript $0$ indicates that these are deterministic expressions
in the initial coordinates.

If $S_{10c}$ denotes the set obtained from \eqref{defSc} by replacing
$c$ with $10c$, then
\begin{equation}\label{two_point_target_mass}
(m\otimes m)(C_{\mathrm{good}})=m(S_{10c})>0.
\end{equation}
Indeed, $(y_1^0,A)=f_0^{-2}(z_0)$, and $f_0$ preserves $m$.
Thus the mass in \eqref{two_point_target_mass} is independent of $L,\delta$.

The local minorization is the following six-step density estimate.

\begin{proposicao}\label{First_step_small_set}
There exists $\kappa>0$, independent of $L,\delta$, such that,
for every $(z_0,\bar z_0)\in S$,
\begin{equation}\label{foundlowerdense}
\frac{d\bar{\mathcal P}^{\,6}_{2,(z_0,\bar z_0)}}{d(m\otimes m)}
(u,v,\bar u,\bar v)
\ge\kappa
\end{equation}
for $(m\otimes m)$-almost every
$(u,v,\bar u,\bar v)\in C_{\mathrm{good}}$.
Consequently,
\begin{equation}\label{two_point_local_minorization}
\bar{\mathcal P}^{\,6}_{2,(z_0,\bar z_0)}(\cdot)
\ge b_0\,m_{C_{\mathrm{good}}}(\cdot),
\qquad (z_0,\bar z_0)\in S,
\end{equation}
where
$b_0:=\kappa(m\otimes m)(C_{\mathrm{good}})>0$
is independent of $L,\delta$.
\end{proposicao}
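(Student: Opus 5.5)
Since the second claim follows from the first by integrating the density bound over $A\cap C_{\mathrm{good}}$, the work is the pointwise lower bound \eqref{foundlowerdense}. I would write the six-step map as a function of the six noise variables, $(z_6,\bar z_6)=\Phi(\omega_0,\dots,\omega_5)$, from $[-\sigma,\sigma]^6$ to $\mathbb T^2\times\mathbb T^2$. This is eight real outputs from six inputs, so a plain change of variables does not work.

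The useful observation is that each map $f_\omega$ is a composition of a translation by $\omega$ in the first coordinate with the standard map. So, as in Proposition \ref{four-step-trans}, I would work backwards from the target. Given $(u,v,\bar u,\bar v)\in C_{\mathrm{good}}$, the last two noise values $\omega_4,\omega_5$ act as a horizontal and a vertical translation on the preimage pair, exactly as in \eqref{inversemapF}. For the two-point motion both orbits see the same noise, so $\omega_5$ shifts the quantities $A,\bar A$ of \eqref{two_point_inverse_initial_coordinates} by the same amount. The conditions defining $C_{\mathrm{good}}$, with constant $10c$, are exactly what makes the backward two-step preimages of the target land in a region where the relevant sines and cosines are bounded below. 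This region is comparable to $S$, and there the two-point map in the coordinates $u_n,v_n$ of \eqref{unnevienne} has Jacobian bounded below.

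Forward from $(z_0,\bar z_0)\in S$, the conditions defining $S$ say that $\partial_{\omega}$ of $(x_1,\bar x_1)$ is non-degenerate. Indeed, $\partial_{\omega_0}(x_1+\bar x_1)$ and $\partial_{\omega_0}(x_1-\bar x_1)$ are proportional to $L\sin(\pi(u_0+2\omega_0))\cos(\pi v_0)$ and $L\cos(\pi(u_0+2\omega_0))\sin(\pi v_0)$. Using $\omega_0,\omega_1,\omega_2,\omega_3$ as in the forward-branch construction, the images fill out, with controlled distortion, $\sim L^{a}$ branches of total noise measure comparable to $\sigma^4$. I would then match the forward set to the backward requirement, in the spirit of Lemma \ref{lemma_minor}: for each of many forward branches, an $L^{-1-\delta}$-sized interval of $(\omega_4,\omega_5)$ hits each target point.

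The density at a target point is then a sum over preimage branches of $\sigma^{-6}\,|\text{Jacobian}|^{-1}$ times the transverse measure. The count of branches, roughly $L^{2-2\delta}$ times powers of $L$, should cancel the Jacobians, which are of order $L$ per non-degenerate step, together with $\sigma^{-6}$. This would give a bound $\kappa$ independent of $L$ and $\delta$, using the fact that the uniform-noise density and bounded distortion from \cite{BXY18} are uniform in $L$.

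The main obstacle is this exact bookkeeping: showing that the number of branches times the inverse Jacobians times $\sigma^{-6}$ is bounded below uniformly. The difficulty comes from the common noise, which only gives a one-parameter family per step for a two-dimensional pair of $x$-coordinates. I would handle this by using the $L\cos/\sin$ factorization in the $(u,v)$ coordinates. The sum $u$ moves at rate $\asymp L$ with $\omega$, while the difference $v$ is controlled by the $|\sin(\pi v)|\ge c$ condition. Consecutive noise values then decouple the two directions, one step for $u$ and the next for $v$.
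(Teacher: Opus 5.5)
\textbf{Verdict: there is a genuine gap.} Your reduction of \eqref{two_point_local_minorization} to \eqref{foundlowerdense} is fine. The density bound itself is not established, and the plan for it starts from a wrong dimension count. The target $\mathbb T^2\times\mathbb T^2$ is four-dimensional, not eight-dimensional. If there really were eight outputs, six noise variables could not produce a density at all.

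With four outputs and six inputs, you must first fibre out two noise coordinates to get a square system. The paper does this by conditioning on the middle pair $\beta=(\omega_2,\omega_3)$. It then studies the map $h_\beta(\omega_0,\omega_1,\omega_4,\omega_5)=f_\omega^6(z_0,\bar z_0)$ from four variables to four, whose density is $(2\sigma)^{-4}\sum_{h_\beta(\omega)=w}|\det Dh_\beta(\omega)|^{-1}$.

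Your matching step borrowed from Lemma \ref{lemma_minor} fails for dimensional reasons. Once $\omega_0,\dots,\omega_3$ are fixed, $(z_4,\bar z_4)$ is determined. Prescribing $(z_6,\bar z_6)\in C_{\mathrm{good}}$ then imposes four equations on the two unknowns $(\omega_4,\omega_5)$. Generically there is no solution, and never an interval of solutions. In the one-point case the target is two-dimensional, so the last two noise values are uniquely determined; the intervals there live in the forward variable $\omega_1$.

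What you leave as ``the main obstacle'' is the whole proof, and it needs two quantitative inputs.

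\emph{First, a uniform count.} For every $\beta$ and every $w\in C_{\mathrm{good}}$, the equation $h_\beta=w$ must have at least $c_*L^{6-4\delta}$ distinct solutions. The paper obtains this as follows.
\begin{itemize}
\item It builds $\asymp L^{3-2\delta}$ forward disks in $(\omega_0,\omega_1)$ (Lemma \ref{forward_small}). On each, $(\omega_0,\omega_1)\mapsto(x_2,\bar x_2)$ is a bijection onto $\mathbb T^2$, and the image is a graph $(y_2,\bar y_2)=\xi(x_2,\bar x_2)$ with $\|D\xi\|=O(L^{-1})$.
\item It builds $\asymp L^{3-2\delta}$ backward disks in $(\omega_5,\omega_4)$ starting from $C_{\mathrm{good}}$ (Lemma \ref{inverse_small}), giving graphs $\xi^-$ over $(y,\bar y)$.
\item Since $\xi^-\circ\xi$ is an $O(L^{-2})$-contraction, each pair of disks yields exactly one solution.
\end{itemize}
Your remarks do point in the right direction. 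Two consecutive noise values decouple $x$ and $\bar x$, and the transversality $|a-\bar a|\gtrsim L$ comes from the second condition in $S$. Your reading of $C_{\mathrm{good}}$ as the backward analogue of $S$ is also correct. What is missing is the graph structure and the unique-intersection argument; a heuristic branch count is not enough.

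\emph{Second, the Jacobian upper bound} $|\det Dh_\beta|\le C_{\mathrm{det}}L^6$. This does not follow from ``order $L$ per step''. The columns $\mathcal A_5\cdots\mathcal A_1\mathbf v_0$, $\mathcal A_5\cdots\mathcal A_2\mathbf v_1$, $\mathcal A_5\mathbf v_4$, $\mathbf v_5$ have norms of order $L^6,L^5,L^2,L$, so Hadamard's inequality only gives $L^{14}$. The bound needs a cancellation:
\begin{itemize}
\item Multiply by $(\mathcal A_5\mathcal A_4)^{-1}$, which has determinant one. The last two columns become $e=(1,0,1,0)^T$ and $r=(0,-1,0,-1)^T$.
\item Subtract rows. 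The determinant reduces to a $2\times2$ determinant of the differences $p-\bar p$ and $q-\bar q$, which is $O(L^4\cdot L^2+L^3\cdot L^3)$.
\end{itemize}

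Only with both facts does the density become at least $(2\sigma)^{-4}c_*L^{6-4\delta}/(C_{\mathrm{det}}L^6)=c_*/(16C_{\mathrm{det}})$. This is independent of $L$ and $\delta$, and averaging over $\beta$ then gives \eqref{foundlowerdense}.
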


We prove Proposition \ref{First_step_small_set} in Subsection
\ref{subsec:two_point_local_minorization}. Subsection
\ref{subsec:two_point_extension} then proves Proposition
\ref{small_set_on_C} by showing that every point of $\mathcal C$ reaches
$S$ with uniformly positive probability at a common time.

\subsection{Local minorization}
\label{subsec:two_point_local_minorization}

The proof of Proposition \ref{First_step_small_set} relies on the
following two lemmas. When $f_\omega^n$ is applied to a pair, we write
\[
f_\omega^n(z,\bar z):=
\bigl(f_\omega^n(z),f_\omega^n(\bar z)\bigr).
\]
For graph representations, we group the coordinates in the order
$(x,\bar x,y,\bar y)$.

\begin{lema}\label{forward_small}
Let $S$ be as in \eqref{defSc}. If $(z,\bar z)\in S$, there are
$\asymp L^{3-2\delta}$ pairwise disjoint topological disks
$\mathcal G_{k,m}\subset[-\sigma,\sigma]^2$ such that the projection
\[
P^+(\omega_0,\omega_1):=(x_2,\bar x_2)
\]
maps each $\mathcal G_{k,m}$ bijectively onto $\mathbb T^2$.
The corresponding image under the two-point dynamics is the graph
\[
\left\{\bigl(x,\bar x,\xi(x,\bar x)\bigr):
(x,\bar x)\in\mathbb T^2\right\},
\]
where
\begin{equation}\label{normofxi}
\|D\xi\|=O(L^{-1}).
\end{equation}
\end{lema}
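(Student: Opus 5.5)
We will compute the two-step two-point map explicitly and read off the disks from its structure. Write $(x,y)=z$, $(\bar x,\bar y)=\bar z$. One step gives
\[
x_1=f(x+\omega_0)-y,\quad y_1=x+\omega_0,\qquad \bar x_1=f(\bar x+\omega_0)-\bar y,\quad \bar y_1=\bar x+\omega_0,
\]
and the second step gives
\[
x_2=f(x_1+\omega_1)-y_1,\qquad \bar x_2=f(\bar x_1+\omega_1)-\bar y_1 ,
\]
together with $y_2=x_1+\omega_1$ and $\bar y_2=\bar x_1+\omega_1$. The plan is to treat $\Phi(\omega_0,\omega_1):=(x_2,\bar x_2)$ as a map $[-\sigma,\sigma]^2\to\mathbb T^2$ (lifted to $\mathbb R^2$) and compute its Jacobian. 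We have
\[
\partial_{\omega_1}x_2=f'(x_1+\omega_1),\qquad \partial_{\omega_0}x_2=f'(x_1+\omega_1)f'(x+\omega_0)-1,
\]
and similarly for $\bar x_2$. Thus, up to lower-order terms,
\[
\det D\Phi\approx f'(x_1+\omega_1)f'(\bar x_1+\omega_1)\bigl(f'(x+\omega_0)-f'(\bar x+\omega_0)\bigr).
\]
The role of $S$ is exactly to make the last factor large. Indeed,
\[
f'(x)-f'(\bar x)=-2\pi L(\sin 2\pi x-\sin2\pi\bar x)=-4\pi L\cos(\pi(x+\bar x))\sin(\pi(x-\bar x)),
\]
which is $\gtrsim cL$ on $S$, uniformly for $|\omega_0|\le\sigma$ once $L$ is large. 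Similarly $|f'(x+\omega_0)|,|f'(\bar x+\omega_0)|\gtrsim cL$.

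\textbf{Step 1 (first coordinate).} As $\omega_0$ ranges over $[-\sigma,\sigma]$, the pair $(x_1,\bar x_1)$ moves along a curve with velocity $(f'(x+\omega_0),f'(\bar x+\omega_0))$. This velocity has size $\sim L$ in each coordinate, and the two components differ by $\gtrsim L$, so the curve is transverse to the diagonal direction $(1,1)$. The variable $\omega_1$ then shifts $(x_1,\bar x_1)$ along $(1,1)$. Hence
\[
(\omega_0,\omega_1)\mapsto (x_1+\omega_1,\bar x_1+\omega_1)\in\mathbb T^2
\]
is a local diffeomorphism with Jacobian $\sim L$. Its image wraps $\sim L\sigma=L^{1-\delta}$ times transversally to the diagonal. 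Combined with the $\omega_1$-width $2\sigma$, it covers a region of area $\sim L^{1-2\delta}$ in the universal cover.

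\textbf{Step 2 (second coordinate).} Compose with $(a,\bar a)\mapsto (f(a)-y_1,f(\bar a)-\bar y_1)$, where $y_1,\bar y_1$ vary only by $O(\sigma)$. Away from the critical strips $\{|f'(a)|\le L^{1-\epsilon}\}$ or $\{|f'(\bar a)|\le L^{1-\epsilon}\}$, this map is a product of expanding circle maps. Each factor has $\sim L$ full monotone branches, giving $\sim L^2$ branch rectangles per unit torus. The rectangles on which both coordinates lie in full branches are mapped onto $\mathbb T^2$ bijectively.

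\textbf{Step 3 (counting).} The count is $L^{1-2\delta}\cdot L^{2}=L^{3-2\delta}$ disks. To make this rigorous, we would pull back each full product branch through the Step 1 diffeomorphism. The inverse of $D\Phi$ has norm $\lesssim L^{-1}\cdot(cL)^{-1}$ in the expanding directions, so preimages are small disks. Distortion control comes from $|f''|\lesssim L$ and branch sizes $\sim L^{-1}$, as in Proposition~\ref{four-step-trans}. Boundary-incomplete branches near the edges of $[-\sigma,\sigma]^2$ and critical branches must be discarded. They form a fraction $O(L^{-\epsilon})$ of the total, so the count stays $\asymp L^{3-2\delta}$.

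\textbf{Step 4 (graph property).} On each $\mathcal G_{k,m}$, the remaining coordinates are
\[
(y_2,\bar y_2)=(x_1+\omega_1,\bar x_1+\omega_1)=\Phi_1(\omega_0,\omega_1),
\]
so $\xi=\Phi_1\circ\Phi^{-1}$. By the chain rule,
\[
D\xi=D\Phi_1\,(D\Phi)^{-1}=D\Phi_1\,(D\Phi_1)^{-1}\bigl(\operatorname{diag}(f'(a),f'(\bar a))+O(1)\bigr)^{-1}.
\]
Up to the $O(1)$ correction coming from $-y_1$, this is $\operatorname{diag}(1/f'(a),1/f'(\bar a))$. It is therefore $O(L^{-1})$ on non-critical branches, provided the threshold $\epsilon$ is chosen with $|f'|\gtrsim L$ there. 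Alternatively, we accept $O(L^{-1+\epsilon})$ and restrict to branches where $|f'|\ge cL$, which still leaves $\asymp L^{3-2\delta}$ disks.

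\textbf{Main obstacle.} The main obstacle is Step 1/Step 3: the precise combinatorics showing that the $(\omega_0,\omega_1)$-parametrized strip covers an exact union of full product branches, up to a small boundary loss. This loss must be controlled uniformly over $(z,\bar z)\in S$ and independently of $L$ and $\delta$. We would handle it by working in the coordinates $(a,\bar a-a)$. There, $\omega_1$ moves only $a$, while $\bar a-a$ is a function of $\omega_0$ alone with derivative $\gtrsim cL$. This reduces the problem to a product of one-dimensional branch counts like those in Lemma~\ref{lemma_minor}.
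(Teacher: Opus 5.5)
Your argument is correct, but it organizes the construction differently from the paper.

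\textbf{The paper's route.} The paper works directly in noise space. It selects $\asymp L^{2-\delta}$ intervals $H_k\subset[-\sigma,\sigma]$ of length $\asymp L^{-2}$ on which $x_1,\bar x_1$ avoid $\{0,\tfrac12\}$, and fixes an admissible integer part $m\in M_k$ of $x_2$, with $|M_k|\asymp L^{1-\delta}$. It then writes each level set $\{x_2=u+m\}$ as a graph $\omega_1=h^u_m(\omega_0)$ and shows that $\bar x_2$ is monotone along it with derivative $\gtrsim L^2$. The disk topology comes from the implicit function theorem, and $\|D\xi\|=O(L^{-1})$ comes from the explicit product $DQ\,(DP)^{-1}$ with $\det DP\asymp L^3$.

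\textbf{Your route.} You factor $P^+$ through $(y_2,\bar y_2)$. The first factor $\Phi_1$ is injective into $\mathbb R^2$, as your $(a,\bar a-a)$ coordinates show. Its Jacobian is $f'(x+\omega_0)-f'(\bar x+\omega_0)$, which is $\gtrsim cL$ on $S$. The second factor is essentially $f\times f$. The disks are then pullbacks of product branch rectangles, and the count is (strip area $\sim L^{1-2\delta}$)$\times$(branch density $\sim L^2$). Disjointness and disk topology are inherited from injectivity of $\Phi_1$, and $\xi$ is visibly an inverse branch of $f\times f$.

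\textbf{How they relate.} Under $\Phi_1$ the paper's sets are of your type. The index $m$ labels the (shifted) $f$-branch containing $y_2$. The interval $H_k(m,u)$ is an $\omega_0$-interval over which $f(\bar y_2)-\bar y_1$ increases by exactly one. Your closing suggestion is precisely the paper's bookkeeping: $\bar a-a$ depends on $\omega_0$ alone, which gives the partition into $H_k$, while $\omega_1$ moves only $a$, which gives $M_k$. Your factorization makes the count and the graph bound transparent. The paper's fibration gives bijectivity directly, without any perturbation argument.

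\textbf{What you still need to supply.} That perturbation argument is the one thing you must add. The second factor is not a product, since $y_1=x+\omega_0$ and $\bar y_1=\bar x+\omega_0$, with $\omega_0$ a function of $\bar a-a$. Because $|\partial_{\omega_0}(\bar a-a)|\gtrsim cL$, this perturbation is $O(L^{-1})$-Lipschitz in $(a,\bar a)$. Meanwhile $f\times f$ expands each coordinate by $\gtrsim L$ on good branches. Hence the perturbed map is injective on the strip intersected with each product of good monotone pieces. You should therefore pull back unit squares under the perturbed map, not exact product rectangles.

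\textbf{Minor points.} The correction in Step 4 is in fact $O(L^{-1})$; your $O(1)$ is valid but crude. With $E=\begin{pmatrix}1&0\\1&0\end{pmatrix}$, one has $D\Phi=\bigl[\operatorname{diag}(f'(a),f'(\bar a))-E(D\Phi_1)^{-1}\bigr]D\Phi_1$. Here $E(D\Phi_1)^{-1}=\bigl(f'(x+\omega_0)-f'(\bar x+\omega_0)\bigr)^{-1}\begin{pmatrix}1&-1\\1&-1\end{pmatrix}$. Inverting the bracket reproduces the paper's formula for $D\xi$ exactly.

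Also, only the fixed threshold $|f'|\ge c_1L$ yields the stated $O(L^{-1})$. With that threshold the discarded fraction is $O(c_1)$, not $O(L^{-\epsilon})$ as Step 3 says. It is still uniform, because along the strip $a$ and $\bar a$ each wind around the circle at nearly constant speed $\sim L$. This is exactly where the conditions $|\sin 2\pi x|,|\sin 2\pi\bar x|\ge c$ from $S$ are needed.
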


The proof of Lemma \ref{forward_small} is deferred to Subsection
\ref{subsec:two_point_forward}.

For the inverse construction, write
\begin{equation}\label{inversemap}
g_\zeta(x,y):=f_\zeta^{-1}(x,y)
=(y-\zeta,f(y)-x).
\end{equation}
The inverse iterates satisfy
\begin{equation}\label{inverseiterations}
x_{n+1}^-=y_n^--\zeta_n,
\qquad
y_{n+1}^-=f(y_n^-)-x_n^-.
\end{equation}
We use $\zeta_n$ to distinguish the inverse noise coordinates from
their order along the forward orbit.

Applying the argument of Lemma \ref{forward_small} to four inverse
iterates, with the last two noise contributions fixed, gives the
following lemma. We leave the adaptation to the reader.

\begin{lema}\label{inverse_small}
Let $C_{\mathrm{good}}$ be as in \eqref{defCc}. If
$(z,\bar z)\in C_{\mathrm{good}}$, then, for every fixed
$(\zeta_2,\zeta_3)\in[-\sigma,\sigma]^2$, there are
$\asymp L^{3-2\delta}$ pairwise disjoint topological disks
$\mathcal G^-_{k,m}\subset[-\sigma,\sigma]^2$ such that the projection
\[
P^-(\zeta_0,\zeta_1):=(y_4^-,\bar y_4^-)
\]
maps each $\mathcal G^-_{k,m}$ bijectively onto $\mathbb T^2$.
The corresponding image under the inverse dynamics is the graph
\[
\left\{\bigl(\xi^-(y,\bar y),y,\bar y\bigr):
(y,\bar y)\in\mathbb T^2\right\},
\]
where
\begin{equation}\label{normofxi_inverse}
\|D\xi^-\|=O(L^{-1}).
\end{equation}
\end{lema}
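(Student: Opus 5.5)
The plan is to mirror the proof of Lemma~\ref{forward_small}. Roles are exchanged under the inverse map \eqref{inversemap}: $y$ now plays the part of $x$, and the pair $(y^-_3,\bar y^-_3)$ plays the part of the intermediate pair in the forward construction. The first step is to write the four inverse iterates \eqref{inverseiterations} explicitly, starting from $(z,\bar z)=((x_0,y_0),(\bar x_0,\bar y_0))$ and using the notation \eqref{two_point_inverse_initial_coordinates}. One finds $y^-_1=y_1^0$ and $y^-_2=A+\zeta_0$, and then
\[
y^-_3=f(A+\zeta_0)-y_1^0+\zeta_1,\qquad
y^-_4=f(y^-_3)-A-\zeta_0+\zeta_2,\qquad
x^-_4=y^-_3-\zeta_3 .
\]
The barred quantities satisfy the same formulas with the \emph{same} $\zeta_j$. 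Hence, once $(\zeta_2,\zeta_3)$ is fixed, $P^-$ factors as $(\zeta_0,\zeta_1)\mapsto(y^-_3,\bar y^-_3)$ followed by a map that is essentially $(y^-_3,\bar y^-_3)\mapsto(f(y^-_3),f(\bar y^-_3))$, shifted by the slowly varying quantity $-\zeta_0$.

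\textbf{Step 1: first factor.} This map has Jacobian determinant
\[
f'(A+\zeta_0)-f'(\bar A+\zeta_0)
=-4\pi L\cos\bigl(\pi(A+\bar A+2\zeta_0)\bigr)\sin\bigl(\pi(A-\bar A)\bigr).
\]
By the defining inequality of $C_{\mathrm{good}}$ with constant $10c$, and since $|\zeta_0|\le\sigma=L^{-\delta}\ll c$, the determinant is $\gtrsim cL$ uniformly on $[-\sigma,\sigma]^2$. The entries are $O(L)$, so the inverse has norm $O(1)$. Because $\zeta_1$ acts as a diagonal translation and $\zeta_0$ moves the point along a direction of slope bounded away from $1$, the image of $[-\sigma,\sigma]^2$ is a thin parallelogram-like region. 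It winds $\asymp L^{1-\delta}$ times around $\mathbb T^2$ and has width $\asymp\sigma$.

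\textbf{Step 2: second factor.} On this region, remove the $O(L^{-1/2})$-neighbourhoods of the critical points $\{0,\tfrac12\}$ in each of $y^-_3$ and $\bar y^-_3$. In the complement, $y\mapsto f(y)$ has monotone branches with derivative of order $L$ and bounded distortion, each covering the circle. Combining the two factors, the Jacobian of $P^-$ is $\asymp L\cdot f'(y^-_3)f'(\bar y^-_3)$, of order $L^3$ on the good part. Counting branches then gives $\asymp L^{3-2\delta}$ disjoint topological disks $\mathcal G^-_{k,m}$, each mapped bijectively onto $\mathbb T^2$; this count matches (area $L^{-2\delta}$) times (Jacobian $L^3$). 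To produce them, I would pick cells of the product of monotone branches of $f$ in $y^-_3$ and $\bar y^-_3$. For each cell, I would check that its preimage under the first factor lies entirely inside $[-\sigma,\sigma]^2$, discarding the $O(L^{3-2\delta}\cdot L^{-\delta})$ boundary cells that fail this. The $-\zeta_0$ and $+\zeta_2$ shifts only perturb $y^-_4$ by $O(1)$ relative to the factor $L$, so they do not affect surjectivity, by a degree argument.

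\textbf{Step 3: graph property.} On each $\mathcal G^-_{k,m}$, invert $P^-$. We have $y^-_3=f_{\mathrm{br}}^{-1}(y+A+\zeta_0-\zeta_2)$, and $\zeta_0$ is itself a function of $(y,\bar y)$ with derivative $O(L^{-2})$ by the Jacobian bound above. Hence $\partial y^-_3/\partial(y,\bar y)=O(L^{-1})$. Since $x^-_4=y^-_3-\zeta_3$ with $\zeta_3$ fixed, the same bound holds for $x^-_4$, and likewise for $\bar x^-_4$. This gives the graph $\xi^-$ with \eqref{normofxi_inverse}.

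I expect the main obstacle to be Step~2: verifying rigorously that whole branch cells pull back inside the noise square, and that these cells are genuinely disjoint disks mapped onto the full torus. This means handling the coupling through the common shift $\zeta_0$ and the boundary of $[-\sigma,\sigma]^2$, exactly as in the forward construction of Lemma~\ref{forward_small}.
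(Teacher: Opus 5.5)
Your four inverse iterates are correct. Under the substitution $(x_0,y_0,\omega_0,\omega_1)\mapsto(A,y_1^0,\zeta_0,\zeta_1)$, with $y_2\mapsto y_3^-$ and $x_2\mapsto y_4^--\zeta_2$, the inverse problem is literally the forward one of Lemma~\ref{forward_small}; that adaptation is all the paper offers. Factoring through $(y_3^-,\bar y_3^-)$ and using product cells of monotone branches is a reasonable alternative to the forward route, which uses short $\omega_0$-intervals, integer level sets of the first coordinate, and covering along each level set. As written, however, two steps do not go through.

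\emph{The cutoff.} Removing only $L^{-1/2}$-neighbourhoods of $\{0,\tfrac12\}$ gives $|f'(y_3^-)|\gtrsim L^{1/2}$, not a derivative of order $L$. Write $b=f'(y_3^-)$, $\bar b=f'(\bar y_3^-)$, $a=f'(A+\zeta_0)$, $\bar a=f'(\bar A+\zeta_0)$. The analogue of $D\xi=DQ\,(DP)^{-1}$ from the forward proof has diagonal entries $\approx 1/b$ and $1/\bar b$. So on cells adjacent to the removed set you only get $\|D\xi^-\|\sim L^{-1/2}$, and $|\det DP^-|\approx|b\bar b(a-\bar a)|$ can be as small as $L^2$. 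Your Step~3 claim $\partial y_3^-/\partial(y,\bar y)=O(L^{-1})$, and hence \eqref{normofxi_inverse}, therefore fails for your family.

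The fix is the one built into the forward proof, namely $x_1\in I_i(\eta)$ and $\bar x_1\in I_i(\eta')$. Remove neighbourhoods of a fixed size $\eta$ in both $y_3^-$ and $\bar y_3^-$, so that $|b|,|\bar b|\gtrsim\eta L$. This loses only an $O(\eta)$ fraction of the strip, provided both coordinates equidistribute along it. That requires $|a|,|\bar a|\gtrsim L$, which is the first condition in $C_{\mathrm{good}}$; you only invoked the second.

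\emph{Bijectivity.} You flag this but leave it open. Because of the common shift $-\zeta_0$, a product cell of branches maps onto a slightly deformed unit square that is not lattice-aligned. A degree argument then gives covering, not a bijection onto $\mathbb T^2$.

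In your framework this closes cleanly. The variable $\zeta_1$ cancels in $y_3^--\bar y_3^-=f(A+\zeta_0)-f(\bar A+\zeta_0)-(y_1^0-\bar y_1^0)$. Hence $\zeta_0=\rho(y_3^--\bar y_3^-)$ with $\operatorname{Lip}\rho\lesssim L^{-1}$, because $|a-\bar a|\gtrsim cL$. The lifted map $(y,\bar y)\mapsto\bigl(f(y)-\rho(y-\bar y),\,f(\bar y)-\rho(y-\bar y)\bigr)$ is then injective on the intersection of a product of two monotone pieces (where $|f'|\gtrsim\eta L$) with the lifted image of the noise square. Indeed, $|f(y)-f(y')|\gtrsim\eta L|y-y'|$ dominates the $O(L^{-1})$-Lipschitz variation of $\rho$.

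Now define each $\mathcal G^-_{k,m}$ as the preimage, under this map composed with your first factor, of a lattice square $[m,m+1]\times[\bar m,\bar m+1]$ lying in the interior of the image of such a region. Disjointness, bijectivity onto $\mathbb T^2$, and the topological-disk property then follow at once. Counting by area gives $\asymp L^{3-2\delta}$ squares, and the upper bound comes from $|\det DP^-|\lesssim L^3$.

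Alternatively, apply the substitution above to Steps~1--5 of the proof of Lemma~\ref{forward_small} verbatim.
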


\begin{proof}[Proof of Proposition \ref{First_step_small_set}]
The proof is divided into two steps.

\medskip
\noindent\textbf{Step 1: Counting solutions.}

We first prove the following proposition.

\begin{proposicao}\label{two_point_cardinality}
Let $(z,\bar z)\in S$ and $(w,\bar w)\in C_{\mathrm{good}}$.
For every fixed $(\omega_2,\omega_3)\in[-\sigma,\sigma]^2$, the equation
\[
f_\omega^6(z,\bar z)=(w,\bar w)
\]
has at least $c_*L^{6-4\delta}$ solutions in the variables
$(\omega_0,\omega_1,\omega_4,\omega_5)$, where $c_*>0$ is
independent of $L,\delta$.
\end{proposicao}

\begin{proof}
Fix $\omega_2,\omega_3$. In the inverse construction, the noise
coordinates are ordered as
\begin{equation}\label{two_point_inverse_noise_order}
(\zeta_0,\zeta_1,\zeta_2,\zeta_3)
=(\omega_5,\omega_4,\omega_3,\omega_2).
\end{equation}
Take a forward set $\mathcal G_{k,m}$ from Lemma
\ref{forward_small} and an inverse set $\mathcal G^-_{j,m'}$ from
Lemma \ref{inverse_small}.

The corresponding forward image can be written as
\[
F=\left\{
\bigl(x_2,\bar x_2,\xi(x_2,\bar x_2)\bigr):
(x_2,\bar x_2)\in\mathbb T^2
\right\},
\qquad
\xi(x_2,\bar x_2)=(y_2,\bar y_2).
\]
Similarly, the inverse image is
\[
B=\left\{
\bigl(\xi^-(y_4^-,\bar y_4^-),y_4^-,\bar y_4^-\bigr):
(y_4^-,\bar y_4^-)\in\mathbb T^2
\right\},
\]
where $\xi^-(y_4^-,\bar y_4^-)=(x_4^-,\bar x_4^-)$.

We claim that $B\cap F$ consists of exactly one point. This gives
exactly one solution of $f_\omega^6(z,\bar z)=(w,\bar w)$ with
\[
(\omega_0,\omega_1)\in\mathcal G_{k,m},
\qquad
(\omega_5,\omega_4)\in\mathcal G^-_{j,m'}.
\]
Indeed, finding an intersection is equivalent to solving
\begin{align*}
(x_2,\bar x_2)&=\xi^-(y_4^-,\bar y_4^-),\\
(y_4^-,\bar y_4^-)&=\xi(x_2,\bar x_2),
\end{align*}
or, equivalently,
\[
(x_2,\bar x_2)=\xi^-\circ\xi(x_2,\bar x_2).
\]
By \eqref{normofxi} and \eqref{normofxi_inverse},
\[
\operatorname{Lip}(\xi^-\circ\xi)=O(L^{-2}).
\]
For $L$ sufficiently large, $\xi^-\circ\xi$ is a contraction of the
torus. Hence it has a unique fixed point, so $B\cap F$ consists of
exactly one point.

There are $\asymp L^{3-2\delta}$ disjoint forward sets and
$\asymp L^{3-2\delta}$ disjoint inverse sets. Since each pair of
branches gives one solution, these branches provide
\[
\asymp L^{3-2\delta}L^{3-2\delta}=L^{6-4\delta}
\]
distinct solutions.
\end{proof}

\noindent\textbf{Step 2: The density bound.}

Fix $(z_0,\bar z_0)\in S$, and condition on
$\beta=(\omega_2,\omega_3)$. Let
\[
h_\beta(\omega_0,\omega_1,\omega_4,\omega_5)
:=f_\omega^6(z_0,\bar z_0),
\]
and denote its conditional law by $\mu_\beta$. Then
\begin{equation}\label{piiiiii111}
\bar{\mathcal P}^{\,6}_{2,(z_0,\bar z_0)}(A)
=
\frac{1}{(2\sigma)^2}
\int_{[-\sigma,\sigma]^2}\mu_\beta(A)\,d\beta.
\end{equation}
For fixed $\beta$, the change-of-variables formula and Proposition
\ref{two_point_cardinality} give, for almost every
$w\in C_{\mathrm{good}}$,
\begin{equation}\label{pp111112222}
\frac{d\mu_\beta}{d(m\otimes m)}(w)
=
\frac{1}{(2\sigma)^4}
\sum_{h_\beta(\omega)=w}
\frac{1}{|\det Dh_\beta(\omega)|}
\gtrsim
\frac{L^6}{\sup_\omega|\det Dh_\beta(\omega)|}.
\end{equation}
It remains to show that
\begin{equation}\label{Weclaimppp}
|\det Dh_\beta(\omega)|
\le C_{\mathrm{det}}L^6,
\end{equation}
uniformly in $\beta$ and the remaining noise coordinates.

Write
\[
A_n=
\begin{pmatrix}
a_n&-1\\
1&0
\end{pmatrix},
\qquad
\mathcal A_n=\operatorname{diag}(A_n,\bar A_n),
\qquad
\mathbf v_n=(a_n,1,\bar a_n,1)^T,
\]
where
$a_n=f'(x_n+\omega_n)$ and
$\bar a_n=f'(\bar x_n+\omega_n)$.
We have $|a_n|,|\bar a_n|\le2\pi L$.

By the chain rule, the columns of $Dh_\beta$, corresponding to
$\omega_0,\omega_1,\omega_4,\omega_5$, are
\[
Dh_\beta=
\left[
\mathcal A_5\mathcal A_4\mathcal A_3\mathcal A_2\mathcal A_1\mathbf v_0
\;\middle|\;
\mathcal A_5\mathcal A_4\mathcal A_3\mathcal A_2\mathbf v_1
\;\middle|\;
\mathcal A_5\mathbf v_4
\;\middle|\;
\mathbf v_5
\right].
\]
Since $\det\mathcal A_n=1$, multiplication on the left by
$(\mathcal A_5\mathcal A_4)^{-1}$ gives
\[
\det Dh_\beta=\det[p,q,e,r],
\]
where
\[
p=\mathcal A_3\mathcal A_2\mathcal A_1\mathbf v_0,
\qquad
q=\mathcal A_3\mathcal A_2\mathbf v_1,
\]
and
\[
e=(1,0,1,0)^T,
\qquad
r=(0,-1,0,-1)^T.
\]
Here we used
$\mathcal A_n^{-1}\mathbf v_n=e$ and
$\mathcal A_n^{-1}e=r$.

Writing $p=(p_x,p_y,\bar p_x,\bar p_y)^T$ and similarly for $q$,
subtraction of the first two rows from the last two yields
\[
|\det Dh_\beta|
=
\left|
\det
\begin{pmatrix}
p_x-\bar p_x&q_x-\bar q_x\\
p_y-\bar p_y&q_y-\bar q_y
\end{pmatrix}
\right|.
\]
The form of $A_n$ and the bound $|a_n|\le2\pi L$ imply
\[
p_x,\bar p_x=O(L^4),
\qquad
p_y,\bar p_y=O(L^3),
\]
and
\[
q_x,\bar q_x=O(L^3),
\qquad
q_y,\bar q_y=O(L^2),
\]
with constants independent of $L$. Consequently,
\begin{align*}
|\det Dh_\beta|
&\le
|p_x-\bar p_x|\,|q_y-\bar q_y|
+
|p_y-\bar p_y|\,|q_x-\bar q_x|\\
&\le C_{\mathrm{det}}L^6,
\end{align*}
which proves \eqref{Weclaimppp}.
Estimate \eqref{foundlowerdense} follows from
\eqref{piiiiii111}, \eqref{pp111112222}, and \eqref{Weclaimppp}.
\end{proof}

\subsection{Extending the small set}
\label{subsec:two_point_extension}

We now prove Proposition \ref{small_set_on_C}, extending the local
minorization \eqref{two_point_local_minorization} from $S$ to the
Harris sublevel set $\mathcal C$. We use the following three results.
Choose $\eta>0$ sufficiently small that
$S\subset\{d_{\mathbb T}(x,\bar x)>\eta\}$.

\begin{proposicao}\label{uniform_bound_on_T}
There exist $T\in\mathbb N$ and $b\in(0,1)$, independent of $L,\delta$,
such that, for every $(z,\bar z)\in\mathcal C$, there is an integer
$0\le n=n(z,\bar z)\le T$ satisfying
\begin{equation}\label{prop18}
\mathbb P\bigl(d(z_n,\bar z_n)>\eta\bigr)\ge b.
\end{equation}
\end{proposicao}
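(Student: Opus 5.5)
We plan to follow the pair along the same noise and use the one-step estimates \eqref{inequality1}--\eqref{inequalitytwo} from the proof of Proposition \ref{smalldev}. Those estimates say that, while $|v_s|<\eta_0$, the horizontal separation is multiplied by at least $L^{1-\chi}$ at each step with conditional probability $1-O(L^{\delta-\chi/2})$. Starting from $d(z,\bar z)\ge L^{-2t}$, a bounded number of such steps (bounded in terms of $t$ and $\chi$ only) carries $|v|$ up to a fixed scale. A final step, in which the amplitude $B_s=-2L\sin(\pi v_s)$ is large, then spreads $x_{s+1}-\bar x_{s+1}$ almost uniformly modulo $1$. This yields $d>\eta$ with probability bounded below.

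\textbf{Trivial case.} If $d(z,\bar z)>\eta$, take $n=0$. Otherwise assume $L^{-2t}\le d(z,\bar z)\le\eta$, fix compatible real lifts, and take $\eta\le\eta_0$.

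\textbf{Step 1: initial horizontal separation.} Repeat the dichotomy of Step 2 in the proof of Proposition \ref{Lyapunov function}, with $t$ replaced by $2t$. Either $|v_0|\gtrsim L^{-2t-2}$, or $|v_1|\gtrsim L^{-2t}$ deterministically. In both cases $|v_{j_0}|\ge L^{-2t-3}$ for some $j_0\in\{0,1\}$, and $|v_{j_0}|\lesssim L\eta$.

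\textbf{Step 2: growth to a fixed scale.} Let $\tau$ be the first time $j\ge j_0$ with $|v_j|\ge L^{-\chi/2}\eta_0$. Before $\tau$ we have $|v_j|<\eta_0$, so the lifted $v_j$ equals the signed torus distance in $x$ and \eqref{inequality1} applies. Set $T_0:=\lceil(2t+4)/(1-\chi)\rceil+2$. If $\xi_s=1-\chi$ for all $j_0\le s<j_0+T_0$, then $\tau\le j_0+T_0$. By successive conditioning, this happens with probability at least
\[
\bigl(1-O(L^{\delta-\chi/2})\bigr)^{T_0}\ge\tfrac12
\]
for $L$ large, because $\delta<\chi/4$.

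\textbf{Step 3: spreading step.} On the event of Step 2, condition on $\mathcal F_0^{\tau-1}$ and consider step $\tau\to\tau+1$. We have
\[
v_{\tau+1}=B_\tau\sin\bigl(\pi(u_\tau+2\omega_\tau)\bigr)+a_\tau,
\qquad |B_\tau|\gtrsim L^{1-\chi/2}\eta_0 .
\]
This is only when $|v_\tau|\le1/2$; otherwise we already have a macroscopic separation at time $\tau$. As $\omega_\tau$ ranges over $[-\sigma,\sigma]$, the phase $u_\tau+2\omega_\tau$ moves by $\sim L^{-\delta}$. Away from the $O(L^{-1/2})$-neighbourhoods of critical phases, $v_{\tau+1}$ therefore sweeps $\gtrsim L^{1-\chi/2-\delta}\eta_0\gg1$ full turns with bounded distortion. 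Hence the law of $v_{\tau+1}$ modulo $1$ has density close to $1$, and $\mathbb P\bigl(d_{\mathbb T}(x_{\tau+1},\bar x_{\tau+1})>\eta\mid\mathcal F_0^{\tau-1}\bigr)\ge1-3\eta\ge\tfrac12$ for $\eta$ small.

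\textbf{Step 4: choosing a deterministic time.} Set $T:=T_0+2$. Combining the steps,
\[
\mathbb P\Bigl(\bigcup_{n\le T}\{d(z_n,\bar z_n)>\eta\}\Bigr)\ge\tfrac14 .
\]
By the union bound, some $n\le T$ satisfies \eqref{prop18} with $b=1/(4(T+1))$. The constants $T$ and $b$ depend only on $t,\chi,\eta,\eta_0$.

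\textbf{Main obstacle.} The hardest point is Step 3: making precise that the spreading step is essentially uniform modulo $1$ uniformly in the conditioning. This requires handling the critical phases and checking that stopping at $\tau$ does not bias $\omega_\tau$. The latter should be harmless, since $\tau$ is a stopping time and $\omega_\tau$ is independent of $\mathcal F_0^{\tau-1}$.
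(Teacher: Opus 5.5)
There is a gap at the start of your Step 3. Your stopping time is defined through the \emph{lifted} difference, and on $\{\tau=j\}$ you only know $|v_{j-1}|<L^{-\chi/2}\eta_0$. Since $v_j=-2L\sin(\pi(u_{j-1}+2\omega_{j-1}))\sin(\pi v_{j-1})-v_{j-2}$, the lifted $v_\tau$ can be of size $L^{1-\chi/2}\eta_0\gg1$.

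Your sentence ``otherwise we already have a macroscopic separation at time $\tau$'' is therefore false. Suppose the lifted $v_\tau$ lies close to a nonzero integer. Then $d_{\mathbb T}(x_\tau,\bar x_\tau)$ is tiny. Moreover $|B_\tau|=2L|\sin(\pi v_\tau)|$ depends only on $v_\tau$ modulo $1$, so the amplitude is tiny as well. Hence the step $\tau\to\tau+1$ spreads nothing: $v_{\tau+1}$ is essentially $-v_{\tau-1}$ modulo $1$, still of size $<L^{-\chi/2}\eta_0$.

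So your Steps 2--3 only give the conclusion on outcomes where $v_\tau$ modulo $1$ stays away from $0$, and you give no lower bound on their probability. The wrap-around event is unlikely, but it needs an estimate. Note also that the point you flagged as the main obstacle, uniformity of the spreading step, is routine once $|B_\tau|\sigma^2\gg1$. The real difficulty is guaranteeing that $|B_\tau|$ is large in the first place.

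One way to close the gap is to stop instead at the first time the \emph{torus} distance $d_{\mathbb T}(x_j,\bar x_j)$ reaches $\rho:=L^{-1+C\delta}$, for a fixed $C$ (say $C=10$) and $\delta$ small.
\begin{itemize}
\item Before that time $|B_s|\lesssim L^{C\delta}$. Either the lifted $v_{s+1}$ stays in $(-1/2,1/2)$, where the growth event of \eqref{inequality1} is growth of the torus distance, or it leaves that interval.
\item If it leaves, the only bad outcome is landing within $\rho$ of one of the $O(L^{(C-1)\delta})$ reachable nonzero integers. The level-set bound $\mathbb P(|\sin(\pi(u_s+2\omega_s))-c|\le\epsilon\mid\mathcal F_0^{s-1})\lesssim L^{\delta}\sqrt{\epsilon}$, used in the proof of Proposition \ref{smalldev}, makes this $O(L^{-1/2+2C\delta})$ per step.
\item At the stopping time, $|B_\tau|\gtrsim L^{C\delta}$. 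So even if the phase window contains a critical point, the sweep is $|B_\tau|\sigma^2\gtrsim L^{(C-2)\delta}\gg1$, and your distortion argument in Step 3 goes through.
\end{itemize}
Alternatively, split according to whether the crossing step $\tau-1\to\tau$ is itself large enough to spread.

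For comparison, the paper's proof is essentially your Steps 1--2 with factor $L^{99/100}$. It uses $\mathbb P(|v_{i+1}|\ge L^{99/100}|v_i|)\ge\iota$, chooses $\frac{99}{100}K_0>2(t+1)$, asserts $\bigcap_{i<K_0}G_i\subset\{\tau\le K_0\}$, and then picks $n$ by pigeonhole. It has no spreading step at all. It reads the lifted bound $|v_{K_0}|>\eta$ directly as $d(z_{K_0},\bar z_{K_0})>\eta$, so it passes over the same wrap-around issue. Your spreading step is the right additional ingredient, but it has to be placed where the overshoot occurs.
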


\begin{lema}\label{thm:twostep_smallset}
There exists $d_*>0$, independent of $L,\delta$, such that, if
$d_{\mathbb T}(x_0,\bar x_0)\ge\eta/4$, then, for $L$ sufficiently large,
\begin{equation}\label{lemma3}
\mathbb P\bigl((x_2,\bar x_2)\in S\bigr)\ge d_*.
\end{equation}
\end{lema}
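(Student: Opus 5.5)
The plan is to find, for a uniformly positive fraction of the noise square $(\omega_0,\omega_1)\in[-\sigma,\sigma]^2$, a decomposition into pieces on which
\[
\Phi(\omega_0,\omega_1):=(x_2,\bar x_2)
\]
is a diffeomorphism with bounded distortion onto all of $\mathbb T^2$. On each such full branch, the conditional probability that $(x_2,\bar x_2)\in S$ is then comparable to $m(S)$. Since $m(S)>0$ depends only on $c$, this gives $d_*$ independent of $L,\delta$. This is the same mechanism as Lemma \ref{forward_small}, but under the weaker hypothesis $d_{\mathbb T}(x_0,\bar x_0)\ge\eta/4$. So the main work is to control the Jacobian without assuming $(z_0,\bar z_0)\in S$.

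First I compute $D\Phi$. Write $a_n=f'(x_n+\omega_n)$ and $\bar a_n=f'(\bar x_n+\omega_n)$. Since $y_1=x_0+\omega_0$, we have $x_2=f(x_1+\omega_1)-x_0-\omega_0$, with $\partial_{\omega_0}x_1=a_0$. The chain rule then gives
\[
\det D\Phi=a_1\bar a_1(\bar a_0-a_0)+(\bar a_1-a_1).
\]
There are two independent sources of nondegeneracy.

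\emph{Case (i).} This case uses $a_0-\bar a_0=-4\pi L\cos\bigl(\pi(x_0+\bar x_0+2\omega_0)\bigr)\sin\bigl(\pi(x_0-\bar x_0)\bigr)$. Here $|\sin(\pi(x_0-\bar x_0))|\gtrsim\eta$ by hypothesis. If also $|\cos(\pi(x_0+\bar x_0+2\omega_0))|\gtrsim1$, then the first term is of order $L^3$ whenever $|a_1|,|\bar a_1|\gtrsim L$.

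\emph{Case (ii).} The identity \eqref{unnevienne} gives
\[
x_1-\bar x_1=v_1=-2L\sin(\pi(u_0+2\omega_0))\sin(\pi v_0)-(y_0-\bar y_0).
\]
Because $\sigma$ is small, $\omega_0$ cannot be used to move $u_0+2\omega_0$ away from $1/2$, so I split according to the deterministic initial data. If $u_0=x_0+\bar x_0$ is $O(\sigma)$-close to $1/2$, we are in the degenerate regime $a_0\approx\bar a_0$. Then $|\sin(\pi(u_0+2\omega_0))|\approx1$, so $v_1$ has derivative $\asymp L\eta$ in $\omega_0$ and winds $\gtrsim L^{1-\delta}$ times around the circle as $\omega_0$ varies. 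Consequently, for all $\omega_0$ outside a set of relative measure $O(\epsilon)$, $x_1-\bar x_1$ avoids an $\epsilon$-neighbourhood of $0$. On that set, $\bar a_1-a_1=-4\pi L\sin\bigl(\pi(x_1+\bar x_1+2\omega_1)\bigr)\sin\bigl(\pi(x_1-\bar x_1)\bigr)$ is of order $L$ for most $\omega_1$. The first term $a_1\bar a_1(\bar a_0-a_0)$ must then either be shown small compared with this (it is not, in general), or one must check that the two terms cannot cancel except on a small set. My first attempt is to argue directly that $\det D\Phi$, viewed as a function of $\omega_1$ for fixed $\omega_0$, is a trigonometric-type expression with $O(1)$ zeros per period of length $L^{-1}$. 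It is therefore $\gtrsim L^{2}$ off a set of relative measure $O(\epsilon)$.

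In either case I remove the bad set $\mathcal B$. It consists of the $\omega_0$ with $x_0+\omega_0$ or $\bar x_0+\omega_0$ in an $L^{-1+\epsilon}$-critical neighbourhood, the $\omega_1$ with $x_1+\omega_1$ or $\bar x_1+\omega_1$ critical, the near-zeros of $\det D\Phi$ described above, and a boundary layer of width $O(L^{-1-\delta})$ in each noise variable, as in Proposition \ref{uniform_goodness}. Its relative measure is at most a small absolute constant. On the complement, I follow the graph-transform argument of Lemma \ref{forward_small}. For fixed $\omega_0$, the curve $\omega_1\mapsto(x_2,\bar x_2)$ has tangent $(a_1,\bar a_1)$ of size $\sim L$. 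Varying $\omega_0$ transversally then sweeps a full two-dimensional torus. A connected component of the good set maps onto $\mathbb T^2$ once it contains a square of side $\gtrsim L^{-1-\delta}\cdot\epsilon$; distortion is bounded because $\|D^2\Phi\|\lesssim L^3$ while $|\det D\Phi|\gtrsim L^2$ on scales $\ll L^{-1}$. Incomplete branches at the edges carry relative mass $O(L^{-1+\delta})$. Summing over full branches then gives $\mathbb P\bigl((x_2,\bar x_2)\in S\bigr)\ge(1-C\epsilon)\,m(S)/C_{\mathrm{dist}}=:d_*$.

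I expect the main obstacle to be the degenerate configuration $x_0+\bar x_0\approx1/2$ in Case (ii). There, the $L^3$ term in $\det D\Phi$ disappears, and one must show that the $O(L^2)$ remainder is not cancelled on a large set. This requires a careful analysis of the zero set of $\det D\Phi$ in $\omega_1$. It also requires verifying that the branch counting survives with a Jacobian of order only $L^2$. In that case branches have side $\sim L^{-1}$ in one direction and are longer in the other, so I would need to confirm that they still fit inside $[-\sigma,\sigma]^2$ with $\sigma=L^{-\delta}$.
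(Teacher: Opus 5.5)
Your full-branch strategy could in principle be completed, but it has a genuine gap. The step you flag as the main obstacle is left open, and the analysis you offer there is wrong. In the regime $u_0\approx\tfrac12$ you assert that $v_1$ has $\omega_0$-derivative $\asymp L\eta$. In fact $\partial_{\omega_0}v_1=a_0-\bar a_0=-4\pi L\cos\bigl(\pi(u_0+2\omega_0)\bigr)\sin(\pi v_0)$, which is small exactly when $|\sin(\pi(u_0+2\omega_0))|\approx1$. The remaining term $\bar a_1-a_1$ of $\det D\Phi$ is $O(L)$, not $O(L^2)$, and your proposed bound $|\det D\Phi|\gtrsim L^2$ off a small set is not justified.

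There is actually no degenerate configuration. Since $\omega_0$ ranges over an interval of length $2L^{-\delta}\gg L^{-1/2}$, for \emph{every} $u_0$ the set $H_0=\{|\cos(\pi(u_0+2\omega_0))|<L^{-1/2}\}$ has relative measure $O(L^{\delta-1/2})$. Off $H_0$ one has $|a_0-\bar a_0|\gtrsim\eta L^{1/2}$, so $|a_1\bar a_1(a_0-\bar a_0)|\gtrsim\eta L^{3/2}$ dominates $|\bar a_1-a_1|\le 4\pi L$ as soon as $|a_1|,|\bar a_1|\ge L^{1/2}$.

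Your distortion criterion also fails as stated. $\partial_{\omega_0}\det D\Phi$ contains terms such as $f''(x_1+\omega_1)\,a_0\,\bar a_1(\bar a_0-a_0)$, of order up to $L^4$. On a set of diameter $\sim L^{-1}$ the Jacobian can therefore vary by $\sim L^3$, which an isotropic comparison of $\|D^2\Phi\|$ with a lower bound $L^2$ cannot control. Bounded distortion on full branches holds only because of their anisotropic shape: along a branch, $x_1+\omega_1$ and $\bar x_1+\omega_1$ move by only $O(L^{-1})$. Finally, the partition into full branches is not carried out under your weaker hypothesis. Its analogue is Steps 2--4 of Lemma~\ref{forward_small}, which used $|a_0|\gtrsim L$ and the $S$-condition on the initial pair. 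As written, the two-dimensional core of your argument is asserted rather than proved.

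The paper's proof avoids any two-dimensional analysis. Since $S^c\subset B_x\cup B_{\bar x}\cup B_u\cup B_v$, each event constrains a single scalar among $x_2$, $\bar x_2$, $u_2=x_2+\bar x_2$, $v_2=x_2-\bar x_2$. One-dimensional marginal bounds and a union bound therefore suffice.
\begin{itemize}
\item Using $\omega_0$ alone: off $H_0$, $v_1$ has derivative at least $C_\eta L^{1/2}$ and bounded distortion on full branches. Hence $\mathbb P(v_1\in E)\le C(|E|+L^{\delta-1/2})$, and the event $G_1$ that $v_1$ stays at distance $>\rho$ from $0$ and $\tfrac12$ has probability at least $1-C(\rho+L^{\delta-1/2})$.
\item For fixed $\omega_0\in G_1$, each of the maps $\omega_1\mapsto x_2,\bar x_2,u_2,v_2$ has derivative equal to a multiple of $L$ times one oscillating trigonometric factor in $\omega_1$. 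The coefficients $\cos(\pi v_1)$ and $\sin(\pi v_1)$ are bounded below by $c_\rho$ on $G_1$.
\item The same one-dimensional estimate then gives $\mathbb P(g\in E\mid\omega_0)\le C(|E|+L^{\delta-1/2})$ for each $g$, and the union bound yields $\mathbb P(S\mid G_1)\ge 1-C(\sqrt c+L^{\delta-1/2})$.
\end{itemize}
Your route can be repaired with the threshold $H_0$ and an anisotropic distortion estimate. However, it is far heavier than needed, and the scalar argument requires no Jacobian of $\Phi$ at all.
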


\begin{lema}\label{lem:S_to_eta_one_step}
There exists $p_1>0$, independent of $L,\delta$, such that, if
$(x_0,\bar x_0)\in S$, then, for $L$ sufficiently large,
\[
\mathbb P\bigl(d_{\mathbb T}(x_1,\bar x_1)>\eta\bigr)\ge p_1.
\]
In particular, Lemma \ref{thm:twostep_smallset} implies
\begin{equation}\label{odd number}
\mathbb P\bigl((x_3,\bar x_3)\in S\bigr)\ge\widetilde p,
\qquad
(x_0,\bar x_0)\in S,
\end{equation}
where $\widetilde p:=p_1d_*>0$.
\end{lema}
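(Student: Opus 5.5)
The plan is to compute the difference of first coordinates after one step explicitly and show that, as a function of the single noise variable $\omega_0$, it winds around the circle many times with almost constant speed. It is then essentially equidistributed on $\mathbb T$, so it avoids the $\eta$-neighbourhood of $0$ with probability close to $1-2\eta$. From \eqref{randomstandardmap}, choosing lifts,
\[
D(\omega_0):=x_1-\bar x_1
=-2L\sin\bigl(\pi(x_0+\bar x_0+2\omega_0)\bigr)\sin\bigl(\pi(x_0-\bar x_0)\bigr)-(y_0-\bar y_0)\pmod 1 .
\]
Differentiating in $\omega_0$ gives
\[
D'(\omega_0)=-4\pi L\cos\bigl(\pi(x_0+\bar x_0+2\omega_0)\bigr)\sin\bigl(\pi(x_0-\bar x_0)\bigr).
\]
At $\omega_0=0$ this has modulus at least $4\pi cL$, by the second condition in \eqref{defSc}. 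For $|\omega_0|\le\sigma=L^{-\delta}$ the cosine factor moves by $O(L^{-\delta})$. Hence, for $L$ large, $|D'|\ge2\pi cL$ on the whole noise interval, $D'$ has constant sign, and its relative oscillation is $O(L^{-\delta}/c)$.

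Consequently $D$ is a monotone map from $[-\sigma,\sigma]$ to $\mathbb R$ of total variation $\gtrsim cL^{1-\delta}$. It decomposes into $\gtrsim cL^{1-\delta}$ full branches, each mapping onto $\mathbb T$ with distortion $1+O(L^{-\delta}/c)$, plus at most two incomplete end branches. Each end branch has $\omega_0$-length $O((cL)^{-1})$, so its probability is $O(L^{\delta-1}/c)$. On each full branch, bounded distortion shows that the conditional probability of $\{d_{\mathbb T}(D,0)\le\eta\}$ is at most $2\eta(1+O(L^{-\delta}/c))$. Summing over branches,
\[
\mathbb P\bigl(d_{\mathbb T}(x_1,\bar x_1)\le\eta\bigr)\le 2\eta\bigl(1+O(L^{-\delta}/c)\bigr)+O\bigl(L^{\delta-1}/c\bigr).
\]
Since $\eta$ is small (in particular we may take $\eta<1/8$), the right-hand side is at most $1/2$ for $L$ large. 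This gives the claim with $p_1=1/2$, independent of $L$ and $\delta$. The first condition in \eqref{defSc} is not needed here.

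For \eqref{odd number}, I would use the Markov property at time $1$. On the $\mathcal F_0^0$-measurable event $\{d_{\mathbb T}(x_1,\bar x_1)>\eta\}$ we certainly have $d_{\mathbb T}(x_1,\bar x_1)\ge\eta/4$. Applying Lemma \ref{thm:twostep_smallset} to the two-point chain started at $(z_1,\bar z_1)$ with noise $(\omega_1,\omega_2)$, which is independent of $\omega_0$, gives conditional probability at least $d_*$ that $(x_3,\bar x_3)\in S$. Taking expectations yields $\mathbb P((x_3,\bar x_3)\in S)\ge p_1d_*$.

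The only delicate point is keeping the lower bound on $|D'|$ uniform over the noise interval and controlling the two boundary branches. Both are routine because $\sigma\to0$ while $cL\to\infty$.
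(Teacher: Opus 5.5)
Your proof is correct and follows essentially the paper's route. The paper simply invokes the branch-distortion bound \eqref{distortioncoolness} from the proof of Lemma \ref{thm:twostep_smallset}, which applies since $S\subset\{d_{\mathbb T}(x,\bar x)>\eta\}$; that bound handles critical points of $\omega_0\mapsto v_1$ by excising $\{|\cos(\pi(u_0+2\omega_0))|<L^{-1/2}\}$. You instead rederive the bound directly from the transversality condition in $S$, which rules out critical points on the whole noise interval and gives the sharper error $O(L^{\delta-1})$. The Markov-property step at time $1$ giving \eqref{odd number} is the same as in the paper.
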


We first deduce the small-set estimate from these results, and then
prove them separately.

\begin{proof}[Proof of Proposition \ref{small_set_on_C}]
By \eqref{two_point_local_minorization}, it suffices to find an even
integer $r$ and $d_0>0$, independent of $L,\delta$, such that
\begin{equation}\label{eq:return_to_S}
\mathbb P\bigl(f_\omega^{rl-6}(z,\bar z)\in S\bigr)\ge d_0,
\qquad
(z,\bar z)\in\mathcal C.
\end{equation}
Indeed, the Markov property then gives
\begin{equation}\label{two_point_minorization_composition}
\begin{aligned}
\bar{\mathcal P}^{\,rl}_{2,(z,\bar z)}(A)
&\ge
b_0\,\mathbb P\bigl(f_\omega^{rl-6}(z,\bar z)\in S\bigr)
m_{C_{\mathrm{good}}}(A)\\
&\ge b_0d_0\,m_{C_{\mathrm{good}}}(A).
\end{aligned}
\end{equation}

Fix $(z_0,\bar z_0)\in\mathcal C$. We distinguish four cases.

\medskip
\noindent\textbf{Case 1: $(z_0,\bar z_0)\in S$.}

Since $S\subset\{d_{\mathbb T}(x,\bar x)>\eta\}$,
Lemma \ref{thm:twostep_smallset} and \eqref{odd number} give uniformly
positive probabilities of returning to $S$ after two and three steps,
respectively. Every integer at least two has the form $2a+3b$ with
$a,b\ge0$. Concatenating these returns therefore gives a positive
lower bound for returning at any prescribed sufficiently large time.
Once that time is fixed, the lower bound is independent of $L,\delta$.

\medskip
\noindent\textbf{Case 2: $(z_0,\bar z_0)\notin S$ and
$d_{\mathbb T}(x_0,\bar x_0)>\eta$.}

By Lemma \ref{thm:twostep_smallset},
\[
\mathbb P\bigl((x_2,\bar x_2)\in S\bigr)\ge d_*.
\]
After these two steps, concatenate returns as in Case 1.
The same argument applies with the weaker separation $\eta/4$.

\medskip
\noindent\textbf{Case 3: $L^{-2t}\le d(z_0,\bar z_0)\le\eta$.}

By Proposition \ref{uniform_bound_on_T}, some $1\le n\le T$ satisfies
\[
\mathbb P\bigl(d(z_n,\bar z_n)>\eta\bigr)\ge b.
\]
Since $y_n-\bar y_n=x_{n-1}-\bar x_{n-1}$ modulo one,
\begin{equation}\label{stupidinclusion}
\begin{aligned}
\{d(z_n,\bar z_n)>\eta\}
\subset{}&
\left\{d_{\mathbb T}(x_n,\bar x_n)>\frac\eta4\right\}\\
&\cup
\left\{d_{\mathbb T}(x_{n-1},\bar x_{n-1})>\frac\eta4\right\}.
\end{aligned}
\end{equation}
Thus, at one of the two deterministic times $n-1,n$, the horizontal
separation exceeds $\eta/4$ with probability at least $b/2$.
We may then apply Case 2 and concatenate returns as in Case 1.

\medskip
\noindent\textbf{Case 4: $d(z_0,\bar z_0)>\eta$ and
$d_{\mathbb T}(x_0,\bar x_0)\le\eta$.}

After one step,
\[
d(z_1,\bar z_1)
\ge\frac{\eta}{2\pi L+1}>L^{-2t}
\]
for $L$ sufficiently large. Proposition \ref{uniform_bound_on_T},
applied at time one, gives a uniformly positive probability of
exceeding separation $\eta$ within the next $T$ steps.
Using \eqref{stupidinclusion}, we argue as in Case 3.
\end{proof}

\begin{proof}[Proof of Proposition \ref{uniform_bound_on_T}]
As in Subsection \ref{subsec:two_point_lyapunov}, if
$d(z_0,\bar z_0)\ge L^{-2t}$,
then either $d_{\mathbb T}(x_0,\bar x_0)>L^{-2(t+1)}$ or
$d_{\mathbb T}(x_1,\bar x_1)>L^{-2t}$.
We start at time zero or one accordingly.
Let
\[
\tau:=\min\{n\ge0:d(z_n,\bar z_n)>\eta\}.
\]
Equation \eqref{inequality1}, with $\chi=1/100$, gives some
$\iota>0$ such that, for $\eta$ sufficiently small and $|v_0|\le\eta$,
\begin{equation}\label{goingaroung}
\mathbb P\bigl(|v_1|\ge L^{99/100}|v_0|\bigr)\ge\iota.
\end{equation}
Define
\[
G_i:=\{|v_{i+1}|\ge L^{99/100}|v_i|\},
\]
and choose $K_0\in\mathbb N$ such that
\[
\frac{99}{100}K_0>2(t+1).
\]
Then
\begin{equation}\label{largeincusion}
\bigcap_{i=0}^{K_0-1}G_i\subset\{\tau\le K_0\}.
\end{equation}
Indeed, repeated use of the events $G_i$ gives
\[
|v_{K_0}|\ge L^{99K_0/100}|v_0|.
\]
Since initially $|v_0|\ge L^{-2(t+1)}$, if $|v_i|\le\eta$ for all
$i<K_0$, then
\[
|v_{K_0}|\ge L^{-2(t+1)+99K_0/100}>\eta
\]
by the choice of $K_0$. Combining \eqref{goingaroung} and
\eqref{largeincusion}, and allowing for the possible initial
one-step shift, proves the proposition.
\end{proof}

\begin{proof}[Proof of Lemma \ref{thm:twostep_smallset}]
Let $u_n,v_n$ be as in \eqref{unnevienne}. Recall that
\[
v_1
=
-2L\sin(\pi(u_0+2\omega_0))\sin(\pi v_0)
-(y_0-\bar y_0).
\]
Since $d_{\mathbb T}(x_0,\bar x_0)\ge\eta/4$,
\[
|v_1'(\omega_0)|
\ge C_\eta L|\cos(\pi(u_0+2\omega_0))|
\]
for some $C_\eta>0$. Define
\[
H_0:=
\left\{
|\cos(\pi(u_0+2\omega_0))|<L^{-1/2}
\right\}.
\]
Arguing as in \cite{BXY18}, we subdivide the monotone components of $H_0^c$
into full-branch intervals $J$ such that $|v_1(J)|=1$ and
\[
\left|
\frac{v_1'(\omega_0)}{v_1'(\bar\omega_0)}
\right|\le C,
\qquad
\omega_0,\bar\omega_0\in J.
\]
The excluded critical neighborhoods and incomplete branches have total
probability $O(L^{\delta-1/2})$. The distortion estimate therefore
implies that, for every interval $E\subset\mathbb T$,
\begin{equation}\label{distortioncoolness}
\mathbb P(v_1\in E)
\le C\bigl(|E|+L^{\delta-\frac12}\bigr).
\end{equation}

Fix $\rho>0$ small and define
\[
G_1:=
\left\{
d_{\mathbb T}(v_1,0)>\rho,
\quad
d_{\mathbb T}\left(v_1,\frac12\right)>\rho
\right\}.
\]
By \eqref{distortioncoolness},
\begin{equation}\label{mathbbg1}
\mathbb P(G_1)
\ge1-C\bigl(\rho+L^{\delta-\frac12}\bigr).
\end{equation}
On $G_1$,
\[
|\sin(\pi v_1)|\ge c_\rho,
\qquad
|\cos(\pi v_1)|\ge c_\rho.
\]

Recall the second-step formulas:
\begin{align*}
x_2
&=L\cos(2\pi(x_1+\omega_1))-(x_0+\omega_0),\\
\bar x_2
&=L\cos(2\pi(\bar x_1+\omega_1))-(\bar x_0+\omega_0),\\
u_2
&=2L\cos(\pi(u_1+2\omega_1))
\cos(\pi v_1)-(u_0+2\omega_0),\\
v_2
&=-2L\sin(\pi(u_1+2\omega_1))
\sin(\pi v_1)-v_0.
\end{align*}

Fix $\omega_0\in G_1$, and define
\[
H_g:=\{|g'(\omega_1)|\ge L^{1/2}\},
\]
for $g=x_2,\bar x_2,u_2,v_2$.
Applying the same distortion argument on $H_g$, we obtain, for every
interval $E\subset\mathbb T$,
\begin{equation}\label{distortionisamaizng}
\mathbb P(g(\omega_1)\in E\mid\omega_0)
\le C\bigl(|E|+L^{\delta-\frac12}\bigr),
\end{equation}
uniformly in $\omega_0\in G_1$ and in the choice of $g$.

The complement of $S$ is contained in the union of the following
four events:
\begin{align*}
B_x&:=\{|\sin(2\pi x_2)|<c\},\\
B_{\bar x}&:=\{|\sin(2\pi\bar x_2)|<c\},\\
B_u&:=\{|\cos(\pi u_2)|<\sqrt c\},\\
B_v&:=\{|\sin(\pi v_2)|<\sqrt c\}.
\end{align*}
By \eqref{distortionisamaizng} and \eqref{mathbbg1},
\begin{align*}
\mathbb P((x_2,\bar x_2)\in S)
&\ge
\mathbb P((x_2,\bar x_2)\in S,\ G_1)\\
&=
\mathbb P(G_1)
\mathbb P((x_2,\bar x_2)\in S\mid G_1)\\
&\ge
\left(1-C(\rho+L^{\delta-\frac12})\right)
\left(1-C(\sqrt c+L^{\delta-\frac12})\right).
\end{align*}
This proves the claim.
\end{proof}

\begin{proof}[Proof of Lemma \ref{lem:S_to_eta_one_step}]
Let
\[
E:=\{v\in\mathbb T:d_{\mathbb T}(v,0)\le\eta\}.
\]
Since $(x_0,\bar x_0)\in S$, estimate \eqref{distortioncoolness} gives
\[
\mathbb P(d_{\mathbb T}(v_1,0)\le\eta)
\le C\bigl(\eta+L^{\delta-\frac12}\bigr).
\]
Equivalently,
\[
\mathbb P(d_{\mathbb T}(x_1,\bar x_1)>\eta)
\ge1-C\bigl(\eta+L^{\delta-\frac12}\bigr),
\]
which concludes the proof.
\end{proof}

\begin{proof}[Proof of Theorem \ref{what_harier_wants}]
Fix $l\ge11$ and choose a common $\varepsilon_0>0$ for both estimates
in Proposition \ref{Lyapunov function}.
Define $\alpha$ and $K$ as in \eqref{first_lyapunov} and
\eqref{two_point_drift_constant}, and choose $t\ge2l+1$ as in
\eqref{two_point_parameter_choice}.

Equation \eqref{two_point_iterated_drift} gives
\eqref{two_point_drift}, while \eqref{two_point_parameter_choice}
gives \eqref{two_point_threshold}. The identity
\eqref{candidatesmallset} identifies the required sublevel set.
Finally, Proposition \ref{small_set_on_C} gives
\eqref{two_point_minorization} with $h=h_t$.
All these parameters are fixed independently of $L,\delta$.
\end{proof}

\subsection{Construction of the forward branches}
\label{subsec:two_point_forward}

We now prove Lemma \ref{forward_small}, completing the branch
construction used in Subsection \ref{subsec:two_point_local_minorization}.

\begin{proof}[Proof of Lemma \ref{forward_small}]
The proof is divided into five steps.

\medskip
\noindent\textbf{Step 1: Construction of the intervals $H_k$.}

Let
\begin{equation}\label{i1i2}
I_1(\eta):=\left[\eta,\frac12-\eta\right],
\qquad
I_2(\eta):=\left[\frac12+\eta,1-\eta\right].
\end{equation}
Fix $(z_0,\bar z_0)\in S$. We construct the intervals $H_k$ as follows.
\begin{itemize}
\item We first partition $[-\sigma,\sigma]$ into
$\asymp L^{1-\delta}$ intervals $I_j$, each mapped onto the full
circle by $x_1$.

\item For $\eta>0$ sufficiently small, retain in each $I_j$ the two
subintervals $I_j^i$ on which $x_1(\omega_0)\in I_i(\eta)$,
$i=1,2$. By the bounded distortion argument in \cite{BXY18}, each $I_j^i$
has length $\asymp L^{-1}$, and there exists $c'>0$, uniform in
$i,j$, such that
\begin{equation}\label{largeinterval}
\operatorname{Leb}\bigl(\bar x_1(I_j^i)\bigr)>c'.
\end{equation}

\item Choose $\eta'\ll\eta$. From each $I_j^i$, remove the preimage
under $\bar x_1$ of the complement of $I_1(\eta')\cup I_2(\eta')$.
Since this complement has measure $4\eta'$, the remaining image
under $\bar x_1$ has measure at least $c'-4\eta'$ by
\eqref{largeinterval}. Pulling back to $I_j^i$ and using
\eqref{defSc}, the retained set has measure at least
\[
c(c'-4\eta')L^{-1}.
\]

\item This removal splits $I_j^i$ into at most three connected
components. Choose $h>0$ with $h\ll c(c'-4\eta')$ and discard all
components of length less than $hL^{-1}$. At least one component
remains in each $I_j^i$. Enumerate the surviving intervals as
$\{\widetilde I_j\}$.

\item Finally, subdivide each $\widetilde I_j$ into intervals of
length between $L^{-2}$ and $\widetilde cL^{-2}$, where
$\widetilde c>1$ will be fixed later. Denote the resulting family by
$\{H_k\}$; it consists of $\asymp L^{2-\delta}$ intervals.
\end{itemize}

\medskip
\noindent\textbf{Step 2: A uniform family of admissible integers.}

Fix one of the intervals $H_k$ constructed above, of length
$\asymp L^{-2}$. By construction, for every $\omega_0\in H_k$,
we have $x_1\in I_1(\eta)\cup I_2(\eta)$. Thus the map
\[
\omega_1\longmapsto x_2(\omega_0,\omega_1)
=f(x_1+\omega_1)-y_1
\]
is monotone and satisfies
\[
|\partial_{\omega_1}x_2|\gtrsim cL.
\]
For every fixed $\omega_0\in H_k$, the interval
$x_2(\omega_0,[-\sigma,\sigma])$ therefore has length
$\asymp L^{1-\delta}$.

On the other hand, if
$(x_1,y_1)=f_{\omega_0}(z_0)$ and
$(\widetilde x_1,\widetilde y_1)=f_{\widetilde\omega_0}(z_0)$,
with $\omega_0,\widetilde\omega_0\in H_k$, then
$|x_1-\widetilde x_1|\lesssim L^{-1}$ and
\[
\bigl|f(x_1+\omega_1)-y_1
-f(\widetilde x_1+\omega_1)+\widetilde y_1\bigr|
\lesssim L|x_1-\widetilde x_1|+O(1)=O(1).
\]
Consequently, the intervals $x_2(\omega_0,[-\sigma,\sigma])$,
$\omega_0\in H_k$, have a common intersection of length
$\asymp L^{1-\delta}$.

Define the admissible integers by
\begin{equation}\label{admissibleset}
M_k:=\left\{
m\in\mathbb Z:[m,m+1]\subset
\bigcap_{\omega_0\in H_k}x_2(\omega_0,[-\sigma,\sigma])
\right\}.
\end{equation}
The preceding discussion gives
\begin{equation}\label{sizeofnumbers}
|M_k|\asymp L^{1-\delta}.
\end{equation}

\medskip
\noindent\textbf{Step 3: The dynamics of $\bar x_2$ along a level set of $x_2$.}

Fix $m\in M_k$ and $u\in\mathbb T$, represented in $[0,1)$.
We solve
\begin{equation}\label{x2equalsu}
x_2(\omega_0,\omega_1)=u+m.
\end{equation}
For $\omega_0\in H_k$, the argument $x_1(\omega_0)+\omega_1$ stays
on one inverse branch of the cosine. The solution is
\[
\omega_1=h_m^u(\omega_0)
=\tau_{m,i}(u,\omega_0)-f(x_0+\omega_0)+y_0,
\]
where
\begin{align*}
\tau_{m,1}(u,\omega_0)
&=\frac{1}{2\pi}\arccos\left(\frac{u+m+x_0+\omega_0}{L}\right),\\
\tau_{m,2}(u,\omega_0)
&=1-\frac{1}{2\pi}\arccos\left(\frac{u+m+x_0+\omega_0}{L}\right),
\end{align*}
with the representative chosen on the corresponding cosine branch.
The index $i$ is determined by whether $x_1$ lies in $I_1(\eta)$ or
$I_2(\eta)$, as in \eqref{i1i2}.

Differentiating \eqref{x2equalsu} with respect to $\omega_0$ gives
\[
(h_m^u)'(\omega_0)
=-f'(x_0+\omega_0)+\frac{1}{f'(x_1+\omega_1)}.
\]
Since $\omega_0\in H_k$, we have $|(h_m^u)'|\ge\bar cL$ for some
$\bar c>0$. Now consider
\[
\phi_u(\omega_0)
:=\bar x_2(\omega_0,h_m^u(\omega_0))
=f(\bar x_1+h_m^u(\omega_0))-\bar x_0-\omega_0.
\]
Differentiating, we obtain
\begin{align*}
\phi_u'(\omega_0)
&=f'(\bar x_1+h_m^u(\omega_0))\\
&\quad\times\left(
f'(\bar x_0+\omega_0)-f'(x_0+\omega_0)
+\frac{1}{f'(x_1+\omega_1)}
\right)-1.
\end{align*}
Because $\bar x_1\in I_1(\eta')\cup I_2(\eta')$ and
$(z_0,\bar z_0)\in S$, we have
$|\phi_u'(\omega_0)|\ge c''L^2$ for some $c''>0$.
Choosing $\widetilde c$ sufficiently large that
$c''\widetilde c>1$, the intervals $H_k$ may be chosen so that
\[
|\phi_u(H_k)|>1
\]
for every $k$ and $m\in M_k$. In particular, $\phi_u$ covers the
full circle.

\medskip
\noindent\textbf{Step 4: Defining the sets $\mathcal G_{k,m}$.}

Fix $k$ and $m\in M_k$. For every $u\in\mathbb T$, the map
\[
\phi_u:H_k\longrightarrow\mathbb T,
\qquad
\phi_u(\omega_0)=\bar x_2(\omega_0,h_m^u(\omega_0)),
\]
is expanding and covers the full circle.
Let $H_k(m,u)$ be the uniquely defined interval with the same left
endpoint as $H_k$ that maps exactly once onto the circle under
$\phi_u$. Define
\begin{equation}\label{bigG}
\mathcal G_{k,m}
:=\bigcup_{u\in\mathbb T}
\left\{(\omega_0,h_m^u(\omega_0)):
\omega_0\in H_k(m,u)\right\}.
\end{equation}
By \eqref{sizeofnumbers}, and since there are
$\asymp L^{2-\delta}$ intervals $H_k$, there are
$\asymp L^{3-2\delta}$ sets of this form.

We first show that these sets are pairwise disjoint. Suppose that
\[
(\omega_0,\omega_1)\in
\mathcal G_{k,m}\cap\mathcal G_{k',m'}.
\]
Then $\omega_0\in H_k\cap H_{k'}$, so $k=k'$. Moreover,
\eqref{bigG} gives
\[
\omega_1=h_m^u(\omega_0)=h_{m'}^{u'}(\omega_0)
\]
for some $u,u'\in\mathbb T$. Applying $x_2$ yields
\[
u+m=x_2(\omega_0,h_m^u(\omega_0))
=x_2(\omega_0,h_{m'}^{u'}(\omega_0))=u'+m'.
\]
Since $u,u'\in[0,1)$ represent the fractional parts and $m,m'$
the integer parts, we obtain $u=u'$ and $m=m'$.

Next, we show that $P^+$ restricted to $\mathcal G_{k,m}$ is a
bijection onto $\mathbb T^2$. Given $u\in\mathbb T$, a point of
$\mathcal G_{k,m}$ satisfying $x_2(\omega_0,\omega_1)=u+m$ has
$\omega_0\in H_k(m,u)$ and $\omega_1=h_m^u(\omega_0)$.
By construction, the map
\[
\omega_0\longmapsto\bar x_2(\omega_0,h_m^u(\omega_0))
\]
covers the circle exactly once on $H_k(m,u)$. Thus, for every
$v\in\mathbb T$, there is a unique $\omega_0\in H_k(m,u)$ such that
\[
\bar x_2(\omega_0,h_m^u(\omega_0))=v.
\]
This proves the claimed bijectivity.

Finally, we verify the topology of the sets $\mathcal G_{k,m}$.

\begin{proposicao}\label{topology}
Each $\mathcal G_{k,m}$ is a topological disk bounded by a simple
closed curve.
\end{proposicao}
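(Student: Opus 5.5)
The plan is to exhibit $\mathcal G_{k,m}$, up to its boundary, as the homeomorphic image of an explicit planar disk under an injective continuous map. Fix $k$ and $m\in M_k$, and write $H_k=[a,a']$. Instead of parametrizing $\mathcal G_{k,m}$ by $\omega_0$ alone, I will use the pair $(u,\omega_0)$. Let $u$ range over the closed interval $[0,1]$, reading $u+m$ as a real number rather than a point of $\mathbb T$. Define
\[
\Psi(u,\omega_0):=\bigl(\omega_0,h_m^u(\omega_0)\bigr),
\qquad
D:=\{(u,\omega_0):u\in[0,1],\ a\le\omega_0\le b(u)\},
\]
where $b(u)$ is the right endpoint of $H_k(m,u)$, so that $H_k(m,u)=[a,b(u)]$. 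Then $\mathcal G_{k,m}$ is $\Psi(D)$ up to boundary conventions (half-open intervals in $u$ and in $\omega_0$), and the claim reduces to three facts: $D$ is a closed disk, $\Psi$ is continuous and injective on $D$, and $\Psi(D)\subset[-\sigma,\sigma]^2$. A continuous injection from a compact set into a Hausdorff space is a homeomorphism onto its image. Hence $\Psi(D)$ will be a closed topological disk whose boundary is the simple closed curve $\Psi(\partial D)$.

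\textbf{Regularity of $h_m^u$ and of $b$.} The formula for $h_m^u$ in Step 3 is real-analytic in $(u,\omega_0)$ on $[0,1]\times H_k$, provided the arccos argument $(u+m+x_0+\omega_0)/L$ stays inside $(-1,1)$ and away from $\pm1$, and the branch index $i$ is fixed. Both conditions follow from the choice of $H_k$: $x_1$ lies in $I_1(\eta)$ or $I_2(\eta)$, so $x_1+\omega_1$ remains on one monotone branch of the cosine, away from its critical points. They also follow from the definition \eqref{admissibleset} of $M_k$, which gives $[m,m+1]\subset x_2(\omega_0,[-\sigma,\sigma])$ for every $\omega_0\in H_k$. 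The same inclusion shows that $h_m^u(\omega_0)\in[-\sigma,\sigma]$ for all $(u,\omega_0)\in[0,1]\times H_k$, so $\Psi(D)$ lies in the noise square.

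Next, $b(u)$ is defined by the equation $\phi_u(b(u))-\phi_u(a)=1$, using the lift of $\phi_u$. Since $|\phi_u'|\ge c''L^2>0$ and $|\phi_u(H_k)|>1$, there is a unique solution $b(u)\in(a,a']$. The map $(u,\omega_0)\mapsto\phi_u(\omega_0)$ is $C^1$, so the implicit function theorem makes $b$ continuous, in fact $C^1$, on $[0,1]$. Consequently $D$ is the region between the constant graph $\omega_0=a$ and the graph of a continuous function $b>a$ over $[0,1]$, which is a closed topological disk. Its boundary consists of four arcs: $\{u=0\}$, $\{u=1\}$, $\{\omega_0=a\}$ and $\{\omega_0=b(u)\}$.

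\textbf{Injectivity of $\Psi$.} Suppose $\Psi(u,\omega_0)=\Psi(u',\omega_0')$. The first coordinates give $\omega_0=\omega_0'$. Applying $x_2$ to the common second coordinate gives $u+m=u'+m$, so $u=u'$; this is the same argument as the disjointness proof in Step 4. This step is actually simpler on $[0,1]$ than on $\mathbb T$, because no identification of fractional parts is needed: the level sets $x_2=m$ and $x_2=m+1$ are genuinely distinct curves.

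\textbf{Main obstacle: matching with the definition \eqref{bigG}.} There, $u$ ranges over $\mathbb T$ and each $H_k(m,u)$ is taken to cover the circle exactly once. I expect the delicate point to be reconciling this with the closed parametrization above. The set in \eqref{bigG} corresponds to $u\in[0,1)$ and $\omega_0\in[a,b(u))$. Adding the closing arc $u=1$, which is the level set $x_2=m+1$, and the right arc $\omega_0=b(u)$ changes the set only by a measure-zero boundary. I will therefore state the result for the closure, or for the interior, $\Psi(\operatorname{int}D)$, which is an open disk. I will then check that its boundary $\Psi(\partial D)$ is a Jordan curve. This holds because $\Psi$ restricted to $\partial D$ is injective: it is the restriction of an injective map, and $\partial D$ is a simple closed curve. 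Neither the bijectivity of $P^+$ onto $\mathbb T^2$ nor the later counting arguments are affected by this measure-zero modification.
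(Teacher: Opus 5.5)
Your proposal is correct and is essentially the paper's proof. The paper also obtains a differentiable $b(u)$ from the implicit function theorem applied to $\phi_u(b)-\phi_u(a)-1$, takes the curvilinear rectangle $\{0\le u\le 1,\ a\le\omega_0\le b(u)\}$, and maps it onto $\mathcal G_{k,m}$ by $(\omega_0,u)\mapsto(\omega_0,h_m^u(\omega_0))$; its inverse $(\omega_0,\omega_1)\mapsto(\omega_0,x_2(\omega_0,\omega_1)-m)$ is exactly your injectivity argument, and your use of compactness in place of that explicit inverse, together with your tracking of the half-open conventions in \eqref{bigG} (which the paper glosses over by treating $u\in[0,1]$ and $H_k(m,u)$ as closed), are only minor refinements.
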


\begin{proof}
By construction, $H_k(m,u)=[a,b(u)]$, where, choosing the orientation
in which $\phi_u$ is increasing,
\[
\phi_u(b(u))=\phi_u(a)+1.
\]
To show that $b(u)$ is differentiable, define
\[
F(u,b):=\phi_u(b)-\phi_u(a)-1.
\]
Since $\phi_u$ is monotone on $H_k$, there is a unique solution
$b(u)>a$ of $F(u,b(u))=0$. Moreover, $|\partial_bF|\asymp L^2$,
so the implicit function theorem implies that $b(u)$ is
differentiable. Consequently,
\[
T_{k,m}:=\{(\omega_0,u):0\le u\le1,\ a\le\omega_0\le b(u)\}
\]
is a curvilinear rectangle, mapped diffeomorphically onto
$\mathcal G_{k,m}$ by
\[
(\omega_0,u)\longmapsto(\omega_0,h_m^u(\omega_0)).
\]
The inverse map is
\[
(\omega_0,\omega_1)\longmapsto
(\omega_0,x_2(\omega_0,\omega_1)-m).
\]
\end{proof}

\medskip
\noindent\textbf{Step 5: The graph representation and derivative bound.}

It remains to show that the image of $\mathcal G_{k,m}$ under the
two-point dynamics can be written as the graph
\[
\left\{\bigl(x_2,\bar x_2,\xi(x_2,\bar x_2)\bigr):
(x_2,\bar x_2)\in\mathbb T^2\right\}
\]
with the bound \eqref{normofxi}. Write
\[
P(\omega_0,\omega_1):=(x_2,\bar x_2),
\qquad
Q(\omega_0,\omega_1):=(y_2,\bar y_2).
\]
Here $P=P^+$. Since $P$ is invertible on $\mathcal G_{k,m}$,
\[
(y_2,\bar y_2)=Q\circ P^{-1}(x_2,\bar x_2).
\]

\begin{proposicao}\label{functional estimate}
The map $Q\circ P^{-1}$ satisfies
\begin{equation}\label{contraction}
\|D(Q\circ P^{-1})\|\lesssim L^{-1}.
\end{equation}
In particular, it is a contraction for $L$ sufficiently large.
\end{proposicao}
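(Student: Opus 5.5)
We bound $D(Q\circ P^{-1})=DQ\,(DP)^{-1}$ directly, using the two variables $(\omega_0,u)$ from Proposition \ref{topology} instead of $(\omega_0,\omega_1)$. On $\mathcal G_{k,m}$ the point $(x_2,\bar x_2)$ equals $(u+m,\phi_u(\omega_0))$. So in these coordinates
\[
D_{(\omega_0,u)}P=\begin{pmatrix}0&1\\ \phi_u'(\omega_0)&\partial_u\phi_u(\omega_0)\end{pmatrix}.
\]
Its inverse has entries of size $O(1/|\phi_u'|)=O(L^{-2})$ in the column hit by $d\bar x_2$. In the column hit by $dx_2$, the entries are $1$ and $-\partial_u\phi_u/\phi_u'$.

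The first step is to estimate $\partial_u h_m^u$ and $\partial_u\phi_u$. From $x_2(\omega_0,h_m^u(\omega_0))=u+m$ we get
\[
\partial_u h_m^u=\frac{1}{f'(x_1+\omega_1)}=O(L^{-1}),
\]
because $x_1\in I_1(\eta)\cup I_2(\eta)$ and $\omega_1$ is small, so $|f'(x_1+\omega_1)|\gtrsim cL$. Then
\[
\partial_u\phi_u=\frac{f'(\bar x_1+\omega_1)}{f'(x_1+\omega_1)}=O(1),
\]
since $\bar x_1\in I_1(\eta')\cup I_2(\eta')$ and $x_1$ is away from critical points. Hence $\partial_u\phi_u/\phi_u'=O(L^{-2})$, and $(DP)^{-1}$ sends $dx_2$ to a vector of size $O(1)$ in $u$ and $O(L^{-2})$ in $\omega_0$, while $d\bar x_2$ goes to a vector of size $O(L^{-2})$ in $\omega_0$ alone.

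The second step is to estimate $DQ$ in $(\omega_0,u)$, where $Q=(y_2,\bar y_2)=(x_1+\omega_1,\bar x_1+\omega_1)$. We have $\partial_{\omega_0}y_2=f'(x_0+\omega_0)+(h_m^u)'(\omega_0)=O(L)$ using the formula for $(h_m^u)'$, and similarly $\partial_{\omega_0}\bar y_2=O(L)$. In $u$, $\partial_u y_2=\partial_u\bar y_2=\partial_u h_m^u=O(L^{-1})$.

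Composing the two steps, the $dx_2$ direction contributes $O(L^{-1})\cdot O(1)+O(L)\cdot O(L^{-2})=O(L^{-1})$, and the $d\bar x_2$ direction contributes $O(L)\cdot O(L^{-2})=O(L^{-1})$. This gives \eqref{contraction}.

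The main care point is the $\omega_0$-derivatives. A naive bound $|\partial_{\omega_0}y_2|\lesssim L^2$ would fail, since $(h_m^u)'$ alone is of order $L$ and could be mistaken for something larger. The exact cancellation $\partial_{\omega_0}(x_1+h_m^u)=1/f'(x_1+\omega_1)$ also holds, and it improves the bound further, but we only need $O(L)$, paired against $|\phi_u'|\ge c''L^2$. The constants rely on the uniform lower bounds $|f'|\gtrsim cL$ at $x_1+\omega_1$ and $\bar x_1+\omega_1$, which come from the construction in Step 1. They are independent of $L,\delta$.
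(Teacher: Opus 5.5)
Your proof is correct. It reaches the bound through a different choice of coordinates from the paper's.

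\textbf{The paper's route.} The paper works directly in the noise variables $(\omega_0,\omega_1)$, with $a=f'(x_0+\omega_0)$, $\bar a=f'(\bar x_0+\omega_0)$, $b=f'(y_2)$, $\bar b=f'(\bar y_2)$. It writes $DQ=\begin{pmatrix}a&1\\ \bar a&1\end{pmatrix}$ and $DP=\begin{pmatrix}ba-1&b\\ \bar b\bar a-1&\bar b\end{pmatrix}$. It shows $|\det DP|\asymp L^3$ from $|b|,|\bar b|\gtrsim L$ and the transversality $|a-\bar a|\gtrsim L$. Multiplying out gives the closed form $D\xi=\frac{1}{\det DP}\begin{pmatrix}\bar b(a-\bar a)+1&-1\\ 1&b(a-\bar a)-1\end{pmatrix}$.

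\textbf{Your route.} You pass to the level-set chart $(\omega_0,u)$ of Proposition \ref{topology}, where $DP=\begin{pmatrix}0&1\\ \phi_u'&\partial_u\phi_u\end{pmatrix}$. Inverting it then needs only the bound $|\phi_u'|\ge c''L^2$, already proved in Step 3. The other entries are routine: $\partial_u h_m^u=1/b$, $\partial_u\phi_u=\bar b/b$, $\partial_{\omega_0}y_2=1/b$ and $\partial_{\omega_0}\bar y_2=\bar a-a+1/b$.

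\textbf{The two inputs coincide.} One checks $\phi_u'=\bar b(\bar a-a)+\bar b/b-1=-\det DP/b$. So your $|\phi_u'|\gtrsim L^2$ is exactly the paper's $|\det DP|\asymp L^3$ divided by $|b|\asymp L$. Multiplying your two factors reproduces the paper's matrix entry by entry; for example, your $\partial y_2/\partial x_2=(1/b)(1+\bar b/\det DP)=(\bar b(a-\bar a)+1)/\det DP$.

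\textbf{Trade-off.} Your route saves work: it reuses Step 3 and skips a separate determinant computation. The paper's route is self-contained and gives an explicit formula. That formula shows $D\xi$ is close to $\operatorname{diag}(1/b,1/\bar b)$ with off-diagonal entries $O(L^{-3})$, which is more than the norm bound. As in the paper, the ``in particular, contraction'' statement is simply read off from the derivative bound.

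\textbf{A minor point.} The bound $\partial_u\phi_u=O(1)$ uses only $|f'(x_1+\omega_1)|\gtrsim L$ and $|f'|\le 2\pi L$. The condition on $\bar x_1$ is not needed there; it enters only through $|\phi_u'|\ge c''L^2$.
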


\begin{proof}
We have
\begin{align*}
y_2&=f(x_0+\omega_0)-y_0+\omega_1,\\
\bar y_2&=f(\bar x_0+\omega_0)-\bar y_0+\omega_1,
\end{align*}
and
\begin{align*}
x_2&=f(y_2)-x_0-\omega_0,\\
\bar x_2&=f(\bar y_2)-\bar x_0-\omega_0.
\end{align*}
Let
\[
a=f'(x_0+\omega_0),\qquad
\bar a=f'(\bar x_0+\omega_0),\qquad
b=f'(y_2),\qquad
\bar b=f'(\bar y_2).
\]
Then
\[
DQ=\begin{pmatrix}a&1\\\bar a&1\end{pmatrix}.
\]
Furthermore,
\begin{align*}
\partial_{\omega_0}x_2&=ba-1,
&\partial_{\omega_0}\bar x_2&=\bar b\bar a-1,\\
\partial_{\omega_1}x_2&=b,
&\partial_{\omega_1}\bar x_2&=\bar b.
\end{align*}
Thus
\[
DP=\begin{pmatrix}
ba-1&b\\
\bar b\bar a-1&\bar b
\end{pmatrix},
\qquad
\det DP=b\bar b(a-\bar a)+(b-\bar b).
\]
By construction, $|b|,|\bar b|\ge c''L$, and the transversality
condition gives $|a-\bar a|\ge c''L$. Hence
\[
|b\bar b(a-\bar a)|\asymp L^3.
\]
Since $|b|,|\bar b|\le2\pi L$, it follows that
$|\det DP|\asymp L^3$. Set $D:=\det DP$. Then
\[
(DP)^{-1}=\frac1D
\begin{pmatrix}
\bar b&-b\\
-(\bar b\bar a-1)&ba-1
\end{pmatrix}.
\]
For $\xi=Q\circ P^{-1}$, this gives
\[
D\xi=DQ\,(DP)^{-1}
=\frac1D
\begin{pmatrix}
\bar b(a-\bar a)+1&-1\\
1&b(a-\bar a)-1
\end{pmatrix}.
\]
Since $|\bar b(a-\bar a)|+|b(a-\bar a)|\lesssim L^2$, we obtain
$\|D\xi\|=O(L^{-1})$.
\end{proof}

This completes the proof of Lemma \ref{forward_small}.
\end{proof}
\section{Large deviation estimates for the random standard map}\label{ldevsec}

In this section, we prove Theorem~\ref{thm:large_deviations} and Theorem~\ref{lyapunov_large_deviations}.

\begin{proof}[Proof of Theorem~\ref{thm:large_deviations}]

Let $E_p=L^\infty(\mathbb PT^2)$ and
$E_2=L^\infty_{V_L}((\mathbb T^2\times\mathbb T^2)\setminus\Delta)$.
For $i\in\{p,2\}$ and bounded measurable observables $H_i$,
define the tilted operators
\begin{equation}\label{tiltedop}
\begin{aligned}
\mathcal P_{i,\xi}
&\colon E_i \to E_i,\\
\mathcal P_{i,\xi}(h)
&:=\mathcal P_i(e^{\xi H_i}h),
\qquad \xi\in\mathbb C,
\end{aligned}
\end{equation}
where, for $i\in\{p,2\}$, $\mathcal P_i$ denotes the Koopman operators in \eqref{Koopmanproh} and \eqref{koopmantwopont}.

By \cite{LTTGeneral}, together with the exponential mixing estimates established above, it is sufficient to prove that the families of operators $\mathcal P_{i,\xi}$ are analytic in a small neighborhood of $0$, uniformly in $L$ and $\delta$. This follows from the fact that the expansion
\[
\mathcal P_{i,\xi}
=
\sum_{k=0}^{\infty}
\frac{\xi^k}{k!}\mathcal P_i(H_i^k\,\cdot)
\]
converges in operator norm, since
\[
\|\mathcal P_i(H_i^k\,\cdot)\|_{E_i\to E_i}
\le B_i\|H_i\|_\infty^k,
\qquad
B_p=1,\quad B_2=e^{(t+2)\varepsilon_0},
\]
where, for $i=2$, we used the bound on $\mathcal P_2$ in \eqref{two_point_operator_bound}.

Thus $\xi\mapsto\mathcal P_{i,\xi}$ is analytic and
\[
\|\mathcal P_{i,\xi}-\mathcal P_i\|_{E_i\to E_i}
\le
B_i\bigl(e^{|\xi|\|H_i\|_\infty}-1\bigr)
\le
eB_i|\xi|\|H_i\|_\infty.
\]
This concludes the proof.
\end{proof}

\begin{proof}[Proof of Theorem~\ref{lyapunov_large_deviations}]

Fix $(x,y,v)\in \mathbb{PT}^2$. For each $i\ge1$, define
\[
X_i
:=
\log \left\|
Df^3_{\vartheta^{3(i-1)}\omega}
\left(f^{3(i-1)}_{\omega}(x,y)\right)
\frac{Df^{3(i-1)}_{\omega}(x,y)\cdot v}
{\|Df^{3(i-1)}_{\omega}(x,y)\cdot v\|}
\right\|.
\]

Averaging over the three fresh noise variables in each block, first to bring the direction close to horizontal and then to obtain expansion, an elementary argument gives
\begin{equation}\label{elementary}
\mathbb P\left\{
X_i\ge\frac{24}{25}\log L
\,\middle|\,\mathcal F_{i-1}
\right\}
\ge1-C_*L^{\delta-1/100},
\end{equation}
where $C_*>0$ is independent of the initial point and direction, $L$, and $\delta$.

Define the auxiliary random variables
\[
\xi_i
:=
\begin{cases}
\dfrac{24}{25}\log L,
& \text{if $X_i \ge \dfrac{24}{25}\log L$},\\[0.3em]
-4\log L,
& \text{otherwise}.
\end{cases}
\]
The above definition, along with the crude lower bound $X_i\ge-4\log L$, implies
\begin{equation}\label{smallbutstrung}
X_i\ge\xi_i,
\qquad \forall i\ge1.
\end{equation}

Let
\[
A_n:=\#\left\{1\le i\le n:\xi_i=-4\log L\right\}
\]
be the number of non-strongly expanding blocks among the first $n$ blocks. Then
\begin{equation}\label{decompxiixix}
\sum_{i=1}^n\xi_i
=
\left(
\frac{24}{25}n-\frac{124}{25}A_n
\right)\log L.
\end{equation}

By \eqref{smallbutstrung}, the above formula implies
\[
\mathbb P\left\{
\sum_{i=1}^nX_i
\le
\alpha n\log L-C
\right\}
\le
\mathbb P\left\{
A_n\ge k_\alpha n+\beta\frac{C}{\log L}
\right\},
\]
where
\[
k_\alpha
:=
\frac{25}{124}
\left(
\frac{24}{25}-\alpha
\right)>0,
\qquad
\beta:=\frac{25}{124}.
\]

Fix $\delta_*<1/100$ such that $0<\delta\le\delta_*$. An exponential-moment argument combined with the conditional estimate \eqref{elementary} implies \eqref{lyapunov_ld_third_iterate}, with
\[
r_1:=\frac{\xi k_\alpha}{2},
\qquad
r_2:=\xi\beta,
\]
for any fixed
\[
0<\xi<\frac{1}{100}-\delta_*.
\]
\end{proof}
\section{Distribution of stable and unstable directions}\label{stabledirsec}

In this section, we study the invariant measures of the projective
processes associated with the random standard map and its inverse,
in order to estimate the distributions of the unstable and stable
directions of the forward dynamics, respectively.

Let us represent any projective direction $v \in\mathbb P^1$ by the unique angle $\theta\in[0,\pi)$ satisfying $v=(\cos\theta,\sin\theta).$ For the projective process of the random standard map, a simple
adaptation of the proof of \cite[Lemma~9]{BXY17} yields the following
concentration estimate.

\begin{proposicao}\label{proposition1}
Let $\nu$ be the mixing stationary measure of the projective process
given by Theorem~\ref{mixingpm}. Then there exist constants
$r,s>0$ such that, for all sufficiently
large $L$,
\[
\nu\bigl\{
(x,y,\theta):\theta \notin B_{L^{-s}}(0)
\bigr\}
\le L^{-r}.
\]
\end{proposicao}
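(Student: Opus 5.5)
We use stationarity together with the one-step structure of the projective update. Write a direction by its slope $s=\tan\theta$. If $(x_n,y_n,s_n)$ is a step of the projective process, then by the form of $Df_\omega$ the new slope is
\[
s_{n+1}=\frac{1}{a_n-s_n},\qquad a_n:=f'(x_n+\omega_n).
\]
This is the same Möbius update $\varphi(a,s)=1/(a-s)$ used in \eqref{recursion}. Hence $|s_{n+1}|\le L^{-s}$ as soon as $|a_n-s_n|\ge L^{s}$. Since $\nu$ is stationary, $\nu\{\theta\notin B_{L^{-s}}(0)\}=\int\bar{\mathcal P}^{\,2}_{p,(x,y,\theta)}\{\theta\notin B_{L^{-s}}(0)\}\,d\nu$. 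It is therefore enough to give a bound, uniform in the initial state, on the probability that after two steps $|s_2|>L^{-s}$, up to an exceptional set of $\nu$-mass $\le L^{-r}$.

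We use two steps rather than one so that the previous slope is harmless. After one step, $s_1=1/(a_0-s_0)$. Either $|s_1|\le 1$, or $|a_0-s_0|<1$; the latter is a condition on $\omega_0$. When it is not excluded it forces $|s_1|$ large but leaves $x_1$ essentially uniform. So the plan is to condition on $\mathcal F_0^0$ and estimate, over the fresh noise $\omega_1$, the probability that $|f'(x_1+\omega_1)-s_1|<L^{s}$. Given $s_1$, this set of $\omega_1$ consists of $O(1)$ preimage windows of an interval of length $2L^{s}$ under $\omega\mapsto f'(x_1+\omega)$. Away from the points where $f''$ vanishes, $|f''|\gtrsim L$, so each window has length $\lesssim L^{s-1}$. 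Near the zeros of $f''$ we use the square-root behaviour, giving length $\lesssim L^{(s-1)/2}$. Dividing by $2\sigma=2L^{-\delta}$, the probability is $\lesssim L^{\delta+(s-1)/2}$. This is exactly the type of estimate used for \eqref{inequality1}.

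The main obstacle is that $x_1+\omega_1$ lives in an interval of length only $2\sigma$. The sets where $|f'|$ is not of order $L$ are therefore hit only when $x_1$ lies within $\sigma$ of the critical levels. This is harmless for the upper bound above, because we need smallness of the bad set, not largeness. The estimate is uniform in $(x_1,s_1)$: for any fixed target value $s_1$, the set of $\omega_1$ with $f'(x_1+\omega_1)\in(s_1-L^s,s_1+L^s)$ has the stated measure. In particular no separate treatment of large $|s_1|$ is needed.

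Choose $s=1/4$ and $\delta<1/8$. The two-step probability is then $\lesssim L^{\delta-3/8}\le L^{-1/4}$ uniformly. Integrating against $\nu$ and using stationarity gives the claim with $r=1/4$ and $s=1/4$, for $L$ large. Passing from slope to angle is immediate, since $|\theta|\le|s|$ near $0$.
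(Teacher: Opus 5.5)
Your proof is correct and follows essentially the argument the paper intends (it defers to the stationarity-plus-noise-smoothing argument of \cite[Lemma~9]{BXY17}). The key ingredient is the bound $\mathbb P\bigl(|f'(x+\omega)-t|<L^{s}\bigr)\lesssim L^{\delta+(s-1)/2}$, uniform in $x$ and the current slope $t$, integrated against $\nu$. The first step is unnecessary: because your estimate is uniform in the state, applying it at time one already gives the claim. You should also rename the exponent so it does not clash with the slope variable $s$.
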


We omit the proof, which follows the argument of
\cite[Lemma~9]{BXY17} with the fixed angular cutoff replaced
by $L^{-s}$, for $s$ sufficiently small.

\subsection{Distribution of the stable direction}

Let $f_\omega$ be as in \eqref{maps}. The inverse maps $f_{\omega}^{-1}$ induce the iterations 
\begin{equation}\label{inversedyn}
x_{i+1}=y_i-\omega_i,
\qquad
y_{i+1}=f(y_i)-x_i.
\end{equation}
The associated projective dynamics then satisfies
\[
\tan\theta_{i+1}=f'(y_i)-\cot\theta_i.
\]

Let $\mathcal P_p^-$ denote the Koopman operator of this process, and let
$\bar{\mathcal P}^{-,n}_{p,(x,y,\theta)}$ denote its
$n$-step transition kernel starting from $(x,y,\theta)$.

\begin{lema}\label{inverseone}
Fix $(x_0,y_0,\theta_0)\in\mathbb{PT}^2$, and define
\begin{equation}\label{hacca}
H^-(\omega_0,\omega_1,\omega_2):= f^{-1}_{\omega_2}\circ f^{-1}_{\omega_1} \circ f_{\omega_0}^{-1}(x_0,y_0,\theta_0)=(x_3,y_3,\theta_3),
\end{equation}
Then, if $\sigma$ is as in  \eqref{notation} and  $\theta_2\neq0$, we have that
\begin{equation}\label{density}
\frac{
d\bar{\mathcal P}^{-,3}_{p,(x_0,y_0,\theta_0)}
}{
dx\,dy\,d\theta
}(x_3,y_3,\theta_3)
=
\frac{1}{(2\sigma)^3}
\left(
\sum_{\omega_0\in\mathcal E^-(x_3,y_3,\theta_3)}
\frac{1}{|f''(B(x_0,y_0)+\omega_0)|}
\right)
\frac{1}{\cos^2\theta_3},
\end{equation}
where
\[
B(x_0,y_0):=f\bigl(f(y_0)-x_0\bigr)-y_0,
\]
and $\mathcal E^-(x_3,y_3,\theta_3)$ is the set of values
of $\omega_0$ for which \eqref{hacca} admits a solution.
Moreover, $\#\mathcal E^-(x_3,y_3,\theta_3)\le2$.
\end{lema}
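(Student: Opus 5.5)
The plan is a direct change of variables. After the three inverse steps, the map $H^-$ depends on $(\omega_0,\omega_1,\omega_2)$ in a triangular way, so its Jacobian determinant can be computed explicitly.

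\textbf{Step 1: iterate \eqref{inversedyn} explicitly.} The first step gives
\[
x_1=y_0-\omega_0,\qquad y_1=f(y_0)-x_0,\qquad \tan\theta_1=f'(y_0)-\cot\theta_0 .
\]
Here $y_1$ and $\theta_1$ are deterministic. The second step gives
\[
x_2=y_1-\omega_1,\qquad y_2=f(y_1)-x_1=B(x_0,y_0)+\omega_0,\qquad \tan\theta_2=f'(y_1)-\cot\theta_1 .
\]
So $\theta_2$ is also deterministic, and it is nonzero by hypothesis. The third step gives
\[
x_3=B+\omega_0-\omega_2,\qquad y_3=f(B+\omega_0)-y_1+\omega_1,\qquad \tan\theta_3=f'(B+\omega_0)-\cot\theta_2 .
\]
The assumption $\theta_2\neq0$ guarantees that $\cot\theta_2$ is a finite constant. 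Hence $\theta_3$ is a function of $\omega_0$ alone.

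\textbf{Step 2: count preimages.} Given a target $(x_3,y_3,\theta_3)$, any solution must satisfy $f'(B+\omega_0)=\tan\theta_3+\cot\theta_2$; this defines $\mathcal E^-(x_3,y_3,\theta_3)$. Once $\omega_0$ is fixed, $\omega_2=B+\omega_0-x_3$ and $\omega_1=y_3-f(B+\omega_0)+y_1$ are uniquely determined, modulo $1$ and within $[-\sigma,\sigma]$, since $2\sigma<1$. Moreover $f'(t)=-2\pi L\sin(2\pi t)$, and $\omega_0$ ranges over an interval of length $2\sigma<1/2$. On such an interval the level set $\{f'=c\}$ has at most two points, because $\sin(2\pi\cdot)$ has at most one turning point there. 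This gives $\#\mathcal E^-\le2$.

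\textbf{Step 3: compute the Jacobian.} The partial derivatives are
\[
\partial_{\omega_1}x_3=0,\quad \partial_{\omega_2}x_3=-1,\quad \partial_{\omega_1}y_3=1,\quad \partial_{\omega_2}y_3=0,
\]
\[
\partial_{\omega_1}\theta_3=\partial_{\omega_2}\theta_3=0,\qquad \partial_{\omega_0}\theta_3=\cos^2\theta_3\,f''(B+\omega_0).
\]
The last identity comes from differentiating $\tan\theta_3$. Expanding along the $\theta_3$ row gives $|\det DH^-|=\cos^2\theta_3\,|f''(B+\omega_0)|$.

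\textbf{Step 4: conclude.} The law of $(\omega_0,\omega_1,\omega_2)$ has density $(2\sigma)^{-3}$ on $[-\sigma,\sigma]^3$. By the area formula, the pushforward density at a point equals the sum over preimages of $(2\sigma)^{-3}/|\det DH^-|$. This is exactly \eqref{density}.

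The only delicate point is the null set where $f''(B+\omega_0)=0$ or $\cos\theta_3=0$. There the formula reads $+\infty$, but this set has measure zero and does not affect the density identity almost everywhere.
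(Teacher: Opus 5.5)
Your proposal is correct and follows essentially the same route as the paper: you iterate the inverse dynamics explicitly, observe that $\theta_3$ depends only on $\omega_0$ through $\tan\theta_3=f'(B+\omega_0)-\cot\theta_2$ with $\theta_2$ deterministic and nonzero, compute the triangular Jacobian $\det DH^-=f''(B+\omega_0)\cos^2\theta_3$, and count preimages. The only minor difference is in how you bound $\#\mathcal E^-\le 2$: you use that the noise interval has length $2\sigma<1/2$, whereas the paper uses that $f'(y_2)=c$ has at most two solutions $y_2\in\mathbb T$, each of which determines $\omega_0$ uniquely. Either argument works.
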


\begin{proof}
A direct computation gives
\[
y_1=f(y_0)-x_0,
\qquad
y_2=f(y_1)-y_0+\omega_0=B(x_0,y_0)+\omega_0.
\]
Thus
\[
x_3=y_2-\omega_2,
\qquad
y_3=f(y_2)-y_1+\omega_1,
\qquad
\tan\theta_3=f'(y_2)-\cot\theta_2,
\]
where $\theta_2$ is deterministic.
Using \eqref{hacca}, the Jacobian matrix is
\[
DH^-=
\begin{pmatrix}
1&0&-1\\
f'(y_2)&1&0\\
\partial_{y_2}\theta_3&0&0
\end{pmatrix}.
\]
Differentiating
\[
\tan\theta_3=f'(y_2)-\cot\theta_2
\]
with respect to $y_2$ gives, assuming $\theta_2\neq0$,
\[
\det DH^-(\omega_0,\omega_1,\omega_2)
=
f''(y_2)\cos^2\theta_3.
\]
This proves \eqref{density}.

To conclude, note that
$\tan\theta_3=f'(y_2)-\cot\theta_2$
determines $y_2$ up to at most two solutions, and each $y_2$
uniquely determines
\[
\omega_0=y_2-B(x_0,y_0),
\qquad
\omega_2=y_2-x_3,
\qquad
\omega_1=y_3-f(y_2)+y_1.
\]
\end{proof}

The argument underlying Proposition~\ref{proposition1}, together
with the density formula in Lemma~\ref{inverseone}, yields the
following estimate.

\begin{proposicao}\label{vertconc}
There exist constants $\bar s,\bar r>0$ such that, for all
sufficiently large $L$, if $\nu^-$ is stationary for the inverse projective process, then
\begin{equation}\label{invfinal}
\nu^-\Bigl\{
(x,y,\theta):\theta
\notin B_{L^{-\bar s}}\left(\frac{\pi}{2}\right)
\Bigr\}
\le L^{-\bar r}.
\end{equation}
\end{proposicao}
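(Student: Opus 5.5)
The plan is to use stationarity of $\nu^-$ under the three–step kernel $\bar{\mathcal P}^{-,3}_p$, which reduces \eqref{invfinal} to a bound uniform in the initial point:
\[
\nu^-(E)=\int \bar{\mathcal P}^{-,3}_{p,(x_0,y_0,\theta_0)}(E)\,d\nu^-(x_0,y_0,\theta_0)
\le \sup_{(x_0,y_0,\theta_0)}\bar{\mathcal P}^{-,3}_{p,(x_0,y_0,\theta_0)}(E),
\qquad
E:=\Bigl\{\theta\notin B_{L^{-\bar s}}\bigl(\tfrac\pi2\bigr)\Bigr\}.
\]
This argument does not use uniqueness or mixing of $\nu^-$, so it applies to every stationary measure, as the statement requires.

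To bound the supremum, I follow the computation in the proof of Lemma~\ref{inverseone}. Along the inverse dynamics \eqref{inversedyn}, the quantities $y_1=f(y_0)-x_0$, $\theta_1$ and $\theta_2$ are deterministic functions of the initial state, since the projective update only uses $y_0$ and $y_1$. The first noise coordinate enters only through $y_2=B(x_0,y_0)+\omega_0$, and then
\[
\tan\theta_3=f'\bigl(B(x_0,y_0)+\omega_0\bigr)-\cot\theta_2 .
\]
If $|\tan\theta_3|\ge L^{\bar s}$, then $|\theta_3-\pi/2|\lesssim L^{-\bar s}$. After adjusting $\bar s$ by a constant factor, it therefore suffices to bound
\[
\mathbb P\bigl\{\,|f'(B+\omega_0)-c|<L^{\bar s}\,\bigr\},\qquad c:=\cot\theta_2\in\mathbb R\cup\{\infty\},
\]
uniformly in $c$ and $B$. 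The case $c=\infty$ (that is, $\theta_2=0$) is trivial, since then $\theta_3=\pi/2$ exactly.

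The main step is this sublevel-set estimate for $f'(u)=-2\pi L\sin(2\pi u)$ over a window $u\in B+[-\sigma,\sigma]$. The preimage of an interval of length $2L^{\bar s}$ under $L\sin(2\pi u)$ consists of $O(L^{-\delta}+1)$ intervals. Each has length $\lesssim L^{\bar s-1}$ where $|f''|\gtrsim L$, and $\lesssim (L^{\bar s}/L)^{1/2}$ near the points where $f''=0$, i.e. where $\sin(2\pi u)=\pm1$; there $f'$ is quadratic, with second derivative of order $L$. Hence the Lebesgue measure of the bad set of $\omega_0$ is $\lesssim L^{(\bar s-1)/2}$. Dividing by $2\sigma=2L^{-\delta}$ gives
\[
\bar{\mathcal P}^{-,3}_{p,(x_0,y_0,\theta_0)}(E)\lesssim L^{\delta+(\bar s-1)/2}.
\]
Choosing, for example, $\bar s=1/4$ and $\delta<1/8$, one can take $\bar r=1/4$ for $L$ large.

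Alternatively, the same bound follows by integrating the density formula \eqref{density} over $E$. Near $\theta_3=\pi/2$ the factor $1/\cos^2\theta_3$ is exactly compensated by the change of variables $\tan\theta_3\mapsto y_2$. The step I expect to need the most care is the uniformity near critical points of $f'$, where the naive $1/|f''|$ bound in \eqref{density} degenerates. The square-root sublevel estimate above is what I would use to handle it. The rest is bookkeeping of the deterministic dependence of $\theta_2$ on the initial point, which is harmless because the estimate is uniform in $c$.
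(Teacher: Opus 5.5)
Your proposal is correct and is essentially the paper's argument. Stationarity reduces \eqref{invfinal} to a uniform bound on the three-step inverse kernel, in which $\theta_3$ depends on the noise only through $y_2=B(x_0,y_0)+\omega_0$, as in Lemma~\ref{inverseone}. Your direct sublevel-set estimate for $f'$ on the noise window is the change-of-variables form of integrating \eqref{density} over the bad set, and your square-root bound near the zeros of $f''$ is what controls the integrable singularity of $1/|f''|$ there.
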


By Oseledets' theorem, the unit determinant of our maps, and the
positivity of the top Lyapunov exponent established in
\cite{BXY17}, the random standard map admits an almost surely
defined, future-measurable stable direction $e^s(\omega,x,y)\in\mathbb P^1.$

The following result, which is classical (see \cite{Viana14}),
connects the projective motion of the inverse dynamics with the
distribution of the stable direction.

\begin{teorema}\label{corr}
Define a probability measure $\nu^s$ on $\mathbb{PT}^2$ by
\begin{equation}\label{nuesse}
\nu^s(B)
:=
(\mathbb P\otimes m)
\bigl\{
(\omega,x,y):(x,y,e^s(\omega,x,y))\in B
\bigr\}.    
\end{equation}
Then $\nu^s$ is stationary for the inverse projective process.
\end{teorema}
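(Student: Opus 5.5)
We need to show $\nu^s$ is stationary for the inverse projective process, i.e.\ for every bounded measurable $\phi$ on $\mathbb{PT}^2$,
\[
\int \mathcal P_p^-\phi\,d\nu^s=\int\phi\,d\nu^s .
\]
The plan is to realise the inverse step as a skew-product on an extended, two-sided noise space in which the stable direction becomes equivariant. The key fact is that $e^s(\omega,x,y)$ depends only on the future noise $(\omega_n)_{n\ge0}$ and on the base point. The inverse step from $(x,y)$ applies $f_{\omega_{-1}}^{-1}$ with a fresh noise coordinate $\omega_{-1}$, independent of the future.

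\textbf{Step 1: invariance of the stable direction.} Extend $\Omega$ to $\Omega_{\pm}:=[-\sigma,\sigma]^{\mathbb Z}$ with the product measure, and let $\vartheta$ be the invertible shift. Consider the skew product
\[
F(\omega,z)=(\vartheta\omega,f_{\omega_0}(z)),
\]
which preserves $\mathbb P\otimes m$, since each $f_{\omega_0}$ preserves $m$. By Oseledets' theorem the stable direction satisfies the equivariance
\[
Df_{\omega_0}(z)\,e^s(\omega,z)=e^s\bigl(F(\omega,z)\bigr)
\]
for a.e.\ $(\omega,z)$. Equivalently, for the inverse map $F^{-1}(\omega,z)=(\vartheta^{-1}\omega,f^{-1}_{\omega_{-1}}(z))$ we have
\[
e^s\bigl(F^{-1}(\omega,z)\bigr)=Df^{-1}_{\omega_{-1}}(z)\,e^s(\omega,z),
\]
where on the left $e^s$ is evaluated at the shifted sequence, whose future now includes $\omega_{-1}$.

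\textbf{Step 2: the computation.} The map $F^{-1}$ also preserves $\mathbb P\otimes m$. Hence
\[
\int\phi\,d\nu^s=\int\phi\bigl(F^{-1}(\omega,z),\,e^s(F^{-1}(\omega,z))\bigr)\,d(\mathbb P\otimes m).
\]
Substituting the equivariance, the integrand becomes
\[
\phi\bigl(f^{-1}_{\omega_{-1}}z,\;Df^{-1}_{\omega_{-1}}(z)\,e^s(\omega,z)\bigr).
\]
Now condition on $(\omega_n)_{n\ge0}$ and $z$. The direction $e^s(\omega,z)$ is measurable with respect to $\sigma(\omega_n:n\ge0)$ and $z$, while $\omega_{-1}$ is independent of it with law $\mathrm{Unif}[-\sigma,\sigma]$. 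Integrating out $\omega_{-1}$ therefore yields
\[
(\mathcal P_p^-\phi)\bigl(z,e^s(\omega,z)\bigr),
\]
and integrating this over $(\omega,z)$ gives $\int\mathcal P_p^-\phi\,d\nu^s$, as required.

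\textbf{Main obstacle.} The delicate point is justifying Step 1 for the noise-dependent cocycle: $e^s$ must be defined and equivariant almost surely with respect to $\mathbb P\otimes m$, and the conventions must match. The inverse noise enters through $f^{-1}_{\omega_{-1}}$, and the inverse projective kernel defined in \eqref{inversedyn} must be written with the same sign and indexing conventions as the skew product. I would handle this by applying the Oseledets theorem to the invertible skew product $F$, which is ergodic because $m$ is the unique stationary measure. By uniqueness of the Oseledets splitting, the stable subspace depends only on the future coordinates. This is the future-measurability already asserted before the theorem, and it is exactly the independence used in Step 2.
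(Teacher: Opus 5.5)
Your proof is correct. The paper gives no argument of its own for this statement, merely calling it classical and referring to Viana, and your argument is precisely the standard one behind that citation: pass to the two-sided extension, use invariance of $\mathbb P\otimes m$ under $F^{-1}$ together with the projective equivariance $e^s(F^{-1}(\omega,z))=Df^{-1}_{\omega_{-1}}(z)\,e^s(\omega,z)$ on a full-measure $F$-invariant set, and then integrate out the fresh coordinate $\omega_{-1}$, which is independent of the future-measurable direction $e^s(\omega,z)$. The only inputs you need are ones the paper already takes for granted just before the statement, namely almost-sure positivity of $\lambda_1$ and future-measurability of $e^s$, so your ``main obstacle'' paragraph is accurate and does not leave an additional gap.
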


We now apply Proposition~\ref{vertconc} to estimate the
probability that $e^s$ lies outside a neighbourhood of the
vertical direction.

\begin{corolario}\label{Stabledef}
Let $\bar s,\bar r>0$ be as in Proposition~\ref{vertconc}. Then
\begin{equation}\label{jesus}
(\mathbb P\otimes m)
\Bigl\{
(\omega,x,y):
\operatorname{dist}\bigl(e^s(\omega,x,y),\tfrac{\pi}{2}\bigr)
\ge L^{-\bar s}
\Bigr\}
\le L^{-\bar r}.
\end{equation}

Consequently, by Birkhoff's ergodic theorem, for
$(\mathbb P\otimes m)$-almost every $(\omega,x,y)$,
\begin{equation}\label{christ}
\limsup_{n\to\infty}
\frac{
\left|
\left\{
1\le k\le n:
\operatorname{dist}\bigl(
e^s(\Theta^k(\omega,x,y)),\tfrac{\pi}{2}
\bigr)
\ge L^{-\bar s}
\right\}
\right|
}{n}
\le L^{-\bar r},
\end{equation}
where $\Theta:\Omega\times\mathbb T^2\to\Omega\times\mathbb T^2$
denotes the skew product map $\Theta(\omega,x,y)=\bigl(\theta(\omega),f_\omega(x,y)\bigr).$
\end{corolario}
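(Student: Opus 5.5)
The first estimate \eqref{jesus} follows directly from Theorem~\ref{corr} and Proposition~\ref{vertconc}; the approach is to identify $\nu^s$ with the stationary measure $\nu^-$ appearing there. By Theorem~\ref{corr}, $\nu^s$ is stationary for the inverse projective process. Unpacking \eqref{nuesse}, the set on the left-hand side of \eqref{jesus} is exactly the preimage under $(\omega,x,y)\mapsto(x,y,e^s(\omega,x,y))$ of
\[
\Bigl\{(x,y,\theta):\theta\notin B_{L^{-\bar s}}\bigl(\tfrac{\pi}{2}\bigr)\Bigr\}.
\]
Hence its $(\mathbb P\otimes m)$-measure equals $\nu^s$ of that set. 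Proposition~\ref{vertconc} is stated for any stationary $\nu^-$, so applying it with $\nu^-=\nu^s$ gives the bound $L^{-\bar r}$. One point needs care: the angular distance to $\pi/2$ must be read in $\mathbb P^1$, i.e.\ modulo $\pi$, and $\operatorname{dist}\ge L^{-\bar s}$ must coincide with $\theta\notin B_{L^{-\bar s}}(\pi/2)$. This is only a matter of matching conventions, possibly up to a boundary set of measure zero, and can be absorbed by open versus closed balls.

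For \eqref{christ}, I apply Birkhoff's theorem to the skew product $\Theta$ on $(\Omega\times\mathbb T^2,\mathbb P\otimes m)$ with observable $\phi=\mathbf 1_{E}$, where $E$ is the event in \eqref{jesus}. The steps are as follows.

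First, $\mathbb P\otimes m$ is $\Theta$-invariant. This holds because each $f_\omega$ preserves $m$ and $\mathbb P$ is shift-invariant with $f_\omega$ depending only on $\omega_0$.

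Second, $e^s$ is equivariant: $e^s(\Theta(\omega,x,y))=Df_\omega(x,y)e^s(\omega,x,y)$. This is the Oseledets property, and it shows that $\phi\circ\Theta^k$ is the indicator of the event appearing in the count at time $k$.

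Third, Birkhoff's theorem gives almost sure convergence of $\frac1n\sum_{k=1}^n\phi\circ\Theta^k$ to $\mathbb E[\phi\mid\mathcal I]$, where $\mathcal I$ is the invariant $\sigma$-algebra.

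The main obstacle is ensuring the limit is the constant $\int\phi\le L^{-\bar r}$ rather than merely a conditional expectation, which requires ergodicity of $\Theta$. I would invoke the standard correspondence (Ohno / Kifer): the skew product over an i.i.d.\ shift is ergodic if and only if the stationary measure $m$ is ergodic for the one-point motion, which is stated at the start of the section on statistical properties. Since the observable depends on the future through $e^s$, I would use the two-sided or natural-extension version of this correspondence, or note that invariance of a function under the one-sided skew product already forces it to depend only on $(x,y)$ modulo null sets via the stationary measure argument. Either route gives a constant limit. With $\mathbb E[\phi]\le L^{-\bar r}$ from \eqref{jesus}, the $\limsup$ in \eqref{christ} is then at most $L^{-\bar r}$ almost surely; in fact it is a limit.
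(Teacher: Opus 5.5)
Your proposal is correct and follows the paper's proof. You obtain \eqref{jesus} by applying Proposition~\ref{vertconc} with $\nu^-=\nu^s$, using Theorem~\ref{corr} and \eqref{nuesse}, and you obtain \eqref{christ} from Birkhoff's theorem applied to $\mathbf 1_E$ together with ergodicity of $\Theta$ with respect to $\mathbb P\otimes m$. The paper simply asserts that ergodicity (later attributing it to \eqref{baseexpmixing}); you justify it via the Kifer--Ohno correspondence, whose one-sided version already covers observables depending on future noise. Your equivariance step is harmless but unnecessary, since the count in \eqref{christ} is by definition $\sum_k\mathbf 1_E\circ\Theta^k$.
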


\begin{proof}
Let $\nu^s$ as in \eqref{nuesse}. By Theorem~\ref{corr}, $\nu^s$ is stationary for the inverse
projective process. Formula \eqref{jesus} follows by \eqref{invfinal} and \eqref{nuesse} whilst \eqref{christ} follows from the ergodicity of $\Theta$ with respect to $\mathbb P\otimes m$.
\end{proof}
\section{Growth to large scale in the random standard map}\label{revtsec}

In this section, we apply the techniques developed in \cite{DD24} to the random standard map to quantify how frequently small curves grow to
large scale and to determine the scale they reach. Many of the
intermediate statements follow from their counterpart in \cite{DD24} by substituting the constants relevant to our setting or making minor modifications to the proofs. We state these results in the
form needed here, making the dependence on $L$ explicit, and refer
to the original proofs, omitting the details of these adaptations.

Throughout this section, we work with the third iterate
\begin{equation}\label{thirditerate}
F_\omega:=f_\omega^3,
\end{equation}

where $f_\omega$ is defined in \eqref{maps}. This random dynamical system is considered over the noise shift $\vartheta^3$, so that
\[
F_\omega^n=f_\omega^{3n}.
\]
Thus each time step below corresponds to three iterates of the original random standard map. Given $\omega \in \Omega$ and $z\in \mathbb{T}^2,$  write 
\begin{equation}\label{sequenceofmatrices}
A_n(\omega,z) = DF^n_{\omega}(z).
\end{equation}
We set $A_0(\omega,z)=\mathrm{Id}$.
We write $A_n=A_n(\omega,z)$ when the dependence on $\omega$ and $z$
is understood.

We first revisit the notions of reverse-tempered norms and
reverse-tempered splittings, introduced in \cite{DD24}. There, they are formulated in terms of the inverse cocycle; here we use a formulation involving the forward derivative products in \eqref{sequenceofmatrices}.

\begin{definicao}\label{reversetemperedness}
Let $R\ge0$ and $\alpha,\varepsilon>0$. We say that $N\ge1$ is an
$(R,\alpha,\varepsilon)$-reverse-tempered time for the sequence
$\{A_n(\omega,z)\}_{n=0}^N$ if
\begin{equation}\label{eq:reverse-temperedness}
\|A_m\|
\ge e^{-R}
L^{-\varepsilon(N-m)}
L^{\alpha(m-k)}
\|A_k\|,
\qquad 0\le k\le m\le N.
\end{equation}
\end{definicao}

\begin{definicao}\label{def:reverse-tempered-splitting}
We say that the sequence $\{A_n(\omega,z)\}_{n=0}^N$ admits a
$(C,\alpha,\varepsilon)$-reverse-tempered splitting if there exist
unit vectors $s_N,u_N\in \mathbb {PT}^2$ such that, for every
$0\le k\le m\le N$,
\begin{equation}\label{eq:stable-splitting}
\frac{\|A_m s_N\|}{\|A_k s_N\|}
\le
e^{C}L^3L^{3\varepsilon(N-k)}L^{-\alpha(m-k)},
\end{equation}
\begin{equation}\label{eq:unstable-splitting}
\frac{\|A_m u_N\|}{\|A_k u_N\|}
\ge
e^{-C}L^{-3}L^{-3\varepsilon(N-k)}L^{\alpha(m-k)},
\end{equation}
and
\begin{equation}\label{eq:angle-splitting}
\angle(A_m s_N,A_m u_N)
\ge
e^{-C}L^{-3}L^{-3\varepsilon(N-k)}.
\end{equation}
\end{definicao}

The next Proposition establishes that if $N$ is a
reverse-tempered time for the sequence $\{A_n\}_{n=0}^N$, then this sequence admits a reverse-tempered splitting. This is essentially \cite[Proposition 4.6]{DD24}.

\begin{proposicao}\label{thm1}
Fix $R\ge0$ and $\alpha,\varepsilon>0$. There exist
$C_*>0$, independent of $R,\alpha,\varepsilon$, and $L_0>0$
such that, for every $L>L_0$, if $N$ is an
$(R,\alpha,\varepsilon)$-reverse-tempered time for the sequence
$\{A_n(\omega,z)\}_{n=0}^N$, then this sequence admits a
$(3R+C_*,\alpha,\varepsilon)$-reverse-tempered splitting.
The vectors $s_N,u_N$ can be chosen orthogonal, with $s_N$ a most
contracted singular direction of $A_N$.
\end{proposicao}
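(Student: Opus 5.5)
We follow the strategy of \cite[Proposition~4.6]{DD24} and work with the singular value decomposition of $A_N$. Let $s_N,u_N$ be orthogonal unit vectors with $\|A_Ns_N\|=\|A_N\|^{-1}$ and $\|A_Nu_N\|=\|A_N\|$; this is possible because $\det A_n=1$. The norms in the definitions are all controlled by the single sequence $\|A_m\|$, together with the uniform bound $\|DF_\omega\|\le(2\pi L+1)^3\lesssim L^3$. This bound is the source of the factors $L^{3}$ and $L^{3\varepsilon}$ in Definition~\ref{def:reverse-tempered-splitting}.

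\textbf{Step 1: the unstable vector.} Write $u_N=a\,u^{(m)}+b\,s^{(m)}$ in the singular basis of $A_m$. The first task is to show that $u_N$ stays close to the most expanded direction $u^{(m)}$ of $A_m$.

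We use $\|A_Nu_N\|=\|A_N\|$ and split $A_N=(A_N A_m^{-1})A_m$. This gives
\[
\|A_N\|\le \|A_NA_m^{-1}\|\bigl(|a|\,\|A_m\|+|b|\,\|A_m\|^{-1}\bigr),
\]
with $\|A_NA_m^{-1}\|\le (CL^3)^{N-m}$. Applying reverse temperedness \eqref{eq:reverse-temperedness} with $(k,m)=(m,N)$ lower-bounds $\|A_N\|$ in terms of $\|A_m\|$.

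A crude bound of this kind is not enough, because it loses a factor per step. We therefore compare consecutive steps instead. The one-step estimate
\[
\angle(u^{(m)},u^{(m+1)})\lesssim L^{3}\|A_m\|^{-2}
\]
follows from the standard perturbation bound for singular directions of a product $B A_m$ with $\|B\|\lesssim L^3$. Summing from $m$ to $N$ and using $\|A_j\|\ge e^{-R}L^{-\varepsilon(N-j)}L^{\alpha(j-m)}\|A_m\|$ gives a geometric series. This yields
\[
|b|\lesssim e^{2R}L^{3}L^{2\varepsilon(N-m)}\|A_m\|^{-2}.
\]

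\textbf{Step 2: growth along $u_N$ and contraction along $s_N$.} From Step 1 we get $\|A_mu_N\|\asymp\|A_m\|$ up to a factor $e^{-C}$, except when $\|A_m\|$ is too small for Step 1 to apply. That small-norm range is absorbed by the factors $L^{-3}L^{-3\varepsilon(N-k)}$. Combining this with \eqref{eq:reverse-temperedness} between $k$ and $m$ gives \eqref{eq:unstable-splitting}.

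For $s_N$, since $s_N\perp u_N$ and $\det A_m=1$, we have
\[
\|A_ms_N\|\,\|A_mu_N\|\,\sin\angle(A_ms_N,A_mu_N)=1.
\]
Then $\|A_ms_N\|\le \|A_m\|^{-1}+|b'|\,\|A_m\|$, where $b'$ is the component of $s_N$ along $u^{(m)}$, which is controlled by Step 1. This gives $\|A_ms_N\|\lesssim e^{2R}L^{3}L^{2\varepsilon(N-m)}\|A_m\|^{-1}$. The ratio for $k\le m$ then follows from the lower bound $\|A_ks_N\|\ge \|A_k\|^{-1}$ and \eqref{eq:reverse-temperedness}, which gives \eqref{eq:stable-splitting}.

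\textbf{Step 3: the angle.} The angle bound \eqref{eq:angle-splitting} comes from the determinant identity in Step 2 together with the two norm bounds, which cap $\|A_ms_N\|\,\|A_mu_N\|$ by $e^{C}L^3L^{3\varepsilon(N-m)}$. The constant $3R+C_*$ arises from collecting the $e^{R}$ factors, which appear at most three times, and the universal constants.

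\textbf{Main obstacle.} The delicate point is Step 1: obtaining a summable control on the rotation of the top singular direction without losing a factor $L^3$ per step. This is exactly where reverse temperedness must be used on every subinterval $[j,N]$. We expect the argument of \cite{DD24} to go through verbatim after replacing $e^{\lambda}$ by $L^{\alpha}$ and the bounded derivative by $L^3$.
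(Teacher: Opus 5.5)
Your scheme is the argument the paper relies on; the paper gives no proof beyond citing \cite[Proposition~4.6]{DD24}. You take $s_N,u_N$ from the singular value decomposition of $A_N$, control the rotation of the singular directions of $A_m$ by summing one-step rotations against reverse temperedness, and then use $\det A_m=1$ for the stable vector and the angle. Granting the Step~1 bound $|b|\lesssim e^{2R}L^{3}L^{2\varepsilon(N-m)}\|A_m\|^{-2}$, your Steps~2 and~3 are correct. This includes the small-norm range handled via $\|A_mu_N\|\ge\|A_m\|^{-1}$, and the factors $e^{2R}\cdot e^{R}$ give $3R+C_*$.

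The estimate that is wrong as written is the one-step bound $\angle(u^{(m)},u^{(m+1)})\lesssim L^{3}\|A_m\|^{-2}$. The standard perturbation bound for $BA_m$ is $\|B\|^{2}\|A_m\|^{-2}$. That order is attained when $B$ strongly contracts $A_mu^{(m)}$, so that $\|A_{m+1}\|\approx\|A_m\|/\|B\|$. Reverse temperedness only gives $\|A_{m+1}\|\ge e^{-R}L^{-\varepsilon(N-m-1)}L^{\alpha}\|A_m\|$, so such a drop by a factor $\sim L^{3}$ is allowed once $\varepsilon(N-m-1)\gtrsim 3+\alpha$. Your bound therefore fails under the hypotheses. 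Falling back on $\|B\|^2\lesssim L^6$ would only give $L^{\pm6}$, not the $L^{\pm3}$ required in Definition~\ref{def:reverse-tempered-splitting}.

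The fix is to use the extremality of $s^{(m+1)}$. Write $s^{(m+1)}=\cos\theta\,s^{(m)}+\sin\theta\,u^{(m)}$ and $B_m:=A_{m+1}A_m^{-1}$, with $\|B_m\|\lesssim L^3$. Then
\[
\|A_{m+1}\|^{-1}=\|B_mA_ms^{(m+1)}\|\ge\|B_m\|^{-1}|\sin\theta|\,\|A_m\|,
\qquad\text{so}\qquad
|\sin\theta|\le\|B_m\|\,\|A_m\|^{-1}\|A_{m+1}\|^{-1}.
\]
Bound both $\|A_j\|^{-1}$ and $\|A_{j+1}\|^{-1}$ by reverse temperedness relative to $\|A_m\|$, and sum over $m\le j<N$:
\[
\sum_{j=m}^{N-1}\|B_j\|\,\|A_j\|^{-1}\|A_{j+1}\|^{-1}\lesssim e^{2R}L^{3-\alpha-\varepsilon}L^{2\varepsilon(N-m)}\|A_m\|^{-2}
\]
for $L\ge L_0(\alpha+\varepsilon)$. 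This is at least as good as your Step~1 bound, and the rest of your proof then goes through unchanged.
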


The next proposition is an application of \cite[Proposition A.13]{DD24}, and   shows that a curve sufficiently transverse to the
stable direction grows to a scale bounded below by a fixed power of
$L^{-1}$, while its curvature remains bounded by a fixed power of $L$.

\begin{proposicao}\label{hyperbolict_times}
Fix $C\ge0$, $\alpha>0$, and $\ell_0,K_0>0$. There exist
$\varepsilon_0,\varepsilon',a,b>0$ and $n_0,L_0\ge1$ such that the
following holds whenever $L\ge L_0$ and
$0<\varepsilon<\varepsilon_0$.

Suppose that, for some $\omega \in \Omega,$ $z \in 
\mathbb{T}^2,$ and $N\ge n_0$ the sequence of matrices $\{A_n(\omega,z)\}_{n=0}^N$ as in \eqref{sequenceofmatrices} admits a
$(C,\alpha,\varepsilon)$-reverse-tempered splitting, with stable and
unstable directions $s_N,u_N$, respectively. Let
\[
\gamma:[-\ell_0,\ell_0]\longrightarrow\mathbb T^2
\]
be an embedded $C^2$ curve parametrized by arclength, satisfying
\[
\gamma(0)=z,\qquad
\|\gamma''\|_\infty\le K_0,\qquad
\angle(\gamma'(0),s_N)\ge L^{-\varepsilon'N}.
\]
Then $F_\omega^N(\gamma)$ contains a subcurve $\gamma_N$ through
$F_\omega^N(z)$ such that, when parametrized by arclength,
\[
\|\gamma_N''\|_\infty\le L^a,
\qquad
\operatorname{Len}(\gamma_N)\ge L^{-b}.
\]
\end{proposicao}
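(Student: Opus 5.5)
The plan is to reduce Proposition~\ref{hyperbolict_times} to the abstract finite-time Pesin statement \cite[Proposition A.13]{DD24}. Everything must be done in an adapted chart along the orbit $z_n:=F_\omega^n(z)$, $0\le n\le N$, with every constant expressed as an explicit power of $L$. The key steps are the following.

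\textbf{Step 1 (uniform $C^2$ bounds).} Since $\|f_\omega\|_{C^2}\lesssim L$, and the same bound holds for $f_\omega^{-1}$, we have $\|F_\omega\|_{C^2}\lesssim L^3$ and $\|F_\omega^{-1}\|_{C^2}\lesssim L^3$. So in the exponential charts $\exp_{z_n}$ the local maps $\tilde F_n:=\exp_{z_{n+1}}^{-1}\circ F_{\vartheta^{3n}\omega}\circ\exp_{z_n}$ satisfy $\tilde F_n(0)=0$, $D\tilde F_n(0)=DF(z_n)$, and $\operatorname{Lip}(D\tilde F_n)\le C_1L^3$ on a ball of fixed radius. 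These are exactly the hypotheses of \cite[Proposition A.13]{DD24}, with the constant there replaced by $C_1L^3$.

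\textbf{Step 2 (Lyapunov charts).} Use the splitting $s_N,u_N$ given by Definition~\ref{def:reverse-tempered-splitting}, transported by $A_n$, to build the linear changes of coordinates $\Phi_n$. Each $\Phi_n$ sends $A_ns_N$ and $A_nu_N$ to the coordinate axes, normalized so that $\|A_mu_N\|/\|A_ku_N\|$ is read off as expansion along the first axis and $\|A_ms_N\|/\|A_ks_N\|$ as contraction along the second. By \eqref{eq:angle-splitting} and the two-sided ratio bounds \eqref{eq:stable-splitting}--\eqref{eq:unstable-splitting}, $\|\Phi_n^{\pm1}\|\le e^{C}L^{3}L^{3\varepsilon(N-n)}$. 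In these charts the linear part is hyperbolic with rate $L^{\alpha}$. The nonlinear part has Lipschitz constant at most $C_1L^3\,(e^CL^3L^{3\varepsilon(N-n)})^3$ on a ball of radius $r_n\asymp L^{-c}L^{-c'\varepsilon(N-n)}$. This chart size is polynomial in $L^{-1}$ at the endpoint $n=N$ and shrinks exponentially backwards, which is the tempered form required by \cite[Proposition A.13]{DD24}. The requirement $\varepsilon<\varepsilon_0$ ensures that the tempering rate is dominated by the hyperbolicity $L^\alpha$.

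\textbf{Step 3 (graph transform).} Write $\gamma$ near $z$ in the chart at time $0$. The hypothesis $\angle(\gamma'(0),s_N)\ge L^{-\varepsilon'N}$, together with the conditioning bound on $\Phi_0$, implies that $\gamma$ is a graph over the unstable axis with slope at most $L^{c''}L^{(\varepsilon'+3\varepsilon)N}$. Its length is at least $r_0$ and its curvature is controlled by $K_0$ and $\|\Phi_0\|$. Now iterate the cone and graph-transform estimates of \cite{DD24}. Each step contracts slopes by a factor $L^{-2\alpha}$ up to the nonlinear error, and it stretches length by $L^{\alpha}$ while the chart radius grows only by $L^{c'\varepsilon}$. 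Choosing $\varepsilon'$ and $\varepsilon_0$ small compared with $\alpha$, and $n_0$ large, the initial bad slope $L^{(\varepsilon'+3\varepsilon)N}$ is absorbed after a fraction of the $N$ steps. The image then becomes a graph crossing the full chart at time $N$, and its $C^2$ norm in chart coordinates stays bounded by a fixed power of $L$, by the standard curvature recursion.

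\textbf{Step 4 (back to the torus).} Undo $\Phi_N$, whose size is at most $e^CL^3$ at $n=N$. This yields a subcurve $\gamma_N\ni F_\omega^N(z)$ with length at least $r_N\|\Phi_N\|^{-1}\ge L^{-b}$ and curvature at most $L^a$.

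The main obstacle is Step 3. We must check that the requirement $\angle\ge L^{-\varepsilon'N}$, together with the possibly small chart radius $r_0\sim L^{-c'\varepsilon N}$, still leaves a graph of admissible length at time $0$ whose subsequent growth dominates the backward shrinkage of the charts. The first thing to try is to track $\log(\text{length})$ and $\log(\text{slope})$ linearly in $N$ with $L$-explicit exponents, and then pick $\varepsilon_0,\varepsilon'\ll\alpha/10$ so that both of these quantities have positive drift.
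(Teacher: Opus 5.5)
Your proposal is correct and takes the paper's route. The paper obtains this statement simply by invoking \cite[Proposition A.13]{DD24} with every constant made explicit as a power of $L$, and your Steps 2--4 unpack the tempered-chart and graph-transform argument behind that proposition (charts of polynomial size at time $N$, shrinking like $L^{-c\varepsilon(N-n)}$ backwards, overcome by expansion at rate $L^{\alpha}$ once $\varepsilon,\varepsilon'\ll\alpha$).

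Some exponents you quote are off, but only their polynomial-in-$L$, $L^{c\varepsilon(N-n)}$-tempered form is used, so the conclusion is unaffected:
\begin{itemize}
\item In Step 1, $\|D^2F_\omega\|\lesssim L^5$, not $L^3$.
\item In Step 2, one-step hyperbolicity in the charts requires the finite Lyapunov-norm rescaling of \cite[Lemma~A.1]{DD24}, at a further polynomial cost in $\|\Phi_n\|$. With the plain axis normalization, \eqref{eq:unstable-splitting} only gives $\|A_{k+1}u_N\|/\|A_ku_N\|\ge e^{-C}L^{\alpha-3}L^{-3\varepsilon(N-k)}$, which is less than $1$ for $\alpha<1$.
\end{itemize}
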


Given $0<\alpha<24/25$, $\ell_0,K_0>0$, $C>0,$ and an embedded $C^2$ curve
$\gamma:[-\ell_0,\ell_0]\to\mathbb T^2$, parametrized by arclength,
with $\gamma(0)=z$ and $\|\gamma''\|_\infty\le K_0$, let
$\tau_0(\gamma)=0$ and define recursively
\begin{equation}\label{taugamma}
\tau_j(\gamma)(\omega):=
\inf\left\{
m>\tau_{j-1}(\gamma)(\omega):
\begin{array}{l}
m\ge n_0,\quad
\{A_n(\omega,z)\}_{n=0}^{m}\text{ admits a }
(C,\alpha,\varepsilon)\text{-reverse-tempered splitting}\\[1mm]
\text{with most contracting direction }s_m
\text{ such that }
\angle(\gamma'(0),s_m)\ge L^{-\varepsilon'm}
\end{array}
\right\},
\end{equation}
where  $n_0,\varepsilon'$ are obtained from Proposition \ref{hyperbolict_times} after fixing $\alpha,C,\ell_0,K_0$, and $\varepsilon$ is chosen so that $0<\varepsilon<\varepsilon_0,$ for $\varepsilon_0$ as given from Proposition \ref{hyperbolict_times}
The remainder of this section is devoted to the proof of the following theorem.

\begin{teorema}\label{thmgrowth}
There exist $0<\alpha<24/25$ and $C>0$ such that, for any $l_0,K_0>0,$ and any  fixed $\delta,\delta_0>0$ sufficiently small, there exist constants $c_{\delta_0}>0$,
$L_0\ge1$, and $n_{\delta_0}\in\mathbb N$ with the following property. For every embedded $C^2$ curve
$\gamma:[-\ell_0,\ell_0]\to\mathbb T^2$, parametrized by arclength,
with $\gamma(0)=z$ and $\|\gamma''\|_\infty\le K_0$,
the times $\tau_k(\gamma)$ defined in \eqref{taugamma} satisfy
\begin{equation}\label{putafkcnformula}
\mathbb P\left(
\#\{k\ge1:\tau_k(\gamma)\le n\}
\le (1-\delta_0)n
\right)
\le L^{-c_{\delta_0}n},
\end{equation}
for all $L\ge L_0$ and $n\ge n_{\delta_0}$.
\end{teorema}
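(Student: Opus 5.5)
The proof will follow the scheme of \cite{DD24}, adapted to make the dependence on $L$ explicit. A time $m$ is counted by the $\tau_j(\gamma)$ exactly when two conditions hold: $m$ is reverse-tempered, and the most contracted direction $s_m$ of $A_m$ is not too close to $\gamma'(0)$. We therefore bound separately the number of times $m\le n$ at which each condition fails. By Proposition \ref{thm1}, any $(R,\alpha,\varepsilon)$-reverse-tempered time yields a $(3R+C_*,\alpha,\varepsilon)$-reverse-tempered splitting whose stable vector is the most contracted singular direction. We fix $R$ and set $C:=3R+C_*$, so that the first condition reduces to checking Definition \ref{reversetemperedness}.

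\textbf{Step 1 (reverse-tempered times have density $\ge 1-\delta_0/2$).} Theorem \ref{lyapunov_large_deviations} applies to $F_\omega=f^3_\omega$. Fix $\alpha<\alpha'<24/25$. For every $k$, every unit vector $v$ and every $C'\ge0$, it gives $\log\|A_m(\omega,z)\|-\log\|A_k(\omega,z)\|\ge \alpha'(m-k)\log L-C'$ outside a set of probability at most $L^{-r_1(m-k)}e^{-r_2C'}$, conditionally on $\mathcal F$ up to block $k$. We apply this to the unit vector $v=A_k^{-1}$ applied to the top singular direction, using that the maps have unit determinant, so $\|A_k\|=\|A_k^{-1}\|$. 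Next we introduce the ``defect'' process
\[
D_N:=\max_{k\le m\le N}\Bigl(\alpha(m-k)\log L+\log\|A_k\|-\log\|A_m\|-\varepsilon(N-m)\log L\Bigr).
\]
Time $N$ is reverse-tempered iff $D_N\le R$. The excess $\alpha'-\alpha$ produces a negative drift of order $\log L$ per block. Following the Lindley-recursion/exponential-moment argument of \cite[Section 5]{DD24}, we obtain $\mathbb E[e^{\beta D_N}]\le C_\beta$ uniformly in $N$, for some $\beta>0$. Moreover, the number of non-tempered times $N\le n$ satisfies a large deviation bound $\mathbb P(\#\{N\le n: D_N>R\}\ge \tfrac{\delta_0}{2}n)\le L^{-cn}$, once $R$ is chosen large in terms of $\delta_0$. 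This is achieved by covering each excursion $\{D_N>R\}$ by a block $[k,m]$ where the large deviation event of Theorem \ref{lyapunov_large_deviations} occurs with defect $C'\gtrsim R+\varepsilon(\text{length})\log L$, and then summing over disjoint families of such blocks.

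\textbf{Step 2 (transversality of $s_m$ to $\gamma'(0)$).} Since $\|A_m\|$ grows like $L^{\alpha m}$, the most contracted direction $s_m$ converges exponentially fast, at rate $L^{-2\alpha m}$, to a limit direction $e^s$ once tempered. So the event $\angle(\gamma'(0),s_m)<L^{-\varepsilon' m}$ essentially forces $A_m\gamma'(0)$ to have norm at most $L^{\alpha m-\varepsilon' m}$ times that of the top direction, i.e.\ $\log\|A_m\gamma'(0)\|\le(\alpha-\varepsilon')m\log L$ up to constants. Applying Theorem \ref{lyapunov_large_deviations} directly to the fixed vector $v=\gamma'(0)$ with $\alpha$ chosen close to $24/25$, this has probability $\le L^{-r_1 m}$. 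Summing over $m\ge m_0$, with a union bound beyond a burn-in of length $\tfrac{\delta_0}{4}n$, shows the failures after the burn-in have probability $\le L^{-cn}$.

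Combining the two steps gives \eqref{putafkcnformula} with $n_{\delta_0}$ large enough that $n_0\le\delta_0 n/4$. The main obstacle is Step 1. Reverse-temperedness at $N$ depends on the whole past and on $N$ itself through the $L^{-\varepsilon(N-m)}$ factor, so the bad times are not independent. The key is to encode them through $D_N$ and to verify that the exponential-moment bounds of \cite{DD24} survive with rates polynomial in $L^{-1}$, using the conditional (Markov) form of Theorem \ref{lyapunov_large_deviations}.
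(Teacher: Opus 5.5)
\textbf{The gap is in Step 2.} Your reduction assumes that ``$\|A_m\|$ grows like $L^{\alpha m}$''. Reverse-temperedness only bounds $\|A_m\|$ from below. A priori $\|A_m\|$ can be as large as $(2\pi L+1)^{3m}$, since each $f_\omega$-step away from the critical set expands by order $L$. Theorem~\ref{lyapunov_large_deviations}, by contrast, only guarantees growth at a rate $\alpha'<24/25$.

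Let $\theta=\angle(\gamma'(0),s_m)$ and let $u_m\perp s_m$ be the top singular direction. Then $\|A_m\gamma'(0)\|^2=\cos^2\theta\,\|A_m\|^{-2}+\sin^2\theta\,\|A_m\|^2$. So $\theta<L^{-\varepsilon' m}$ only gives $\|A_m\gamma'(0)\|\lesssim L^{-\varepsilon' m}\|A_m\|$. This can be of order $L^{(3-\varepsilon')m}$, which is not a large-deviation event. Conversely, the single-vector bound $\|A_m\gamma'(0)\|\ge L^{\alpha' m}$ only excludes angles below roughly $L^{\alpha' m}/\|A_m\|$, which may be as small as $L^{-(3-\alpha')m}$. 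Passing to the limit direction $e^s$ does not change this.

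What is missing is an anti-concentration estimate for the contracting direction at a fixed base point, namely uniform expansion of a whole ball of directions. This is Proposition~\ref{prop:strong-direction}: $\mathbb P(\exists w\in B_{L^{-\beta}}(v):\|A_nw\|<L^{a_*n})\le 2L^{-h\beta}$ for $n\ge c_0\beta$. It does not follow from one application of the large-deviation bound at time $m$. Its proof, adapted from \cite[Lemma~4.13 and Proposition~4.14]{DD24}, is multiscale:
\begin{itemize}
\item the single-vector estimate is applied over windows of length comparable to $\beta$, short enough that $\|A\|\le(2\pi L+1)^{3n_1}\ll L^{\beta}$, so the whole ball moves like $v$;
\item the expanded ball is then projectively contracted as in \eqref{eq:projective-contraction-ball}, and one iterates.
\end{itemize}
Since $\|A_ms_m\|=\|A_m\|^{-1}\le 1<L^{a_*m}$, taking $\beta=\varepsilon' m$ and $n=m$ bounds the angle failure at time $m$ by $2L^{-h\varepsilon' m}$. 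This requires $c_0\varepsilon'<1$, as in \eqref{eq:delta-condition-final}, and $\varepsilon' m\ge\beta_0$.

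With this ingredient your decomposition does go through, and it is lighter than the paper's route. The paper packages both requirements into renewal times $S_j$, defined in \eqref{eq:S-j-def-correct}. It proves conditional geometric tails for $S_j-S_{j-1}$ in Lemma~\ref{lem:S-j-tail}, using Proposition~\ref{prop:first-net-tail}, Proposition~\ref{prop:strong-direction} and projective transport of the ball from $S_{j-1}$, and then applies an exponential-moment argument. You can instead union-bound the angle failures over $m\ge\delta_0 n/4$.

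Your Step 1 is a workable alternative to the paper's fixed net $\mathcal N$, which makes $\log\|A_mv\|=\sum X_i(v)$ additive so that Lemma~\ref{lem:concat-rev-temp} applies. Two corrections are needed there:
\begin{itemize}
\item Since $\log\|A_m\|$ is not additive, there is no literal Lindley recursion for $D_N$. Your covering of bad times by witness blocks $[k,N]$ is what actually does the work.
\item The $\mathcal F_k$-measurable vector should be the normalized image $w=A_ku/\|A_ku\|$ of the top singular vector $u$ of $A_k$, not its preimage. Then $\|A_m\|\ge\|A_k\|\,\|A^{(k)}_{m-k}w\|$.
\end{itemize}
Finally, do not choose $R$ in terms of $\delta_0$, since the statement fixes $C$ before $\delta_0$. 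Keep $R$ fixed and take $L\ge L_0(\delta_0)$ instead. Each witness block of length $\ell$ costs $L^{-c\ell}$, which absorbs the $e^{O(n)}$ number of disjoint block families.
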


The values $\alpha,l_0,K_0,C$ will be fixed in the proof of the theorem, and, once fixed, Proposition \ref{hyperbolict_times} gives us the values of  $n_0,\varepsilon$ and $\varepsilon'$ in the definition of $\tau_j$.

\subsection{Tail of reverse-tempered times}

In this subsection, we adapt the approach of \cite{DD24} to establish quantitative estimates for a modified version of the reverse-tempered times introduced there.

For $\omega \in \Omega,$ $(z,v)\in\mathbb{PT}^2$ define, for $i\ge1$,
\begin{equation}\label{newseq}
X_i(\omega,z,v)
:=
\log\left\|
\frac{A_i(\omega,z)\cdot v}{\|A_{i-1}(\omega,z)\cdot v\|}
\right\|.
\end{equation}
We also write $X_i(v)=X_i(\omega,z,v)$ when the dependence on $\omega$
and $z$ is understood, or simply $X_i$ when the direction $v$ is also
understood.

Given $M \ge 1$, we say that the sequence
$\{X_i\}_{i=1}^M$ is $(C,\alpha,\varepsilon)$-reverse tempered
if, for every $0\le j<k\le M$,
\begin{equation}\label{eq:reverse-tempered-def}
\sum_{m=j+1}^{k}X_m
\ge
\alpha\log L\,(k-j)
-\varepsilon\log L\,(M-k)+C.
\end{equation}

The following is an immediate consequence of  \cite[Lemma 4.15]{DD24}. 
\begin{lema}\label{lem:concat-rev-temp}
Assume that for some $\omega \in \Omega$ and $(z,v)\in \mathbb{PT}^2$ the sequence $\{X_m(\omega,z,v)\}_{m=1}^{M}$ is
$(C,\alpha,\varepsilon)$-reverse tempered, with $C\le0$.
Let $X_{M+1}$ be appended to the sequence and suppose that
the singleton block satisfies
\begin{equation}\label{eq:singleton-c}
X_{M+1}\ge \alpha\log L+c\log L
\end{equation}
for some $c\in\mathbb R$.
Then the extended sequence
$\{X_m\}_{m=1}^{M+1}$ is
$(C_+,\alpha,\varepsilon)$-reverse tempered with
\[
C_+
:=
\min\{ C+\varepsilon\log L,\ c\log L,\ C+c\log L\}.
\]
In particular, since $C\le0$, one may also use the simpler lower bound
\begin{equation}\label{eq:concat-recursion-simple}
C_+\ge
\min\{\ C+\varepsilon\log L,\ C+c\log L\}.
\end{equation}
\end{lema}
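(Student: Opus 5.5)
We have to verify the reverse-tempered inequality \eqref{eq:reverse-tempered-def} for the extended sequence $\{X_m\}_{m=1}^{M+1}$, with the constant $C_+$. The plan is a case split on the pair $0\le j<k\le M+1$, according to whether the block $(j,k]$ contains the new index $M+1$.

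\textbf{Case $k\le M$.} The block lies inside the old sequence, so the hypothesis gives
\[
\sum_{m=j+1}^k X_m\ge \alpha\log L\,(k-j)-\varepsilon\log L\,(M-k)+C .
\]
For the extended sequence the target is
\[
\alpha\log L\,(k-j)-\varepsilon\log L\,(M+1-k)+C_+ .
\]
The two right-hand sides differ by $-\varepsilon\log L+(C_+-C)$. Since $C_+\le C+\varepsilon\log L$, the old bound implies the new one.

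\textbf{Case $k=M+1$ and $j=M$.} This is the singleton block. Now $M+1-k=0$, so the target reads $X_{M+1}\ge\alpha\log L+C_+$. This follows from \eqref{eq:singleton-c}, because $C_+\le c\log L$.

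\textbf{Case $k=M+1$ and $j<M$.} Split the sum as $\sum_{m=j+1}^{M}X_m+X_{M+1}$. Apply the old hypothesis to the first part with $k=M$; the tempering penalty vanishes there. This gives a lower bound $\alpha\log L\,(M-j)+C$. Adding \eqref{eq:singleton-c} yields
\[
\alpha\log L\,(M+1-j)+C+c\log L\ \ge\ \alpha\log L\,(M+1-j)+C_+ .
\]
This is the required bound, since $C_+\le C+c\log L$.

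All three cases hold with $C_+$ equal to the minimum of the three quantities, which proves the first claim. For \eqref{eq:concat-recursion-simple}: since $C\le 0$ we have $C+c\log L\le c\log L$, so the middle term can be dropped from the minimum.

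There is no real obstacle here. The only point requiring care is the bookkeeping of the penalty term $\varepsilon\log L\,(M-k)$ when $M$ is replaced by $M+1$. Since $\varepsilon>0$, this penalty grows by exactly $\varepsilon\log L$, and the constant $C+\varepsilon\log L$ in $C_+$ is what accounts for it in the first case.
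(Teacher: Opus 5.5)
Your proof is correct. The case split (blocks inside the old sequence, the singleton block, and blocks ending at $M+1$) accounts for the three terms of $C_+$ one-to-one, and your observation that $C\le 0$ makes $c\log L$ redundant gives \eqref{eq:concat-recursion-simple}. The paper gives no argument of its own and simply invokes the concatenation lemma \cite[Lemma 4.15]{DD24}, which is proved by this same case analysis, so your verification is that argument specialized to a one-term appended block.
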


Let $\mathcal N\subset\mathbb P^1$ consist of eight equally spaced
directions satisfying
\[
\min_{w\in\mathcal N}\angle(v,w)\le\frac{\pi}{16}
\quad\text{for every }v\in\mathbb P^1.
\]

\begin{definicao}\label{def:net-good-time}
We say that $n\ge0$ is an
$(\mathcal N,\alpha,\varepsilon)$-reverse-tempered time for the sequence $\{A_i(\omega,z)\}_{i=0}^n$
if, for every $v\in\mathcal N$ and every $0\le k\le m\le n$,
\[
\|A_m v\|
\ge
L^{-\varepsilon(n-m)}
L^{\alpha(m-k)}
\|A_k v\|.
\]
Equivalently,  
\[
\sum_{\ell=k+1}^{m}X_\ell(v)
\ge
-\varepsilon(n-m)\log L
+
\alpha(m-k)\log L
\]
for every $v\in\mathcal N$ and every $0\le k\le m\le n$.
\end{definicao}

The importance of such times is motivated by the following lemma,
whose proof is left to the reader.

\begin{lema}\label{lem:net-implies-rev-temp}
Assume that $n$ is an
$(\mathcal N,\alpha,\varepsilon)$-reverse-tempered time for $\{A_i(\omega,z)\}_{i=0}^n.$ Then there exists $C'>0$ such that  $n$ is also $(C',\alpha,\varepsilon)$-reverse-tempered
time  as in Definition \ref{reversetemperedness}
\end{lema}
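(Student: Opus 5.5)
The plan is to reduce the claim to a statement about an arbitrary vector $w\in\mathbb P^1$, using the fact that the net $\mathcal N$ is $\pi/16$-dense. Fix $0\le k\le m\le n$.

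First, we have $\|A_m\|\ge\|A_m v\|$ for every unit $v$. So it suffices to find a single $v\in\mathcal N$ with $\|A_k v\|\ge c\|A_k\|$ for a universal constant $c>0$. Once we have it, the hypothesis gives
\[
\|A_m\|\ge\|A_m v\|\ge L^{-\varepsilon(n-m)}L^{\alpha(m-k)}\|A_k v\|\ge c\,L^{-\varepsilon(n-m)}L^{\alpha(m-k)}\|A_k\|.
\]
This is exactly \eqref{eq:reverse-temperedness} with $N=n$ and $e^{-R}=c$, that is, $C'=\log(1/c)$.

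To choose $v$, let $u_k$ be a most expanded singular direction of $A_k$. Write a unit vector $w$ as $w=\cos\phi\,u_k+\sin\phi\,u_k^\perp$. Then $\|A_k w\|\ge|\cos\phi|\,\|A_k\|$, since $A_k u_k$ and $A_k u_k^\perp$ are orthogonal. Take $v\in\mathcal N$ with $\angle(v,u_k)\le\pi/16$. This gives $\|A_k v\|\ge\cos(\pi/16)\|A_k\|$, so $c=\cos(\pi/16)$ works, uniformly in $k$, $L$, $\omega$ and $z$.

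The case $k=0$ is trivial because $A_0=\mathrm{Id}$. No step is a real obstacle. The only point needing care is that $v$ is allowed to depend on $k$ but must not depend on $m$, and the argument above respects this, since $v$ is chosen from $u_k$ alone.
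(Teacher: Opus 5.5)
Your proof is correct: choosing $v\in\mathcal N$ within $\pi/16$ of a top right-singular direction of $A_k$ gives $\|A_kv\|\ge\cos(\pi/16)\|A_k\|$. Combined with $\|A_m\|\ge\|A_mv\|$, this yields Definition~\ref{reversetemperedness} with the constant $C'=\log(1/\cos(\pi/16))$, uniformly in $L,\omega,z$. The paper leaves this proof to the reader, and yours is the natural argument for it. Your closing caution is unnecessary but harmless: the hypothesis holds for every $v\in\mathcal N$ and every pair $k\le m$, so $v$ could depend on $m$ as well.
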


For $\omega \in \Omega,$ $z\in \mathbb{T}^2,$ and an integer $N\ge0$,  define
\begin{equation}\label{def:first-net-good-time}
\tau_1(N)(\omega,z)
:=
\inf\left\{
n>N:
n\text{ is an }(\mathcal N,\alpha,\varepsilon)
\text{-reverse-tempered time for $\{A_i(\omega,z)\}_{i=0}^n$}
\right\}.
\end{equation}
\begin{proposicao}\label{prop:first-net-tail}
Let $\alpha,\varepsilon>0$ satisfy
\begin{equation}\label{admissibleparams}
0<\varepsilon<\frac{24}{25}-\alpha.
\end{equation}
There exist constants $\bar A\ge1$ and
$\bar r_1,\bar r_2>0$ such that, for $\delta>0$ sufficiently
small, all sufficiently large $L$, and all integers $m,n\ge0$
and $D_0\ge0$,
\begin{equation}\label{vindipldev}
\mathbb P\left(
\tau_1(m)>m+n
\,\middle|\,
\substack{
\{X_i(v)\}_{i=1}^{m}\text{ is }
(-D_0,\alpha,\varepsilon)\text{-reverse tempered}\\
\text{for every }v\in\mathcal N
}
\right)
\le
\bar A L^{-\bar r_1 n}e^{\bar r_2D_0}.
\end{equation}
\end{proposicao}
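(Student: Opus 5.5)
We adapt the argument of \cite[Lemma 4.16 and Proposition 4.17]{DD24}, replacing their one-step expansion estimate by the large-deviation bound of Theorem~\ref{lyapunov_large_deviations}. The idea is to build, block by block, a time at which every direction $v\in\mathcal N$ has a reverse-tempered sequence $\{X_i(v)\}$. Lemma~\ref{lem:concat-rev-temp} is the engine: appending a block either raises the deficit constant by $\varepsilon\log L$, if the new increment is strongly expanding, or lowers it by a controlled amount.

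\textbf{Step 1: the deficit as a random walk.} Fix $v\in\mathcal N$. Define the deficit $D_j(v):=-C_j(v)\ge0$, where $C_j(v)$ is the optimal (largest, capped at $0$) constant making $\{X_i(v)\}_{i=1}^{m+j}$ $(C_j,\alpha,\varepsilon)$-reverse tempered. Set $c_i:=X_{m+i}/\log L-\alpha$. By \eqref{eq:concat-recursion-simple},
\[
D_{j+1}\le \max\{D_j-\varepsilon\log L,\ D_j-c_{j+1}\log L\},
\]
truncated below at $0$. Once $D_j$ reaches $0$, the time $m+j$ is reverse tempered for $v$ with constant $0$, which is exactly the condition of Definition~\ref{def:net-good-time}. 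Iterating the recursion gives
\[
D_n\le \max\Bigl(0,\ D_0-\min_{0\le k\le n}\Bigl[\varepsilon\log L\,(n-k)+\sum_{i=1}^{k}c_i\log L\Bigr]\Bigr),
\]
up to the usual reordering of the recursion. Hence $D_n=0$ unless, for some $k\le n$,
\[
\sum_{i=1}^k X_{m+i}\le (\alpha-\varepsilon)k\log L-\varepsilon (n-k)\log L+D_0 .
\]
If needed, we refine this by tracking all intermediate $j\le n$.

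\textbf{Step 2: large deviations.} Conditioning on $\mathcal F$ up to time $m$ does not affect the future noise. The direction $A_m v/\|A_m v\|$ is measurable with respect to the past, so Theorem~\ref{lyapunov_large_deviations} (with $F=f^3$, $\alpha<24/25$) applies to the block sums started at $(F^m_\omega z, A_m v/|A_m v|)$. Since $\alpha+\varepsilon<24/25$ by \eqref{admissibleparams}, we use $\alpha':=\alpha+\varepsilon$ (or anything strictly between $\alpha$ and $24/25$). The probability that the displayed inequality holds for a given $k$ is then at most
\[
L^{-r_1k}e^{-r_2(\varepsilon(n-k)\log L+(\alpha'-\alpha+\varepsilon)k\log L - D_0)}\le L^{-r k - r\varepsilon (n-k)}e^{r_2D_0}.
\]
Summing over $k\le n$ gives a bound $\bar A L^{-\bar r_1 n}e^{\bar r_2 D_0}$ for each fixed $v$. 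A union bound over the eight directions of $\mathcal N$ multiplies this by $8$.

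\textbf{Step 3: simultaneity.} The event $\{\tau_1(m)>m+n\}$ is contained in the event that some $v\in\mathcal N$ fails to have zero deficit at time $m+n$. It is also contained in the event that the deficits do not all vanish at a common time. Reaching $D=0$ at time $m+n$ for all $v$ forces the sequence to be net reverse tempered at $m+n$, because the constant $0$ certifies the full inequality for all $k\le m'\le m+n$. This gives \eqref{vindipldev}.

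\textbf{Main obstacle.} The main obstacle is Step 1. We need to verify that Lemma~\ref{lem:concat-rev-temp}, whose increments are one $F$-step, yields the clean bound on $D_n$. In particular, the requirement $C\le0$ must be maintained through the truncation. We also need to handle the dependence of the starting direction on the past so that Theorem~\ref{lyapunov_large_deviations} applies uniformly, which it does since it is uniform in $(x,y,v)$.
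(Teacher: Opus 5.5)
Your proposal has a genuine gap: the iteration in Step 1 is wrong, and the reduction in Step 3 cannot give decay in $n$. The one-step recursion you take from Lemma~\ref{lem:concat-rev-temp} is fine. With $a_i:=\min\{\varepsilon,c_i\}\log L$, the capped recursion $D_{j+1}\le\max\{0,D_j-a_{j+1}\}$ only yields
\[
D_n\le\max\Bigl\{D_0-\sum_{i=1}^{n}a_i,\ \max_{1\le k\le n}\Bigl(-\sum_{i=k+1}^{n}a_i\Bigr)\Bigr\}.
\]
The terms with $k$ near $n$ involve only the last increments. For $k=n-1$ you get $D_n>0$ as soon as $X_{m+n}(v)<\alpha\log L$. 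This is the singleton block ending at the terminal time in \eqref{eq:reverse-tempered-def}; it has no $\varepsilon$-slack, and no earlier expansion compensates it.

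Your formula sees only blocks starting at time $m$. For example, take $X_{m+1}\ge\frac52\log L$ and $X_{m+2}=-\frac12\log L$. Your bound gives $D_2\le\max\{0,D_0-2\varepsilon\log L\}$, while the terminal singleton block forces $D_2\ge(\alpha+\frac12)\log L$.

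This is not a cosmetic slip. Step 3 bounds $\{\tau_1(m)>m+n\}$ by failure at the single deterministic time $m+n$. That event contains the one-step event $\{X_{m+n}(v)<\alpha\log L\ \text{for some}\ v\in\mathcal N\}$, whose probability does not decay with $n$; the best available bound is $KL^{\delta-1/100}$ from \eqref{elementary}. So no bound of the form $\bar AL^{-\bar r_1n}e^{\bar r_2D_0}$ can come from time $m+n$ alone. The decay has to come from failure at \emph{every} time $m+j$, $1\le j\le n$, used jointly over $\mathcal N$. A union bound over $v$ of per-direction statements does not produce a common good time.

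The missing idea, which is the paper's argument, is to work with $C_j:=\min_{v\in\mathcal N}C_j(v)$. On $\{\tau_1(m)>m+n\}$ one has $C_j<0$ for all $1\le j\le n$, since otherwise $m+j$ would be a net reverse-tempered time. Hence Lemma~\ref{lem:concat-rev-temp} applies at every step, with no truncation.

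Call step $j+1$ good if $X_{m+j+1}(v)\ge\frac{24}{25}\log L$ for all $v\in\mathcal N$. Then $C_{j+1}\ge C_j+\varepsilon\log L$, because $\frac{24}{25}-\alpha>\varepsilon$. Otherwise, the crude bound $X\ge-4\log L$ gives $C_{j+1}\ge C_j-(4+\alpha)\log L$. With $Q_n$ the number of bad steps,
\[
0<-C_n\le D_0-(n-Q_n)\varepsilon\log L+Q_n(4+\alpha)\log L,
\]
which forces $Q_n>\frac{\varepsilon n}{4+\alpha+\varepsilon}-\frac{D_0}{(4+\alpha+\varepsilon)\log L}$. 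The uniform conditional one-step estimate \eqref{elementary}, together with an exponential-moment bound with parameter of order $\log L$, then gives the claim. The factor $e^{\bar r_2D_0}$ arises as $L^{cD_0/\log L}$.

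Theorem~\ref{lyapunov_large_deviations} is not the right tool here, since the obstruction comes from short blocks. It is also only stated for $C\ge0$, whereas your Step 2 uses it with a constant that becomes negative for large $D_0$.
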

\begin{proof}
Given $z\in\mathbb T^2$, let $\alpha,\varepsilon>0$ satisfy
\eqref{admissibleparams}, and let $\tau_1(m)$ be as in
\eqref{def:first-net-good-time}. Suppose that the sequence $\{X_i(v)\}_{i=1}^{m}$ is
$(-D_0,\alpha,\varepsilon)$-reverse tempered for every
$v\in\mathcal N$, for some $D_0>0.$
Let $C_0=-D_0$ and, for $j\ge1$, let $C_j$ denote the let $C_j$ be the minimum, over $v\in\mathcal N$, of the optimal reverse-temperedness constant for the sequence $\{X_i(v)\}_{i=1}^{m+j}$.
If
\begin{equation}\label{eq:good-step}
X_{m+j+1}(v)\ge \frac{24}{25}\log L
\qquad \forall v\in\mathcal N,
\end{equation}
by \eqref{eq:concat-recursion-simple} we have
\begin{equation}\label{eq:good-recursion}
C_{j+1}
\ge
C_j+\varepsilon\log L.
\end{equation}
Otherwise, we use the lower bound
\begin{equation}\label{eq:bad-step}
X_{m+j+1}(v)\ge -4\log L
\qquad \forall v\in\mathcal N,
\end{equation}
leading to
\begin{equation}\label{eq:bad-recursion-simple}
C_{j+1}
\ge
C_j-(4+\alpha)\log L.
\end{equation}

By \eqref{elementary}, for for some constant $K>0$.
\begin{equation}\label{preelementary}
\mathbb P\left(
X_{m+j+1}(v)\ge \frac{24}{25}\log L
\quad \forall v\in\mathcal N
\,\middle|\,
\mathcal F_{m+j}
\right)
\ge
1-K L^{\delta-\frac{1}{100}},
\end{equation}
with \[
\mathcal F_m:=\sigma(\omega_0,\ldots,\omega_{3m-1}),
\qquad m\ge1.
\]
Let $Q_n$ be the number of times, between $m+1$ and $m+n$, the event in \eqref{eq:good-step}
does not happen. Since $C_0=-D_0$, the sequence
$D_i:=-C_i$ satisfies, for $n\ge1$, on the event
$\{\tau_1(m)>m+n\}$,
\begin{equation}\label{eq:defect-bound}
0<D_n
\le
D_0-(n-Q_n)\varepsilon\log L+Q_n(4+\alpha)\log L.
\end{equation}
Consequently,
\begin{equation}\label{eq:bad-count-threshold}
Q_n
>
\frac{\varepsilon}{4+\alpha+\varepsilon}n
-
\frac{D_0}{(4+\alpha+\varepsilon)\log L}.
\end{equation}
The probabilistic estimate in \eqref{preelementary}, along with
an exponential moments argument, implies \eqref{vindipldev}.
\end{proof}
\subsection{Concluding the proof of Theorem \ref{thmgrowth}}

The following proposition controls the regularity of the stable
direction. Its proof follows the argument of
\cite[Lemma~4.13 and Proposition~4.14]{DD24}, using our large-deviation
estimate, Proposition~\ref{lyapunov_large_deviations}, in place of
\cite[Proposition~4.4]{DD24}.

\begin{proposicao}\label{prop:strong-direction}
There exist $a_*\in(0,24/25)$ and $c_0,h,L_0,\beta_0>0$ such that,
for every $\beta\ge\beta_0$, if $n\ge c_0\beta,$ then, for all $L>L_0$, we have
\begin{equation}\label{eq:strong-direction-bad}
\sup_{z\in\mathbb T^2}\sup_{v\in\mathbb P^1}
\mathbb P\left(
\exists w\in B_{L^{-\beta}}(v):
\|A_n(\omega,z)w\|<L^{a_*n}
\right)
\le
2L^{-h\beta}.
\end{equation}
\end{proposicao}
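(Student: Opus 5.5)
The plan is a net-plus-linearity argument in the spirit of \cite[Lemma~4.13]{DD24}. The large-deviation bound of Theorem~\ref{lyapunov_large_deviations} controls a single direction; we upgrade it to a whole ball of directions $B_{L^{-\beta}}(v)$ by exploiting the fact that $A_n=A_n(\omega,z)\in SL(2,\mathbb R)$.

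\textbf{Step 1 (one direction at the endpoint, or nearby).} Fix $z$ and $v$. Since $F_\omega^n=f_\omega^{3n}$, Theorem~\ref{lyapunov_large_deviations} with $m=n$, some fixed $\alpha_1\in(a_*,24/25)$ and $C=0$ gives
\[
\mathbb P\bigl(\|A_nv\|<L^{\alpha_1 n}\bigr)\le L^{-r_1n}.
\]
The same bound holds for any fixed direction; below we use it for $v$ itself and for $v^\perp$.

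\textbf{Step 2 (from one direction to the ball).} Write $w=\cos\phi\, v+\sin\phi\, v^\perp$ with $|\phi|\lesssim L^{-\beta}$. Then
\[
\|A_nw\|\ge\|A_nv\|-|\phi|\,\|A_n\|.
\]
So it suffices to bound $\|A_n\|$ from above. The crude bound is $\|A_n\|\le(2\pi L+1)^{3n}\le L^{4n}$ for large $L$. On the event of Step~1, this gives
\[
\|A_nw\|\ge L^{\alpha_1n}-CL^{-\beta}L^{4n}.
\]
This is at least $L^{a_*n}$ provided $\beta\ge 4n$, which is the wrong regime. Because the hypothesis is $n\ge c_0\beta$, the deterministic norm bound is useless, and a probabilistic upper bound on $\|A_n\|$ is needed instead.

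\textbf{Step 3 (refined split, the main obstacle).} This is where we follow DD24's Proposition~4.14. Split the $n$ steps into a short final block of length $k=\lceil\beta/5\rceil$ and an initial block of length $n-k$, writing $A_n=A'A''$ with $A''=A_{n-k}$.

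Along the initial block, the image $A''B_{L^{-\beta}}(v)$ is an arc of directions. We claim that with high probability it has angular width at most $L^{-\beta}\|A''\|^2/\|A''v\|^2$, and that $\|A''v\|\gtrsim\|A''\|$. The second claim holds off an event of probability $L^{-h\beta}$: since $\det A''=1$, a small $\|A''v\|/\|A''\|$ means $v$ is close to the most contracted direction, and this is excluded by applying Step~1 to $v^\perp$ together with a large-deviation bound on $\|A''\|$ from above. For the upper bound on $\|A''\|$ we use
\[
\|A''\|\le\|A''v\|\,\|A''v^\perp\|\big/\bigl|\sin\angle(A''v,A''v^\perp)\bigr|,
\]
together with an exponential-moment bound $\mathbb E\|A''u\|^{s}\le L^{s\cdot c\,(n-k)}$.

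Once these hold, every $w$ in the ball is mapped within a tiny angle of $A''v$, so all such $w$ expand like $v$ over the initial block. Over the final $k$ steps, the whole arc is expanded by $L^{\alpha_1k}$ except on a set of probability $L^{-r_1k}$, using Step~1 applied conditionally on $\mathcal F_{n-k}$ with the Markov property. If the arc turns out to be wider than a net scale, we cover it by $O(L^{\beta})$ directions, each contributing $L^{-r_1k}$; the union bound then costs a factor $L^{\beta}$, which is absorbed because $r_1k\ge 2\beta$ once $c_0$ is large.

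Combining the two blocks, $\|A_nw\|\ge L^{\alpha_1(n-k)}L^{\alpha_1k}L^{-O(1)}\ge L^{a_*n}$ for $n\ge c_0\beta$. The total exceptional probability is at most $2L^{-h\beta}$ with $h\approx\min(r_1,r_2)/10$.

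We expect the delicate point to be Step~3, specifically controlling $\|A''\|/\|A''v\|$ with tail $L^{-h\beta}$ uniformly in $v$. We would first try to obtain it from Theorem~\ref{lyapunov_large_deviations} with $C=\beta\log L$, using the factor $e^{-r_2C}=L^{-r_2\beta}$.
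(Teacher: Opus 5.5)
The gap is the claim in Step 3 that, off an event of probability $L^{-h\beta}$, one has $\|A''v\|\gtrsim\|A''\|$. For $A''\in SL(2,\mathbb R)$ with most contracted direction $s(A'')$,
\[
\|A''\|\,|\sin\angle(v,s(A''))|\le\|A''v\|\le\|A''\|\,|\sin\angle(v,s(A''))|+\|A''\|^{-1}.
\]
So your claim is a statement about how close the fixed direction $v$ can be to $s(A'')$. That is exactly the regularity of the contracted direction that the proposition encodes, now at the longer time $n-k$. Invoking it is circular.

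With $L$-independent constants the claim is also false. Averaging over $v\in\mathbb P^1$ shows that $\sup_v\mathbb P(\|A''v\|<\rho\|A''\|)\gtrsim\rho$ (for, say, $\rho\ge L^{-\beta}$). Hence a failure probability $L^{-h\beta}$ forces a loss $\rho\le L^{-h\beta}$. Concretely, for $v=(0,1)$ the first factor $Df_\omega$ sends $v$ to a horizontal vector of norm one, so typically $\|A''v\|\approx L^{-1}\|A''\|$.

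The tools you list cannot give even $\|A''v\|\ge L^{-c\beta}\|A''\|$:
\begin{itemize}
\item Step 1 applied to $v^\perp$ is a lower bound on $\|A''v^\perp\|$, so it excludes nothing.
\item Theorem~\ref{lyapunov_large_deviations}, even with $C=\beta\log L$, gives at best $\|A''v\|\ge L^{\alpha_1(n-k)-\beta}$ with $\alpha_1<24/25$.
\item Every valid upper bound on $\|A''\|$, deterministic ($L^{4(n-k)}$) or from moments, is at least of order $L^{3(n-k)}$, because each $F_\omega$-step genuinely expands by about $L^3$.
\end{itemize}
The resulting ratio, at best $L^{-(3-\alpha_1)(n-k)-\beta}$, is exponentially small in $n-k\ge(c_0-1)\beta$. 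This is precisely the failure you diagnosed in Step 2.

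There is also an arithmetic error in the net fallback. With $k=\lceil\beta/5\rceil$, the requirement $r_1k\ge2\beta$ needs $r_1\ge10$, whereas $r_1$ from Theorem~\ref{lyapunov_large_deviations} is a small constant, and $c_0$ does not enter $k$. So the union bound over $O(L^\beta)$ directions costs $L^{\beta(1-r_1/5)}\gg1$.

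The paper itself does not prove this; it defers to \cite[Lemma~4.13, Proposition~4.14]{DD24} with Theorem~\ref{lyapunov_large_deviations} in place of their large-deviation input. A route that needs nothing beyond your Step 1 is to reverse the order of the blocks and iterate.

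The deterministic perturbation bound of Step 2 \emph{does} succeed over a short \emph{initial} block of length $m_1\approx\beta/(4-\alpha_1)$. On $\{\|A_{m_1}v\|\ge2L^{\alpha_1m_1}\}$, which has probability at least $1-L^{-r_1m_1}$, every $w\in B_{L^{-\beta}}(v)$ satisfies $\|A_{m_1}w\|\ge\|A_{m_1}v\|-L^{-\beta}L^{4m_1}\ge L^{\alpha_1m_1}$. Then \eqref{eq:projective-contraction-ball} maps the ball into a ball of radius $\lesssim L^{-\beta-2\alpha_1m_1}$ around $\widehat{A_{m_1}}v$.

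Condition on $\mathcal F_{m_1}$; since Theorem~\ref{lyapunov_large_deviations} is uniform in base point and direction, you can repeat with $\beta_{j+1}=\beta_j+2\alpha_1m_j-O(1)$ and $m_j\approx\beta_j/(4-\alpha_1)$. This gives:
\begin{itemize}
\item the exponents $\beta_j$ grow geometrically, so $O(\log(n/\beta))$ blocks cover $[0,n]$;
\item the failure probabilities sum to $\lesssim L^{-r_1m_1}$;
\item a leftover block shorter than $m_1$ costs at most a factor $L^{-4m_1}$, so the total expansion is at least $L^{\alpha_1n-(\alpha_1+4)m_1}\ge L^{a_*n}$ once $c_0$ is large.
\end{itemize}
The missing idea is that the ball must first be shrunk projectively by short early blocks before long-time growth is invoked. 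Controlling the long block first requires the very regularity you are trying to prove.
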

Set $S_0=0$ and, for $v\in\mathbb P^1$, define recursively the $v$-net-reverse-tempered times as
\begin{equation}\label{eq:S-j-def-correct}
S_j(\omega,z,v)
:=
\inf\left\{
m>\max\{S_{j-1},N_0\}:
\begin{array}{l}
m\text{ is an }(\mathcal N,\alpha,\varepsilon)
\text{-reverse-tempered time,}\\[2mm]
\|A_m(\omega,z)w\|\ge L^{a_*m}
\text{ for every }w\in B_{L^{-\varepsilon'm}}(v)
\end{array}
\right\},
\end{equation}
for some given $N_0 \in \mathbb{N}$, $\varepsilon'>0$,
Thus $S_j$ is the first time after $\max\{S_{j-1},N_0\}$ to be a $(\mathcal N,\alpha,\varepsilon)$-reverse-tempered time for the sequence $\{A_n(\omega,z)\}_{n=0}^{S_j}$ and such that every direction in the ball $B_{L^{-\varepsilon'S_j}}(v)$ has expanded by at least $L^{a_*S_j}$.

\begin{lema}\label{lem:S-j-tail}
Let $\alpha,\varepsilon$ as in \eqref{admissibleparams} and , given $c_0$  as in Proposition~\ref{prop:strong-direction} choose
$\varepsilon'$ sufficiently small that
\begin{equation}\label{eq:delta-condition-final}
0<c_0\varepsilon'<1.
\end{equation}
Then, for $N_0$ sufficiently large, there exist constants $C,\eta>0$ such that, for all sufficiently large $L$
and every $i\ge1$,
\begin{equation}\label{eq:S1-tail}
\mathbb P(S_1>N_0+i)\le CL^{-\eta i},
\end{equation}
and, for every $j\ge2$,
\begin{equation}\label{eq:S-j-tail}
\mathbb P\left(
S_j>S_{j-1}+i
\,\middle|\,
\mathcal F_{S_{j-1}}
\right)
\le
CL^{-\eta i}.
\end{equation}
\end{lema}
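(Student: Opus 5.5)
The plan is to split the event $\{S_1>N_0+i\}$ according to the two requirements in \eqref{eq:S-j-def-correct}, and to control each with a result already established. Set $n:=N_0+i$ and first restrict to the times $m\in(N_0,n]$ that are $(\mathcal N,\alpha,\varepsilon)$-reverse-tempered. Starting from the trivial reverse-tempered sequence at time $0$, i.e.\ $D_0=0$, iterate Proposition~\ref{prop:first-net-tail}: the successive net-reverse-tempered times $T_1<T_2<\dots$ have gaps with tails bounded by $\bar AL^{-\bar r_1 k}$. The key point is that at each such time $T_\ell$ the sequence is $(0,\alpha,\varepsilon)$-reverse tempered for every $v\in\mathcal N$, so $D_0=0$ in the conditioning and the bound restarts without a penalty. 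A standard exponential-moment argument over the gaps then shows that, outside an event of probability $\lesssim L^{-c i}$, the number of net-reverse-tempered times in $(N_0,n]$ is at least $\theta i$ for some fixed $\theta>0$. More precisely, the same argument shows that the set of non-reverse-tempered times in $(N_0,n]$ has cardinality at most $(1-\theta)i$.

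For the second requirement, apply Proposition~\ref{prop:strong-direction} at each time $m>N_0$ with $\beta=\varepsilon'm$. The condition $n\ge c_0\beta$ becomes $m\ge c_0\varepsilon' m$, which is exactly what \eqref{eq:delta-condition-final} guarantees. Taking $N_0$ large ensures $\varepsilon'm\ge\beta_0$. This gives
\[
\mathbb P\bigl(\exists w\in B_{L^{-\varepsilon'm}}(v):\|A_mw\|<L^{a_*m}\bigr)\le 2L^{-h\varepsilon' m}.
\]
Summing over $m\ge N_0+\theta i$ shows that, outside probability $\lesssim L^{-h\varepsilon'(N_0+\theta i)}$, every time beyond $N_0+\theta i$ satisfies the expansion condition. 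On the intersection of the two good events, a net-reverse-tempered time in $(N_0+\theta i,n]$ exists, since more than $(1-\theta)i$ are missing only if the bad count exceeds its bound; adjust $\theta$ to get a clean split. Such a time is then a valid candidate, so $S_1\le n$, which proves \eqref{eq:S1-tail}.

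For \eqref{eq:S-j-tail}, condition on $\mathcal F_{S_{j-1}}$. At $S_{j-1}$ the sequence is net-reverse-tempered with constant $0$, so Proposition~\ref{prop:first-net-tail} applies again with $m=S_{j-1}$ and $D_0=0$. The expansion requirement is the delicate point. It concerns $A_m$ from time $0$, not from $S_{j-1}$, so Proposition~\ref{prop:strong-direction} cannot be applied to the future noise alone, and the conditioning on $\mathcal F_{S_{j-1}}$ breaks the unconditional estimate.

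This is the main obstacle, and I would handle it as follows. Factor $A_m=A_{m-S_{j-1}}(\vartheta^{3S_{j-1}}\omega,F^{S_{j-1}}z)\,A_{S_{j-1}}$. At time $S_{j-1}$ the directions in $B_{L^{-\varepsilon'S_{j-1}}}(v)$ are already expanded by $L^{a_*S_{j-1}}$, and their images lie in a cone around a single image direction $v'$ of projective size comparable to $L^{-\varepsilon'S_{j-1}}$ times the distortion. Since $m>S_{j-1}$ the required ball $B_{L^{-\varepsilon' m}}(v)$ is smaller, so the image cone has width at most $L^{-\beta'}$ with $\beta'\gtrsim\varepsilon'(m-S_{j-1})$. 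Now apply Proposition~\ref{prop:strong-direction} to the fresh block of length $m-S_{j-1}$ started at the $\mathcal F_{S_{j-1}}$-measurable point and direction $v'$; the supremum over $z,v$ makes the conditioning harmless. This yields expansion $L^{a_*(m-S_{j-1})}$ on the cone, and multiplying by $L^{a_*S_{j-1}}$ gives the required $L^{a_*m}$.

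The remaining worry is that the image cone's width depends on the angle distortion of $A_{S_{j-1}}$ near $v$. I expect this to be controlled by reverse-temperedness at $S_{j-1}$ via Proposition~\ref{thm1}, possibly after shrinking $\varepsilon'$ relative to $\alpha$. Summing the resulting tails as in the first step then gives \eqref{eq:S-j-tail}.
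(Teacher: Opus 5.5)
Your proof of \eqref{eq:S1-tail} is sound, and it is more explicit than the paper's, which simply invokes \cite[Proposition~8.4]{DD24}. The union bound over deterministic $m>N_0$ with $\beta=\varepsilon'm$ is exactly where \eqref{eq:delta-condition-final} and $\varepsilon'N_0\ge\beta_0$ enter, and restarting \eqref{vindipldev} with $D_0=0$ at net-reverse-tempered times supplies one in the window.

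For $j\ge2$ you follow the paper's route: write $A_{S_{j-1}+n}=A_n^{(S_{j-1})}A_{S_{j-1}}$, transport the ball projectively, and apply Proposition~\ref{prop:strong-direction} to the fresh block of length $n$, with the supremum over $(z,v)$ absorbing the conditioning. However, the step you leave as a \emph{remaining worry} is the crux. The tool you suggest for it (reverse-temperedness at $S_{j-1}$ via Proposition~\ref{thm1}, possibly shrinking $\varepsilon'$) is not what closes it. The missing observation is elementary. For $\det A=1$ the projective action has derivative $\|Aw\|^{-2}$ at a unit vector $w$, so $\|Aw\|\ge R$ on an interval $I$ of directions gives $\operatorname{diam}(\widehat A I)\le CR^{-2}\operatorname{diam}(I)$, which is \eqref{eq:projective-contraction-ball}. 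The ball-expansion clause of \eqref{eq:S-j-def-correct} at time $S_{j-1}$ provides exactly this, with $R=L^{a_*S_{j-1}}$ on $B_{L^{-\varepsilon'S_{j-1}}}(v)\supset B_{L^{-\varepsilon'(S_{j-1}+n)}}(v)$. Hence the transported ball lies in $B_{CL^{-\varepsilon'n-(\varepsilon'+2a_*)S_{j-1}}}(v_{S_{j-1}})$. The projective map contracts rather than distorts, and no splitting information is needed.

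The second gap is the short blocks. Proposition~\ref{prop:strong-direction} requires $\beta\ge\beta_0$ and block length $n\ge c_0\beta$, so it says nothing for $n<c_0\beta_0$. In the $S_1$ case every $m>N_0$ is long, but here $n$ ranges down to $1$. Since \eqref{eq:S-j-tail} must hold for every $i\ge1$ with $C$ independent of $L$ (already $i=1$ needs a bound decaying in $L$), these blocks cannot be absorbed into the constant. Your instruction to sum ``as in the first step'' does not cover them.

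With your width bound $L^{-\beta'}$, $\beta'\approx\varepsilon'n$, you would in fact lose all $n<\beta_0/\varepsilon'$. With the sharper radius above you can take $\beta=\max\{\beta_0,\varepsilon'n\}$ (using $S_{j-1}\ge N_0$), which leaves only $n<c_0\beta_0$. For those, the paper argues directly:
Theorem~\ref{lyapunov_large_deviations} gives $\|A_n^{(S_{j-1})}v_{S_{j-1}}\|\ge L^{b_*n}$, for a fixed $b_*\in(a_*,24/25)$, outside probability $L^{-rn}$. The crude bound $\|A_n^{(S_{j-1})}\|\le L^{4n}$, together with the radius $\lesssim L^{-(\varepsilon'+2a_*)S_{j-1}}$ and $S_{j-1}\ge N_0$, transfers this to $\|A_n^{(S_{j-1})}w\|\ge L^{a_*n}$ on the whole transported ball once $N_0$ and $L$ are large.

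Both repairs rely on the extra factor $L^{-(\varepsilon'+2a_*)S_{j-1}}$, which your estimate throws away.
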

We record the following elementary proposition, which will be used in the proof of the above Lemma. 
Let $A$ be an area-preserving
matrix and $I\subset\mathbb P^1$ an interval of directions such that
\[
\|Aw\|\ge R
\qquad
\text{for every }w\in I.
\]
Then the projective action of $A$, defined by
\begin{equation}\label{projective action}
\widehat A(w):=\frac{Aw}{\|Aw\|},
\end{equation}
satisfies
\begin{equation}\label{eq:projective-contraction-ball}
\operatorname{diam}(\widehat A I)
\le
CR^{-2}\operatorname{diam}(I).
\end{equation}
\begin{proof}
For $S_1$, formula \eqref{eq:S1-tail} follows from \cite[Proposition~8.4]{DD24}, using
\eqref{vindipldev} and Proposition~\ref{prop:strong-direction}
in place of their recovery and direction estimates, respectively, and $N_0$ sufficiently large that $\varepsilon'N_0\ge\beta_0.$

Fix $j\ge2$. Define
\[
s_i^{(j)}
:=
\inf\left\{
m\ge S_{j-1}+\left\lfloor\frac i2\right\rfloor+1:
m\text{ is an }(\mathcal N,\alpha,\varepsilon)
\text{-reverse-tempered time}
\right\}.
\]
Then, by the properties of $S_{j-1}$ in \eqref{eq:S-j-def-correct}, 
formula \eqref{vindipldev} applies with $D_0=0$ and yields
\begin{equation}\label{eq:sij-tail}
\begin{aligned}
\mathbb P\left(
s_i^{(j)}>S_{j-1}+i
\,\middle|\,
\mathcal F_{S_{j-1}}
\right)
&\le
\bar A\sum_{\ell=0}^{\lfloor i/2\rfloor}
L^{-\bar r_1(i-\ell)}\\
&\le CL^{-\bar r_1i/2}.
\end{aligned}
\end{equation}

If $S_j>S_{j-1}+i$, then either no $v$-net-reverse-tempered time occurs in the window  $\left(S_{j-1}+\frac i2,\;S_{j-1}+i\right],$ or such a time occurs but the ball-expansion condition fails.
Therefore
\begin{equation}\label{eq:union-bound-ball-expansion}
\begin{aligned}
\mathbb P\left(
S_j>S_{j-1}+i
\,\middle|\,
\mathcal F_{S_{j-1}}
\right)
&\le
\mathbb P\left(
s_i^{(j)}>S_{j-1}+i
\,\middle|\,
\mathcal F_{S_{j-1}}
\right)
\\[1mm]
&\quad+
\mathbb P\left(
\begin{array}{l}
\exists w\in B_{L^{-\varepsilon's_i^{(j)}}}(v):
\|A_{s_i^{(j)}}w\|<L^{a_*s_i^{(j)}},\\[1mm]
s_i^{(j)}\le S_{j-1}+i
\end{array}
\,\middle|\,
\mathcal F_{S_{j-1}}
\right)
\\[1mm]
&\le C L^{-\bar r_1 i/2}
\\[1mm]
&\quad+
\sum_{m=\lfloor i/2\rfloor+1}^{i}
\mathbb P\left(
\begin{array}{l}
\exists w\in B_{L^{-\varepsilon'(m+T)}}(v):\\[1mm]
\|A_{m+T}w\|<L^{a_*(m+T)}
\end{array}
\,\middle|\,
\mathcal F_T
\right).
\end{aligned}
\end{equation}
where in the last inequality we used \eqref{eq:sij-tail}. We now estimate the second term. Write
\[
v_{S_{j-1}}:=\widehat A_{S_{j-1}}(v),
\qquad
A_m^{(S_{j-1})}:=A_m\bigl(\vartheta^{3S_{j-1}}\omega,F_\omega^{S_{j-1}}(z)\bigr),
\]
where $\widehat A_{S_{j-1}}$ denotes the projective action of
$A_{S_{j-1}},$ as defined in \eqref{projective action}, so that
$$A_{S_{j-1}+m}=A_m^{(S_{j-1})}A_{S_{j-1}}.$$
Because of \eqref{eq:S-j-def-correct} and
\eqref{eq:projective-contraction-ball}, we have
\[
\widehat A_{S_{j-1}}\left(B_{L^{-\varepsilon'(m+S_{j-1})}}(v)\right)
\subset
B_{CL^{-\varepsilon'm-(\varepsilon'+2a_*)S_{j-1}}}(v_{S_{j-1}}),
\]
and
\[
\|A_{S_{j-1}}w\|
\ge
L^{a_*S_{j-1}}>1
\qquad
\text{for every }w\in B_{L^{-\varepsilon'(m+S_{j-1})}}(v).
\]
Hence
\begin{equation}\label{tbf}
\begin{aligned}
&\mathbb P\left(
\exists w\in B_{L^{-\varepsilon'(m+S_{j-1})}}(v):
\|A_{m+S_{j-1}}w\|<L^{a_*(m+S_{j-1})}
\,\middle|\,
\mathcal F_{S_{j-1}}
\right)\\
&\qquad\le
\mathbb P\left(
\exists w\in B_{CL^{-\varepsilon'm-(\varepsilon'+2a_*)S_{j-1}}}(v_{S_{j-1}}):
\|A_m^{(S_{j-1})}w\|<L^{a_*m}
\,\middle|\,
\mathcal F_{S_{j-1}}
\right).
\end{aligned}
\end{equation}
Let $\beta_0,c_0$ as in Proposition \ref{prop:strong-direction}
If $m\ge m_*:=\lceil c_0\beta_0\rceil$, set $\bar\beta:=\max\{\beta_0,\varepsilon'm\}.$ Since $S_{j-1}\ge N_0$, for $N_0$ and $L$ sufficiently large we have
\[
B_{CL^{-\varepsilon'm-(\varepsilon'+2a_*)S_{j-1}}}(v_{S_{j-1}})
\subset B_{L^{-\beta}}(v_{S_{j-1}}).
\]
Proposition~\ref{prop:strong-direction}, applied with
$\beta =\bar \beta$ and $n=m,$ therefore bounds the RHS of
\eqref{tbf} by $2L^{-h\varepsilon'm}$.

For $1\le m<m_*$, choose $a_*<b_*<24/25$.
By Proposition~\ref{lyapunov_large_deviations}, conditionally on
$\mathcal F_{S_{j-1}}$, we have
\[
\|A_m^{(S_{j-1})}v_{S_{j-1}}\|\ge L^{b_*m}
\]
except on a set of probability at most $L^{-rm}$, for some $r>0$.
Using $\|A_m^{(S_{j-1})}\|\le L^{4m}$, we obtain
\[
\|A_m^{(S_{j-1})}w\|
\ge
L^{b_*m}
-
CL^{4m-\varepsilon'm-(\varepsilon'+2a_*)S_{j-1}}
\ge
L^{a_*m}
\]
for all $w$ in the transported neighborhood of $v_{S_{j-1}}$, for $N_0$ and $L$ sufficiently
large and $m<m_*$

Thus each term in \eqref{eq:union-bound-ball-expansion} is bounded
by $2L^{-\eta_0m}$, for some $\eta_0>0$. Combining this with
\eqref{eq:sij-tail} and \eqref{eq:union-bound-ball-expansion}, we obtain
\[
\mathbb P\left(
S_j>S_{j-1}+i
\,\middle|\,
\mathcal F_{S_{j-1}}
\right)
\le
CL^{-\bar r_1i/2}
+
2\sum_{m=\lfloor i/2\rfloor+1}^{i}L^{-\eta_0m}
\le
CL^{-\eta i},
\]
which concludes the proof.
\end{proof}

\begin{proof}[Proof of Theorem \ref{thmgrowth}]

Fix $K_0,\ell_0$ and $\gamma$ as in the statement of
Theorem~\ref{thmgrowth}. Choose $\alpha,\varepsilon>0$ satisfying
\eqref{admissibleparams}, and fix $C>3C'+C_*$, where $C'$ is as in
Lemma~\ref{lem:net-implies-rev-temp} and $C_*$ is as in
Proposition~\ref{thm1}. Let $n_0,\varepsilon',\varepsilon_0$ be the
constants provided by Proposition~\ref{hyperbolict_times} for the
above choices of $\ell_0,K_0,\alpha,C$. By decreasing $\varepsilon$
and $\varepsilon'$ if necessary, we may assume that
$\varepsilon<\varepsilon_0$ and that $\varepsilon'$ satisfies
\eqref{eq:delta-condition-final}.

Set $v:=\gamma'(0)$ and $z:=\gamma(0)$.
Lemma~\ref{lem:net-implies-rev-temp} estimates the tail of 
$\{S_j(\omega,z,v)\}_{j=0}^{\infty }$ satisfying \eqref{eq:S-j-def-correct},
with $\varepsilon,\alpha,\varepsilon'$ as fixed above and
$N_0\ge n_0$ chosen sufficiently large. The inequality
\begin{align*}
\#\{j \colon \tau_j \le n\} \ge \#\{j \colon S_j \le n\} 
\end{align*}
concludes the proof.
In particular, \eqref{putafkcnformula} follows from a standard
exponential-moment argument using the tail estimates in
Lemma~\ref{lem:S-j-tail}.

\end{proof}

\section{Abundance of typical horseshoes}\label{abundanceset}
In this section we prove Theorem \ref{thm:typical-horseshoes}. Before doing that, we need to introduce two families of random times. The first one is called Young times. These are a family of times at which two given curves return simultaneously to their respective fixed reference sets, while achieving large scale. The second family is called stable times. These are essentially times at which the stable manifold has length polynomial in $L^{-1}$ and is almost vertical.

We first describe the Young times, then the stable times.

Throughout this section, we consider the random dynamical system $F_\omega=f_\omega^3$ over the noise shift $\vartheta^3$.

Fix $\eta>0$, whose value will be chosen later, and consider the
critical neighbourhood
\[
\mathcal G_\eta
:=
\left\{
x\in\mathbb T:
d(x,0)<\eta
\ \text{or}\
d\left(x,\frac12\right)<\eta
\right\}.
\]

Let $\gamma_0,\bar\gamma_0$ be curves satisfying the geometric
hypotheses of Proposition~\ref{hyperbolict_times}, with distinct
marked points
\[
z_0=(x_0,y_0)\in\gamma_0,
\qquad
\bar z_0=(\bar x_0,\bar y_0)\in\bar\gamma_0.
\]
Write
\[
z_i=(x_i,y_i):=F_\omega^i(z_0),
\qquad
\bar z_i=(\bar x_i,\bar y_i):=F_\omega^i(\bar z_0).
\]
Let
\begin{align*}
H_n(\gamma_0)
&:=
\{\tau_j(\gamma_0):j\ge1\}\cap\{1,\ldots,n\},\\
H_n(\bar\gamma_0)
&:=
\{\tau_j(\bar\gamma_0):j\ge1\}\cap\{1,\ldots,n\},
\end{align*}
where $\tau_j(\gamma)$ is as in \eqref{taugamma}.
Let $v_0$ be the tangent direction of $\gamma_0$ at $z_0$,
and define $\bar v_0$ similarly. Denote their evolution under the
projective dynamics of $F_\omega$ by $v_i,\bar v_i$.

Given $\eta_1,\eta_2>0$, define the set of simultaneous hyperbolic
times by
\[
\widetilde H_n
:=
H_n(\gamma_0)\cap H_n(\bar\gamma_0)\cap
\left\{
1\le i\le n:
\begin{array}{l}
d(z_i,\bar z_i)>\eta_1,\\[2mm]
v_i,\bar v_i\in B_{\eta_2}(0),\\[2mm]
x_i,\bar x_i\notin\mathcal G_{2\eta}
\end{array}
\right\}.
\]
Here $0\in\mathbb P^1$ denotes the horizontal direction.

\begin{proposicao}\label{prop:hyperbolic-times-density}
For every $\bar\varepsilon\in(0,1)$, if $\eta,\eta_1>0$ are
sufficiently small and $0<\eta_2<1/10$ is fixed, there exist
$L_0,C,c>0$ such that, for every $L\ge L_0$ and $n\ge1$,
\begin{equation}\label{hyptimes}
\mathbb P\left(
\#\widetilde H_n\le(1-\bar\varepsilon)n
\right)
\le Ce^{-cn}.
\end{equation}
The constants may depend on the fixed initial curves and marked points,
but are independent of $L$ and $n$.
\end{proposicao}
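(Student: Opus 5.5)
The plan is a union bound over four families of bad times. Each family is controlled either by an earlier tail estimate or by a large-deviation estimate. Write
\[
\{1,\dots,n\}\setminus\widetilde H_n\subset B^{(1)}_n\cup B^{(2)}_n\cup B^{(3)}_n\cup B^{(4)}_n .
\]
Here $B^{(1)}_n=\{1,\dots,n\}\setminus H_n(\gamma_0)$ and $B^{(2)}_n$ is the analogous set for $\bar\gamma_0$. $B^{(3)}_n$ is the set of times with $d(z_i,\bar z_i)\le\eta_1$. $B^{(4)}_n$ is the set of times where $v_i\notin B_{\eta_2}(0)$, or $\bar v_i\notin B_{\eta_2}(0)$, or $x_i\in\mathcal G_{2\eta}$, or $\bar x_i\in\mathcal G_{2\eta}$. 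It suffices to show $\mathbb P(\#B^{(k)}_n>\bar\varepsilon n/4)\le Ce^{-cn}$ for each $k$. For $k=1,2$ this is exactly Theorem~\ref{thmgrowth} applied with $\delta_0=\bar\varepsilon/4$. That theorem gives a bound $L^{-c_{\delta_0}n}\le e^{-cn}$ for $n\ge n_{\delta_0}$. The finitely many $n<n_{\delta_0}$ are absorbed into $C$.

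For $k=3$, I use the two-point large deviations (\eqref{eq:two_point_large_deviations}). The observable is $H_2=\mathbf 1_{\{d(z,\bar z)\le\eta_1\}}$, read along the $F_\omega=f^3_\omega$ chain. Equivalently, I apply the estimate to the $f_\omega$ chain and restrict to times that are multiples of three; the mixing of $\mathcal P_2^3$ is inherited from Theorem~\ref{mixingtwopointmotion}. The mean $(m\otimes m)\{d\le\eta_1\}\lesssim\eta_1^2$ is below $\bar\varepsilon/8$ once $\eta_1$ is small. Taking $\varepsilon=\bar\varepsilon/8$ gives a bound $C(1+V_L(z_0,\bar z_0))e^{-cn\bar\varepsilon^2}$. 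The prefactor is a constant depending on the fixed distinct marked points. A priori it grows with $L$ through the exponent $\varepsilon_0/\log L$, but $d(z_0,\bar z_0)^{-\varepsilon_0/\log L}\le e^{\varepsilon_0\cdot|\log d|/\log L}$ is bounded uniformly in $L\ge L_0$. This is why the constants depend on the initial data but not on $L$.

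For $k=4$, I use the projective large deviations (\eqref{eq:projective_large_deviations}), once for each marked trajectory. The observable is $H_p=\mathbf 1_{\{\theta\notin B_{\eta_2}(0)\}}+\mathbf 1_{\{x\in\mathcal G_{2\eta}\}}$, again sampled along multiples of three. Its $\nu$-mean is at most $\nu\{\theta\notin B_{L^{-s}}(0)\}+m\{x\in\mathcal G_{2\eta}\}\le L^{-r}+8\eta$. For this I use Proposition~\ref{proposition1}, noting $L^{-s}<\eta_2$ for large $L$, and the fact that the base marginal of $\nu$ is $m$. Choosing $\eta$ small and $L$ large makes this mean smaller than $\bar\varepsilon/16$, and the large-deviation bound with $\varepsilon=\bar\varepsilon/16$ has constants uniform in $L$.

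The main obstacle is justifying the passage from the $f_\omega$ chains to the third-iterate sampling used in the definition of $\widetilde H_n$. Theorem~\ref{thm:large_deviations} is stated for averages over all times, not over every third time. I would handle this by rerunning its proof (tilted operators plus the mixing theorems) for the Markov operators $\mathcal P_p^3$ and $\mathcal P_2^3$. These are still exponentially mixing with uniform constants, and they are bounded on the weighted space by \eqref{two_point_operator_bound} cubed. A secondary point is that Theorem~\ref{thmgrowth} is stated for a single curve, so the two curves need two separate applications; the union bound makes this harmless.
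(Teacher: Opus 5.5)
Your proposal is correct and follows the paper's own proof. Both use a union bound that combines Theorem~\ref{thmgrowth} (applied separately to $\gamma_0$ and $\bar\gamma_0$) for the hyperbolic-time condition with the large-deviation estimates of Theorem~\ref{thm:large_deviations}, with Proposition~\ref{proposition1} and small $\eta,\eta_1$ making the stationary means of the bad-position observables small; your extra care about sampling the $f_\omega$-chains at multiples of three (via $\mathcal P_p^3,\mathcal P_2^3$) and about the prefactor $1+V_L(z_0,\bar z_0)$ fills in points the paper leaves implicit.
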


\begin{proof}
Theorem~\ref{thmgrowth} gives that the density of
$H_n(\gamma_0)$ and $H_n(\bar\gamma_0)$ is arbitrarily
close to one, with exponential tails.
Consequently, for every $\bar\delta_0>0$ sufficiently small,
there exist $c_{\bar\delta_0}>0$ and $n_{\bar\delta_0}\ge1$
such that, for $L$ sufficiently large and $n\ge n_{\bar\delta_0}$,
\begin{equation}\label{firsth}
\mathbb P\left(
\#\bigl(H_n(\gamma_0)\cap H_n(\bar\gamma_0)\bigr)
\le (1-\bar\delta_0)n
\right)
\le 2L^{-c_{\bar\delta_0}n}.
\end{equation}

The bad position conditions
$d(z_i,\bar z_i)\le\eta_1$ and
$x_i\in\mathcal G_{2\eta}$ or $\bar x_i\in\mathcal G_{2\eta}$
have stationary probabilities $O(\eta_1^2)$ and $O(\eta)$,
respectively. Moreover, for fixed $\eta_2>0$, the estimate for
the stationary distribution in Proposition~\ref{proposition1}
shows that the stationary probability of
$v_i\notin B_{\eta_2}(0)$ tends to zero as $L\to\infty$,
and the same holds for $\bar v_i$.

Given $\delta_0>0$, we may therefore choose $\eta_1$ and $\eta$
sufficiently small, and then $L$ sufficiently large, so that
the large deviation estimates in
Theorem~\ref{thm:large_deviations} yield constants $C,\gamma>0$
such that
\begin{equation}\label{nowforeta}
\mathbb P\left(
\#\left\{
1\le i\le n:
\begin{array}{l}
d(z_i,\bar z_i)\le\eta_1
\ \text{or}\ x_i\in\mathcal G_{2\eta}
\\
\text{or}\ \bar x_i\in\mathcal G_{2\eta}
\ \text{or}\ v_i\notin B_{\eta_2}(0)
\\
\text{or}\ \bar v_i\notin B_{\eta_2}(0)
\end{array}
\right\}
>\delta_0 n
\right)
\le Ce^{-\gamma n}.
\end{equation}

Formula \eqref{hyptimes} follows from \eqref{firsth} and
\eqref{nowforeta}, choosing
$\delta_0+\bar\delta_0<\bar\varepsilon$.
\end{proof}

Given positive integers $K_1,K_2$, write $K:=K_1+K_2$ and
$\widetilde H:=\bigcup_{n\ge1}\widetilde H_n$.
Define the $K$-sparse hyperbolic times recursively by
\[
r_0:=0,
\qquad
r_{j+1}:=
\inf\{m\in\widetilde H:m>r_j+K\}.
\]
Let
\[
\widetilde H_{n,K_1,K_2}
:=
\{r_j:j\ge1,\ r_j\le n\}.
\]
By construction,
\[
\#\widetilde H_{n,K_1,K_2}
\ge
\frac{\#\widetilde H_n-K}{K+1}.
\]
Applying \eqref{hyptimes} with $\bar\varepsilon/2$ in place
of $\bar\varepsilon$, we get 
\begin{equation}\label{eq:sparse-hyperbolic-times}
\mathbb P\left(
\#\widetilde H_{n,K_1,K_2}
\le
\frac{(1-\bar\varepsilon)n}{K+1}
\right)
\le Ce^{-cn}.
\end{equation}

Fix two disjoint rectangles $I_1,I_2\subset\mathbb T^2$ and let
\[
p_*:=m(I_1)m(I_2)>0.
\]

For initial points $z,\bar z$, define the event
\[
\mathcal A_{k,l}(\eta;z,\bar z)
:=
\bigcap_{r=3k}^{3l-1}
\left\{
\begin{array}{l}
\pi_x f_\omega^r(z)+\omega_r\notin\mathcal G_\eta,\\
\pi_x f_\omega^r(\bar z)+\omega_r\notin\mathcal G_\eta
\end{array}
\right\},
\]
where $\pi_x$ denotes the projection from the torus into the $x$-coordinate. Thus $\mathcal A_{k,l}$ requires critical avoidance at every
individual $f_\omega$-step between the $k$th and $l$th
iterates of $F_\omega$.
Consider the set of noise realizations
\[
E(z,\bar z)
:=
\left\{
F_\omega^{K_1}(z)\in I_1,
\quad
F_\omega^{K_1}(\bar z)\in I_2
\right\}
\cap
\mathcal A_{0,K}(\eta;z,\bar z).
\]

We say that $i$ is a Young time if $i=r_j$ for some $j\ge1$ and
\[
\vartheta^{3i}\omega\in E(z_i,\bar z_i).
\]
Let $Y_n$ denote the set of Young times less than or equal to $n$.

\begin{proposicao}\label{prop:young-times}
Fix $K_2\ge1$. One can choose $K_1$ sufficiently large, then
$\eta>0$ sufficiently small, and $L_0$ sufficiently large, so that
there exist $b,C_1,c_1,\delta_0>0$ such that, for every
$L\ge L_0$ and $n\ge1$,
\begin{equation}\label{cardinalityyoungtimes}
\mathbb P\left(\#Y_n\le bn\right)
\le C_1e^{-c_1n}.
\end{equation}

Furthermore, if $i$ is a Young time, there exist subcurves
$\gamma_0^+\subset\gamma_0$ and
$\bar\gamma_0^+\subset\bar\gamma_0$, containing $z_0$ and
$\bar z_0$, respectively, such that
\[
F_\omega^{i+K_1}(\gamma_0^+)\cap I_1\neq\emptyset,
\qquad
F_\omega^{i+K_1}(\bar\gamma_0^+)\cap I_2\neq\emptyset.
\]
Both image curves have length $2\delta_0$, and all their tangent
directions lie in $B_{C_\eta/L}(0)$, for some constant $C_\eta$
independent of $L$.
\end{proposicao}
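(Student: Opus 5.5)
The plan is to work along the sparse hyperbolic times $r_j\in\widetilde H_{n,K_1,K_2}$ and show two things. First, at each such time, conditionally on the past $\mathcal F_{r_j}$, the event $\vartheta^{3r_j}\omega\in E(z_{r_j},\bar z_{r_j})$ has probability bounded below by some $p_0>0$ that does not depend on $L$. Second, since consecutive sparse times are separated by more than $K=K_1+K_2$ steps, these events use disjoint blocks of fresh noise. A martingale (Azuma or exponential-moment) argument then gives $\#Y_n\ge \tfrac{p_0}{2}\#\widetilde H_{n,K_1,K_2}$ outside an exponentially small set. Combining this with \eqref{eq:sparse-hyperbolic-times} yields \eqref{cardinalityyoungtimes} with $b=p_0(1-\bar\varepsilon)/(2(K+1))$. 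One point to handle: the $r_j$ are not stopping times for the future noise, since they depend on reverse-tempered splittings. I will avoid this by noting that membership in $\widetilde H$ at time $i$ depends only on $\omega_0,\dots,\omega_{3i-1}$, because $\tau_j$ and the position conditions are past-measurable. So $r_j$ is a stopping time and the strong Markov property applies.

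For the lower bound $p_0$, I will fix a time $i\in\widetilde H$, so that $d(z_i,\bar z_i)>\eta_1$ and $x_i,\bar x_i\notin\mathcal G_{2\eta}$. I will then estimate the probability of the event that, for the next $3K$ single steps, both orbits avoid $\mathcal G_\eta$ and at step $3K_1$ land in $I_1\times I_2$.

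\begin{itemize}
\item \emph{Landing.} I will use two-point mixing, Theorem \ref{mixingtwopointmotion}. From a pair at distance $\ge\eta_1$ we have $V_L\le \eta_1^{-\varepsilon_0/\log L}\lesssim 1$. So for $K_1$ large, uniformly in $L$, the probability of $(F^{K_1}z_i,F^{K_1}\bar z_i)\in I_1\times I_2$ is at least $p_*/2$.
\item \emph{Critical avoidance.} Each single-step visit to $\mathcal G_\eta$ has probability $O(\eta)$ plus a mixing error, by the one-point distortion estimates as in \eqref{distortioncoolness}. Over $3K$ steps this costs at most $O(K\eta)$. Choosing $\eta\ll p_*/K$ after $K_1$ is fixed keeps the joint probability at least $p_*/4$.
\end{itemize}

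This is where the order of quantifiers ($K_1$, then $\eta$, then $L_0$) comes in.

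For the geometric statement, at a Young time $i$ I will invoke Proposition \ref{hyperbolict_times}, since $i\in H_n(\gamma_0)\cap H_n(\bar\gamma_0)$. It gives image curves through $z_i,\bar z_i$ of length $\ge L^{-b}$ and curvature $\le L^a$, with tangent at the marked point in $B_{\eta_2}(0)$. I then need to push these curves through the next $3K_1$ steps, along which the marked points avoid $\mathcal G_\eta$.

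\begin{itemize}
\item Off the critical set we have $|f'|\ge c_\eta L$. So the cone $\mathcal C^h_{1/10}$ is mapped by each $Df_\omega$ into a cone of width $O(1/(\eta L))$ about the horizontal, while vectors in it expand by a factor $\gtrsim \eta L$.
\item To keep the whole subcurve, and not just the marked point, outside $\mathcal G_{\eta/2}$, I will cut it to length comparable to $\eta/(L\cdot\text{expansion so far})$ at each step. This is a standard Duarte-type argument \cite{Duarte1994}.
\item After $O(b)$ steps the piece reaches length $2\delta_0$, with $\delta_0$ of order $\eta$. Thereafter I will keep trimming to length $2\delta_0$ around the image of the marked point, which stays outside $\mathcal G_\eta$.
\item Provided $3K_1\gtrsim b$, at step $i+K_1$ the curves have length $2\delta_0$, tangents in $B_{C_\eta/L}(0)$, and contain $F^{i+K_1}z_0\in I_1$, respectively $\in I_2$.
\end{itemize}

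The main obstacle I expect is this last trimming step. The curve must simultaneously grow from the tiny scale $L^{-b}$ to scale $\delta_0$ and have every point avoid the critical region, not only the marked point. The critical-avoidance event constrains only the marked orbit. So I need the mismatch between the subcurve and the marked point to stay below the distance $\eta/2$ to $\mathcal G_\eta$ at every step. This will require choosing $\delta_0\ll\eta$ and controlling curvature, using $L^a$ curvature that is flattened by the hyperbolic steps.
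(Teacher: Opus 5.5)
Your proposal is correct and follows essentially the same route as the paper. The paper also obtains a uniform lower bound $\rho=p_*/4$ on $\mathbb P(E(z,\bar z))$ from two-point mixing at time $3K_1$ minus an $O(K\eta)$ union bound for critical avoidance. It then applies a conditional exponential-moment argument over the $K$-sparse, past-measurable candidate times, combined with \eqref{eq:sparse-hyperbolic-times}. For the geometric claim it uses Proposition~\ref{hyperbolict_times} followed by cone preservation and trimming near the reference orbit, reaching length $2\delta_0$ with slopes $O(1/(\eta L))$. Your explicit stopping-time check and your use of unconditional one-step distortion bounds, instead of the paper's conditional bound \eqref{consequently}, are only cosmetic differences.
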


\begin{proof}
We first prove that there exists $\rho>0$ such that
\begin{equation}\label{eq:young-success-probability}
\inf_{\substack{
d(z,\bar z)>\eta_1\\
\pi_x z,\pi_x\bar z\notin\mathcal G_{2\eta}
}}
\mathbb P\bigl(E(z,\bar z)\bigr)
\ge\rho.
\end{equation}

By Theorem~\ref{mixingtwopointmotion}, applied at time $3K_1$,
choosing $K_1$ sufficiently large gives
\[
\mathbb P\left(
F_\omega^{K_1}(z)\in I_1,
\quad
F_\omega^{K_1}(\bar z)\in I_2
\right)
\ge\frac{p_*}{2},
\]
uniformly for $d(z,\bar z)>\eta_1$ and $L$ sufficiently large.

If $x_0\notin\mathcal G_{2\eta}$, then, for $L$
sufficiently large, the map
\[
\omega_0\longmapsto f(x_0+\omega_0)-y_0
\]
is monotone, has bounded distortion, and its image has
length at least $c\eta L^{1-\delta}$. Consequently,
\begin{equation}\label{consequently}
\mathbb P\left(
\pi_x f_\omega(z)\in\mathcal G_{2\eta}
\right)
\le C_0\eta,
\end{equation}
uniformly for $z=(x_0,y_0)$ with
$x_0\notin\mathcal G_{2\eta}$ and $y_0\in\mathbb T$.

For $L$ sufficiently large that $L^{-\delta}<\eta$, failure
of critical avoidance requires one of the unperturbed
horizontal coordinates to enter $\mathcal G_{2\eta}$.
The bound in \eqref{consequently} then gives
\[
\mathbb P\left(
\mathcal A_{0,K}(\eta;z,\bar z)^c
\right)
\le6C_0K\eta.
\]
Therefore,
\[
\mathbb P\bigl(E(z,\bar z)\bigr)
\ge
\frac{p_*}{2}-6C_0K\eta
\ge
\frac{p_*}{4}
=:\rho,
\]
after choosing $\eta$ sufficiently small. This proves
\eqref{eq:young-success-probability}.

At every candidate time $i=r_j$, we have
$d(z_i,\bar z_i)>\eta_1$ and
$x_i,\bar x_i\notin\mathcal G_{2\eta}$. The conditional exponential-moment argument
in \cite{LTTGeneral}, together with \eqref{eq:sparse-hyperbolic-times}
and \eqref{eq:young-success-probability}, then gives \eqref{cardinalityyoungtimes}.

We now prove the geometric assertion. Since
$i\in H_i(\gamma_0)$, Proposition~\ref{hyperbolict_times}
gives a subcurve of $F_\omega^i(\gamma_0)$ through $z_i$,
of length at least $L^{-b_0}$ and curvature at most
$L^{a_0}$, for some $a_0,b_0>0$. Since
$v_i\in B_{\eta_2}(0)$, restricting to a smaller subcurve
gives an almost horizontal curve of length at least
$L^{-q}$, where $q>\max\{a_0,b_0\}$ is fixed.

Along the following critical-avoiding block, horizontal cones
are preserved and vectors in these cones expand by at least
$c_\eta L$ at each individual $f_\omega$-step, for some $c_\eta>0.$
Restricting at each step to a subcurve sufficiently close to the
reference orbit, and choosing $3K_1>q+1$, we reach a fixed small
scale before the last individual iterate. The last iterate then
gives, at time $i+K_1$, an image subcurve through $z_{i+K_1}$ of
length $2\delta_0$, with tangent directions in $B_{C_\eta/L}(0)$.
Pulling this subcurve back gives $\gamma_0^+$.

Since $z_{i+K_1}\in I_1$, its image intersects $I_1$. The same
argument applies to $\bar\gamma_0$, using
$\bar z_{i+K_1}\in I_2$.
\end{proof}
We now introduce the notion of stable times. To this end,
we first recall the definition of tempered norms from
\cite{DD24}.

For $(\omega,z)\in\Omega\times\mathbb T^2$, write
\begin{equation}\label{derivative}
A_m(\omega,z)
:=
DF_\omega^m(z)
=
Df_\omega^{3m}(z),
\qquad m\ge0,
\end{equation}
with $A_0(\omega,z)=\mathrm{Id}$.

\begin{definicao}\label{def:tempered-norms}
Let $C\in\mathbb R$, $\alpha,\varepsilon>0$, and
$n\in\mathbb N\cup\{\infty\}$.
We say that $\{A_m\}_{m=0}^{n}$ has
$(C,\alpha,\varepsilon)$-tempered norms if
\[
\|A_m\|
\ge
e^C L^{\alpha(m-k)-\varepsilon k}\|A_k\|
\]
for all integers $0\le k<m\le n$.
\end{definicao}

In \cite{DD24}, tempered norms are used to obtain tempered
splittings and to control the size of finite-time stable
manifolds through graph transform arguments; see
\cite[Proposition~4.6 and Appendix~B]{DD24}.
We will use this notion for a similar purpose in the horseshoe construction below.

For $j\ge0$, let
\[
z_j:=F_\omega^j(z)=f_\omega^{3j}(z),
\]
and write
\begin{equation}\label{eq:shifted-derivative}
\begin{aligned}
A_m^{(j)}(\omega,z)
&:=
A_m(\vartheta^{3j}\omega,z_j)
=
DF_{\vartheta^{3j}\omega}^{m}(z_j)
\\
&=
A_{j+m}(\omega,z)A_j(\omega,z)^{-1},
\qquad m\ge0.
\end{aligned}
\end{equation}
Let $e^s(\omega,z)$ be the stable direction defined in
Section~\ref{stabledirsec}, and write
\[
e_j^s:=e^s(\vartheta^{3j}\omega,z_j).
\]
Fix $\alpha,\varepsilon,u>0$ as in the preceding growth
and direction estimates. We define the set of stable
configurations by
\begin{equation}\label{eq:stable-configurations}
S
:=
\left\{
(z,\omega)\in\mathbb T^2\times\Omega:
\begin{array}{l}
\{A_m(\omega,z)\}_{m\ge0}
\text{ has }(0,\alpha,\varepsilon)
\text{-tempered norms},\\[1mm]
\angle(e^s(\omega,z),(0,1))\le L^{-u}
\end{array}
\right\},
\end{equation}
where we restrict to the invariant set of full measure
on which $e^s$ is defined.

\begin{definicao}\label{def:stable-times}
The set of stable times for $(\omega,z)$ is
\begin{equation}\label{eq:stable-times}
\mathcal S(\omega,z)
:=
\{j\ge0:(z_j,\vartheta^{3j}\omega)\in S\}.
\end{equation}
\end{definicao}

\begin{proposicao}\label{stableprop}
For the parameter choices above, there exist
$L_0,C,\beta>0$ such that, for every $L\ge L_0$,
\begin{equation}\label{eq:stable-configurations-measure}
(m\otimes\mathbb P)(S)
\ge 1-CL^{-\beta}.
\end{equation}
The constants are independent of $L$.

Consequently, for every $\varepsilon'>0$, choosing $L$
sufficiently large gives
\begin{equation}\label{densitystabledire}
m\otimes\mathbb P\text{-a.s.},
\qquad
\liminf_{n\to\infty}
\frac{
\#\bigl(\mathcal S(\omega,z)\cap\{0,\ldots,n-1\}\bigr)
}{n}
>
1-\varepsilon'.
\end{equation}
\end{proposicao}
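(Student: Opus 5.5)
The set $S$ is the intersection of two conditions, so I would bound the measure of each complement separately and add. The second condition is handled by Corollary~\ref{Stabledef}. Taking $u\le\bar s$, the event $\angle(e^s(\omega,z),(0,1))>L^{-u}$ is contained in $\{\operatorname{dist}(e^s,\pi/2)\ge L^{-\bar s}\}$. By \eqref{jesus} this event has $(\mathbb P\otimes m)$-measure at most $L^{-\bar r}$.

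The first condition requires $(0,\alpha,\varepsilon)$-tempered norms for the whole forward cocycle. Here I would use a union bound over pairs $(k,m)$ together with the large deviation estimate of Theorem~\ref{lyapunov_large_deviations}. Fix an $\alpha<\alpha'<24/25$. Since $\|A_m\|\ge\|A_{m-k}^{(k)}A_kw\|$ for any unit $w$, I would take $w$ to be the most expanded singular direction of $A_k$. This gives $\|A_m\|\ge\|A_{m-k}^{(k)}\hat w\|\,\|A_k\|$, where $\hat w=\widehat A_k(w)$ is $\mathcal F_k$-measurable. Conditioning on $\mathcal F_k$ and applying \eqref{lyapunov_ld_third_iterate} to the block of length $m-k$ started at $(z_k,\hat w)$ with constant $C=\varepsilon k\log L$ (in the $\log$ scale), the probability that $\|A_{m-k}^{(k)}\hat w\|<L^{\alpha(m-k)-\varepsilon k}$ is at most
\[
L^{-r_1(m-k)}e^{-r_2\varepsilon k\log L}=L^{-r_1(m-k)-r_2\varepsilon k}.
\]
This uses $\alpha'$ in place of $\alpha$ and absorbs the slack, with the three-step blocks matching $F_\omega=f_\omega^3$. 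For pairs with $m-k$ small, the bound also uses the surplus $(\alpha'-\alpha)(m-k)\log L$. Summing over all $0\le k<m$ gives
\[
\sum_{k\ge0}L^{-r_2\varepsilon k}\sum_{j\ge1}L^{-r_1 j}\lesssim L^{-r_1},
\]
for $L$ large. Combining the two bounds yields \eqref{eq:stable-configurations-measure} with $\beta=\min\{r_1,\bar r\}$ and constants independent of $L$.

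For the density statement \eqref{densitystabledire}, I would note that $j\in\mathcal S(\omega,z)$ iff $\Theta^{3j}$-type iterates (the skew product of $F_\omega$ over $\vartheta^3$) of $(\omega,z)$ land in $S$. Both conditions defining $S$ are measurable and equivariant, since $e^s$ is equivariant and the shifted cocycle is $A^{(j)}$. By Birkhoff's ergodic theorem for the skew product $(\omega,z)\mapsto(\vartheta^3\omega,F_\omega z)$, which preserves $\mathbb P\otimes m$, the frequency converges almost surely to $\mathbb E[\mathbf 1_S\mid\mathcal I]$. If ergodicity of the three-step skew product is available, this limit equals $(m\otimes\mathbb P)(S)\ge 1-CL^{-\beta}>1-\varepsilon'$ for $L$ large. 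Ergodicity of $f_\omega^3$ follows from uniqueness of the stationary measure $m$ for the three-step chain, via the positive density in Corollary~\ref{Density_is_pos}.

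The main obstacle is the conditional use of Theorem~\ref{lyapunov_large_deviations} with an $\mathcal F_k$-measurable random starting point and direction. This is fine because that theorem is uniform in $(x,y,v)$ and the future noise is independent of $\mathcal F_k$. The remaining care is in ensuring the tempered-norm inequality holds for all pairs simultaneously with $e^0$ constant. In particular, pairs with $k=0$ and small $m$ must be handled with the $\alpha'$ slack, possibly after removing an additional set of measure $O(L^{-r_1})$.
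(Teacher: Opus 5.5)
Your proposal is correct and follows the paper's route. The paper also bounds the failure of $(0,\alpha,\varepsilon)$-tempered norms by the summation (union-bound) argument of \cite{DD24}, fed with Theorem~\ref{lyapunov_large_deviations}. It controls the stable-direction condition via Corollary~\ref{Stabledef}, and concludes by Birkhoff's theorem for the ergodic skew product $(z,\omega)\mapsto(F_\omega z,\vartheta^3\omega)$; the paper deduces ergodicity from \eqref{baseexpmixing} rather than from Corollary~\ref{Density_is_pos}, which is an equivalent minor variation.

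Your $\alpha'$ slack is unnecessary. Since $\alpha<24/25$, you can apply \eqref{lyapunov_ld_third_iterate} directly with constant $\varepsilon k\log L$ (which is $0$ when $k=0$). This already gives the summable bound $L^{-r_1(m-k)-r_2\varepsilon k}$ for every pair.
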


\begin{proof}
The summation argument in the proof of
\cite[Propositions~4.4 and~4.7]{DD24}, using
Theorem~\ref{lyapunov_large_deviations} in place of their
large deviation estimate, gives
\begin{equation}\label{eq:stable-norms-probability}
\mathbb P\left(
\{A_m(\omega,z)\}_{m\ge0}
\text{ does not have }
(0,\alpha,\varepsilon)\text{-tempered norms}
\right)
\le CL^{-\gamma},
\end{equation}
for some $C,\gamma>0$, uniformly in $z$.

By Corollary~\ref{Stabledef}, there exists $v>0$ such that
\[
(m\otimes\mathbb P)\left(
\angle(e^s(\omega,z),(0,1))>L^{-u}
\right)
\le CL^{-v}.
\]
Combining these estimates proves
\eqref{eq:stable-configurations-measure}, with
$\beta:=\min\{\gamma,v\}$ and after increasing $C$.

Finally, the mixing estimate \eqref{baseexpmixing}
implies ergodicity of the skew product
\[
(z,\omega)\longmapsto
(F_\omega(z),\vartheta^3\omega).
\]
Thus \eqref{densitystabledire} follows from
Birkhoff's ergodic theorem and
\eqref{eq:stable-configurations-measure}.
\end{proof}

\begin{proof}[Proof of Theorem \ref{thm:typical-horseshoes}]
We first select candidate horseshoe times.
Throughout the proof, times are counted in iterates of
$F_\omega=f_\omega^3$, so that
\[
z_t:=F_\omega^t(z)=f_\omega^{3t}(z),
\qquad
\bar z_t:=F_\omega^t(\bar z)=f_\omega^{3t}(\bar z).
\]
In particular, $K_1,K_2$ count iterates of $F_\omega$.
The cone estimates on the short blocks are applied at each
original iterate of $f_\omega$.

For $\chi>0$ sufficiently small, let $I_1,I_2$  be in $\chi$-horseshoe position. We use the Young times obtained by applying
Proposition~\ref{prop:young-times} with $I_{1,c},I_{2,c}$ as targets.

For $(z,\bar z)\in I_1\times I_2$, let $\gamma_z,\bar\gamma_z$
be horizontal arcs through $z,\bar z$, crossing $I_1,I_2$,
respectively, and satisfying the geometric hypotheses of
Proposition~\ref{hyperbolict_times}.
By \eqref{cardinalityyoungtimes} and the Borel--Cantelli lemma,
there exists $\beta_\chi>0$ such that, for
$(m\otimes m)$-almost every $(z,\bar z)\in I_1\times I_2$,
\begin{equation}\label{firstcardinality}
\mathbb P\left(
\liminf_{n\to\infty}\frac{\#Y_n(\omega,z,\bar z)}{n}
>\beta_\chi
\right)=1.
\end{equation}
We write
\[
Y:=\bigcup_{n\ge1}Y_n=\{\tilde t_j\}_{j\ge1}.
\]

Write
\begin{equation}\label{gammatj}
\gamma_{z,t}:=F_\omega^t(\gamma_z),
\qquad
\gamma_{\bar z,t}:=F_\omega^t(\bar\gamma_z),
\end{equation}
and let $v_{z,t}$ denote the unit tangent to
$\gamma_{z,t}$ at $z_t$.
Let $\delta_0$ be as in Proposition~\ref{prop:young-times}.
For every $\tilde t_j\in Y$, the curves
$\gamma_{z,\tilde t_j+K_1}$ and
$\gamma_{\bar z,\tilde t_j+K_1}$ contain almost horizontal
subarcs of length $2\delta_0$, centred at their marked
return points. Taking $\chi$ sufficiently small compared
with $\delta_0$, each subarc crosses both $I_1$ and $I_2$
with a margin from their horizontal boundaries.

Fix $0<\varepsilon_1<\beta_\chi$. By \eqref{densitystabledire},
choosing $L$ sufficiently large gives
\begin{equation}\label{pairwisestable}
\liminf_{n\to\infty}
\frac{\#\bigl(\mathcal S(\omega,z)\cap
\mathcal S(\omega,\bar z)\cap\{1,\ldots,n\}\bigr)}{n}
>1-\varepsilon_1
\end{equation}
for $(m\otimes m\otimes\mathbb P)$-almost every
$(z,\bar z,\omega)\in I_1\times I_2\times\Omega$.

Consider the return times
\begin{equation}\label{horseshoestime}
\mathcal H
:=
\left\{
\tilde t_j+K_1:
\tilde t_j+K_1+K_2\in
\mathcal S(\omega,z)\cap\mathcal S(\omega,\bar z)
\right\}.
\end{equation}
By \eqref{firstcardinality} and \eqref{pairwisestable}, this set
has lower density greater than $\beta_\chi-\varepsilon_1$ almost
surely. Let $D_0\ge1$ be a fixed integer, chosen sufficiently
large below, and define
\[
h_1:=\inf\mathcal H,
\qquad
h_{j+1}:=\inf\{h\in\mathcal H:h>h_j+K_1+K_2+D_0\}.
\]
The same sparsification argument as in
\eqref{eq:sparse-hyperbolic-times} gives
\begin{equation}\label{horseshoetimesdensity}
\liminf_{n\to\infty}
\frac{\#\{j\ge1:h_j\le n\}}{n}
>\bar\beta,
\qquad
\bar\beta:=
\frac{\beta_\chi-\varepsilon_1}{K_1+K_2+D_0+1}>0,
\end{equation}
for $(m\otimes m\otimes\mathbb P)$-almost every
$(z,\bar z,\omega)\in I_1\times I_2\times\Omega$.

We now prove that these are the desired horseshoe times.

Fix $j\ge1$ and set
\[
t_0:=h_j,\qquad t_1:=h_j+K_2,\qquad
t_2:=h_{j+1}-K_1,\qquad t_3:=h_{j+1}.
\]
Set
\[
D:=t_2-t_1.
\]
Thus $t_1$ is a stable time, $t_2$ is a Young time, and $D>D_0$.
The blocks $[t_0,t_1]$ and $[t_2,t_3]$ contain, respectively,
$3K_2$ and $3K_1$ original iterates, and critical avoidance
holds at every such iterate. Thus any lower bound obtained
from the one-step cone estimates is imposed on $3K_2$ or
$3K_1$, respectively.

We prove that the geometric picture required for the horseshoe
construction is preserved from time $t_0$ to time $t_3$
along the orbit of $z$;
the same argument applies along the orbit of $\bar z$.

Write
\[
A^i:=A_i^{(t_1)}(\omega,z),\qquad
A^{m,k}:=A^m(A^k)^{-1},
\qquad 0\le k\le m\le D.
\]
Set
\[
\alpha_*:=\alpha-2\varepsilon,
\qquad \varepsilon_*:=3\varepsilon,
\]
and take $\varepsilon$ sufficiently small that
$\alpha_*>4\varepsilon_*$.

By the definition of Young times, the products up to time $t_2$
admit a reverse-tempered splitting with unit directions
$s_{t_2},u_{t_2}$. Define
\[
a_s:=\frac{A_{t_1}(\omega,z)s_{t_2}}
{\|A_{t_1}(\omega,z)s_{t_2}\|},
\qquad
a_u:=\frac{A_{t_1}(\omega,z)u_{t_2}}
{\|A_{t_1}(\omega,z)u_{t_2}\|}.
\]

By Definition~\ref{def:reverse-tempered-splitting}, for
$0\le k\le m\le D$,
\begin{align}
\frac{\|A^m a_s\|}{\|A^k a_s\|}
&\le H L^{-\alpha_*(m-k)+\varepsilon_*(D-k)},
\label{eq:restricted-reverse-stable}\\
\frac{\|A^m a_u\|}{\|A^k a_u\|}
&\ge H^{-1}L^{\alpha_*(m-k)-\varepsilon_*(D-k)},
\label{eq:restricted-reverse-unstable}\\
\sin\angle(A^k a_s,A^k a_u)
&\ge H^{-1}L^{-\varepsilon_*(D-k)},
\label{eq:restricted-reverse-angle}
\end{align}
where $H=e^{C_1}L^3$ for some $C_1\ge0$ independent of $L,D,j$.

By \eqref{horseshoestime}, the sequence
$\{A_m^{(t_1)}\}_{m\ge0}$ has
$(0,\alpha,\varepsilon)$-tempered norms. The norm-to-splitting
argument in \cite[Proposition~4.6]{DD24}, with the one-step
bound $\|DF_\omega^{\pm1}\|_\infty\le(2\pi L+1)^3$, gives
orthogonal unit directions $b_s,b_u$, with
$b_s=e^s(\vartheta^{3t_1}\omega,z_{t_1})$, such that
\begin{equation}\label{eq:long-forward-splitting}
\begin{aligned}
\frac{\|A^m b_s\|}{\|A^k b_s\|}
&\le H L^{-\alpha_*(m-k)+\varepsilon_* k},\\
\frac{\|A^m b_u\|}{\|A^k b_u\|}
&\ge H^{-1}L^{\alpha_*(m-k)-\varepsilon_* k},\\
\sin\angle(A^k b_s,A^k b_u)
&\ge H^{-1}L^{-\varepsilon_* k},
\end{aligned}
\end{equation}
after increasing $C_1$ if necessary.

The proof is divided into the following steps.

\medskip
\noindent\textbf{Step 1: A family of two-sided norms.}

For a nonzero vector $v$, write
$v(i):=A^iv/\|A^iv\|$.
\begin{lema}\label{lem:long-splitting-comparison}
For every $0\le i\le D$,
\begin{align}
\sin\angle(a_u(i),b_u(i))
&\le H^2L^{-2\alpha_* i+\varepsilon_* D},
\label{eq:long-unstable-comparison}\\
\sin\angle(a_s(i),b_s(i))
&\le H^2L^{-2\alpha_*(D-i)+\varepsilon_* D}.
\label{eq:long-stable-comparison}
\end{align}
\end{lema}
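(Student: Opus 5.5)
The plan is to use the standard two-dimensional cone/projective-contraction comparison. Both splittings $(a_s,a_u)$ and $(b_s,b_u)$ are defined for the same matrix products $A^i$, $0\le i\le D$, which have determinant one. For the unstable comparison \eqref{eq:long-unstable-comparison}, I would decompose the vector $b_u$ (at time $0$) in the basis $\{a_s,a_u\}$, writing $b_u=\lambda a_s+\mu a_u$. Symmetrically, for \eqref{eq:long-stable-comparison} I would decompose $a_s$ in the basis $\{b_s,b_u\}$ at time $D$ and push backward.

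\textbf{Unstable comparison.} The key identity for $2\times2$ area-preserving maps is
\[
\sin\angle(A^iv,A^iw)=\frac{\sin\angle(v,w)}{\|A^iv\|\,\|A^iw\|}
\]
for unit vectors $v,w$. This holds because $|\det(A^iv,A^iw)|=|\det(v,w)|$. Apply it with $v=a_u$, $w=b_u$. Lower bounds for $\|A^ia_u\|$ come from \eqref{eq:restricted-reverse-unstable} with $k=0$, $m=i$, and for $\|A^ib_u\|$ from \eqref{eq:long-forward-splitting} with $k=0$. Together with $\sin\angle(a_u,b_u)\le1$, they give
\[
\sin\angle(a_u(i),b_u(i))\le H^2L^{-2\alpha_* i+\varepsilon_* D}.
\]
The loss $L^{\varepsilon_*D}$ is the reverse-tempered factor $L^{\varepsilon_*(D-0)}$ of the first bound, while the forward one contributes $L^{-\varepsilon_*\cdot0}=1$. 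This is exactly \eqref{eq:long-unstable-comparison}, without even needing the angle bounds.

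\textbf{Stable comparison.} Here I would run the same identity backwards from time $D$, with the inverse products $A^{i,D}:=A^i(A^D)^{-1}$, which also have unit determinant. For unit $v,w$ at time $D$,
\[
\sin\angle(A^{i,D}v,A^{i,D}w)=\frac{\sin\angle(v,w)}{\|A^{i,D}v\|\,\|A^{i,D}w\|}.
\]
Take $v=a_s(D)$ and $w=b_s(D)$. Then $\|A^{i,D}a_s(D)\|=\|A^ia_s\|/\|A^Da_s\|$, which by \eqref{eq:restricted-reverse-stable} with $k=i$, $m=D$ is at least $H^{-1}L^{\alpha_*(D-i)-\varepsilon_*(D-i)}$. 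Similarly, $\|A^ib_s\|/\|A^Db_s\|\ge H^{-1}L^{\alpha_*(D-i)-\varepsilon_*i}$ by \eqref{eq:long-forward-splitting} with $k=i$, $m=D$. The product of the two tempering losses is $L^{-\varepsilon_*D}$, giving
\[
\sin\angle(a_s(i),b_s(i))\le H^2L^{-2\alpha_*(D-i)+\varepsilon_*D},
\]
which is \eqref{eq:long-stable-comparison}.

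\textbf{Main obstacle.} The substance is bookkeeping. One must check that the splitting bounds \eqref{eq:restricted-reverse-stable}--\eqref{eq:long-forward-splitting} hold for the restricted products $A^i=A^{(t_1)}_i$ with the stated constants. This is where the re-indexing from $t_2$ to the window $[t_1,t_2]$ and the factor $H$ enter, and I take it as given from the preceding display. I also have to verify that the exponents combine to the stated $\varepsilon_*D$ in both cases. If I want the cleaner form without extra tempering, I would absorb the discrepancy using $\alpha_*>4\varepsilon_*$, but as computed above the exponents match directly.
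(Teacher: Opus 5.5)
Your proposal is correct and is essentially the paper's proof. The unstable estimate uses the area-preservation identity for $\sin\angle$ with the $k=0$ bounds. The stable estimate uses the same identity between times $i$ and $D$; you use the inverse products $A^{i,D}$ where the paper uses $A^{D,i}$, which is equivalent. The basis decomposition in your opening paragraph is never used and can be dropped.
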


\begin{proof}
Since $A^i$ preserves area and the initial representatives have unit
length,
\[
\sin\angle(a_u(i),b_u(i))
=\frac{|\det(a_u,b_u)|}{\|A^ia_u\|\,\|A^ib_u\|}
\le H^2L^{-2\alpha_* i+\varepsilon_* D},
\]
by \eqref{eq:long-forward-splitting} and
\eqref{eq:restricted-reverse-unstable} with $k=0$.
For the stable directions, apply area preservation to $A^{D,i}$:
\begin{align*}
\sin\angle(a_s(i),b_s(i))
&=\|A^{D,i}a_s(i)\|\,\|A^{D,i}b_s(i)\|
  \sin\angle(a_s(D),b_s(D))\\
&\le H^2L^{-2\alpha_*(D-i)+\varepsilon_*(D-i)+\varepsilon_* i}.
\end{align*}
This proves the second estimate.
\end{proof}

We now prove that the stable and unstable directions
\begin{equation}\label{twosidedsplitting}
e_s:=a_s,\qquad e_u:=b_u
\end{equation}
define a splitting which is both tempered and reverse tempered.

\begin{lema}\label{lem:long-common-splitting}
Suppose that $\alpha_*>\frac32\varepsilon_*$ and
\begin{equation}\label{eq:long-block-length-condition}
H^3L^{-(\alpha_*-\frac32\varepsilon_*)D}\le\frac14.
\end{equation}
Set $d_i:=\min\{i,D-i\}$ and
$\sigma_i:=\sin\angle(e_s(i),e_u(i))$. Then
\begin{equation}\label{eq:long-common-angle}
\sigma_i\ge\frac1{2H}L^{-\varepsilon_* d_i},\qquad 0\le i\le D,
\end{equation}
and, for $0\le k\le m\le D$,
\begin{align}
\frac{\|A^me_s\|}{\|A^ke_s\|}
&\le2H^2L^{-(\alpha_*-\varepsilon_*)(m-k)+2\varepsilon_* d_k},
\label{eq:long-common-stable}\\
\frac{\|A^me_u\|}{\|A^ke_u\|}
&\ge(2H^2)^{-1}L^{\alpha_*(m-k)-2\varepsilon_* d_k}.
\label{eq:long-common-unstable}
\end{align}
\end{lema}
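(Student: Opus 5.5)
We will prove Lemma~\ref{lem:long-common-splitting} by splitting the index range into the two halves $i\le D/2$ and $i\ge D/2$, and on each half replacing one of the two vectors $e_s=a_s$, $e_u=b_u$ by its partner from the other splitting, using Lemma~\ref{lem:long-splitting-comparison}. On the first half we compare $e_u(i)=b_u(i)$ with $a_u(i)$; on the second half we compare $e_s(i)=a_s(i)$ with $b_s(i)$.

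\textbf{The angle bound \eqref{eq:long-common-angle}.} For $i\le D/2$ we have $d_i=i$, and we use the reverse-tempered angle \eqref{eq:restricted-reverse-angle}:
\[
\sin\angle(a_s(i),a_u(i))\ge H^{-1}L^{-\varepsilon_*(D-i)}.
\]
This is too weak by itself, since it involves $D-i$ rather than $i$. We would therefore first use \eqref{eq:long-forward-splitting} for the pair $b_s,b_u$, which gives the better factor $L^{-\varepsilon_* i}$, and then transfer it by the triangle inequality on angles:
\[
\sigma_i\ge\sin\angle(a_s(i),b_u(i))\ge \sin\angle(b_s(i),b_u(i))-\sin\angle(a_s(i),b_s(i)).
\]
By \eqref{eq:long-stable-comparison} the error term is at most $H^2L^{-2\alpha_*(D-i)+\varepsilon_*D}\le H^2L^{-(\alpha_*-\varepsilon_*)D}$, because $D-i\ge D/2$. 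Condition \eqref{eq:long-block-length-condition} makes this at most $\frac14 H^{-1}L^{-\varepsilon_* D/2}\le \frac1{4H}L^{-\varepsilon_* i}$. Hence $\sigma_i\ge \frac{3}{4H}L^{-\varepsilon_* i}$.

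For $i\ge D/2$ we argue symmetrically. We start from \eqref{eq:restricted-reverse-angle}, whose factor $L^{-\varepsilon_*(D-i)}$ is now the right one, and perturb $a_u(i)$ to $b_u(i)$ using \eqref{eq:long-unstable-comparison}, since $i\ge D/2$. We expect the main obstacle to be exactly this bookkeeping. One must check that the exponent $\frac32\varepsilon_*$ in \eqref{eq:long-block-length-condition} absorbs the $\varepsilon_*D$ loss in Lemma~\ref{lem:long-splitting-comparison} together with the $\varepsilon_* d_i\le\varepsilon_*D/2$ target, and that the extra power of $H$ is covered by $H^3$. If $\frac32$ turns out to be insufficient on the nose, we would use the cruder pair bound at $i$ near $D/2$.

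\textbf{The growth estimates \eqref{eq:long-common-stable} and \eqref{eq:long-common-unstable}.} For the unstable estimate with $k\le D/2$, $e_u=b_u$ and \eqref{eq:long-forward-splitting} give the bound directly, with loss $\varepsilon_*k=\varepsilon_*d_k$. For $k\ge D/2$ we instead decompose $e_u(k)$ in the basis $\{a_s(k),a_u(k)\}$:
\[
e_u(k)=\lambda a_u(k)+\mu a_s(k).
\]
Lemma~\ref{lem:long-splitting-comparison} together with the angle bound \eqref{eq:restricted-reverse-angle} gives $|\lambda|\ge\frac12$ and $|\mu|\lesssim H\,L^{\varepsilon_*(D-k)}\sin\angle(a_u(k),b_u(k))$. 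Applying $A^{m,k}$, the $a_u$ component grows by \eqref{eq:restricted-reverse-unstable}, which involves $D-k=d_k$. The $a_s$ component contracts by \eqref{eq:restricted-reverse-stable}, so it is dominated by the expanding one; this yields the constant $(2H^2)^{-1}$ and the loss $2\varepsilon_*d_k$, one $\varepsilon_*d_k$ from the angle and one from the growth.

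The stable estimate is proved the same way with the roles exchanged. For $k\ge D/2$, $e_s=a_s$ and \eqref{eq:restricted-reverse-stable} applies directly. For $k\le D/2$ we decompose $a_s(k)=\lambda b_s(k)+\mu b_u(k)$. Now $|\mu|$ is controlled by \eqref{eq:long-stable-comparison}, and since that bound is exponentially small in $D-k$, it kills the expansion $L^{\alpha_*(m-k)}$ of $b_u$ for all $m\le D$. This step degrades the contraction rate to $\alpha_*-\varepsilon_*$, which is why \eqref{eq:long-common-stable} has the weaker exponent.
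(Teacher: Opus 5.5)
Your angle bound \eqref{eq:long-common-angle} is correct and is exactly the paper's argument. You compare $a_s(i)$ with $b_s(i)$ for $i\le D/2$, and $a_u(i)$ with $b_u(i)$ for $i\ge D/2$. In both halves the ratio of error to main term is at most $H^3L^{-(\alpha_*-\frac32\varepsilon_*)D}\le\frac14$, so no fallback is needed near $i=D/2$.

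\textbf{The gap} is in the stable growth estimate \eqref{eq:long-common-stable} for $k\le D/2$. After writing $a_s(k)=\lambda b_s(k)+\mu b_u(k)$, you need an \emph{upper} bound on $\|A^{m,k}b_u(k)\|$. But \eqref{eq:long-forward-splitting} only gives the lower bound $H^{-1}L^{\alpha_*(m-k)-\varepsilon_*k}$, and nothing says $b_u$ grows only like $L^{\alpha_*(m-k)}$. The only a priori upper bound is the crude $\|A^{m,k}\|\le(2\pi L+1)^{3(m-k)}$. The coefficient satisfies $|\mu|\le H^3L^{-2\alpha_*(D-k)+\varepsilon_*(D+k)}$ with $\alpha_*<1$, which does not absorb that growth (take $k=0$, $m=3D/4$). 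The direct bound \eqref{eq:restricted-reverse-stable} for $a_s$ also fails here, since it loses $\varepsilon_*(D-k)$ rather than $O(\varepsilon_* k)$. The splitting data are one-sided: upper bounds for stable vectors, lower bounds for unstable ones. So this case cannot be closed by decomposing $a_s(k)$; you must convert the lower bound on the growth of $b_u$ into an upper bound on the growth of $a_s$.

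\textbf{The missing idea} is area preservation applied to the pair $(e_s,e_u)$ itself. With $S_i=\|A^ie_s\|$ and $U_i=\|A^ie_u\|$, the product $S_iU_i\sigma_i=|\det(e_s,e_u)|$ is independent of $i$, so $S_m/S_k=(U_k/U_m)(\sigma_k/\sigma_m)$. For $k\le D/2$ this gives
$S_m/S_k\le HL^{-\alpha_*(m-k)+\varepsilon_*k}\cdot 2HL^{\varepsilon_*d_m}$.
Then $d_m\le d_k+(m-k)$ yields both the constant $2H^2$ and the rate $\alpha_*-\varepsilon_*$. The degradation comes from $1/\sigma_m$, not from $\mu$ as you suggest.

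The same identity settles the unstable case $k\ge D/2$ in one line: $U_m/U_k\ge (S_k/S_m)\,\sigma_k$. Your decomposition $e_u(k)=\lambda a_u(k)+\mu a_s(k)$ can be kept for that case, but not by saying the $a_s$-component is ``dominated''. For $m$ near $k$, subtracting the available bounds would require roughly $H^5L^{-(\alpha_*-\frac52\varepsilon_*)D}\ll1$, which is stronger than \eqref{eq:long-block-length-condition}. Use instead the distance to the line through $A^{m,k}a_s(k)$:
$\|A^{m,k}e_u(k)\|\ge|\lambda|\,\|A^{m,k}a_u(k)\|\,\sin\angle(a_u(m),a_s(m))$.
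With $|\lambda|\ge\frac34$, this gives exactly $(2H^2)^{-1}L^{\alpha_*(m-k)-2\varepsilon_*d_k}$, matching the constants you announced.
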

\begin{proof}
Comparing $e_s(i)$ with $b_s(i)$ for $i\le D/2$, and
$e_u(i)$ with $a_u(i)$ for $i\ge D/2$, the preceding angle
estimates and \eqref{eq:long-block-length-condition} give
\[
\sigma_i
\ge H^{-1}L^{-\varepsilon_*d_i}
\left(1-2H^3L^{-(\alpha_*-\frac32\varepsilon_*)D}\right)
\ge \frac1{2H}L^{-\varepsilon_*d_i}.
\]
This proves \eqref{eq:long-common-angle}.

Write $S_i:=\|A^ie_s\|$ and $U_i:=\|A^ie_u\|$.
By area preservation, $S_iU_i\sigma_i$ is constant, so
\begin{equation}\label{eq:long-area-ratios}
\frac{S_m}{S_k}
=\frac{U_k}{U_m}\frac{\sigma_k}{\sigma_m},
\qquad
\frac{U_m}{U_k}
=\frac{S_k}{S_m}\frac{\sigma_k}{\sigma_m}.
\end{equation}
Combining these identities with
\eqref{eq:long-forward-splitting} when $k\le D/2$, and
\eqref{eq:restricted-reverse-stable} when $k\ge D/2$,
gives \eqref{eq:long-common-stable} and
\eqref{eq:long-common-unstable}, using
$d_m\le d_k+(m-k)$.
\end{proof}
Finally, the endpoint comparison estimates give
\begin{equation}\label{eq:long-endpoint-comparison}
\angle(e_s(0),b_s(0)),\quad\angle(e_u(D),a_u(D))
\le\frac\pi2H^2L^{-(2\alpha_*-\varepsilon_*)D}.
\end{equation}

\medskip
\noindent\textbf{Step 2: A family of two-sided coordinate changes.}

Choose $D_0$ sufficiently large that
\eqref{eq:long-block-length-condition} holds for $D>D_0$ and
$L$ sufficiently large. Set
\[
\lambda:=\alpha_*-4\varepsilon_*>0.
\]
The finite Lyapunov norm construction in the proof of
\cite[Lemma~A.1]{DD24}, applied to the splitting in
\eqref{twosidedsplitting} with the bounds of
Lemma~\ref{lem:long-common-splitting}, gives a sequence of linear coordinate
transformations $L_i$ satisfying, for $0\le i<D$,
\begin{equation}\label{eq:long-chart-diagonal}
L_{i+1}A^{i+1,i}L_i^{-1}
=\begin{pmatrix}\Lambda_i&0\\0&M_i\end{pmatrix},
\qquad
|\Lambda_i|\ge L^\lambda,
\quad |M_i|\le L^{-\lambda},
\end{equation}
and
\[
\|L_i\|\le8H^3L^{3\varepsilon_*d_i},
\qquad
\|L_i^{-1}\|\le\sqrt2,
\qquad 0\le i\le D.
\]

Set $P:=10$, so that $8H^3\le L^P$ for $L$ sufficiently
large. In particular, setting
\[
g_i:=L_{i+1}\circ F_{\vartheta^{3(t_1+i)}\omega}\circ L_i^{-1},
\qquad 0\le i<D,
\]
and centering the transformation to the origin, we have
\begin{equation}\label{giii}
g_i(x,y)
=\bigl(\Lambda_i x+g_{i,1}(x,y),\,
       M_i y+g_{i,2}(x,y)\bigr),
\end{equation}
where $g_{i,\ell}(0)=0$ and $Dg_{i,\ell}(0)=0$ for
$\ell=1,2$. Moreover, for $L$ sufficiently large,
\begin{equation}\label{eq:lb-second-derivative}
\max_{\ell=1,2}\|D^2g_{i,\ell}\|_\infty
\le L^B L^{3\varepsilon_*d_i},
\qquad B:=P+6+3\varepsilon_*.
\end{equation}

For the inverse dynamics, we reverse time and exchange the
stable and unstable coordinates. Thus, writing $J(x,y):=(y,x)$,
we use the charts $\widehat L_i:=JL_{D-i}$ and the maps
\[
\widehat g_i:=Jg_{D-i-1}^{-1}J,
\qquad 0\le i<D.
\]
These admit the same representation as in \eqref{giii}, with
diagonal entries $M_{D-i-1}^{-1}$ and
$\Lambda_{D-i-1}^{-1}$, respectively, and satisfy the same
expansion, contraction, and second-derivative bounds.

\medskip
\noindent\textbf{Step 3: Graph transform and growth.}

Take $\varepsilon$ sufficiently small that
$12\varepsilon_*<\lambda$, and choose
\[
q>2P+u+\lambda+3,
\qquad k'>B+q+2.
\]
For $0\le i\le D$, set
\begin{equation}\label{eq:lb-radii}
r_i:=L^{-k'}L^{-\lambda d_i/4},
\qquad
a_i:=\frac{r_i}{16(1+L^{P+1})},
\qquad
Q_i:=[-r_i,r_i]^2.
\end{equation}
Let $G_i$ denote either $g_i$ or $\widehat g_i$.
We consider curves through a marked point whose successive
coordinates in the charts are $x_i$, with
\[
G_i(x_i)=x_{i+1},
\qquad |x_i|\le r_i/1600.
\]
Write a curve through $x_i$ as
\[
\xi\longmapsto x_i+\bigl(\xi,\varphi_i(\xi)\bigr),
\qquad
\varphi_i(0)=0,
\qquad
U_i:=\|\varphi_i'\|_\infty.
\]
Assume that $U_0\le L^{P+1}$ and that the initial graph has
projected half-length $l_0\le a_0$, so that it is contained
in $Q_0$. At each step, retain the
image graph on the largest symmetric interval about its
basepoint with half-length at most $a_{i+1}$.

We claim that, for $D_0$ and $L$ sufficiently large and
$D\ge D_0$,
\begin{align}
U_D
&\le 5L^{-\lambda-q},
\label{eq:lb-final-chart-slope}\\
l_D
&\ge \min\{L^{\lambda D/2}l_0,a_D\},
\label{eq:lb-capped-length-solution}\\
\left|\frac{d\xi_m}{d\xi_k}\right|
&\ge L^{\lambda(m-k)/2},
\qquad 0\le k\le m\le D.
\label{eq:lb-projection-expansion}
\end{align}
In particular, the final graph has projected half-length
$a_D$ whenever $L^{\lambda D/2}l_0\ge a_D$.

Indeed, the mean-value theorem and
\eqref{eq:lb-second-derivative} give
\[
\|DG_i(y)-DG_i(0)\|
\le 2L^{B-k'}
L^{-(\lambda/4-3\varepsilon_*)d_i}
\le L^{-q},
\qquad y\in Q_i.
\]
As long as $U_i\le L^{P+1}$, we have, for $L$ sufficiently large,
\begin{align*}
\left|\frac{d\xi_{i+1}}{d\xi_i}\right|
&\ge L^\lambda-L^{-q}(1+U_i)
\ge \frac34L^\lambda
\ge L^{\lambda/2},\\
U_{i+1}
&\le
\frac{(L^{-\lambda}+L^{-q})U_i+L^{-q}}
     {L^\lambda-L^{-q}(1+U_i)}
\le 2L^{-2\lambda}U_i+2L^{-\lambda-q},
\end{align*}
where we used $U_i\le L^{P+1}$ and the choice of $q$.
In particular, $U_{i+1}\le L^{P+1}$. Multiplying the first estimate
gives \eqref{eq:lb-projection-expansion}, while iterating the
second gives
\[
U_D
\le (2L^{-2\lambda})^D L^{P+1}
   +4L^{-\lambda-q}
\le 5L^{-\lambda-q},
\]
which is \eqref{eq:lb-final-chart-slope}.

Finally, the projection expands by at least $L^{\lambda/2}$,
so the retained half-length satisfies
\begin{equation}\label{eq:lb-capped-length-recurrence}
l_{i+1}\ge\min\{L^{\lambda/2}l_i,a_{i+1}\}.
\end{equation}
These graphs lie in $Q_{i+1}$, since their displacement from
$x_{i+1}$ is at most
$(1+L^{P+1})a_{i+1}=r_{i+1}/16$.
Iterating the recurrence proves
\eqref{eq:lb-capped-length-solution}, because
\[
L^{\lambda(D-i)/2}a_i\ge a_D,
\qquad 0\le i\le D,
\]
by $d_i\le D-i$.

\medskip
\noindent\textbf{Step 4: Growth of the reference curve.}

At time $t_0$, the retained reference arc is almost horizontal. The cone
estimates on $[t_0,t_1]$ preserve this property and give
expansion comparable to $L$ at each original iterate.
Thus, at time $t_1$, it contains an almost horizontal subarc
of fixed size on both sides of $z_{t_1}$.

Since $t_1$ is a stable time, the endpoint comparison gives
\[
\angle(e_s(0),(0,1))\le CL^{-u}.
\]
Consequently, in the chart, the reference curve is a graph
with slope at most $CL^P\le L^{P+1}$, whose projection
contains $[-a_0,a_0]$. Applying Step 3 with $x_i=0$ gives
\[
l_D=a_D,
\qquad U_D\le5L^{-\lambda-q}.
\]
Its terminal tangent is the tangent $v_{z,t_2}$ of the
reference curve. Since $t_2$ is a Young time,
$\angle(v_{z,t_2},(1,0))\le\eta_2$, and therefore, by definition of $U_D$
\begin{equation}\label{eq:lb-terminal-unstable-cone}
\angle(e_u(D),(1,0))
\le\eta_2+CL^{P-\lambda-q}.
\end{equation}

Retain the part of the terminal graph with
$|\xi_D|\le a_0/100$. By the derivative estimate in Step 3,
the backward orbit of every point of this arc, written
in the inverse charts, satisfies
\begin{equation}\label{eq:lb-reference-contraction}
\begin{aligned}
|\widehat x_i|
&\le\frac{r_0}{1600}L^{-\lambda i/2}
\le\frac{r_i}{1600},
&&0\le i\le D,\\
\widehat g_i(\widehat x_i)&=\widehat x_{i+1},
&&0\le i<D.
\end{aligned}
\end{equation}
Thus these points satisfy the condition on the marked
orbits required in Step 3 with $G_i=\widehat g_i$.

\medskip
\noindent\textbf{Step 5: Backward growth of the vertical fibres.}

Recall that $\gamma_{z,t_3}$ contains an almost horizontal
curve crossing both $I_1$ and $I_2$. Choosing $K_1$
sufficiently large, we may choose this curve so that its
preimage at time $t_2$ is contained in the terminal graph
with $|\xi_D|\le a_0/100$ constructed at the end of Step 4.

Fix $w$ on this crossing curve and take a sufficiently small
vertical curve symmetric about $w$. Since the block
$[t_2,t_3]$ avoids the critical region, its successive
inverse images remain almost vertical and expand by a
factor comparable to $L$ at each original backward iterate.
At time $t_2$, in the inverse charts, we restrict the image
to a graph with projected half-length
\[
l_0:=a_D L^{-\lambda D/4}.
\]

Using \eqref{eq:lb-reference-contraction}, we now apply
Step 3 to the inverse maps $\widehat g_i$.
By our choice of $l_0$, the retained fibre $\ell_D$ at
time $t_1$ satisfies
\[
l_D=a_D,
\qquad U_D\le5L^{-\lambda-q}.
\]
Returning to the original coordinates, its length on each
side of its marked point is at least
\begin{equation}\label{eq:lb-final-physical-length}
\frac{a_D}{\|\widehat L_D\|}
\ge L^{-(k'+2P+3)},
\end{equation}
and its tangents satisfy
\begin{equation}\label{eq:lb-final-physical-angle}
\angle(T\ell_D,(0,1))
\le CL^{P-\lambda-q}+CL^{-u}
\le C'L^{-u},
\end{equation}
by \eqref{eq:long-endpoint-comparison} and the fact that
$t_1$ is a stable time.

Choosing $K_2$ sufficiently large and increasing $D_0$
if necessary, backward iteration along the critical-avoiding
block $[t_0,t_1]$ preserves almost verticality and expands
the fibre to a full vertical crossing of $I_1$.

\medskip
\noindent\textbf{Step 6: The Markov property.}

Fix $b\in\{1,2\}$. Through each point $w \in \gamma_{z,t_3}$, take the vertical
segment constructed in Step 5. Iterating backwards to
time $t_0$ gives a family of disjoint almost vertical curves. These curves foliate a vertical strip $R_{1}\subset I_1$.
At time $t_1$, their marked points are exponentially
close to $z_{t_1}$.

Every point of $R_{1}$ belongs to a retained fibre, so
its orbit through the middle block remains in the chart
boxes. Applying the slope and expansion estimates of
Step 3 forward, together with the cone estimates on
the two short blocks, shows that any almost horizontal
curve crossing $R_{1}$ between its vertical sides
is expanded and is almost horizontal at time $t_3$.
Its endpoints lie on the images of the two extremal
fibres, which are vertical segments through the
endpoints $\gamma_{z,t_3}$.

The images of the two horizontal boundaries of $R_1$
cross both $I_1$ and $I_2$ and extend beyond their vertical
sides, as does $\gamma_{z,t_3}$, while the vertical fibres
contract exponentially. Consequently,
\[
F_{\vartheta^{3t_0}\omega}^{t_3-t_0}(R_1)
=f_{\vartheta^{3t_0}\omega}^{3(t_3-t_0)}(R_1)
\]
is a horizontal strip crossing both rectangles with a
margin. Taking the preimages of its intersections with
$I_1$ and $I_2$ gives the two required Markov branches.
\end{proof}
\medskip
\noindent\textbf{Step 7: Verification of the definition.}

Relabel the selected times as $\{h_k\}_{k\ge0}$, and identify
$(z_1,z_2)=(z,\bar z)$ and
$(\gamma_1,\gamma_2)=(\gamma_z,\bar\gamma_z)$.
Equation~\eqref{horseshoetimesdensity} gives
\eqref{posdensstm} with $d_*:=\bar\beta$.
Since the Young times have targets $I_{1,c},I_{2,c}$,
we have $z_{a,k}\in I_{a,c}$.
The retained crossings described after \eqref{gammatj},
pulled back to the initial curves, give the subcurves
$\gamma_{h_k,a}$ required in item~(1).

For each $k$, apply Steps~4--6 with
$t_0=h_k$ and $t_3=h_{k+1}$, along each marked orbit.
Let $H_k^1$ be the strip $R_1$ constructed in Step~6,
and let $H_k^2$ be the corresponding strip along $\bar z$.
Pulling back the family of vertical segments in Step~5
gives the $C^1$ foliation by vertical crossings required
in item~(2).

Set $\alpha_{\mathrm h}:=\lambda/4$; this will be the
constant $\alpha_*$ in Definition~\ref{def:typical-horseshoe}.
Every orbit in $H_k^a$ stays in the retained chart boxes
through the middle block, as observed in Step~6.
The slope estimates and \eqref{eq:lb-projection-expansion},
applied in both time directions and combined with the
endpoint chart bounds and the cone estimates on the two
short blocks, give strict cone invariance and expansion
at least $cL^{\lambda D/2-A}$, for fixed $c,A>0$.
Since $n_k=D+K_1+K_2$, increasing $D_0$ absorbs these
fixed losses and gives both expansion bounds in item~(2)
with exponent $\alpha_{\mathrm h}$.

By Step~6, $P_k(H_k^a)$ is a band crossing both rectangles
simultaneously with the prescribed margin.
Its fibres $P_k(W_k^a(p))$ are vertical segments.
Integrating the inverse expansion estimate along each
such segment gives
\[
\operatorname{length}\bigl(P_k(W_k^a(p))\bigr)
\le L^{-\alpha_{\mathrm h}n_k}
   \operatorname{length}\bigl(W_k^a(p)\bigr)
\le C\chi L^{-\alpha_{\mathrm h}n_k}.
\]
This is the vertical thickness bound in item~(3).
Finally, Fubini's theorem gives a set
$\widetilde\Omega\subset\Omega$ of full probability on
which these conclusions hold for $(m\otimes m)$-almost
every pair in $I_{1,c}\times I_{2,c}$.
Thus $(I_1,I_2)$ is a typical horseshoe.

\section*{Acknowledgements}
I thank Alex Blumenthal for helpful discussions on the two-point motion.

\clearpage
\bibliographystyle{amsplain}
\bibliography{standard}

\end{document}